
\documentclass{amsart}
\usepackage{euscript}
\usepackage{amssymb,amsmath}
\usepackage{tikz}
\usetikzlibrary{calc}
\usepackage{amscd}
\usepackage[all]{xy}
\allowdisplaybreaks
\parskip=0pt
\raggedbottom
\def\Euc{{\rm Euc}}
\def\semidirect{\rtimes}

\def\height{\mathop{\hbox{\rm ht}}\nolimits}
\def\Suz{\hbox{\it Suz}}
\def\PSL{{\rm PSL}}
\def\SL{{\rm SL}}

\let\iso\cong

\def\Re{\mathop{\rm Re}\nolimits}
\def\Aut{\mathop{\hbox{\rm Aut}}\nolimits}

\def\homotopic{\simeq}

\def\O{{\rm O}}
\def\SO{{\rm SO}}

\def\reverse{\mathop{\hbox{\rm reverse}}\nolimits}

\makeatletter
\def\dodgelinewidth{.45pt}
\def\dodgeradius{2pt}
\def\spaceabovedodgeline{1pt}
\def\spacebelowdodgeline{1.2pt} 
\def\dodge#1{\mathop{\vbox{\m@th\ialign{##\crcr\noalign{\kern\spaceabovedodgeline}\dodgefill\crcr\noalign{\kern\spacebelowdodgeline\nointerlineskip}$\hfil\displaystyle{#1}\hfil$\crcr}}}\limits}
\def\dodgefill{$\m@th
\setbox0=\vbox to\dodgelinewidth{}
\leaders\vrule height\ht0 depth0pt\hfill\dodgebump\leaders\vrule height\ht0 depth0pt\hfill$}
\def\dodgebump{\begin{tikzpicture}%
    \useasboundingbox(-\dodgeradius,0)rectangle(\dodgeradius,\dodgeradius);%
    \draw[line width=\dodgelinewidth](\dodgeradius,0) arc (0:180:\dodgeradius);%
\end{tikzpicture}}
\makeatother

%
\makeatletter
\def\Looplinewidth{\dodgelinewidth}
\def\Loopradius{\dodgeradius}
\def\spaceabovemeridianline{\spaceabovedodgeline}
\def\spacebelowmeridianline{3pt}
\def\Loop#1{\mathop{\vbox{\m@th\ialign{##\crcr\noalign{\kern\spaceabovemeridianline}\Loopfill\crcr\noalign{\kern\spacebelowmeridianline\nointerlineskip}$\hfil\displaystyle{#1}\hfil$\crcr}}}\limits}
\def\Loopfill{$\m@th
\setbox0=\vbox to\Looplinewidth{}
\leaders\vrule height\ht0 depth0pt\hfill\Looploop$}
\def\Looploop{\begin{tikzpicture}%
    \useasboundingbox(-\Loopradius,0)--(\Loopradius,0);%
    \draw[line width=\dodgelinewidth](0,0)circle(\Loopradius);%
\end{tikzpicture}}
\makeatother

\makeatletter
\def\geodesiclinewidth{\dodgelinewidth}

\def\spaceabovegeodesicline{\spaceabovedodgeline}
\def\spacebelowgeodesicline{1pt}
\def\geodesic#1{\mathop{\vbox{\m@th\ialign{##\crcr\noalign{\kern\spaceabovegeodesicline}\geodesicfill\crcr\noalign{\kern\spacebelowgeodesicline\nointerlineskip}$\hfil\displaystyle{#1}\hfil$\crcr}}}\limits}
\def\geodesicfill{$\m@th
\setbox0=\vbox to\geodesiclinewidth{}
\leaders\vrule height\ht0 depth0pt\hfill\geodesicarrow$}
\def\geodesicarrow{\vrule height\geodesiclinewidth width1pt\kern-1pt
\smash[tb]{\lower2.29pt\hbox{$\rightharpoonup$}}}
\makeatother

\makeatletter
\def\hyper@x#1,#2\relax{#1}
\def\hyper@y#1,#2\relax{#2}
\def\hyper@coords#1{#1}
\newif\ifhyper@vertical
\def\hyper@computer#1#2{%
  \edef\hyper@toscan{(#1)}
  \tikz@scan@one@point\hyper@coords\hyper@toscan
  \edef\hyper@sx{\the\pgf@x}
  \edef\hyper@sy{\the\pgf@y}
  \edef\hyper@toscan{(#2)}
  \tikz@scan@one@point\hyper@coords\hyper@toscan
  \edef\hyper@ex{\the\pgf@x}
  \edef\hyper@ey{\the\pgf@y}
  \pgfmathsetmacro{\hyper@mx}{(\hyper@ex + \hyper@sx)/2}
  \pgfmathsetmacro{\hyper@my}{(\hyper@ey + \hyper@sy)/2}
  \pgfmathsetmacro{\hyper@dx}{\hyper@ex - \hyper@sx}
  \pgfmathparse{\hyper@dx == 0 ? "\noexpand\hyper@verticaltrue" : "\noexpand\hyper@verticalfalse"}
  \pgfmathresult
  \ifhyper@vertical
  \edef\hyper@cmd{-- (\tikztotarget)}
  \else
  \pgfmathsetmacro{\hyper@dy}{\hyper@ey - \hyper@sy}
  \pgfmathsetmacro{\hyper@t}{\hyper@my/\hyper@dx}
  \pgfmathsetmacro{\hyper@cx}{\hyper@mx + \hyper@t * \hyper@dy}
  \pgfmathsetmacro{\hyper@radius}{veclen(\hyper@cx - \hyper@sx, \hyper@sy)}
  \pgfmathsetmacro{\hyper@sangle}{180 - atan2(\hyper@sy,\hyper@cx-\hyper@sx)}
  \pgfmathsetmacro{\hyper@eangle}{180 - atan2(\hyper@ey,\hyper@cx-\hyper@ex)}
  \edef\hyper@cmd{arc[radius=\hyper@radius pt, start angle=\hyper@sangle, end angle=\hyper@eangle]}
  \fi
}
\tikzset{%
  hyperbolic plane/.style={
    to path={
      \pgfextra{\hyper@computer\tikztostart\tikztotarget}
      \hyper@cmd
    }
  },
}
\makeatother

\let\geodesic\overline

\def\complexgeodesic#1{\geodesic{#1}{}^{\C}}

\overfullrule=10pt

\def\cong{\equiv}
\def\orb{{\rm\scriptstyle orb}}
\def\piorb{\pi_1^\orb}
\def\Im{\mathop{\rm Im}\nolimits}
\def\Isom{\mathop{\rm Isom}\nolimits}

\def\PGL{\mathop{\rm PGL}\nolimits}
\def\Leech{\Lambda}
\def\w{\omega}

\def\ybar{\bar{y}}
\def\zbar{\bar{z}}

\def\wbar{\bar{\w}}
\def\vbar{\bar{v}}

\def\E{\EuScript{E}}
\def\Z{\mathbb{Z}}
\def\Q{\mathbb{Q}}
\def\F{\mathbb{F}}
\def\R{\mathbb{R}}
\def\C{\mathbb{C}}

\def\ubar{\bar{u}}
\def\e{\varepsilon}
\def\H{\EuScript{H}}
\def\tensor{\otimes}
\def\sset{\subseteq}
\def\G{\Gamma}
\def\PG{P\G}
\def\thetabar{\bar{\theta}}
\def\psibar{\bar{\psi}}
\def\mbar{\bar{m}}

\def\ip#1#2{\langle#1\,{|}\,#2\rangle}
\def\set#1#2{\{#1\,{|}\,#2\}}
\def\Bigset#1#2{\Bigl\{#1\Bigm|#2\Bigr\}}
\def\spanof#1{\langle#1\rangle}
\def\gend#1{\langle#1\rangle}

\def\bigip#1#2{\bigl\langle#1\bigm|#2\bigr\rangle}
\def\Bigip#1#2{\Bigl\langle#1\Bigm|#2\Bigr\rangle}

%
%
\newcommand{\abs}[1]{\lvert #1 \rvert}
\newcommand{\BB}{\mathbb{B}}

\newcommand{\op}[1]{\operatorname{#1}}

\newcommand{\smallmat}[4]{\bigl( \begin{smallmatrix} #1 & #2  \\ #3 & #4 \end{smallmatrix} \bigr)}
%
%
\newtheorem{theorem}{Theorem}[section]
\newtheorem{lemma}[theorem]{Lemma}

\newtheorem{conjecture}[theorem]{Conjecture}

\theoremstyle{remark}
\newtheorem{numberedremark}[theorem]{Remark}
\newtheorem*{remark}{Remark}
\newtheorem*{remarks}{Remarks}
\newtheorem*{correction}{Correction to \cite{Allcock-Y555}}
\numberwithin{figure}{section}
\numberwithin{equation}{section}
\begin{document}
\title{Generators for a complex hyperbolic braid group}
\author{Daniel Allcock}
\thanks{First author supported by NSF grant DMS-1101566 and a Simons
  Foundation Collaboration Grant}
\address{Department of Mathematics\\University of Texas, Austin}
\email{allcock@math.utexas.edu}
\urladdr{http://www.math.utexas.edu/\textasciitilde allcock}
\author{Tathagata Basak}
\address{Department of Mathematics, Iowa State University, Ames IA, 50011.}
\email{tathastu@gmail.com}
\urladdr{https://orion.math.iastate.edu/tathagat/}
\subjclass[2010]{%
Primary: 57M05
; Secondary: 20F36
, 52C35
, 32S22
}

\date{September 19, 2016}

\begin{abstract}
  We give generators for a certain complex hyperbolic braid group.
  That is, we remove a hyperplane arrangement from complex hyperbolic
  $13$-space, take the quotient of the remaining space by a discrete
  group, and find generators for the orbifold fundamental group of the
  quotient.  These generators have the most natural form: loops
  corresponding to the hyperplanes which come nearest the basepoint.
  Our results support the conjecture that motivated this study,  the
  ``monstrous proposal'', which posits a relationship between this
  braid group and the monster finite simple group.
\end{abstract}

\maketitle
\setcounter{tocdepth}{1}
\tableofcontents


\section{Introduction}
\label{sec-introduction}

\noindent
We are interested in finding generators and relations for complex hyperbolic braid
groups.  Complex hyperbolic space
$\BB^n$ is a complex manifold, diffeomorphic to the unit ball in
$\C^n$, and closely analogous to real hyperbolic space.  Our braid
groups, which we called braid-like in \cite{AB-braidlike}, arise by removing a
locally finite arrangement of complex hyperplanes, 
quotienting the remaining space by the action of a discrete group, and
then taking the fundamental group.
This is closely analogous to a well-known construction of the
classical braid group \cite{Fox-Neuwirth}: remove the hyperplanes $x_i=x_j$ from
$\C^n$, and then quotient by the symmetric group~$S_n$.  A similar
construction, with $S_n$ replaced by any other Coxeter group, yields
the Artin groups \cite{Brieskorn}\cite{van-der-Lek}.  In a different direction, replacing
$S_n$ by a finite complex reflection group leads to the complex braid
groups, whose presentations and key properties were worked out only
recently \cite{Bessis}.  In a sense we are generalizing the classical braid
group in both these ways simultaneously.

Our main result, theorem~\ref{t-26-meridians-based-at-tau-generate}, gives a
natural set of generators for
a particular complex hyperbolic braid group coming from~$\BB^{13}$.
Finding enough relations that  give a presentation of the fundamental group appears to be rather harder.  We will explain why
this example is interesting, but to set the scene we begin with our
broader motivations.  In a sense these are detours, since we do not
develop them further in this paper.

The first motivation involves singularity theory.  The Artin groups of
types $A_n$, $D_n$ and $E_n$ appear in nature as the fundamental
groups of the discriminant complements of the simple singularities,
which are also called $A_n$, $D_n$ and $E_n$.  Under a technical
assumption that often holds, this has the following consequence.
Suppose given a family of complex varieties over a base variety~$B$,
write $B_0\sset B$ for the set of smooth fibers, and suppose the fiber
over $b\in B$ has some simple singularities but no worse
singularities.  
Then these singularities have types $A_n$, $D_n$ and $E_n$ for some 
choices of subscripts, and there is a neighborhood $U$ of $b$ such that
$\pi_1(B_0 \cap U)$ is the direct product of the corresponding
Artin groups.
Informally:\ $A_n$, $D_n$ and $E_n$
Artin groups appear ``locally'' in the fundamental groups of ``most''
families of algebraic varieties.  For example, by considering families
of Riemann surfaces, one automatically expects suitable elements of
mapping class groups to satisfy the braid relation---an expectation
fulfilled by Dehn twists around curves that meet just once
(and transversely).  See
\cite{Libgober}, \cite{Looijenga-Artin-groups},
(3.5)--(3.7) of \cite{ACT-surfaces}, and lemma~1.5 and theorem~7.1 of
\cite{ACT-threefolds}
for examples and applications of these ideas; the needed technical
assumption is that the family $B$ should provide a simultaneous versal
deformation of all the singularities.

Now, singularities well beyond the simple ones have been classified,
and the next-least complicated ones are the ``affine'' singularities
$\tilde A_n$, $\tilde D_n$, $\tilde E_n$.  From the notation one
naturally expects that the fundamental groups of their discriminant
complements are the corresponding affine Artin groups.  But this is
not so \cite{van-der-Lek}.  And more-singular singularities have
discriminant complements whose fundamental groups are even further
from the Artin groups.  For example, \cite[Thm.~4.3]{Lonne} gives
presentations in the case of the Brieskorn-Pham singularities
$x_1^{d_1}+\cdots+x_n^{d_n}=0$, where each $d_i$ is at least~$2$.
These presentations include Artin relations but also additional
relations.  But in many cases, for example
\cite{Looijenga-triangle-singularities}\cite[\S10]{Looijenga-compactifications-II}\cite{Laza},
these fundamental groups are braid groups
in our sense, with $\BB^n$ replaced by the symmetric space
$\SO(2,n)/S\bigl(\O(2)\times\O(n)\big)$.  The reasoning we used for
simple singularities generalizes to any singularity, so these new
braid groups appear automatically when considering families of complex
varieties.  In particular, they appear in nature in a way that the
infinite-type Artin groups do not.  Brieskorn
\cite{Brieskorn-question} has asked for presentations for more
discriminant complements, and we hope our methods will contribute to
this.  We regard braid groups coming from hyperplane arrangements in
$\BB^n$ as an easier analogue of the
$\SO(2,n)/S\bigl(\O(2)\times\O(n)\big)$ case, hence a test bed for our
ideas.

Our second motivation is the braid groups of the finite complex
reflection groups.  The known presentations \cite{Bessis} are obtained
from Lefschetz pencils, rather than directly from the arrangement of
the hyperplanes.  In the case of Artin groups, the standard basepoint
lies in the interior of the Weyl chamber, and the standard generators
are the following paths, called ``meridians''.  Each starts at the
basepoint, moves directly toward a facet $F$ of the chamber until
close to it, then travels along a semicircle around the
complexification of $F$, and then moves directly to the image of the
basepoint under the reflection across~$F$.  Although this is not a
loop, it  becomes one after quotienting by the Weyl group.
In this way the standard generators correspond to the mirrors nearest
the basepoint.  We hope that there are analogous good generating sets
for the complex braid groups.  The Weyl chamber is not available in
this context, but there may still be natural basepoints (possibly
the ``Weyl vectors'' of \cite{Basak-Coxeter-diagrams}), and generators like those for Artin groups,
coming from the mirrors nearest the basepoint.

\smallskip
Now we discuss the braid group that is the subject of this paper.  It
comes from a group $P\Gamma$ acting on $\BB^{13}$ with finite
covolume, generated by triflections (complex reflections of
order~$3$).  The hyperplane arrangement $\H$ is the union of the
mirrors of the triflections.  In a sense this is the hardest example
available, because $n=13$ is the highest dimension for which there is a
known finite-covolume complex hyperbolic reflection group on~$\BB^n$.
Our main result, theorem~\ref{t-26-meridians-based-at-tau-generate},  concerns a particular
basepoint $\tau\in\BB^{13}$ and the meridians based there.  Meridians
are defined below, and more generally in section~\ref{sec-paths-and-homotopies}, but they are
similar to the Artin group case.  The
only difference is
that the circular-arc portion of the path is only one third of a
circle, not half, because the complex reflections have
order~$3$, not~$2$.  Our main result is that the braid group
$G_\tau=\piorb\bigl((\BB^{13}-\H)/P\Gamma,\tau\bigr)$ is generated by
the meridians associated to the $26$ mirrors nearest~$\tau$.  The
notation $\piorb$ indicates the orbifold fundamental group, which is
needed because $P\Gamma$ does not act freely on $\BB^{13}-\H$.
See section~\ref{subsec-orbifold-fundamental-group} for our conventions about orbifold fundamental
groups.

The motivation to study this particular example is the first author's
``monstrous proposal'', namely the following conjecture concerning the
sporadic finite simple group $M$ known as the monster.  See \cite{Allcock-monstrous}
for background.

\begin{conjecture}[Monstrous Proposal {\cite{Allcock-monstrous}}]
  \label{conj-monstrous-proposal}
  The quotient of $G_\tau$, by the subgroup $N$ normally generated by
  the squares of the meridians, is isomorphic the ``bimonster''
  $B=(M\times M)\semidirect\Z/2$, where $M$ is the monster finite
  simple group and $\Z/2$ acts by exchanging the
  factors in the obvious way.
\end{conjecture}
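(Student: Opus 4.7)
The plan is to combine theorem~\ref{t-26-meridians-based-at-tau-generate} with a translation of geometric relations among meridians into a presentation that can be recognized as the bimonster. First, by the main theorem, $G_\tau$ is generated by the $26$ meridians $m_1,\dots,m_{26}$ associated to the mirrors of $P\Gamma$ nearest to~$\tau$. Since $N$ is normally generated by the squares $m_i^2$, the quotient $G_\tau/N$ is generated by the $26$ involutions $\bar m_1,\dots,\bar m_{26}$, and the problem reduces to identifying this $26$-generator group with $B=(M\times M)\semidirect\Z/2$.

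Next, I would extract relations from the local geometry of the arrangement $\H$ at~$\tau$. For each pair $(i,j)$, the relative position of the two mirrors governs the relation between $m_i$ and $m_j$: when the mirrors are complex-orthogonal one expects a commutation relation, when their reflections generate a finite complex reflection group of rank two one expects a braid-type relation of small exponent, and triples of mirrors meeting in certain subspaces should impose further Coxeter-style relations. The mod-$N$ images of all such relations give candidate defining relations for $G_\tau/N$. Comparing the incidence combinatorics of the $26$ nearest mirrors with the known ``monstrous'' diagrams---in particular the $Y_{555}$ / $M_{666}$ presentations of Conway, Simons, Ivanov and Norton---should yield a candidate homomorphism $\varphi\colon G_\tau/N\to B$ sending each $\bar m_i$ to one of the natural generating involutions of the bimonster.

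Surjectivity of $\varphi$ would follow once the image is shown to contain a known generating set of~$B$. For injectivity I would proceed in two stages: (i)~check that each geometric relation actually holds in $B$, so that $\varphi$ is well-defined on the abstract group presented by these relations; (ii)~invoke a presentation theorem for $B$ that matches the geometric relations read off from the $26$ nearest mirrors. The simplicity of $M$, together with the explicit product structure of $B$, might then be used to rule out any proper quotient by counting elements of prescribed orders (for example meridian products) and comparing with orders known in~$B$.

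The hard part, by far, is step~(ii): showing that the geometric relations are \emph{complete}. Verifying that specific relations hold in $B$ is mechanical, but proving that no additional relations are needed amounts to exhibiting a presentation of the bimonster intrinsic to the complex hyperbolic geometry. One approach is to import the existing $Y_{555}$-type presentation and match its generators bijectively with the $26$ meridians; a more intrinsic approach would require a fundamental domain for $P\Gamma$ on $\BB^{13}-\H$ and a technique for decomposing arbitrary loops into meridian words of controlled length. Either route clearly lies well beyond the generator theorem, which is why this paper only establishes generation and leaves the identification with $B$ as a conjecture.
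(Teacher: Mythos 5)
The statement you are trying to prove is Conjecture~\ref{conj-monstrous-proposal}; the paper does not prove it, and explicitly positions Theorem~\ref{t-26-meridians-based-at-tau-generate} as only one step toward it. So there is no proof in the paper to compare against, and your text is a programme rather than a proof --- as you yourself concede in the final paragraph. Still, it is worth being precise about where the gaps are, because your sketch misplaces some of them. What is actually known (and cited in the introduction) is this: by \cite[Thm.~4.7]{Basak-bimonster-2} the $26$ meridians satisfy exactly the commutation and braid relations prescribed by incidence in $P^2\F_3$, so after killing the squares one gets a surjection from the Coxeter group of the incidence graph of $P^2\F_3$ onto $G_\tau/N$ (surjectivity is exactly the content of the main theorem). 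The first open step is to verify the one additional Conway--Simons relation \cite{Conway-Simons} (or an equivalent modulo the squares); that would upgrade this to a surjection $B\twoheadrightarrow G_\tau/N$, whence $G_\tau/N$ is $B$, $\Z/2$ or trivial by simplicity of $M$. Your step of ``extracting relations from the local geometry'' is therefore largely already done in the literature for the braid/commutation relations, but the extra relation is not a rank-two or Coxeter-style local phenomenon, and your framework gives no mechanism for producing it.

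The deeper problem is the direction of your map $\varphi\colon G_\tau/N\to B$. To define a homomorphism \emph{out} of $G_\tau/N$ by sending each $\bar m_i$ to a bimonster involution, you must know that the images satisfy \emph{every} relation holding among the $\bar m_i$ in $G_\tau/N$ --- that is, you need a presentation of $G_\tau$, which the paper states is ``rather harder'' and does not attempt. Your proposed remedies do not close this: ``invoking a presentation theorem for $B$'' shows the candidate relations suffice to present $B$, not that they exhaust the relations of $G_\tau/N$; and ``counting elements of prescribed orders'' cannot distinguish $G_\tau/N\iso B$ from $G_\tau/N\iso\Z/2$ without first producing some nontrivial homomorphism from $G_\tau$ to a large group, which is precisely the missing ingredient. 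In short, the two genuine obstructions are (i) verifying the extra Conway--Simons relation among the meridians, and (ii) excluding the collapse of $G_\tau/N$ to $\Z/2$ or $1$; your proposal does not supply a workable idea for either, so it cannot be completed as written.
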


A known presentation of $B$ has $26$ generators of order~$2$,
corresponding to the points and lines of $P^2\F_3$.  Two of these
generators braid or commute according to whether the corresponding
points/lines are incident in the usual sense of projective geometry.
There is one additional relation; see \cite{Conway-Simons}.  The
amazing coincidence is that $G_\tau$'s $26$ generators may be indexed
by the points and lines of $P^2\F_3$ in the same way, and they satisfy
exactly the same commutation and braid relations \cite[Thm.~4.7]{Basak-bimonster-2}.  And
although it has not yet been verified, there are good geometric
grounds to expect that $G_\tau$'s generators also satisfy the
additional bimonster relation, or one equivalent to it modulo the
$(\hbox{meridian})^2=1$ relations.  
Assuming this, $G_\tau/N$ is a quotient of $B$, hence is isomorphic
to $B$ or $\Z/2$ or the trivial group. 
To prove conjecture \ref{conj-monstrous-proposal} one would 
need to rule out the cases $G_{\tau} = \Z/2$ or the trivial group.
So we regard
theorem~\ref{t-26-meridians-based-at-tau-generate} as significant
progress toward conjecture~\ref{conj-monstrous-proposal}.

\medskip
Now we develop just enough background to make precise the objects we
have discussed; for additional background see
section~\ref{sec-background-conventions-notation}.  We write $\E$ for
the ring $\mathbb{Z}[e^{2 \pi i/3}]$ of Eisenstein integers.  The
central character in the paper is a particular hermitian $\E$-lattice
$L$, namely the unique one of signature $(13,1)$ which equals
$\sqrt{-3}$ times its dual lattice.  A concrete model for $L$ is the
$\E$-span of the $13$ ``point-roots'' like $(0;\sqrt{-3},0,\dots,0)$
and the $13$ ``line-roots'' like $(1;1,1,1,1,0,\dots,0)$.  Here we are
using the standard hermitian form of signature $-+\cdots+$ on
$\C^{14}$, with the last $13$ coordinates indexed by the points of
$P^2\F_3$, and each line-root having coordinates $1$ at the four
points of a line of $P^2\F_3$.  The complex ball $\BB^{13}$ is the set
of complex lines of negative norm in $L \otimes_{\E} \C$.  A root
means a lattice vector of norm~$3$, for example a point- or line-root.
If $r$ is a root, then we write $R_r$ for the triflection in $r$,
meaning the isometry of $L$ which multiplies $r$ by $e^{2\pi i/3}$ and
fixes $r^\perp$ pointwise.  The mirror of this complex reflection
means the fixed point set in $\BB^{13}$.  The hyperplane arrangement
$\H$ is the union of the mirrors, and $P\Gamma$ is the subgroup of
$\Aut\BB^{13}$ generated by the triflections (which is the full
projective isometry group of~$L$).

The mirrors of the $13$ point-roots meet orthogonally at a point of
$\BB^{13}$, and similarly for the $13$ line-roots.  The basepoint
$\tau$ is the midpoint of the segment joining these two points, and
the mirrors nearest $\tau$ are exactly these $26$ mirrors.  For any
point- or line-root $r$, the corresponding meridian in
$G_\tau=\piorb\bigl((\BB^{13}-\H)/P\Gamma,\tau\bigr)$ is represented
by the following three-part path.  Let $p$ be the projection of $\tau$
to the mirror $r^\perp$, and let $U$ be an open ball centered at $p$,
small enough so that the only mirror it meets is $r^\perp$.  Let $d$
be a point of the geodesic $\geodesic{\tau p}$, lying in $U$ and
different from~$p$.  The first part of the meridian is the geodesic
$\geodesic{\tau d}$.  The second part is the circular arc from $d$ to
$R_r(d)$, centered at $p$ and positively oriented in the complex
geodesic containing $\geodesic{\tau p}$.   The third
part of the meridian is the geodesic $\geodesic{R_r(d)R_r(\tau)}$.  We
call these elements of~$G_\tau$ the point- and line-meridians.  Now
our main theorem has precise meaning:

\begin{theorem}[Main theorem]
\label{t-26-meridians-based-at-tau-generate}
The $13$ point-meridians and $13$ line-meridians generate the orbifold
fundamental group of $(\BB^{13} - \H)/P \Gamma$, based at $\tau$.
\end{theorem}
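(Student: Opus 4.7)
The plan is to prove the theorem via a Dirichlet-domain analysis of the $P\Gamma$-orbit of $\tau$, combined with a path-subdivision argument in the quotient orbifold. Let $D \subset \BB^{13}$ be the closed Dirichlet domain for $P\Gamma$ centered at $\tau$, i.e.\ the set of $x$ with $d(x,\tau) \leq d(x,g\tau)$ for every $g \in P\Gamma$. Then $D$ is a fundamental domain for $P\Gamma$, and each of its codimension-one walls lies on the bisector of $\tau$ and some nearest translate $g_0\tau$. A form of Poincar\'e's polyhedron theorem asserts that the corresponding wall-pairing elements, together with the finite stabilizer $\op{Stab}_{P\Gamma}(\tau)$, generate $P\Gamma$.

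The orbifold fundamental group $G_\tau$ is represented (by the conventions of section~\ref{subsec-orbifold-fundamental-group}) by homotopy classes of pairs $([\gamma],g)$, where $\gamma$ is a path in $\BB^{13}-\H$ from $\tau$ to $g\tau$. Given such a pair, I would lift the $P\Gamma$-tiling of $\BB^{13}$ by translates of $D$, subdivide $\gamma$ according to the tiles it enters, and push each wall-crossing slightly off $\H$ in a fixed direction. Each resulting segment from $h_iD$ to $h_{i+1}D$ represents (rel endpoints, in $\BB^{13}-\H$) the meridian attached to the corresponding wall-pairing of $D$. Consequently, $G_\tau$ is generated by these meridians, by full loops around the mirrors meeting the interior of $D$, and by representatives of $\op{Stab}_{P\Gamma}(\tau)$.

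The heart of the proof is then the lattice-theoretic claim that the wall-pairings of $D$ not lying in $\op{Stab}_{P\Gamma}(\tau)$ are exactly the $26$ triflections $R_r$ and their inverses $R_r^2$, for $r$ a point- or line-root. Since $d(\tau,R_r\tau)$ is controlled by the distance from $\tau$ to $r^\perp$, and the $26$ point- and line-mirrors are by hypothesis the nearest mirrors to $\tau$, these $26$ triflections are at least candidates for nearest translates; the work is to bound $d(\tau,g\tau)$ from below for every other $g \in P\Gamma$---triflections in farther roots, non-reflective isometries, and compositions such as $R_rR_s$---using the explicit hermitian form on the Eisenstein lattice $L$. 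This geometry-of-numbers computation on $L$ of signature $(13,1)$ is the principal obstacle.

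What remains is to absorb the other generators into words in the $26$ point- and line-meridians. Full loops around the mirrors meeting $D$ are cubes of these meridians, since each such mirror is $r^\perp$ for a point- or line-root $r$ and the meridian around $r^\perp$ cubes to a full loop. The finite stabilizer $\op{Stab}_{P\Gamma}(\tau)$---which contains at least the involution swapping the common intersection point of the $13$ point-mirrors with that of the $13$ line-mirrors---should be realizable by explicit products of meridians, in analogy with how the long element of a Weyl group is realized in the corresponding Artin group; I would write down such words by direct calculation.
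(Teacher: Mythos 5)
Your proposal has a genuine gap at the step you yourself flag as ``the principal obstacle'': the claim that the wall-pairings of the Dirichlet domain $D$ centered at $\tau$ are exactly the $26$ triflections $R_r$ and their inverses, together with the stabilizer $L_3(3){:}2$ of $\tau$. Nothing in the proposal supports this, and it is very unlikely to be provable. Knowing that the $26$ point- and line-mirrors are the mirrors nearest $\tau$ only makes the bisectors of $\tau$ and $R_r^{\pm1}\tau$ \emph{candidates} for walls; to exclude every other wall you must bound $d(\tau,g\tau)$ from below for \emph{all} $g\in P\Gamma$ outside this finite set --- not merely for triflections in farther roots (a lattice computation) but for arbitrary products of reflections, a statement about the whole infinite group. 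For a non-cocompact lattice acting on a $26$-real-dimensional space there is no reason to expect so few face-pairings and no feasible method for computing the combinatorics of $D$. The same difficulty undermines the absorption step: your claim that every mirror meeting $D$ is one of the $26$ is unsupported (every mirror $H'$ lies in the bisector of $\tau$ and $R_{H'}^{-1}\tau$, so $\H\cap D\subset\partial D$, but $D\cap\H$ must surject onto all of $\H/P\Gamma$, and there is no argument that the $26$ nearest mirrors accomplish this by themselves), so the full loops you need are not visibly cubes of the $26$ meridians. Finally, realizing the order-$11232$ stabilizer $L_3(3){:}2$ as words in the $26$ meridians is asserted, not done; note that surjectivity of the subgroup generated by the meridians onto $P\Gamma$ is the easy part of the theorem --- the content lies in the kernel $\pi_1(\BB^{13}-\H,\tau)$ of the sequence \eqref{eq-exact-sequence-on-pi-1}, which is exactly where your unproved claims accumulate.

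There is also a subtler problem in the path-subdivision step. A wall $W$ paired by $R_r$ contains the mirror $r^\perp$ as a real-codimension-one subset, so $W-\H$ is disconnected; crossings of $W$ on different sides of $r^\perp$ give elements of $G_\tau$ differing by loops around $r^\perp$, and identifying any particular crossing with the specific path $\mu_{\tau,r}$ (which winds through angle $2\pi/3$ around $r^\perp$ inside a prescribed complex geodesic) needs an argument you do not give. This is the kind of analysis the paper outsources to Theorem~1.4 of \cite{AB-braidlike}, which is proved at a cusp $\rho$ rather than at $\tau$ precisely because the geometry there --- the Leech lattice and the Heisenberg group of translations \eqref{eq-definition-of-translation} --- is computable. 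The paper's route (infinitely many Leech meridians at $\rho$ generate; reduce to $130$; reduce to the $26$ point- and line-meridians at $\rho$; transport the basepoint along $\geodesic{\rho\tau}$) bypasses the Dirichlet domain entirely. Your strategy is not absurd in outline, but as written its two central claims are unproved and almost certainly out of reach.
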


We announced this in \cite{AB-braidlike}, and our starting point for the proof is
theorem~1.5 of that paper.  That result gives a specific infinite
generating set for $G_\rho$, which is this same orbifold fundamental
group, but based at a cusp $\rho\in\partial\BB^{13}$ of $P\Gamma$.
This generating set consists of the meridians based at $\rho$ and
corresponding to the (infinitely many) mirrors which come closest to
$\rho$.  (See section~\ref{sec-paths-and-homotopies} for what we mean by taking the basepoint
at a cusp, by the meridians based there, and what it means for a
mirror to come closest to $\rho$.  There are no surprises, but some
care is needed.)

Given this starting point, our first step is to exhibit a finite
subset of these meridians, which is a generating set.  This occupies
section~\ref{sec-finitely-many-generators-based-at-a-cusp}, and the key argument concerns generators for the
$P\Gamma$-stabilizer of~$\rho$.  Our second step is show that the
meridians corresponding to the point- and line-roots, but based at
$\rho$ rather than $\tau$, are also a generating set.  This is
section~\ref{sec-26-generators-based-at-a-cusp}, and the method is to show that the subgroup of
$G_\rho$ they generate contains all the generators from
section~\ref{sec-finitely-many-generators-based-at-a-cusp}.  Finally, in section~\ref{sec-change-of-basepoint} we show that moving the
basepoint from $\rho$ to $\tau$, along the geodesic
$\geodesic{\rho\tau}$, identifies the point- and line-meridians based
at $\rho$ with those based at $\tau$ in the obvious way.
This implies theorem~\ref{t-26-meridians-based-at-tau-generate}.
At heart,
all of our arguments involve concrete homotopies between various paths
in $\BB^{13}-\H$.  Besides setting up our general definition of
meridians, section~\ref{sec-paths-and-homotopies} contains several theorems saying that such
homotopies exist, provided that certain totally geodesic triangles in
$\BB^{13}$ miss~$\H$.  The paper rests on the verification of this
property for a total of $10$ triangles, in the appendices.

Part of
this verification relies on computer calculation; we also used the
computer to verify the paper's many hand-calculations involving
vectors in~$\C^{14}$.  These calculations are involved enough that
a reader skimming over them might 
imagine that the main theorem is a numerical
accident.  In fact, behind most of these calculations lurk special properties of the Leech lattice,
such as inequalities that are exactly what is needed to complete a proof.
So we will emphasize these properties when they arise; they lend the 
calculations a certain sense of inevitability.

The authors are grateful to the RIMS (Kyoto U.) for its hospitality during
part of this work.

\section{Background, conventions, notation}
\label{sec-background-conventions-notation}

\subsection{Eisenstein lattices}
\label{t-def-eisenstein-lattice}
Let $\omega = e^{2 \pi i/3}$ and $\theta = \omega - \wbar =
\sqrt{-3}$. Let $\E$ be the ring $\Z[\omega]$ of Eisenstein integers.
An Eisenstein lattice $K$ means an hermitian $\E$-lattice, i.e., a
free $\E$-module with an hermitian form $\ip{\;}{\;} : K \times K \to
\Q(\w)$, linear in the first variable and antilinear in the second.
We abbreviate $K\tensor_\E\C$ to $K\tensor\C$.  If $K$ is
nondegenerate then its dual lattice is defined as $K^* = \lbrace
x \in K \otimes \C \colon \ip{x}{k} \in \E \text{\; for all \;} k \in
K \rbrace$.  The norm $v^2$ of $v \in K$ means $\ip{v}{v}$.  If $X$ is
a subset of a lattice then we write $X^\perp$ for the
set of lattice vectors orthogonal to it.  If $x,y,\dots$ lie in an
$\E$-lattice, then $\spanof{x,y,\dots}$ means their $\E$-span.

In the appendices, starting with lemma~\ref{lem-the-center}, the Eisenstein integer
$\psi=1-3\wbar$ plays an important role.

\subsection{Complex hyperbolic space}
\label{subsec-complex-hyperbolic-space}
We call an Eisenstein lattice $K$ Lorentzian if it has signature
$(n,1)$.  In that case we let $\BB(K)\sset P(K\tensor\C)$ denote the set of complex
lines of negative norm in $K \otimes\C$.  Topologically it is a
complex ball of dimension $n$.  It has a
natural metric called the Bergman metric, and is sometimes called
complex hyperbolic space. If it is clear what lattice we mean then we
sometimes write $\BB^n$ in place of $\BB(K)$.   In particular, $\BB^{13}$ will
always mean $\BB(L)$ for the lattice $L$ defined in section~\ref{t-P2-F3-model-of-L} below.  An inclusion
of Lorentzian lattices induces an inclusion of their complex
balls.  If this inclusion has codimension~$1$ then we
call the smaller ball a hyperplane.

Any vector $v$ of negative norm in $K\tensor\C$ determines a point $\C
v$ in $\BB(K)$. Often we use the same symbol for the point and the
vector.  The distance between two points of $\BB(K)$ is given by
\begin{equation}
\label{eq-complex-hyperbolic-metric}
  d(v, w) = \cosh^{-1} \sqrt{  \frac{\abs{   \ip{v}{w}}^2}{v^2\, w^2} } 
\end{equation}
where $v$ and $w$ are two negative norm vectors of $K\tensor\C$.  Similarly, if
$v,s\in K\tensor\C$ have negative and positive norm respectively, then  
\begin{equation}
\label{eq-distance-point-to-hyperplane}
  d\bigl(v, \BB(s^\perp)\bigr) = \sinh^{-1} \sqrt{ -\frac{\abs{   \ip{v}{s}}^2}{v^2\, s^2} } 
\end{equation}
When $s$ is a root (see section~\ref{subsec-roots-in-general} below) we usually write $s^\perp$ in place of
$\BB(s^\perp)$ when it is clear that we mean this rather than the
orthogonal complement in the lattice.  Formulas \eqref{eq-complex-hyperbolic-metric} and \eqref{eq-distance-point-to-hyperplane}
differ 
from those in
\cite{Goldman} by an unimportant factor of~$2$.

The boundary $\partial \BB(K)$ of $\BB(K)$ in $P(K\tensor\C)$ is a
real $(2n-1)$-sphere.  Its points are the projectivizations of null
vectors, meaning non-zero vectors in $K\tensor\C$ of norm~$0$.  Given
such a vector $\rho$, we define a sort of distance-to-$\rho$ function
on $\BB(K)$, called the height:
\begin{equation}
  \label{eq-definition-of-height}
  \height_\rho(v)=-\frac{|\ip{v}{\rho}|^2}{v^2}
\end{equation}
This is invariant under scaling $v$, so it descends to a function on
$\BB(K)$.  We will say that one point of $\BB(K)$ is closer to $\rho$
than another point is, if its value of $\height_\rho$ is smaller.
The horosphere centered at $\rho$, of height $h$, means the
set of $v\in\BB(K)$ with $\height_\rho(v)=h$.  We define open and
closed horoballs the same way, replacing $=$ by~$<$ and~$\leq$.
Scaling $\rho$ by $\lambda\in\C$  scales the function $\height_\rho$
by $|\lambda|^2$.  But for us
there will be a canonical normalization, because we will always take
$\rho$ to be a primitive lattice vector.  Under this condition, the
only allowed scaling of $\rho$ is by sixth roots of unity, which does
not affect the height function.  We call the point of $\partial\BB(K)$
represented by a primitive lattice vector a cusp, and for
convenience we also call that vector itself a cusp.
We write $\overline{\BB(K)}$ for the topological closure
$\BB(K)\cup\partial\BB(K)$ of $\BB(K)$ in $P(K\tensor\C)$.

\subsection{Geodesics and totally geodesic triangles}
\label{subsec-geodesics-and-geodesic-triangles}
Continuing to take $K$ as above, suppose $v,w\in K\tensor\C$ represent
points of $\overline{\BB(K)}$.  Then the geodesic joining them may be
described as follows.  First we rescale one or both of $v,w$ so that
$\ip{v}{w}\in(-\infty,0]$; this inner product will be negative
unless $v$ and $w$ both represent the same point of $\partial\BB(K)$.  After this rescaling, the
geodesic is the image in projective space of the real line segment in
$K\tensor\C$ joining the vectors.  If we have in mind a third point of
$\overline{\BB(K)}$, represented by $x\in K\tensor\C$, then there may
or may not be a totally geodesic surface in $\overline{\BB(K)}$ whose closure
contains all three points.  If there is one then we write $\triangle v
w x$ for their
convex hull.
There are two cases in which we use this notation.  One is when
$v,w,x$ all lie in some complex $2$-space in $K\tensor\C$.  Then they
lie in (the closure of) some $\BB^1$.  In this case we call
$\triangle v w x$ a complex triangle.

The other case occurs when it is possible to scale $v,w,x$ so that
their pairwise inner products lie in $(-\infty,0]$.  Suppose this has
  been accomplished.  The image in $\BB(K)$ of the set of
  negative-norm vectors in the real span of $v,w,x$ is a totally
  geodesic copy of the real hyperbolic plane.  In this case the
  projectivization of the convex hull of $v$, $w$ and $x$ in
  $K\tensor\C$ is the convex hull of their images in
  $\overline{\BB(K)}$.  In this case we call $\triangle v w x$ a
  totally real triangle, and don't usually distinguish between the
  triangle in $K\tensor\C$ and its image in $\overline{\BB(K)}$.

    For us this situation occurs as follows.  Let $b$ be a point of
    $\overline{\BB(K)}$, $H$ be a hyperplane in $\BB(K)$, $p$ be the
    point of $H$ closest to $b$, and $q$ be another point of $H$.
    Using the same letters for vectors representing these points, and
    $s$ for a positive-norm vector orthogonal to $H$, we first scale
    $b$ so that its inner product with $s$ is real, and then scale $q$
    so that that its inner product with $b$ lies in $(-\infty,0]$.
      Then $p=b-\ip{b}{s}s/s^2$ has inner product
      $b^2-\ip{b}{s}^2/s^2\leq0$ with $b$, and by $s\perp q$ we have
      $\ip{p}{q}=\ip{b}{q}\leq0$.  So $\triangle b p q$ is a totally
      real triangle.

\subsection{The $P^2\F_3$ model of the lattice $L$}
\label{t-P2-F3-model-of-L}
Now we set up an explicit model for the $\E$-lattice $L$ that governs
everything in this paper, called the $P^2\F_3$ model.
It was implicit in \cite{Basak-bimonster-1} (eq. 25 in proof of
prop. 6.1) and was defined explicitly in \cite{Allcock-Y555}.
Most of our
computations will be done in this coordinate system.  See section~\ref{subsec-Leech-model-of-L} for
an alternate model of $L$, the Leech model.

We write elements of $\C^{14}$ as vectors $x =
(x_0; x_1, \dotsb, x_{13} )$ and use the standard hermitian form
of signature $(13,1)$ on $\C^{14}$, namely
\begin{equation*}
\ip{x}{y}=-x_0 \bar{y}_0+x_1\bar{y}_1+\dots+x_{13}\bar{y}_{13}.
\end{equation*}
We index the last $13$ coordinates by the points of $P^2\F_3$.  $L$
consists of all vectors $x\in\E^{14}$ such that $x_0 \cong
x_1+\dots+x_{13}$ mod $\theta$ and that $(x_1,\dots,x_{13})$, modulo
$\theta$, is an element of the ``line code'', meaning the
$7$-dimensional subspace of $\F_3^{13}$ spanned by the characteristic
functions of the lines of $P^2\F_3$.  The elements of the line code
are tabulated in tables 2 and~3 of \cite{Allcock-Y555};  the explicit list is not
needed in this paper.  We write $\Gamma$ for $\Aut L$.  It contains
the simple group $L_3(3)=\PGL_3(\F_3)$, acting by permuting the
points of $P^2\F_3$ in the obvious way.  It also contains the group
$3^{13}=(\Z/3)^{13}$, acting by multiplying the last~$13$ coordinates by cube
roots of unity.

An important property of $L$ is that it equals $\theta\cdot L^*$.  The
proof amounts to checking that all inner products are divisible
by~$\theta$ and that $\det L=-3^7$.  The first part is easy, using
the point- and line-roots from section~\ref{subsec-point-roots-etc}.
The second part follows from the fact that $L/(\theta\E)^{14}\iso\F_3^7$.

\subsection{Roots, mirrors and the hyperplane arrangement $\H$}
\label{subsec-roots-in-general}
A root of $L$ means a lattice vector of norm~$3$.  What makes
roots special is that their ``triflections''
preserve~$L$.  That is, supposing
$s$ is a root, we define $R_s$
as the automorphism of $L$ that fixes $s^{\bot}$ pointwise and
multiplies $s$ by the cube root of unity $\omega$.  A formula is
\begin{equation}
  \label{eq-formula-for-omega-reflection}
R_s:  x\mapsto x-(1-\w)\frac{\ip{x}{s}}{s^2}s
\end{equation}
One can show that this isometry of $L\tensor\C$ preserves $L$.  (The
key  is that all inner products in $L$ are divisible by
$\theta$.)  It is called the
$\w$-reflection in~$s$.  Replacing $\w$ by $\wbar$ gives the
$\wbar$-reflection, which is also $R_s^{-1}$.  Both of these
isometries are complex reflections of order~$3$, sometimes called
triflections.  It is known that $\Gamma$ is generated by the
triflections in the roots of $L$; see \cite{Basak-bimonster-1} or \cite{Allcock-Y555}.

As the fixed-point set of the reflection $R_s$, the hyperplane
$\BB(s^\perp)\sset\BB(L)$ is called the mirror of~$s$.  In this paper
we use the word mirror exclusively for hyperplanes orthogonal to roots.
The union of all
mirrors is called $\H$.  This hyperplane arrangement is central
to the paper, since our goal is to study the orbifold fundamental
group of $\bigl(\BB^{13}-\H\bigr)/P\Gamma$.

\subsection{Point-roots, line-roots, $13$-points and $26$-points}
\label{subsec-point-roots-etc}
It is possible to number the points and lines of $P^2\F_3$ by the numbers
$1,\dots,13$, such that the $j$th line is the set of points
$\{j,j+1,j+3,j+9\}$.  Here the indices should be read modulo~$13$.  We
adopt this numbering for the following important roots of $L$.  First,
the $13$ point-roots $p_i$ are the vectors
$p_1=(0;\theta,0,\dots,0),\dots,p_{13}=(0;0,\dots,0,\theta)$.  And
second, for $j=1,\dots,13$, the line-root $l_j$ is the vector of the
form $(1;1^4,0^9)$ with $1$'s along the $j$th line of $P^2\F_3$.
Explicitly,
\begin{center}
\begin{tabular}{lll}
$l_{1}=(1;1101000001000)$&$l_{8\phantom{0}}=(1;0001000110100)$\\
$l_{2}=(1;0110100000100)$&$l_{9\phantom{0}}=(1;0000100011010)$\\
$l_{3}=(1;0011010000010)$&$l_{10}=(1;0000010001101)$\\
$l_{4}=(1;0001101000001)$&$l_{11}=(1;1000001000110)$\\
$l_{5}=(1;1000110100000)$&$l_{12}=(1;0100000100011)$\\
$l_{6}=(1;0100011010000)$&$l_{13}=(1;1010000010001)$\\
$l_{7}=(1;0010001101000)$
\end{tabular}
\end{center}
We speak of a point-root $p_i$ and a line-root $l_j$ as being incident
when the corresponding point and line of $P^2\F_3$ are.  For example,,
$\ip{p_i}{l_j}=\theta$ or $0$ according to whether or not $p_i$ and
$l_j$ are incident.
Also, distinct
point-roots are orthogonal, as are distinct line-roots.

There is some obvious symmetry preserving this configuration of roots:
$L_3(3)$ sends point-roots to point-roots and line-roots to
line-roots.  But there is additional symmetry.  From a correlation of
$P^2\F_3$, i.e., an incidence-preserving exchange of points with
lines, one can construct an isometry of $L$ that sends the point-roots
to line-roots and the line-roots to negated point-roots.  (We give
such an isometry explicitly in the proof of
lemma~\ref{lem-step-4-line-case}.)  Together with scalars and
$L_3(3)$, this generates a subgroup $\bigl(6\times L_3(3)\bigr)\cdot2$
of $\Gamma$, whose image in $P\Gamma$ is $L_3(3):2$.  Here we are
using ATLAS notation \cite{ATLAS}: a group has ``structure $A.B$'' if
it has a normal subgroup isomorphic to a group $A$, with quotient
isomorphic to a group $B$.  If the extension splits then one can
indicate this by writing $A:B$ instead.  If it does not split then one
can write $A\cdot B$.  We are also using another ATLAS notation:
writing $n$ to indicate a cyclic group of order~$n$.

We define the point- and line-mirrors to be the mirrors of the point-
and line-roots.  The $13$ point-mirrors are mutually orthogonal and
intersect at a single point of $\BB^{13}$, represented by
\begin{equation*}
p_\infty=(\thetabar;0^{13}).
\end{equation*}
For lack of a better name we call it
a $13$-point to indicate the $13$ mirrors passing through it.  (It is
easy to see that there are no mirrors through it except the
point-mirrors.)  We apply the same term to its $\Gamma$-translates,
such as the intersection point of the $13$ line-mirrors, represented by
\begin{equation*}
l_\infty=(4;1^{13}).
\end{equation*}
 We summarize the inner product information about
$p_\infty,p_1,\dots,p_{13},l_\infty,l_1,\dots,l_{13}$ as follows.
First, all have norm~$3$ except for $p_\infty$ and $l_\infty$, which
have norm~$-3$.  Second, if $i\neq j$ then $p_i\perp p_j$ and $l_i\perp
l_j$.  Third, $\ip{p_\infty}{l_\infty}=4\theta$.  Finally, if $i$ and
$j$ are not both $\infty$, then $\ip{p_i}{l_j}$ is $\theta$ or $0$
according to whether or not $p_i$ and $l_j$ are incident.  Here we are
regarding $p_\infty$ as ``incident'' to every $l_j$, and $l_\infty$ as
``incident'' to every $p_i$.

We define the basepoint $\tau$ used
in theorem~\ref{t-26-meridians-based-at-tau-generate} as the midpoint of
the segment joining $p_\infty$ and $l_\infty$.  It is represented by
the vector 
\begin{equation*}
\tau = l_{\infty} + i p_{\infty}=(4+\sqrt3;1^{13})
\end{equation*}
 of norm $-6-8\sqrt3$, and is the unique fixed point of $L_3(3){:}2\sset
P\Gamma$.  It is known that the mirrors closest to $\tau$ are exactly
the $26$ point- and line-mirrors.  (See
\cite[prop.~1.2]{Basak-bimonster-1}, where $\tau$ was called
$\bar{\rho}$, or lemma~\ref{lem-mirrors-near-the-26-point} in this paper.)
Two consequences of this are that no mirrors pass through $\tau$, and
that $L_3(3){:}2$ is the full $\PG$-stabilizer of $\tau$.  For lack of
a better name, we call $\tau$ a $26$-point to indicate these $26$
nearest mirrors.
We use the same language for its
$\Gamma$-translates.

\subsection{The Leech model of $L$}
\label{subsec-Leech-model-of-L}
By the Leech lattice we mean what might better be called the complex
Leech lattice $\Leech$.  It is a $12$-dimensional positive definite
$\E$-lattice, described in detail in \cite{Wilson}.  At the smallest
scale at which it is integral as an $\E$-lattice, it has minimal
norm~$6$ and satisfies $\Leech=\theta\cdot\Leech^*$ and
$\det\Leech=3^6$.  Also see \cite{Wilson} for a thorough study of
$\Aut\Leech$, which is the universal central extension $6\cdot\Suz$ of
Suzuki's sporadic finite simple group~$\Suz$.  It will be important
for us that $6\cdot\Suz$ acts transitively on the lattice vectors of
norm~$6$ resp.\ $9$.  We will postpone an explicit description of
$\Lambda$ until the second half of
appendix~\ref{app-how-real-triangles-meet-mirrors}, because we won't
need it until then.

The Leech model of $L$ is useful when one has in mind a Leech cusp
(see below).  It is:
$L\iso\Leech\oplus\bigl(\begin{smallmatrix}0&\thetabar\\\theta&0\end{smallmatrix}\bigr)$.
  This means that we write lattice vectors as $(x;y,z)$ where
  $x\in\Leech$ and $y,z\in\E$, with the inner product given by
\begin{equation}
  \label{eq-inner-product-in-Leech-model}
  \Bigip{(x;y,z)}{(x';y',z')}
  =\ip{x}{x'}+
  \begin{pmatrix}
    y&z
  \end{pmatrix}
  \begin{pmatrix}
    0&\thetabar\\\theta&0
  \end{pmatrix}
  \begin{pmatrix}
    \ybar'\\\zbar'
  \end{pmatrix}
\end{equation}
This model of $L$ was introduced in \cite{Allcock-Inventiones}, and
proven to be isometric to the $P^2\F_3$ model in \cite[Lemma~2.6]{Basak-bimonster-1}.  The Leech
cusp $\rho=(0;0,1)$ is distinguished, and the mirrors nearest it can
be conveniently parameterized as explained in section~\ref{subsec-Leech-cusps-and-Leech-roots}.

\subsection{Leech cusps and Leech roots}
\label{subsec-Leech-cusps-and-Leech-roots}
Earlier we declared that if $v\in L$ is a primitive null vector then
we call it (or the point of $\partial\BB^{13}$ it represents) a cusp.
We refine this to ``Leech cusp'' in the special case that
$v^\perp/\gend{v}$ is isometric to the Leech lattice.  An example
which will play a key role in this paper is the primitive null vector
$\rho$ defined as $(0;0,1)$ in the Leech model.  For background 
we
remark that there are five $\Gamma$-orbits on primitive null vectors
$v\in L$, corresponding to the five possibilities for the isometry
class of $v^\perp/\gend{v}$, which are the ``Eisenstein Niemeier
lattices''.  (See
Lemma~2 and Theorem~4 of \cite{Allcock-Y555}.)
In the $P^2\F_3$ model it is harder to
find a Leech cusp, but an example is $\rho=(3\w-1;-1,\dots,-1)$.
(Proof sketch following the proof of theorem~1 in \cite{Allcock-Y555}: $\rho^\perp/\gend{\rho}$ is an
Eisenstein Niemeier lattice, and its isometry group obviously contains
$L_3(3)$. By \cite[Thm.~4]{Allcock-Y555} the Leech lattice is the only Eisenstein
Niemeier lattice whose symmetry group contains~$L_3(3)$.)  We hope
that the simultaneous use of~$\rho$ for $(0;0,1)$ in the Leech model
and $(3\w-1;-1,\dots,-1)$ in the $P^2\F_3$ model is helpful rather
than confusing: we think of the models as two ways to describe a
single lattice.  In lemma~\ref{lem-point-and-line-roots-in-Leech-model} we
complete the identification of the two models by explicitly writing
down the point- and line-roots in the Leech model.

The following description of
vectors $s\in L\tensor\C$ not orthogonal to $\rho$ is very useful: every such $s$
can be written uniquely in the form
\begin{equation}
\label{eq-definition-of-vector-s}
s
=
\biggl(
\sigma
;
m
,
\frac{\theta}{\mbar}\Bigl(\frac{\sigma^2-N}{6}+\nu\Bigr)
\biggr)
\end{equation}
where $\sigma\in\Leech\tensor\C$, $m\in\C-\{0\}$, $N$ is the norm
$s^2$, and $\nu$ is purely imaginary.  Restricting the first
component to $\Leech$ and the others to $\E$ gives the elements of
$L-\rho^\perp$.  Further restricting $N$ to~$3$ gives the roots of
$L$, and finally restricting $m$ to~$1$ gives the Leech roots (see
equation~\eqref{eq-Leech-roots-in-Leech-model} below).
We remark that scaling $s$ by $\lambda\in\C-\{0\}$ scales $\sigma$ and
$m$ by $\lambda$, and $N$ and $\nu$ by $|\lambda|^2$.

The elaborate form of the last coordinate
in \eqref{eq-definition-of-vector-s}
allows the following interpretation of $\ip{s}{s'}$, where $s$ is from
\eqref{eq-definition-of-vector-s} and similarly for $s'$:
\begin{equation}
\label{eq-general-inner-product-formula}
\begin{split}
\ip{s}{s'}
=
m\mbar'
\biggl[
\frac{1}{2}
\biggl(
\frac{N'}{|m'|^2}+&\frac{N}{|m|^2}
-
\Bigl(\frac{\sigma}{m}-\frac{\sigma'}{m'}\Bigr)^2
\biggl)
\\
&+
\Im\Bigip{\frac{\sigma}{m}}{\frac{\sigma'}{m'}}
+
3\Bigl(\frac{\nu'}{|m'|^2}-\frac{\nu}{|m|^2}\Bigr)
\biggr].
\end{split}
\end{equation}
The content of this is that
$\ip{s}{s'}$ is governed by the relative positions of $\sigma/m$ and
$\sigma'/m'$ in $\Leech\tensor\C$, with $\nu$ and $\nu'$ influencing
only the imaginary part of the bracketed term.
(\emph{Caution:} we are using the convention that the imaginary part
of a complex number is imaginary; for example $\Im\theta$ is $\theta$
rather than $\sqrt3$.)  
One proves the formula by writing out $\ip{s}{s'}$,
completing the square and patiently rearranging.

Because the Leech lattice has no vectors of norm${}<6$, a Leech cusp
$\rho\in\partial\BB^{13}$ lies in (the closures of) no mirrors.  The
mirrors which come closest to it are called first-shell mirrors, and
the second- and third-shell mirrors are defined similarly.  We use the
same language for their corresponding roots.  When the Leech cusp is
$\rho$, then $s$ is a first-, second- or third-shell root if
$|\ip{\rho}{s}|^2$ is $3$, $9$ or~$12$ respectively.  It will be useful
to fix a scaling of the first-shell roots: a Leech root (with respect
to~$\rho$) means a root $s$ with $\ip{\rho}{s}=\theta$, and 
we call its mirror a Leech mirror.

In the Leech model, the Leech roots (with respect to $\rho$) are the
vectors
\begin{equation}
  \label{eq-Leech-roots-in-Leech-model}
s
=
\biggl(
\sigma
;
1
,
\theta\Bigl(\frac{\sigma^2-3}{6}+\nu\Bigr)
\biggr)
\end{equation}
with $\sigma\in\Leech$ and
$\nu\in\frac{1}{\theta}\Z+\frac{1}{2\theta}$ if $\sigma^2$ is
divisible by~$6$, or $\nu\in\frac{1}{\theta}\Z$ otherwise.  This is a
specialization of \eqref{eq-definition-of-vector-s}. 
We note that if $(\sigma; 1, \alpha)$ is a Leech root,
then the Leech roots of the form $(\sigma; 1, *)$ are
$(\sigma; 1, \alpha + n)$ as $n$ runs over $\Z$.
In the $P^2\F_3$ model there is no simple
formula for the Leech roots, but (with respect to $\rho$) the
point-roots are examples of Leech roots, and the line-roots are
examples of second-shell roots.
For later use, we record the inner products
\begin{equation*}
\ip{\rho}{p_i} = \theta \text{\; and \;} 
\ip{\rho}{l_i} = 3 \bar{\omega} \text{\; for \;} i = 1, \dotsb, 13.
\end{equation*}

In section~\ref{sec-paths-and-homotopies} we will choose a closed horoball $A$ centered at
$\rho$ and disjoint from $\H$, and choose a point $a\in A$.  We will
speak of ``meridians'' to refer to certain elements $M_{a,A,H}$ of the
orbifold fundamental group (see below) of
$(\BB^{13}-\H)/P\Gamma$, or their underlying paths $\mu_{a,A,H}$.
The definition of a meridian involves a choice of a mirror~$H$;
we will call it a Leech meridian if $H$ is a Leech
mirror.

\subsection{Meridians and the orbifold fundamental group}
\label{subsec-orbifold-fundamental-group}
The orbifold fundamental group $G_b=\piorb\bigl((\BB^{13}-\H)/\PG,b\bigr)$
depends on the choice of a base point $b\in\BB^{13}-\H$.  
It
means the following set of equivalence classes of
pairs 
$(\gamma,g)$, where $g\in\PG$ and $\gamma$ is a path in $\BB^{13}-\H$
from $b$ to $g(b)$.  One such pair is equivalent to another
one $(\gamma',g')$ if $g=g'$ and $\gamma$ and $\gamma'$ are homotopic
in $\BB^{13}-\H$, rel endpoints.   
The group operation
is 
$(\gamma,g)\cdot(\gamma',g')=(\hbox{$\gamma$ followed by
  $g\circ\gamma'$},g g')$.
Inversion is given by $(\gamma,g)^{-1}=(g^{-1}\circ\reverse(\gamma),g^{-1})$.
Projection of $(\gamma,g)$ to $g$ defines a homomorphism $G_b\to\PG$.
It is surjective because $\BB^{13}-\H$ is path-connected.
The kernel is obviously $\pi_1(\BB^{13}-\H,b)$, yielding the exact sequence
\begin{equation}
\label{eq-exact-sequence-on-pi-1}
1\to\pi_1\bigl(\BB^{13}-\H,b\bigr)\to G_b\to\PG\to1
\end{equation}

The elements of $G_b$ we will use are the ``meridians'' $M_{b,H}$ and
$M_{b,q,H}$, where $H$ is a mirror and $q$ is a point of $H$.  These
are defined in section~\ref{sec-paths-and-homotopies}.  Their
underlying paths are written $\mu_{b,H}$ and $\mu_{b,q,H}$ and are
also called meridians.  The meridians associated to the point- and
line-mirrors are called the point- and line-meridians.  The main
theorem of this paper is that $G_\tau$ is generated by these $26$
meridians, when the basepoint $\tau$ is the $26$-point from
section~\ref{subsec-point-roots-etc}.  There are also ``fat
basepoint'' versions of meridians: $M_{a,A,H}$, $M_{a,A,q,H}$,
$\mu_{a,A,H}$ and $\mu_{a,A,q,H}$ with~$A$ a horoball centered at a
Leech cusp, small enough to miss~$\H$, and $a$ is a point of~$A$.
These are also defined in section~\ref{sec-paths-and-homotopies}.  Of
the meridians of this sort, we mostly use those with $H$ a Leech
mirror, which we call Leech meridians.  We also speak of second-shell
meridians, with the corresponding meaning.

In any $\mu_{\cdots}$ or $M_{\cdots}$, we write just $s$ instead of
$s^\perp$ in the subscript when the hyperplane is the mirror $s^\perp$
of a root.  For example, $\mu_{b,s}$ rather than $\mu_{b,s^\perp}$.

\section{Meridians and homotopies between them}
\label{sec-paths-and-homotopies}

\noindent
At its heart this paper consists of explicit manipulations of paths in
$\BB^{13}-\H$.  In this section we describe the most important paths
and manipulations.  
For concreteness we work only with our particular arrangement $\H$ in
$\BB^{13}$, but it will be obvious that everything works equally well
in the generality of \cite{AB-braidlike}.  (In particular, 
for the mirror arrangements of finite complex reflection groups.)
The ideas are simple, and we hope the
following summary will enable the reader to skip the technical
details.

Our basic path is called a ``meridian'' and written
$\mu_{b,q,H}$.  Here $b$ is a point of $\BB^{13}-\H$,
$H$ is a mirror, and $q$ is a point of $H$.  The path begins
at $b$ and travels along a geodesic towards $q$, stopping near it.
Next it travels directly toward $H$, stopping very near it.  Then it
travels $2\pi/3$ of the way around $H$, on a positively oriented
circular arc.  Finally it ``returns'' by retracing the paths in the
first two steps of the construction (or rather their images under the
$\w$-reflection in $H$).  See figure~\ref{fig-examples-of-meridians-new}.  The term ``meridian'' is
from knot theory: we think of the image of $\H$ in 
$\BB^{13}/P\Gamma$ as a sort of knot.  ``Meridian''  indicates
that (the image of) $\mu_{b,q,H}$ is a loop, freely homotopic to a
circle around (the image of) $H$.

\begin{figure}
  \def\declarecenter#1#2#3#4{%
    \coordinate (middle1) at ($(#1)!.5!(#2)$);%
    \coordinate (middle2) at ($(#2)!.5!(#3)$);%
    \coordinate (aux1) at ($(middle1)!1!90:(#2)$);%
    \coordinate (aux2) at ($(middle2)!1!90:(#3)$);%
    \coordinate (#4) at ($(intersection of middle1--aux1 and middle2--aux2)$);%
  }
  \begin{center}
    \begin{tikzpicture}[x=1.45cm,y=1.45cm]
      \def\URx{3.3}
      \def\URy{.3}
      \def\LLx{-2.9}
      \def\LLy{-3.8}
      \clip(\LLx,\LLy)rectangle(\URx,\URy);
      \def\similarityfactor{2.5}
      \def\qTOpprimeDIST{.9}
      \def\pTOdANGLE{-155}
      \def\pTOdDIST{.3}
      \def\pprimeTOcprimeDIST{1.2}
      \coordinate (q) at (0,0);
      \path (q)++(0,-\qTOpprimeDIST) coordinate (pprime);
      \path (q)++(0,-\qTOpprimeDIST*\similarityfactor) coordinate (p);
      \path (p)++(\pTOdANGLE:\pTOdDIST) coordinate (d);
      \path (pprime)++(\pTOdANGLE:\pTOdDIST) coordinate (dprime);
      \path (pprime)++(\pTOdANGLE:\pprimeTOcprimeDIST) coordinate (cprime);
      \path (p)++(\pTOdANGLE:\pprimeTOcprimeDIST*\similarityfactor) coordinate (b);
      \path (p)++(180-\pTOdANGLE:\pTOdDIST) coordinate (Rd);
      \path (pprime)++(180-\pTOdANGLE:\pTOdDIST) coordinate (Rdprime);
      \path (pprime)++(180-\pTOdANGLE:\pprimeTOcprimeDIST) coordinate (Rcprime);
      \path (p)++(180-\pTOdANGLE:\pprimeTOcprimeDIST*\similarityfactor)coordinate(Rb);
      \path (p)++(0,-1) coordinate (bottom);
      \path (q)++(0,.3) coordinate (top);
      \draw[thick] (top)--(bottom);
      \draw[white,line width=2pt]
      (p)++(\pTOdANGLE:\pTOdDIST)arc(\pTOdANGLE:-180-\pTOdANGLE:\pTOdDIST);
      \draw(p)++(\pTOdANGLE:\pTOdDIST)arc(\pTOdANGLE:-180-\pTOdANGLE:\pTOdDIST);
      \draw[white,line width=2pt]
      (pprime)++(\pTOdANGLE:\pTOdDIST)arc(\pTOdANGLE:-180-\pTOdANGLE:\pTOdDIST);
      \draw(pprime)++(\pTOdANGLE:\pTOdDIST)arc(\pTOdANGLE:-180-\pTOdANGLE:\pTOdDIST);
      \draw (d)--(b)--(cprime)--(dprime);
      \draw (Rd)--(Rb)--(Rcprime)--(Rdprime);
      \def\dotradius{.03}
      \foreach\x in{p,q,pprime} \fill (\x) circle (\dotradius);
      \foreach\x in{d,cprime,dprime,b} \fill (\x) circle (\dotradius);
      \foreach\x in{Rd,Rcprime,Rdprime,Rb}\fill(\x) circle (\dotradius);
      \coordinate  (tip) at ($(b)!.4!(d)$);
      \draw[-latex] (b)--(tip);
      \coordinate (tip) at ($(b)!.7!(cprime)$);
      \draw[-latex] (b)--(tip);
      \draw [dotted] (p)--(b);
      \draw [dotted] (p)--(Rb);
      \draw [dotted] (q)--(b);
      \draw [dotted] (q)--(Rb);
      \draw [dotted] (pprime)--(dprime);
      \draw [dotted] (pprime)--(Rdprime);
      \def\divRwhite{.06}
      \def\divR{.065}
      \coordinate (div) at ($(b)!.7!(d)$);%
      \fill[white] (div) circle (\divRwhite);
      \draw(div)++(\pTOdANGLE:\divR)arc(\pTOdANGLE:180+\pTOdANGLE:\divR);
      \coordinate (Rdiv) at ($(Rb)!.7!(Rd)$);%
      \fill[white] (Rdiv) circle (\divRwhite);
      \draw(Rdiv)++(-\pTOdANGLE:\divR)arc(-\pTOdANGLE:-180-\pTOdANGLE:\divR);
      \coordinate (div) at ($(cprime)!.5!(dprime)$);%
      \fill[white] (div) circle (\divRwhite);
      \draw(div)++(\pTOdANGLE:\divR)arc(\pTOdANGLE:180+\pTOdANGLE:\divR);
      \coordinate (Rdiv) at ($(Rcprime)!.5!(Rdprime)$);%
      \fill[white] (Rdiv) circle (\divRwhite);
      \draw(Rdiv)++(-\pTOdANGLE:\divR)arc(-\pTOdANGLE:-180-\pTOdANGLE:\divR);
      \coordinate (div) at ($(b)!.4!(cprime)$);%
      \fill[white] (div) circle (\divRwhite);
      \draw(div)++(-125:\divR)arc(-125:180-125:\divR);
      \coordinate (div) at ($(Rb)!.4!(Rcprime)$);%
      \fill[white] (div) circle (\divRwhite);
      \draw(div)++(-55:\divR)arc(-55:180-55:\divR);
      \tikzstyle{lab}=[inner sep=0pt, outer sep=2pt]
      \draw(p)node[lab,anchor=south west]{$p$};
      \draw(pprime)node[lab,anchor=south west]{$p'$};
      \draw(q)node[lab,anchor=south west]{$q$};
      \draw(bottom)node[lab,anchor=north]{$H$};
      \draw(b)node[lab,anchor=north east]{$b$};
      \draw(d)node[lab,anchor=north,outer sep=4pt]{$d\,\,\,$};
      \draw(cprime)node[lab,anchor=south east,outer sep=1pt]{$c'$};
      \draw(dprime)node[lab,anchor=south,outer sep=3pt]{$d'$};
      \draw(Rb)node[lab,anchor=north west,outer sep=1pt]{$R(b)$};
      \draw(Rd)node[lab,anchor=south west,outer sep=1pt]{$R(d)$};
      \draw(Rcprime)node[lab,anchor=west,outer sep=3pt]{$R(c')$};
      \draw(Rdprime)node[lab,anchor=south west,outer sep=0pt]{$R(d')$};
    \end{tikzpicture}
  \end{center}
  \caption{The meridians $\mu_{b,q,H}$ (top) and $\mu_{b,H}$ (bottom)
    go left to right from $b$ to $R(b)$.  Here $b$ is the basepoint,
    $H$ is a mirror, $R$ is its $\w$-reflection, $q\in H$, and $p$ is
    the point of $H$ nearest $b$.  The semicircular arcs in the paths
    indicate possible diversions around points of $\H$.}
  \label{fig-examples-of-meridians-new}
\end{figure}

We abbreviate the notation to $\mu_{b,H}$ when $q$ is the point of $H$
nearest $b$.  These are the meridians considered in our earlier paper
\cite{AB-braidlike}.

Lemmas \ref{lem-homotopy-for-moving-the-basepoint} and~\ref{lem-homotopy-for-moving-the-point-on-the-hyperplane-ordinary-basepoint-version} are the main results of this section.
The conceptual content of
lemma~\ref{lem-homotopy-for-moving-the-point-on-the-hyperplane-ordinary-basepoint-version}
is the following.  Consider the first turning point of $\mu_{b,q,H}$,
which we took to be near $q$ (called $c'$ in
figure~\ref{fig-examples-of-meridians-new}).  If we used a
point slightly further from $q$ in place of $c'$, then we would obtain a homotopic
path.  If we gradually moved the turning point all the way back to
$b$, and all the intermediate paths missed $\H$, then we would have a
homotopy from $\mu_{b,q,H}$ to $\mu_{b,H}$.
Lemma~\ref{lem-homotopy-for-moving-the-point-on-the-hyperplane-ordinary-basepoint-version}
shows that this applies if $\triangle b p q$ misses $\H$ except at the
obvious intersection points.  Here $p$ is the point of $H$ closest to
$b$.

The conceptual content of
lemma~\ref{lem-homotopy-for-moving-the-basepoint} is the following.
Consider a second basepoint $b'$, and join $b$ to it by some path
$b_{t\in[0,1]}$ that misses $\H$.  One expects that under reasonable
hypotheses, the family of paths $\mu_{b_t,q,H}$ will form a homotopy
between $\mu_{b,q,H}$ and $\mu_{b',q,H}$.
Lemma~\ref{lem-homotopy-for-moving-the-basepoint} gives sufficient
conditions for this: first, the geodesics $\geodesic{b_t q}$ should
miss $\H$ (except at $q$), and second, $H$ should be orthogonal to all
other mirrors through $q$.

Informally we think of the meridians $\mu_{b,q,H}$ and $\mu_{b,H}$ as
elements of the orbifold fundamental group
$G_b=\piorb\bigl((\BB^{13}-\H)/P\Gamma,b\bigr)$.  But strictly
speaking, an element of this group is an ordered pair (see
section~\ref{subsec-orbifold-fundamental-group}).  So we also say
``meridian'' for the ordered pairs $(\mu_{b,q,H},R_H)$ and
$(\mu_{b,H},R_H)$, where $R_H$ is the $\w$-reflection with mirror~$H$.
We write $M_{b,q,H}$ and $M_{b,H}$ for them.

In our applications the hyperplane is always the mirror of a root
$s$. As mentioned in section~\ref{subsec-orbifold-fundamental-group},
when writing $\mu_{\cdots}$ or $M_{\cdots}$ we will write just $s$ in
the subscript rather than $s^\perp$, when $s$ is a root and the
hyperplane is its mirror.  For example, $\mu_{b,s}$ rather than
$\mu_{b,s^\perp}$.

Our informal statements of lemmas \ref{lem-homotopy-for-moving-the-basepoint} and~\ref{lem-homotopy-for-moving-the-point-on-the-hyperplane-ordinary-basepoint-version} are so natural,
even obvious, that the reader may wonder what else there is to say.
There are two things to fuss over.  First, the basepoint might not be in general
position with respect to the hyperplanes.    So the proper
definition of $\mu_{b,q,H}$ must include detours around any
hyperplanes met by the version of $\mu_{b,q,H}$ described above.
Then it requires work even to show that the
resulting homotopy class is well-defined (lemma~\ref{lem-well-definedness-of-homotopy-class}).  Second, we
would like to use the Leech cusp $\rho$ as a sort of basepoint, even
though $\rho$ is not a point of $\BB^{13}$.  We accomplish this by
using the closed horoball $A$ centered at $\rho$ and disjoint from
$\H$, and a point $a\in A$.
Informally we think of $\rho$ as the basepoint, but $a$ is the
official basepoint.  In the end one can work with the resulting
meridians just like the ones above.  But the notation acquires an
extra subscript: $\mu_{a,A,q,H}$, $\mu_{a,A,H}$, $M_{a,A,q,H}$
and~$M_{a,A,H}$.
Finally, to realize that lemma \ref{lem-homotopy-for-moving-the-basepoint}
is not a mere tautology, 
 notice the role played by the technical assumption:
{\it $H$ is orthogonal to all other mirrors through $q$}.

\medskip
Now we begin the technical content of the section.  For
$b,c\in\overline{\BB^{13}}$ we write $\geodesic{b c}$ for the geodesic
segment from $b$ to $c$.  Now suppose $b,c\notin\H$.  It may happen
that $\geodesic{b c}$ meets $\H$, so we define a perturbation
$\dodge{b c}$ of $\geodesic{b c}$ in the obvious way.  The notation may
be pronounced ``$b$ dodge $c$'' or ``$b$ detour $c$'', and the precise
definition is the following.  We write $\complexgeodesic{b c}$ for the
complex line containing $\geodesic{b c}$.  By the local finiteness of
the mirror arrangement, $\complexgeodesic{b c}\cap\H$ is a discrete
set.  Consider the path got from $\geodesic{b c}$ by using positively
oriented semicircular detours in $\complexgeodesic{b c}$, around the
points of $\geodesic{b c}\cap\H$, in place of the corresponding
segments of $\geodesic{b c}$.  By $\dodge{b c}$ we mean this path,
with the radius of these detours taken to be small enough.  (This
means: small enough for the construction to make sense and the
resulting homotopy class in $\BB^{13}-\H$, rel endpoints, to be
radius-independent.)

Now we define $\mu_{b,H}$ for $b\in\BB^{13}-\H$ and $H$ a hyperplane
of $\H$.  See the bottom path in figure~\ref{fig-examples-of-meridians-new}.  Write $p$ for the point of $H$ nearest $b$ and $R$ for the
$\w$-reflection with mirror $H$.   Choose an open ball $U$ around
$p$, small enough to miss all the mirrors except those through~$p$,
and choose some $d\in U\cap\bigl(\geodesic{b p}-\{p\}\bigr)$.
From $b\notin\H$ we get $d\notin\H$.
Then $\mu_{b,H}$
means $\dodge{b d}$, followed by the positively oriented circular arc
in $\complexgeodesic{b p}$ from $d$ to $R(d)$, centered at $p$,
followed by $R\bigl(\reverse(\dodge{b d})\bigr)$.  It is easy to see
that the resulting path is independent of our choices of $U$ and $d$,
up to homotopy rel endpoints in $\BB^{13}-\H$.
As mentioned above, $M_{b,H}$ means the element $(\mu_{b,H},R)$ of the
orbifold fundamental group, and is also called a meridian.
Although we don't need
it, we mention for background that $M_{b,H}^3$ is the loop in
$\BB^{13}-\H$ that we called $\Loop{b H}$ (``$b$ loop $H$'') in
\cite{AB-braidlike}.  It goes from $b$ to $d$, encircles $H$ in
$\complexgeodesic{b p}$, and then returns to~$b$.

Next we define $\mu_{b,q,H}$, where $q$ is a point of $H$. See the top
path in figure~\ref{fig-examples-of-meridians-new}.  Choose an open
ball $U$ around $q$ that is small enough to miss all the mirrors
except those through~$q$, and choose some point $c'\in
U\cap\bigl(\geodesic{b q}-\{q\}\bigr)$.  From $b\notin\H$ we get
$c'\notin\H$, so $\mu_{c',H}$ is defined. We define $\mu_{b,q,H}$ as
$\dodge{b c'}$ followed by $\mu_{c',H}$ followed by
$R(\reverse(\dodge{b c'}))$.
Figure~\ref{fig-examples-of-meridians-new} also shows primed versions
of the points $p$ and $d$ from the previous paragraph's definition of
$\mu_{c',H}$.  The next lemma shows that $\mu_{b,q,H}$ is well-defined
as a homotopy class.  The tricky part is dealing with possible detours
in the subpath $\mu_{c',H}$.  (For the meridians in this paper, it
happens that no such detours occur, in which case the lemma is
obvious.)  Just as with $M_{b,H}$, we refer to the orbifold
fundamental group element $M_{b,q,H}=(\mu_{b,q,H},R)$ as a meridian.

\begin{lemma}[Well-definedness of $\mu_{b,q,H}$]
  \label{lem-well-definedness-of-homotopy-class}
 The homotopy class of $\mu_{b,q,H}$, rel endpoints, is independent of
 the choices of $U$ and~$c'$.
\end{lemma}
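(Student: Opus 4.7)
The plan is to break the independence into two pieces: independence of the open ball~$U$ (with $c'$ fixed), and independence of~$c'$ (with $U$ shrunk as needed). Any two valid choices $(U_1,c_1')$ and $(U_2,c_2')$ can be related via an intermediate choice $(U_3,c_3')$ in which $U_3\sset U_1\cap U_2$ is a small ball around $q$ and $c_3'$ lies on $\geodesic{bq}$, inside $U_3$ and closer to $q$ than both $c_1'$ and $c_2'$. So it suffices to handle the two basic moves: shrinking $U$ while fixing $c'$, and sliding $c'$ along $\geodesic{bq}$ toward $q$ inside a fixed small $U$.

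Shrinking $U$ is essentially immediate. The homotopy class of the inner meridian $\mu_{c',H}$ was already shown (in the preceding paragraph of the paper) to be independent of the auxiliary open ball used in its definition, so shrinking the outer $U$ changes nothing as long as $c'$ stays inside. The first and third portions, $\dodge{bc'}$ and $R(\reverse(\dodge{bc'}))$, do not involve $U$ at all.

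For the sliding move, fix $U$ small enough to miss all mirrors except those through~$q$, and let $c_t'$ parameterize the subsegment of $\geodesic{bq}-\{q\}$ from $c_1'$ to $c_2'$, with both points in $U$. The plan is to build an explicit homotopy $\mu_t$ between $\mu_{b,q,H}$ defined using $c_1'$ and the one defined using~$c_2'$, by writing $\dodge{bc_2'}$ as $\dodge{bc_1'}$ concatenated with the (possibly dodged) segment from $c_1'$ to $c_2'$; the middle portion $\mu_{c_t',H}$ varies continuously with $t$ because its turning point $p_t'$ (the nearest point of $H$ to $c_t'$) depends continuously on~$c_t'$. The relevant $2$-disk swept out by the homotopy decomposes into (i) a piece in the complex line $\complexgeodesic{bq}$ joining the two copies of $\dodge{bc_t'}$ and their $R$-images, and (ii) the thin ``tube'' swept out by the inner meridians $\mu_{c_t',H}$, which stays inside $U$.

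The main obstacle is to verify that this swept region misses~$\H$. For (i), the trace of the segment $c_1'c_2'$ meets $\H$ only at finitely many points of mirrors through~$q$ (by local finiteness and the choice of~$U$), and each such crossing can be pushed off using the dodge convention. For (ii), the tube stays inside~$U$, so the only mirrors it could meet are those through~$q$; continuity of $p_t'$ and the discreteness of these mirrors allow the detours inside $\mu_{c_t',H}$ to be chosen consistently for $t$ in small intervals, and glued across the finitely many values of $t$ where the detour pattern changes by standard arguments (at each such value, the local picture is a small disk in a $2$-plane meeting a transverse mirror, and the two conventions agree up to a null-homotopic loop). Once this is established, the concatenation of the three portions gives the desired homotopy in $\BB^{13}-\H$, rel endpoints.
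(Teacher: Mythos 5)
Your overall strategy coincides with the paper's: slide $c'$ along $\geodesic{bq}$ toward $q$ inside a fixed small ball, sweep out a surface from the family of inner meridians, and then observe that independence of $U$ follows from independence of $c'$. The gap is in your piece (ii), the tube swept out by the $\mu_{c_t',H}$. You allow the ``detour pattern'' of $\mu_{c_t',H}$ to change at finitely many values of $t$ and assert that at each such value the two conventions ``agree up to a null-homotopic loop.'' Nothing supports that assertion: the discrepancy produced at such a transition is a small loop encircling a mirror inside a transverse complex geodesic, i.e.\ exactly the kind of element (the cube of a meridian) that these constructions are designed to detect, and it is not null-homotopic in $\BB^{13}-\H$ in general. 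If an intersection point of $\geodesic{c_t'p_t'}$ with a mirror really could appear, disappear, or run into an endpoint as $t$ varies, the glued homotopy would fail. So the argument cannot be finished by ``standard arguments'' at transition values; one must show that there are no transitions at all, and that is the actual content of the lemma.

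The paper supplies exactly this geometric input. By the construction in section~\ref{subsec-geodesics-and-geodesic-triangles}, $c_0$, $p_0$ and $q$ span a totally real triangle, and nonpositive curvature puts it inside $U$, so the only mirrors meeting it pass through $q$ and each meets the triangle in a geodesic segment running from the vertex $q$ to the opposite edge $\geodesic{c_0p_0}$. Restricted to the quadrilateral swept by the $\geodesic{c_tp_t}$, each such mirror therefore contributes either nothing or a single segment $\geodesic{u_0u_1}$ crossing every $\geodesic{c_tp_t}$ exactly once, with $u_t$ varying continuously in $t$; and choosing $\e>0$ so that the $\e$-disks $D_t\subset\complexgeodesic{c_tp_t}$ about $p_t$ meet $\H$ only at $p_t$ keeps the $u_t$ a definite distance from $p_t$ and makes the circular-arc portions safe. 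With this, one semi-cylindrical detour strip per mirror, uniform in $t$, closes up the homotopy. Your proposal names the right obstacle but does not prove this no-transition statement, and the fallback you offer in its place is unsound.
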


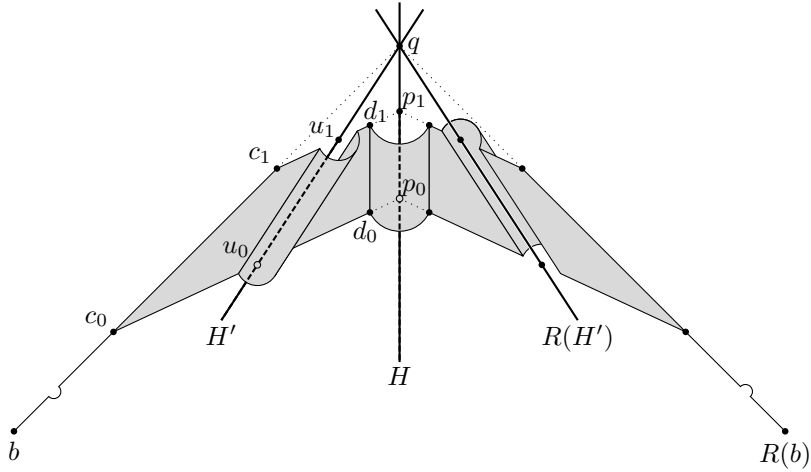
\begin{figure}
  \begin{center}
    \begin{tikzpicture}[x=1.45cm,y=1.45cm]
      \def\URx{3.8}
      \def\URy{.4}
      \def\LLx{-3.7}
      \def\LLy{-3.9}
      \clip(\LLx,\LLy)rectangle(\URx,\URy);
      \def\qTOponeDIST{.6}
      \def\qTOpzeroDIST{1.4}
      \def\Hcolor{gray!30}
      \def\pTOdANGLE{-155}
      \def\pTOdDIST{.3}
      \def\qTObANGLE{-135}
      \def\qTObDIST{5}
      \def\HprimeANGLE{57}
      \def\HprimetopDIST{.4}
      \def\HprimebottomDIST{3}
      \def\eps{.19}
      \coordinate (q) at (0,0);
      \coordinate (p1) at (0,-\qTOponeDIST);
      \coordinate (p0) at (0,-\qTOpzeroDIST); 
      \path (p0)++(\pTOdANGLE:\pTOdDIST) coordinate (d0);
      \path (p1)++(\pTOdANGLE:\pTOdDIST) coordinate (d1);
      \coordinate (b) at (\qTObANGLE:\qTObDIST);
      \coordinate (c0) at ($(intersection of p0--d0 and q--b)$);%
      \coordinate (c1) at ($(intersection of p1--d1 and q--b)$);%
      \coordinate (Hprimetop) at (\HprimeANGLE:\HprimetopDIST);
      \coordinate (Hprimebottom) at (\HprimeANGLE:-\HprimebottomDIST);
      \coordinate (u0) at ($(intersection of p0--c0 and Hprimetop--Hprimebottom)$);%
      \path (u0)++(\pTOdANGLE:\eps) coordinate (x0);
      \path (u0)++(\HprimeANGLE-90:\eps) coordinate (y0);
      \path (u0)++(180+\pTOdANGLE:\eps) coordinate (z0);
      \coordinate (u1) at ($(intersection of p1--c1 and Hprimetop--Hprimebottom)$);%
      \path (u1)++(\pTOdANGLE:\eps) coordinate (x1);
      \path (u1)++(\HprimeANGLE-90:\eps) coordinate (y1);
      \path (u1)++(180+\pTOdANGLE:\eps) coordinate (z1);
      \path (p0)++(180-\pTOdANGLE:\pTOdDIST) coordinate (Rd0);
      \path (p1)++(180-\pTOdANGLE:\pTOdDIST) coordinate (Rd1);
      \coordinate (Rb) at (180-\qTObANGLE:\qTObDIST);
      \coordinate (Rc0) at ($(intersection of p0--Rd0 and q--Rb)$);%
      \coordinate (Rc1) at ($(intersection of p1--Rd1 and q--Rb)$);%
      \coordinate (RHprimetop) at (-\HprimeANGLE:-\HprimetopDIST);
      \coordinate (RHprimebottom) at (-\HprimeANGLE:\HprimebottomDIST);
      \coordinate (Ru0) at ($(intersection of p0--Rc0 and RHprimetop--RHprimebottom)$);%
      \path (Ru0)++(180-\pTOdANGLE:\eps) coordinate (Rx0);
      \path (Ru0)++(-\HprimeANGLE+90:\eps) coordinate (Ry0);
      \path (Ru0)++(-\pTOdANGLE:\eps) coordinate (Rz0);
      \coordinate (Ru1) at ($(intersection of p1--Rc1 and RHprimetop--RHprimebottom)$);%
      \path (Ru1)++(180-\pTOdANGLE:\eps) coordinate (Rx1);
      \path (Ru1)++(-\HprimeANGLE+90:\eps) coordinate (Ry1);
      \path (Ru1)++(-\pTOdANGLE:\eps) coordinate (Rz1);
      \path (p0)++(0,-1.5) coordinate (bottom);
      \path (q)++(0,.4) coordinate (top);
      \draw[thick] (top)--(bottom);
      \draw[thick] (Hprimetop)--(Hprimebottom);
      \fill[\Hcolor](c0)--(x0)--(x1)--(c1)--cycle;
      \draw(x1)--(x0)--(c0)--(c1)--(x1);
      \fill[\Hcolor](d0)--(z0)--(z1)--(d1)--cycle;
      \draw(d0)--(z0)--(z1)--(d1)--cycle;
      \def\Hleftpathback{(y0) arc (\HprimeANGLE-90:\pTOdANGLE+180:\eps)
      --(z1) arc (\pTOdANGLE+180:\HprimeANGLE-90:\eps)--cycle}
      \fill[\Hcolor]\Hleftpathback;
      \draw\Hleftpathback;
      \def\Hleftpathfront{(y0) arc (\HprimeANGLE-90:\pTOdANGLE:\eps)
        --(x1) arc (\pTOdANGLE:\HprimeANGLE-90:\eps)--cycle}
      \fill[\Hcolor]\Hleftpathfront;
      \draw\Hleftpathfront;
      \def\Hmiddlepath{%
      (p0)++(\pTOdANGLE:\pTOdDIST)
      arc (\pTOdANGLE:-180-\pTOdANGLE:\pTOdDIST)
      -- (Rd1)
      arc (-180-\pTOdANGLE:\pTOdANGLE:\pTOdDIST)
      -- (d1)--cycle}
      \fill[\Hcolor] \Hmiddlepath;
      \draw\Hmiddlepath;
      \def\Hrightpathback{(Rz0)arc(-\pTOdANGLE:-\HprimeANGLE+90:\eps)
        --(Ry1)arc(-\HprimeANGLE+90:-\pTOdANGLE:\eps)--cycle}
      \fill[\Hcolor]\Hrightpathback;
      \draw\Hrightpathback;
      \draw (Rx1)arc(-180-\pTOdANGLE:-\pTOdANGLE:\eps);
      \fill[\Hcolor](Rz0)--(Rz1)--(Rd1)--(Rd0)--cycle;
      \draw(Rz0)--(Rz1)--(Rd1)--(Rd0)--(Rz0);
      \fill[\Hcolor](Rx0)--(Rx1)--(Rc1)--(Rc0)--cycle;
      \draw(Rx1)--(Rx0)--(Rc0)--(Rc1)--(Rx1);
      \draw[white,line width=1.4pt]($(RHprimetop)!.3!(RHprimebottom)$)--(RHprimebottom);
      \draw[thick] (RHprimetop)--(RHprimebottom);
      \pgfsetdash{{2.3pt}{1pt}}{2.4pt}
      \draw[thick](p0)--(p1);
      \pgfsetdash{{2.3pt}{1pt}}{3.8pt}
      \draw[thick](p0)--(bottom);
      \pgfsetdash{{2.3pt}{1pt}}{2.3pt}
      \draw[thick](u0)--(Hprimebottom);
      \pgfsetdash{{2.3pt}{1pt}}{2.2pt}
      \draw[thick](u0)--(u1);
      \pgfsetdash{}{0pt}
      \draw (b)--(c0);
      \draw (Rb)--(Rc0);
      \draw [dotted] (q)--(c1);
      \draw [dotted] (q)--(Rc1);
      \draw [dotted] (p0)--(d0);
      \draw [dotted] (p0)--(Rd0);
      \draw [dotted] (p1)--(d1);
      \draw [dotted] (p1)--(Rd1);
      \coordinate (div) at ($(b)!.4!(c0)$);%
      \def\divRwhite{.06}
      \def\divR{.065}
      \fill[white] (div) circle (\divRwhite);
      \draw(div)++(\qTObANGLE:\divR)arc(\qTObANGLE:180+\qTObANGLE:\divR);
      \coordinate (Rdiv) at ($(Rb)!.4!(Rc0)$);%
      \fill[white] (Rdiv) circle (\divRwhite);
      \draw(Rdiv)++(-\qTObANGLE:\divR)arc(-\qTObANGLE:-180-\qTObANGLE:\divR);
      \def\dotradius{.03}
      \foreach\x in{q,p1} \fill (\x) circle (\dotradius);
      \foreach\x in{b,c0,c1,d0,d1,u1} \fill (\x) circle (\dotradius);
      \foreach\x in{Rb,Rc0,Rc1,Rd0,Rd1,Ru0,Ru1} \fill (\x) circle (\dotradius);
      \fill[\Hcolor] (p0) circle (\dotradius); 
      \draw (p0) circle (\dotradius);
      \fill[\Hcolor] (u0) circle (\dotradius); 
      \draw (u0) circle (\dotradius);
      \tikzstyle{lab}=[inner sep=0pt, outer sep=2pt]
      \draw(q)node[lab,anchor=west,outer sep=3pt]{$q$};
      \draw(p1)node[lab,anchor=south west,outer sep=1pt]{$p_1$};
      \draw(p0)node[lab,anchor=south west,outer sep=1pt]{$p_0$};
      \draw(bottom)node[lab,anchor=north]{$H$};
      \draw(b)node[lab,anchor=north,outer sep=4pt]{$b$};
      \draw(c0)node[lab,anchor=south east]{$\,\,c_0$};
      \draw(c1)node[lab,anchor=south east]{$c_1$};
      \draw(d0)node[lab,anchor=north,outer sep=3pt]{$d_0\,\,$};
      \draw(d1)node[lab,anchor=south,outer sep=1pt]{$\,\,\,d_1$};
      \draw(u0)node[lab,anchor=south east,outer sep=3pt]{$u_0$};
      \draw(u1)node[lab,anchor=south east,outer sep=1pt]{$u_1$};
      \draw(Hprimebottom)node[lab,anchor=north]{$H'$};
      \draw(Rb)node[lab,anchor=north,outer sep=4pt]{$R(b)$};
      \draw(RHprimebottom)node[lab,anchor=north]{$R(H')$};
    \end{tikzpicture}
  \end{center}
  \caption{The well-definedness of $\mu_{b,q,H}$; see lemma~\ref{lem-well-definedness-of-homotopy-class}.
  The unlabeled points on the right side are the $R$-images of $c_0$,
  $c_1$, $u_0$, $u_1$, $d_0$ and $d_1$.}
  \label{fig-well-definedness-of-mu-b-q-H}
\end{figure}

\begin{proof}
    Suppose $U$ is a ball around $q$ as above, and
    write $c_0,c_1\in U\cap\bigl(\geodesic{bq}-\{q\}\bigr)$ for two
    candidates for $c'$.  We choose the subscripts so
    that $c_0$ is further from $q$ than $c_1$ is.  We will show that
    the version of $\mu_{b,q,H}$ defined using $c'=c_0$ is homotopic
    rel endpoints to the version of $\mu_{b,q,H}$ defined using
    $c'=c_1$.  The shaded surface in figure~\ref{fig-well-definedness-of-mu-b-q-H} is the homotopy that we
    will construct.  It is a homotopy from
    $\mu_{c_0,H}$ to $\mu_{c_1,H}$ that moves $c_0$ along
    $\geodesic{c_0 c_1}$. This is enough to
    build a homotopy between the two versions of $\mu_{b,q,H}$.
    It follows that the homotopy class of $\mu_{b,q,H}$ is independent
    of the choice of $c$.   It follows  that it is also
    independent of the choice of~$U$.
    
    We parameterize $\geodesic{c_0c_1}$ by $t\mapsto c_t$ with $t$
    varying over $[0,1]$.  We write $p_t$ for the point of $H$ nearest
    $c_t$.  The issue we must deal with is the following. If
    $\geodesic{c_t p_t}-\{p_t\}$ meets $\H$, then varying $t$ will
    move the intersection points but not eliminate them.  This is
    visible in figure~\ref{fig-well-definedness-of-mu-b-q-H}, where
    $H'$ is a mirror giving rise to such intersections.  The method of
    section~\ref{subsec-geodesics-and-geodesic-triangles} shows that
    $c_0$, $p_0$ and $q$ span a totally real triangle $\triangle
    c_0p_0q$, and also that all the $p_t$ lie on the segment
    $\geodesic{p_0 q}$.  By the nonpositive curvature of $\BB^{13}$,
    $p_0$ is closer to $q$ than $c_0$ is.  So $\triangle c_0p_0q$ lies
    in $U$.  In particular, the mirrors meeting this triangle are among those
    containing~$q$.

    It follows that $\H$ meets $\triangle c_0p_0q$ in the union of
    finitely many geodesic segments from its vertex $q$ to the
    opposite edge $\geodesic{c_0p_0}$.  Now restrict this to the
    quadrilateral with vertices $c_0$, $c_1$, $p_1$ and $p_0$.  Its
    intersection with any mirror is either empty or a segment
    $\geodesic{u_0 u_1}$, which we may parameterize by $t\mapsto u_t$
    with $u_t\in\geodesic{c_t p_t}$ as shown in
    figure~\ref{fig-well-definedness-of-mu-b-q-H}.  If there is more
    than one such segment then they are disjoint from each other.

    Now, any mirror that meets $\geodesic{p_0 p_1}$
    contains all of $\geodesic{p_0 p_1}$.  So we may choose $\e>0$
    such that every mirror meeting the closed $\e$-neighborhood of
    $\geodesic{p_0 p_1}$ contains $\geodesic{p_0 p_1}$.  Now let $D_t$
    be the closed $\e$-disk in the complex geodesic
    $\complexgeodesic{c_t p_t}$, centered at $p_t$.  The only mirrors
    it meets are the ones containing $\geodesic{p_0 p_1}$.  All of
    these mirrors contain $p_t$, and therefore contain no other points of
    $D_t$.  So $D_t\cap\H=\{p_t\}$.
    Write $d_t$ for the point where $\geodesic{c_t p_t}$ 
    enters $D_t$.  Because $D_t$ misses $\H$ except at $p_t$, we may
    take $\mu_{c_t,H}$ to be $\dodge{c_t d_t}$, followed by the
    positively-oriented arc in $\partial D_t$ from $d_t$ to $R(d_t)$,
    followed by $R\bigl(\reverse(\dodge{c_t d_t})\bigr)$.  The $\partial D_t$
    portions of these paths vary continuously with $t$, sweeping out
    the $\frac13$-tube  in the center of
    figure~\ref{fig-well-definedness-of-mu-b-q-H}.

    The shaded surface to the left of this in figure~\ref{fig-well-definedness-of-mu-b-q-H}
    is got by modifying the
    quadrilateral with edges $\geodesic{d_1 c_1}$, $\geodesic{c_1
      c_0}$, $\geodesic{c_0 d_0}$ and $\set{d_t}{t\in[0,1]}$.  The
    modification is to replace a strip around each segment
    $\geodesic{u_0 u_1}$ (notation as above and in figure~\ref{fig-well-definedness-of-mu-b-q-H}) by a
    semi-cylindrical strip that dodges $\H$.  More precisely, we may take
    this semi-cylindrical strip to be a union of positively oriented semicircles $S_t$, where
    $S_t$ lies in the complex geodesic $\complexgeodesic{c_t p_t}$ and
    has center $u_t$ and some small constant radius.
    Each $S_t$ misses $\H$ by reasoning similar to the previous paragraph.
    Because the
    $u_t$ and the $\complexgeodesic{c_t p_t}$ vary continuously with
    $t$, the $S_t$'s do too.  

    The part of the homotopy on the right side of the figure is the
    $R$-image of the part on the left.  The left, right and middle
    parts fit together to give the promised homotopy from
    $\mu_{c_0,H}$ to $\mu_{c_1,H}$.
\end{proof}

\begin{lemma}[Homotopy between meridians under change of basepoint]
  \label{lem-homotopy-for-moving-the-basepoint}
  Suppose $H$ is a mirror, $q$ is a point
  of $H$, and that the other mirrors through $q$ are
  orthogonal to $H$.  Suppose
  $b:[0,1]\to\BB^{13}-\H$
    is a path, written $t\mapsto b_t$, such that each
  $\geodesic{b_t q}$ misses $\H$ except at $q$.  Then
  $M_{b_0,q,H}\in G_{b_0}$ is
  identified with $M_{b_1,q,H}\in G_{b_1}$ under the
  isomorphism of orbifold fundamental groups $G_{b_0}\iso G_{b_1}$
  induced by the path $b$.
\end{lemma}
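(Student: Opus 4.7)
My plan is to construct, for each $t\in[0,1]$, the path $\mu_{b_t,q,H}$ with no detours, in such a way that the whole family varies continuously in $t$. The resulting continuous family will be a homotopy in $\BB^{13}-\H$ whose initial and terminal endpoints trace out $b_t$ and $R(b_t)$ respectively. By the formalism of section~\ref{subsec-orbifold-fundamental-group}, this exhibits $(\mu_{b_0,q,H},R)$ and $(\mu_{b_1,q,H},R)$ as corresponding elements under the basepoint-change isomorphism induced by $b$.

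The crux, where the orthogonality hypothesis enters, is the following. Write $H=s^\perp$, and let $c\in\BB^{13}-\H$ be any point near $q$ with nearest point $p\in H$. Then $c-p$ is a multiple of $s$, so $\complexgeodesic{cp}$ is the projectivization of $\spanof{p,s}$. For any other mirror $H'=(s')^\perp$ through $q$, the hypothesis gives $\ip{s}{s'}=0$, so $\ip{\alpha p+\beta s}{s'}=\alpha\ip{p}{s'}$ for all $\alpha,\beta\in\C$. This vanishes only when $\alpha=0$ (in which case $\beta s$ has positive norm and does not represent a point of $\BB(L)$) or when $\ip{p}{s'}=0$, which would force $c\in H'$, contradicting $c\notin\H$. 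So $\complexgeodesic{cp}$ misses every mirror through $q$ other than $H$, and meets $H$ only at $p$. If $c$ is also close enough to $q$, the portion of $\complexgeodesic{cp}$ used in forming $\mu_{c,H}$ lies in a small ball around $q$ that misses every mirror not through $q$.

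By compactness of $[0,1]$ one can choose a single small $\delta>0$ so that, defining $c_t$ as the point on $\geodesic{b_tq}$ at distance $\delta$ from $q$, the conclusion of the previous paragraph applies uniformly in $t$; then pick $d_t\in\geodesic{c_tp_t}$ at a fixed small distance from $p_t$ (where $p_t\in H$ is the point nearest $c_t$), and use a fixed small radius for the circular arc. Since $\geodesic{b_tq}$ misses $\H$ except at $q$, the segment $\geodesic{b_tc_t}$ lies in $\BB^{13}-\H$; by the crux, so do $\geodesic{c_td_t}$ and the positively oriented arc from $d_t$ to $R(d_t)$ in $\complexgeodesic{c_tp_t}$ centered at $p_t$. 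Hence $\mu_{b_t,q,H}$ is defined with no detours, and all of its ingredients depend continuously on $t$, as does the $R$-image portion. The concatenation is the desired continuous homotopy.

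The main obstacle is the crux observation: without the orthogonality hypothesis, the arc around $p_t$ could in principle graze another mirror through $q$ as $t$ varies, introducing detours that need not vary continuously with $t$ and breaking the continuous-family argument.
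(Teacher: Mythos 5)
Your proposal is correct and follows essentially the same route as the paper's proof: fix $c_t$ on $\geodesic{b_tq}$ at a uniform small distance from $q$, use the orthogonality hypothesis to show the segments $\geodesic{c_tp_t}$ and the circular arcs miss every mirror except $H$ (which they meet only at $p_t$), and assemble the continuously varying meridians into the homotopy. Your linear-algebra computation in $P(\spanof{p,s})$ is just an explicit version of the paper's geometric assertion that, by orthogonality, $\geodesic{c_tp_t}$ can meet another mirror through $q$ only by lying entirely inside it.
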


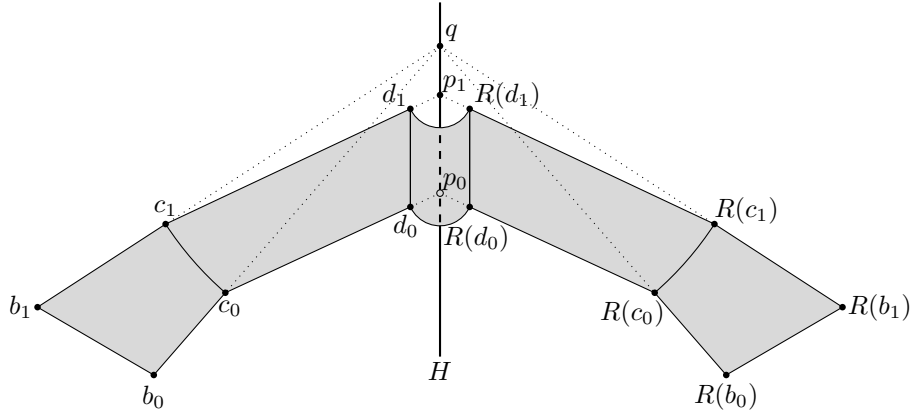
\begin{figure}
  \begin{center}
    \begin{tikzpicture}[x=1.45cm,y=1.45cm]
      \def\URx{4.4}
      \def\URy{.4}
      \def\LLx{-4}
      \def\LLy{-3.4}
      \clip(\LLx,\LLy)rectangle(\URx,\URy);
      \def\pTOdANGLE{-155}
      \def\pTOdDIST{.3}
      \def\qTObzeroANGLE{-131}
      \def\qTObzeroDIST{4}
      \def\qTOboneANGLE{-147}
      \def\qTOboneDIST{4.4}
      \def\qTOcDIST{3}
      \def\pzeroTOczeroDIST{1.4}
      \def\poneTOconeDIST{1.5}
      \def\bzeroFACTOR{-1.8}
      \def\bzeroFACTOR{-1}
      \def\boneFACTOR{-1.5}
      \coordinate (q) at (0,0);
      \coordinate (p0) at (0,-1.35);
      \coordinate (p1) at (0,-.45); 
      \path (q)++(\qTObzeroANGLE:\qTObzeroDIST) coordinate (b0);
      \path (q)++(\qTOboneANGLE:\qTOboneDIST) coordinate (b1);
      \path (q)++(\qTObzeroANGLE:\qTOcDIST) coordinate (c0);
      \path (q)++(\qTOboneANGLE:\qTOcDIST) coordinate (c1);
      \path (p0)++(\pTOdANGLE:\pTOdDIST) coordinate (d0);
      \path (p1)++(\pTOdANGLE:\pTOdDIST) coordinate (d1);
      \path (q)++(180-\qTObzeroANGLE:\qTObzeroDIST) coordinate (Rb0);
      \path (q)++(180-\qTOboneANGLE:\qTOboneDIST) coordinate (Rb1);
      \path (q)++(180-\qTObzeroANGLE:\qTOcDIST) coordinate (Rc0);
      \path (q)++(180-\qTOboneANGLE:\qTOcDIST) coordinate (Rc1);
      \path (p0)++(180-\pTOdANGLE:\pTOdDIST) coordinate (Rd0);
      \path (p1)++(180-\pTOdANGLE:\pTOdDIST) coordinate (Rd1);
      \path (p0)++(0,-1.5) coordinate (bottom);
      \path (q)++(0,.4) coordinate (top);
      \draw[thick] (top)--(bottom);
      \def\Hcolor{gray!30}
      \fill[\Hcolor] (b0)--(c0)--(d0)--(d1)--(c1)--(b1)--cycle;
      \fill[\Hcolor] (Rb0)--(Rc0)--(Rd0)--(Rd1)--(Rc1)--(Rb1)--cycle;
      \def\Hmiddlepath{%
      (p0)++(\pTOdANGLE:\pTOdDIST)
      arc (\pTOdANGLE:-180-\pTOdANGLE:\pTOdDIST)
      -- (Rd1)
      arc (-180-\pTOdANGLE:\pTOdANGLE:\pTOdDIST)
      -- (d1)--cycle}
      \fill[\Hcolor] \Hmiddlepath;
      \draw\Hmiddlepath;
      \draw (c0)--(d0)--(d1)--(c1)--(b1)--(b0)--(c0);
      \draw (Rc0)--(Rd0)--(Rd1)--(Rc1)--(Rb1)--(Rb0)--(Rc0);
      \draw (q)++(\qTObzeroANGLE:\qTOcDIST)arc(\qTObzeroANGLE:\qTOboneANGLE:\qTOcDIST);
      \draw (q)++(180-\qTObzeroANGLE:\qTOcDIST)arc(180-\qTObzeroANGLE:180-\qTOboneANGLE:\qTOcDIST);
      \begin{scope}
        \clip \Hmiddlepath;
        \draw[dashed,thick] (top)++(0,-.2)--(bottom);
      \end{scope}
      \def\dotradius{.03}
      \foreach\x in{q,p1} \fill (\x) circle (\dotradius);
      \foreach\x in{b0,b1,c0,c1,d0,d1} \fill (\x) circle (\dotradius);
      \foreach\x in{Rb0,Rb1,Rc0,Rc1,Rd0,Rd1} \fill (\x) circle (\dotradius);
      \draw [dotted] (q)--(c0);
      \draw [dotted] (q)--(c1);
      \draw [dotted] (q)--(Rc0);
      \draw [dotted] (q)--(Rc1);
      \draw [dotted] (p1)--(d1);
      \draw [dotted] (p1)--(Rd1);
      \draw [dotted] (p0)--(d0);
      \draw [dotted] (p0)--(Rd0);
      \fill[\Hcolor] (p0) circle (\dotradius); 
      \draw (p0) circle (\dotradius);
      \tikzstyle{lab}=[inner sep=0pt, outer sep=2pt]
      \draw(q)node[lab,anchor=south west]{$q$};
      \draw(p0)node[lab,anchor=south west,outer sep=1pt]{$p_0$};
      \draw(p1)node[lab,anchor=south west,outer sep=1pt]{$p_1$};
      \draw(bottom)node[lab,anchor=north]{$H$};
      \draw(b0)node[lab,anchor=north,outer sep=4pt]{$b_0$};
      \draw(b1)node[lab,anchor=east]{$b_1$};
      \draw(c0)node[lab,anchor=north,outer sep=3pt]{$\,\,c_0$};
      \draw(c1)node[lab,anchor=south,outer sep=3pt]{$c_1$};
      \draw(d0)node[lab,anchor=north,outer sep=3pt]{$d_0\,\,$};
      \draw(d1)node[lab,anchor=south east,outer sep=1pt]{$d_1$};
      \draw(Rb0)node[lab,anchor=north,outer sep=3pt]{$R(b_0)$};
      \draw(Rb1)node[lab,anchor=west]{$R(b_1)$};
      \draw(Rc0)node[lab,anchor=north east,outer sep=3pt]{$R(c_0)\!\!\!\!$};
      \draw(Rc1)node[lab,anchor=south west,outer sep=0pt]{$R(c_1)$};
      \draw(Rd0)node[lab,anchor=north,outer sep=7pt]{$\,\,\,R(d_0)$};
      \draw(Rd1)node[lab,anchor=south west,outer sep=0pt]{$\,R(d_1)$};
    \end{tikzpicture}
  \end{center}
  \caption{Illustration for lemma~\ref{lem-homotopy-for-moving-the-basepoint}.  The paths $\mu_{b_0,q,H}$
    and $\mu_{b_1,q,H}$ go left to right across the bottom and top
    respectively, and the shaded area is a homotopy between them. It 
    moves the basepoint from $b_0$ to $b_1$.
  }
  \label{fig-homotopy-for-moving-the-basepoint}
\end{figure}

\begin{proof}
The proof consists mainly of looking at
figure~\ref{fig-homotopy-for-moving-the-basepoint}.  Let $\e>0$ be
small enough that the only mirrors at distance${}\leq\e$ from $q$ are
those that contain $q$.  
For each $t\in[0,1]$, take the
point $c_t$ in the definition of $\mu_{b_t,q,H}$ to be the point of
$\geodesic{b_t q}$ that lies at distance $\e$ from $q$.
(We suppose without loss of generality that $\e< d(q,b_t)$ for all $t$,
so this construction  makes sense.)
The leftmost
region in figure~\ref{fig-homotopy-for-moving-the-basepoint}
represents the union of the paths $\geodesic{b_t c_t}$.  They all miss
$\H$ by hypothesis.  The rightmost region is its $R$-image, where $R$
still denotes the $\w$-reflection in~$H$.

Now let $p_t$ be the point of $H$ closest to $c_t$; by the nonpositive
curvature of $\BB^{13}$, $p_t$ is closer to $q$ than $c_t$ is.
So every point of every geodesic $\geodesic{c_t p_t}$ lies at
distance${}\leq\e$ from $q$.
We claim these geodesics also
miss $\H$, except that every $p_t$ lies
in~$H$.  Otherwise, some mirror $H'\neq H$ would meet some
$\geodesic{c_t p_t}$.  By the construction of $\e$, $H'$
passes through $q$.  So our orthogonality hypothesis gives $H'\perp
H$.  Because of this orthogonality, the only way $\geodesic{c_t
  p_t}$ can meet $H'$ is by lying entirely in $H'$.  This
would contradict the known fact $c_t\notin\H$.

Now let $\e'>0$ be small enough that the closed
$\e'$-neighborhood of $\set{p_t}{t\in[0,1]}$ misses every mirror
except $H$.  Call this neighborhood $U$.
By definition,
the ``middle'' part of $\mu_{b_t,q,H}$ is the path $\mu_{c_t,H}$,
whose definition involves choosing a point $d_t$ very near $p_t$.  We 
choose $d_t$ to be the point of $\geodesic{c_t p_t}$ at distance $\e'$ from
$p_t$.
(We suppose without loss of generality that $\e'< d(H,c_t)$
for every $t$, so this construction makes sense.) 
The second region in figure~\ref{fig-homotopy-for-moving-the-basepoint} is the union of the
geodesics $\geodesic{c_t d_t}$, and the second region from the
right is its $R$-image.

The $\frac13$-of-a-tube portion of the homotopy in figure~\ref{fig-homotopy-for-moving-the-basepoint} is
the surface swept out by the path $t\mapsto d_t$ under the rotations
around $H$ by angles in $[0,2\pi/3]$.  
It lies
in $U-H=U-\H$, so it misses~$\H$.  The five pieces of the homotopy fit
together to form a homotopy in $\BB^{13}-\H$ from $\mu_{b_0,q,H}$ to
$\mu_{b_1,q,H}$, in which $b_0$ moves to $b_1$ along the path~$b$.
It follows that the isomorphism $G_{b_0}\iso G_{b_1}$ induced by this
path identifies $\bigl(\mu_{b_0,q,H},R\bigr)$ with
$\bigl(\mu_{b_1,q,H},R\bigr)$.  That is, it identifies $M_{b_0,q,H}$
with $M_{b_1,q,H}$.
\end{proof}

Philosophically, we would like to take the Leech cusp $\rho$ as a basepoint for
analyzing the orbifold fundamental group of $(\BB^{13}-\H)/P\Gamma$.
But we must work around the difficulty that $\rho$ is not a point of
$\BB^{13}$.  To do this we follow the ``fat basepoint'' strategy of
\cite{AB-braidlike}, by choosing a closed horoball $A$ centered at $\rho$ and
small enough to miss~$\H$.  Lemma~4.3 of \cite{AB-braidlike} shows that we may take
$A$ to be the closed horoball of any height${}<1$.
We also choose some point $a$ of~$A$.  

We continue to suppose that $H$ is a mirror and $R$ is its
$\w$-reflection.  Now we take $p$ to be $H$'s point closest to
$\rho$. (See section~\ref{subsec-complex-hyperbolic-space} for the meaning of ``closest''.)
If the meridian were defined then it would begin with a geodesic
(possibly with small detours) from $\rho$ to a point near $p$.  This
path would pierce $\partial A$ at the point of $\partial A$ nearest to
$p$, which we will call $b$.  To define the corresponding path based
at $a$ we simply replace the segment $\geodesic{\rho b}$ by
$\geodesic{ab}$.  Formally, $\mu_{a,A,H}$ is $\geodesic{a b}$ followed
by $\mu_{b,H}$ followed by $R\bigl(\geodesic{ba}\bigr)$.  See
figure~\ref{fig-homotopy-for-moving-the-point-on-the-hyperplane} for a
picture: $\mu_{a,A,H}$ travels from left to right across the bottom.
The point marked~$d$ comes from the definition of~$\mu_{b,H}$.

Similarly, if $q$ is any point of $H$ then we define $b'$ as the point
of $\partial A$ nearest to $q$, and then define $\mu_{a,A,q,H}$ as
$\geodesic{a b'}$ followed by $\mu_{b'\!,q,H}$ followed by
$R\bigl(\geodesic{b'a}\bigr)$.  This travels from left to right across
the top of
figure~\ref{fig-homotopy-for-moving-the-point-on-the-hyperplane}.  The
point marked $c'$ comes from the definition of $\mu_{b'\!,q,H}$, and
the points marked $d'$ and $p'$ come from the definition of the
subpath $\mu_{c'\!,H}$ of $\mu_{b'\!,q,H}$.  Just as for ordinary
basepoints, we refer to the orbifold fundamental group elements
$M_{a,A,H}=(\mu_{a,A,H},R)$ and $M_{a,A,q,H}=(\mu_{a,A,q,H},R)$ as
meridians.

\begin{numberedremark}
\label{rem-change-a-without-loss}
  In several places it will clarify an argument to assume ``without
  loss of generality'' that $a$ is some particular point of $A$.  One
  should expect that the exact location of $a$ is unimportant: if $a'$
  is another point of $A$ then $G_a$ and $G_{a'}$ are canonically
  isomorphic.  One just moves the basepoint from $a$ to $a'$ along any
  path in the simply connected space~$A$.  But more is needed: this
  should identify each meridian $M_{a,A,q,H}\in G_a$ with
  $M_{a',A,q,H}\in G_{a'}$.  It is easy to see that this holds.  In
  fact, arranging for it to be true was the main reason for
  introducing $A$ into the definition of the meridians.  This was
  necessary because the canonical isomorphism $G_a\iso G_{a'}$ does
  {\it not\/} identify every $M_{a,q,H}$ with the corresponding
  $M_{a',q,H}$.
\end{numberedremark}

\begin{numberedremark}
  There is a more elegant treatment of fat basepoints in
  \cite[Appendix~A]{Bessis}.  This treats $A$ itself as the basepoint
  and avoids mention of $a$.  But this approach is not well-suited for
  moving the basepoint from $a$ to the $26$-point~$\tau$ (that lies
  outside $A$).  We do this in section~\ref{sec-change-of-basepoint},
  and it is an essential part of our proof of
  theorem~\ref{t-26-meridians-based-at-tau-generate}.
\end{numberedremark}

\begin{lemma}[Homotopies between meridians based ``at'' a cusp]
  \label{lem-homotopy-for-moving-the-point-on-the-hyperplane-cusp-version}
  Suppose $\rho\in\partial\BB^{13}$ is a Leech cusp, $A$ is a closed horoball centered at
  $\rho$ and disjoint from $\H$, and $a$ is a basepoint in $A$.  Let
  $H$ be a mirror, $p$ be the
  point of $H$ closest to $\rho$, and $q$ be another point of $H$.
  Our key assumption is that the totally real triangle $\triangle \rho
  p q$ is
  disjoint from $\H$ except that $\geodesic{p q}$ lies in $H$ and
  that $q$ might lie in additional mirrors.  Then
  $\mu_{a,A,H}$ and $\mu_{a,A,q,H}$ are homotopic rel
  endpoints in $\BB^{13}-\H$, and  $M_{a,A,H}=M_{a,A,q,H}$ in~$G_a$.
\end{lemma}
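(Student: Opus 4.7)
The strategy is to construct a one-parameter family of meridians $\mu_t$, $t\in[0,1]$, which slides the ``turning point'' along $\geodesic{pq}$ and interpolates between $\mu_{a,A,H}$ at $t=0$ and $\mu_{a,A,q,H}$ at $t=1$. Parameterize $\geodesic{pq}$ by $t\mapsto q_t$ with $q_0=p$ and $q_1=q$, and for each $t$ let $b_t'$ be the point where $\geodesic{\rho q_t}$ pierces $\partial A$. Define $\mu_t=\mu_{a,A,q_t,H}$ as in section~\ref{sec-paths-and-homotopies}: it consists of the initial segment $\geodesic{a b_t'}$, followed by $\mu_{b_t',q_t,H}$ built using auxiliary choices $c_t'$ on $\geodesic{b_t' q_t}$ near $q_t$ and $d_t'$ on $\geodesic{c_t' p_t'}$ near $p_t'$ (where $p_t'$ is the point of $H$ nearest $c_t'$), together with a positively oriented $2\pi/3$-arc from $d_t'$ to $R(d_t')$ in $\complexgeodesic{c_t' p_t'}$; and finally $R(\geodesic{b_t' a})$. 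At $t=1$ we recover $\mu_{a,A,q,H}$ tautologically. At $t=0$, we have $q_0=p$ and $b_0'=b$, and since $\geodesic{\rho p}$ meets $H$ orthogonally at $p$, the nearest point $p_0'$ of $H$ to $c_0'$ is $p$ itself, so $\mu_0$ is precisely a valid realization of $\mu_{a,A,H}$.

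The heart of the proof is to check that $(t,s)\mapsto\mu_t(s)$ is a continuous homotopy in $\BB^{13}-\H$ rel endpoints, and this is where the hypothesis on $\triangle \rho p q$ enters. Each segment $\geodesic{a b_t'}$ lies in the geodesically convex horoball $A$, which is disjoint from $\H$. Each $\geodesic{b_t' c_t'}$ is a subsegment of $\geodesic{\rho q_t}\subset\triangle \rho p q$; by the key assumption this triangle meets $\H$ only along $\geodesic{pq}\subset H$ and possibly at $q$, so provided $c_t'$ is chosen strictly short of $q_t$, this segment misses $\H$ entirely and no dodges are required. The segment $\geodesic{c_t' d_t'}$ and the $2\pi/3$-arc both lie in a ball about $p_t'$ which, by local finiteness of $\H$, may be chosen small enough to meet no mirror except $H$. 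The remaining three pieces of $\mu_t$ are the $R$-images of the first three and are handled identically.

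The main technical obstacle is continuity as $t\to 1$: since $p_t'$ then approaches $q$, which may lie in mirrors other than $H$, the balls used above must shrink to zero radius in the limit. This still yields a valid homotopy, because one may choose the auxiliary radii to depend continuously on $t$ and tend to zero as $t\to 1$, while the complex line $\complexgeodesic{c_t' p_t'}$ containing each arc varies continuously in $t$ as well. Concatenating the five continuous families gives a homotopy in $\BB^{13}-\H$ from $\mu_{a,A,H}$ to $\mu_{a,A,q,H}$ rel endpoints. Since both meridians project to the same reflection $R\in\PG$, the equality $M_{a,A,H}=M_{a,A,q,H}$ in $G_a$ follows.
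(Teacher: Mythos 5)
Your strategy---sweeping the turning point from $p$ to $q$ along $\geodesic{pq}$---produces essentially the same homotopy that the paper builds directly out of five pieces (two triangles inside $A$, two pentagons inside $\triangle\rho p q-\geodesic{pq}$, and a third of a tube), so the idea is right. The gap is at $t=1$, which is exactly where the hypothesis that ``$q$ might lie in additional mirrors'' matters (and in every application of this lemma $q$ does lie on several mirrors). If $H''\neq H$ is a mirror through $q$, then for $t<1$ the ball $U_t$ about $q_t$ required by the definition of $\mu_{a,A,q_t,H}$ must miss $H''$, since $H''$ does not contain $q_t$; hence its radius is at most $d(q_t,H'')\to 0$. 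You acknowledge this and propose letting the auxiliary radii tend to zero as $t\to1$. But then $c_t'$, $p_t'$ and $d_t'$ are all forced to converge to $q$ and the arcs shrink to points, so the limit of your family at $t=1$ is a path passing through $q\in H\sset\H$: it is not a path in $\BB^{13}-\H$, and it is not $\mu_{a,A,q,H}$, whose turning point $c_1'$ sits at a definite positive distance from $q$. Keeping the radius at $t=1$ positive instead makes the family discontinuous there. Either way the construction fails precisely at the endpoint it is supposed to reach.

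The repair is to give up on the intermediate paths being literal meridians $\mu_{a,A,q_t,H}$: keep $c_t'$ at a uniformly positive distance from $q_t$ for all $t$, varying continuously, with $c_1'$ inside the legitimate ball $U_1$ about $q$. The segments $\geodesic{b_t'c_t'}$ and $\geodesic{c_t'd_t'}$ then still miss $\H$, not because they sit in small balls (once $c_t'$ is bounded away from $q_t$, the segment $\geodesic{c_t'd_t'}$ no longer lies in a small ball about $p_t'$), but because both lie in $\triangle\rho p q-\geodesic{pq}$: the computation of section~\ref{subsec-geodesics-and-geodesic-triangles} shows that the projection $p_t'$ of $c_t'$ to $H$ lies on $\geodesic{p q_t}$ and that $\geodesic{c_t'p_t'}$ stays inside the triangle. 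The points $p_t'$ then sweep out a compact subset of $\geodesic{pq}-\{q\}$, which meets no mirror but $H$, so local finiteness supplies a single $\e>0$ whose $\e$-neighborhood of this set meets only $H$; all the arcs can be taken with this fixed radius. This uniform neighborhood of a compact piece of $\geodesic{pq}-\{q\}$ is exactly the paper's key step (its $\e$-neighborhood $U$ of $\geodesic{pp'}$), and with it your one-parameter family becomes a genuine homotopy rel endpoints in $\BB^{13}-\H$.
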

 
\begin{figure}
  \def\declarecenter#1#2#3#4{%
    \coordinate (middle1) at ($(#1)!.5!(#2)$);%
    \coordinate (middle2) at ($(#2)!.5!(#3)$);%
    \coordinate (aux1) at ($(middle1)!1!90:(#2)$);%
    \coordinate (aux2) at ($(middle2)!1!90:(#3)$);%
    \coordinate (#4) at ($(intersection of middle1--aux1 and middle2--aux2)$);%
  }
  \begin{center}
    \begin{tikzpicture}[x=1.45cm,y=1.45cm]
      \def\URx{3.6}
      \def\URy{.4}
      \def\LLx{-3.1}
      \def\LLy{-4.2}
      \clip(\LLx,\LLy)rectangle(\URx,\URy);
      \def\similarityfactor{4}
      \def\qTOpprimeDIST{.6}
      \def\pTOdANGLE{-155}
      \def\pTOdDIST{.3}
      \def\pprimeTOcprimeDIST{.8}
      \coordinate (q) at (0,0);
      \path (q)++(0,-\qTOpprimeDIST) coordinate (pprime);
      \path (q)++(0,-\qTOpprimeDIST*\similarityfactor) coordinate (p);
      \path (p)++(\pTOdANGLE:\pTOdDIST) coordinate (d);
      \path (pprime)++(\pTOdANGLE:\pTOdDIST) coordinate (dprime);
      \path (pprime)++(\pTOdANGLE:\pprimeTOcprimeDIST) coordinate (cprime);
      \path (p)++(\pTOdANGLE:\pprimeTOcprimeDIST*\similarityfactor) coordinate (rho);
      \coordinate (b) at ($(rho)!.3!(p)$);
      \coordinate (bprime) at ($(rho)!.21!(q)$);
      \declarecenter{rho}{b}{bprime}{A}
      \path (A)++(-100:.4) coordinate (a);
      \path (p)++(180-\pTOdANGLE:\pTOdDIST) coordinate (Rd);
      \path (pprime)++(180-\pTOdANGLE:\pTOdDIST) coordinate (Rdprime);
      \path (pprime)++(180-\pTOdANGLE:\pprimeTOcprimeDIST) coordinate (Rcprime);
      \path (p)++(180-\pTOdANGLE:\pprimeTOcprimeDIST*\similarityfactor)coordinate(Rrho);
      \coordinate (Rb) at ($(Rrho)!.3!(p)$);
      \coordinate (Rbprime) at ($(Rrho)!.21!(q)$);
      \declarecenter{Rrho}{Rb}{Rbprime}{RA}
      \path (RA)++(-80:.4) coordinate (Ra);
      \def\Acolor{gray!25}
      \fill[\Acolor] let \p1=($(b)-(A)$), \n0={veclen(\p1)} in (A) circle (\n0);
      \fill[\Acolor] let \p1=($(Rb)-(RA)$), \n0={veclen(\p1)} in (RA) circle (\n0);
      \path (p)++(0,-1) coordinate (bottom);
      \path (q)++(0,.4) coordinate (top);
      \draw[thick] (top)--(bottom);
      \def\Hcolor{gray!55}
      \fill[\Hcolor] (a)--(bprime)--(cprime)--(dprime)--(d)--(b)--cycle;
      \fill[\Hcolor] (Ra)--(Rbprime)--(Rcprime)--(Rdprime)--(Rd)--(Rb)--cycle;
      \def\Hmiddlepath{%
      (p)++(\pTOdANGLE:\pTOdDIST)
      arc (\pTOdANGLE:-180-\pTOdANGLE:\pTOdDIST)
      -- (Rdprime)
      arc (-180-\pTOdANGLE:\pTOdANGLE:\pTOdDIST)
      -- (d)}
      \fill[\Hcolor] \Hmiddlepath;
      \draw\Hmiddlepath;
      \draw (d)--(b)--(a)--(bprime)--(cprime)--(dprime);
      \draw (Rd)--(Rb)--(Ra)--(Rbprime)--(Rcprime)--(Rdprime);
      \draw (b)--(bprime);
      \draw (Rb)--(Rbprime);
      \begin{scope}
        \clip \Hmiddlepath;
        \draw[dashed,thick] (top)++(0,-.2)--(bottom);
      \end{scope}
      \def\dotradius{.03}
      \foreach\x in{q,pprime} \fill (\x) circle (\dotradius);
      \foreach\x in{d,cprime,dprime,a,b,bprime,rho} \fill (\x) circle (\dotradius);
      \foreach\x in{Rd,Rcprime,Rdprime,Ra,Rb,Rbprime,Rrho}\fill(\x) circle (\dotradius);
      \draw [dotted] (p)--(rho);
      \draw [dotted] (p)--(Rrho);
      \draw [dotted] (q)--(rho);
      \draw [dotted] (q)--(Rrho);
      \draw [dotted] (pprime)--(dprime);
      \draw [dotted] (pprime)--(Rdprime);
      \fill[\Hcolor] (p) circle (\dotradius); 
      \draw (p) circle (\dotradius);
      \tikzstyle{lab}=[inner sep=0pt, outer sep=2pt]
      \draw(p)node[lab,anchor=south west]{$p$};
      \draw(pprime)node[lab,anchor=south west]{$p'$};
      \draw(q)node[lab,anchor=south west]{$q$};
      \draw(bottom)node[lab,anchor=north]{$H$};
      \draw(A)++(120:.5)node[anchor=south east]{$A$};
      \draw(a)node[lab,anchor=north,outer sep=4pt]{$a$};
      \draw(rho)node[lab,anchor=north east]{$\rho$};
      \draw(b)node[lab,anchor=north west]{$b$};
      \draw(bprime)node[lab,anchor=south,outer sep=3pt]{$b'$};
      \draw(d)node[lab,anchor=north,outer sep=4pt]{$d\,\,\,$};
      \draw(cprime)node[lab,anchor=south east,outer sep=1pt]{$c'$};
      \draw(dprime)node[lab,anchor=south,outer sep=3pt]{$d'$};
      \draw(RA)++(30:.5)node[anchor=south west]{$R(A)$};
      \draw(Ra)node[lab,anchor=north,outer sep=3pt]{$R(a)$};
      \draw(Rrho)node[lab,anchor=north west,outer sep=1pt]{$R(\rho)$};
      \draw(Rb)node[lab,anchor=north east,outer sep=1pt]{$R(b)$};
      \draw(Rbprime)node[lab,anchor=south west,outer sep=1pt]{$R(b')$};
      \draw(Rd)node[lab,anchor=south west,outer sep=1pt]{$R(d)$};
      \draw(Rcprime)node[lab,anchor=west,outer sep=3pt]{$R(c')$};
      \draw(Rdprime)node[lab,anchor=south west,outer sep=0pt]{$R(d')$};
    \end{tikzpicture}
  \end{center}
  \caption{Illustration of the meridians $\mu_{a,A,H}$, left to right
    across the bottom, and $\mu_{a,A,q,H}$, left to right across the
    top.  The dark region indicates lemma~\ref{lem-homotopy-for-moving-the-point-on-the-hyperplane-cusp-version}'s homotopy between
    them.  The light gray balls are horoballs centered at Leech
    cusps $\rho$ and $R(\rho)$.
  }
  \label{fig-homotopy-for-moving-the-point-on-the-hyperplane}
\end{figure}

\begin{proof}
  The proof mostly consists of looking at
  figure~\ref{fig-homotopy-for-moving-the-point-on-the-hyperplane},
  which uses the notation $b,d,b',c',d',p',R$ from the definitions
  given above.  The paths $\mu_{a,A,q,H}$ and $\mu_{a,A,H}$ are the
  upper and lower paths around the dark gray region, traversed from
  left to right.  This region indicates the homotopy between them that
  we will construct.

  By hypothesis, for small enough $\e>0$, the $\e$-neighborhood of
  $\geodesic{p p'}$ meets no mirrors except~$H$.  This neighborhood,
  which we call $U$, is convex by the nonpositive curvature of
  $\BB^{13}$ (see the remark after Prop.~II.2.4 of \cite{Bridson-Haefliger}).  By moving $d$ closer to $p$ and $d'$ closer to
  $p'$ we may suppose without loss of generality that they lie in $U$.
  The portion of the homotopy that appears in the figure as $\frac13$
  of a tube is the surface swept out by $\geodesic{d d'}$ under the
  rotations around $H$ by angles in $[0,2\pi/3]$.  It lies in $U$
  since $\geodesic{d d'}$ does.  It misses $\H$ because it misses $H$,
  which is the only mirror that meets~$U$.  The pentagon with vertices
  $b'$, $c'$, $d'$, $d$ and $b$ misses $\H$ because it lies in
  $\triangle \rho p q-\geodesic{p q}$.  (Recall from section~\ref{subsec-geodesics-and-geodesic-triangles} that
  $\triangle \rho p q$ is totally geodesic.)  The triangle with vertices
  $a$, $b$ and $b'$ misses $\H$ because it lies in $A$.  By symmetry,
  the $R$-images of this pentagon and triangle also miss
  $\H$.  The $\frac13$-tube, two pentagons and two triangles
  fit together to give a homotopy between $\mu_{a,A,H}$ and
  $\mu_{a,A,q,H}$.
\end{proof}

We will also need a version of this where the basepoint is an ordinary
point of $\BB^{13}-\H$ rather than a Leech cusp.  The picture and proof differ from
lemma~\ref{lem-homotopy-for-moving-the-point-on-the-hyperplane-cusp-version}
only by replacing
all of $\rho$, $A$, $a$, $b$ and $b'$ by a single point $b$.

\begin{lemma}[Homotopies between meridians based at a point of $\BB^{13}$]
  \label{lem-homotopy-for-moving-the-point-on-the-hyperplane-ordinary-basepoint-version}
  Suppose $b\in\BB^{13}-\H$.  Let
  $H$ be a mirror, $p$ be the
  point of $H$ closest to $b$, and $q$ be another point of $H$.
  Suppose that the totally real triangle $\triangle b p q$ is
  disjoint from $\H$ except that $\geodesic{p q}$ lies in $H$ and
  that $q$ might lie in additional mirrors.  Then
  $\mu_{b,H}\homotopic\mu_{b,q,H}$  rel
  endpoints in $\BB^{13}-\H$, and
  $M_{b,H}=M_{b,q,H}$ in~$G_b$.
  \qed
\end{lemma}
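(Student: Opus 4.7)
The plan is to adapt the proof of Lemma~\ref{lem-homotopy-for-moving-the-point-on-the-hyperplane-cusp-version} by collapsing the horoball structure, as the author's own remark suggests. Let $p'$ be the point of $H$ closest to $q$ (appearing implicitly in $\mu_{b,q,H}$ via its subpath $\mu_{c',H}$), and let $d, d'$ be the points close to $p, p'$ used in defining $\mu_{b,H}$ and $\mu_{c',H}$. By the nonpositive curvature of $\BB^{13}$ applied to the totally real triangle $\triangle b p q$, we have $p' \in \geodesic{p q}$, so all of $b, c', d', d, p, p'$ lie in the closure of $\triangle b p q$.

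The homotopy will consist of three pieces glued together: a quadrilateral region with vertices $b, c', d', d$; its $R$-image; and a one-third tube around $H$ swept out by the rotations of $\geodesic{d d'}$ through angles in $[0, 2\pi/3]$. For the quadrilateral, I note that it lies inside $\triangle b p q - \geodesic{p q}$, so by the hypothesis on $\triangle b p q$ it misses $\H$. For the tube: choose $\e > 0$ small enough that the $\e$-neighborhood $U$ of $\geodesic{p p'}$ meets no mirrors except $H$ (this is possible because $\triangle b p q$ is disjoint from $\H$ away from $\geodesic{p q}$ and $q$, and $q \notin \geodesic{p p'}$ when $q \ne p'$; if $q = p'$ then one argues with a slightly retracted segment). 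By nonpositive curvature $U$ is convex. After moving $d$ closer to $p$ and $d'$ closer to $p'$ if necessary (which does not change the homotopy class of $\mu_{b,H}$ or $\mu_{c',H}$, by lemma~\ref{lem-well-definedness-of-homotopy-class}), we may assume $d, d' \in U$, so $\geodesic{d d'} \subset U$ and the entire tube lies in $U - H \subset \BB^{13} - \H$.

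To check that these three pieces actually form the desired homotopy, I would verify that their boundary pieces match the boundary pieces of $\mu_{b,H}$ and $\mu_{b,q,H}$ as described in section~\ref{sec-paths-and-homotopies}: the lower boundary of the quadrilateral is $\geodesic{b d}$, and its upper boundary is $\geodesic{b c'}$ followed by $\geodesic{c' d'}$; the tube interpolates between the arc in $\mu_{b,H}$ (through $d$) and the arc in $\mu_{c',H}$ (through $d'$); and the $R$-image gives the return segments. Because the quadrilateral lies in $\triangle b p q$ which misses $\H$, the subpaths $\geodesic{b d}$ and $\geodesic{c' d'}$ actually require no detours, so the simplification of $\dodge{\cdot\,\cdot}$ to $\geodesic{\cdot\,\cdot}$ is legitimate. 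Passing to $G_b = \piorb\bigl((\BB^{13}-\H)/\PG, b\bigr)$, both meridians are represented by the ordered pair $(\text{path}, R)$ with the same second coordinate, and the homotopy of paths rel endpoints gives $M_{b,H} = M_{b,q,H}$.

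The main obstacle is not conceptual but bookkeeping: making sure that the detour freedom in the definitions of $\mu_{b,H}$ and $\mu_{b,q,H}$ does not interfere, i.e., that we can choose $d$ and $d'$ close enough to $p$ and $p'$ to suppress spurious detours. This is handled exactly as in the proof of lemma~\ref{lem-well-definedness-of-homotopy-class}, by shrinking the radius of the reference balls around $p$ and $q$ until the only mirrors met are those through those points, and then using the hypothesis on $\triangle b p q$ to conclude that only $H$ itself is relevant in a neighborhood of $\geodesic{p p'}$.
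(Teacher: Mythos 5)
Your proposal is correct and is essentially the paper's own proof: the paper disposes of this lemma by declaring that the argument for lemma~\ref{lem-homotopy-for-moving-the-point-on-the-hyperplane-cusp-version} carries over once $\rho$, $A$, $a$, $b$ and $b'$ are collapsed to the single point $b$, which is exactly the adaptation you carry out (the pentagon becomes your quadrilateral $b,c',d',d$, the triangle inside $A$ disappears, and the $\frac13$-tube over $\geodesic{d d'}$ is unchanged). One small slip: $p'$ should be the point of $H$ nearest the turning point $c'$, not nearest $q$ (the latter is $q$ itself since $q\in H$); with the correct reading $p'\neq q$ automatically because $c'\notin H$, so the degenerate case you guard against never arises.
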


\section{Finitely many generators based ``at'' a cusp}
\label{sec-finitely-many-generators-based-at-a-cusp}

\noindent
In this section we begin the proof of our main result,
theorem~\ref{t-26-meridians-based-at-tau-generate}.  Our starting
point is theorem~1.4 from \cite{AB-braidlike}, which states that the
set of all Leech meridians generates
$G_a=\piorb\bigl((\BB^{13}-\H)/P\Gamma,a\bigr)$, where $a$ is a
basepoint chosen near a Leech cusp $\rho$.  Recall from
section~\ref{subsec-Leech-cusps-and-Leech-roots} that the 
Leech roots are the roots $r$ satisfying $\ip{\rho}{r} = \theta$ 
and Leech
meridians are the meridians $M_{a,A,s}$ with $s$ varying over the
Leech roots, where $A$ is a closed horoball centered at~$\rho$,
disjoint from~$\H$ and containing~$a$.  This is an infinite
generating set, which we improve in
theorem~\ref{thm-130-meridians-generate} by exhibiting an explicit
finite set of Leech meridians that generates $G_a$.  The exact nature
of this generating set is not so important, but it must be explicit
and finite so that we will be able to prove
theorem~\ref{thm-26-meridians-at-cusp-generate} in the next section.

\begin{theorem}[$130$ generators based ``at'' a cusp]
\label{thm-130-meridians-generate}
Suppose $\rho=(3\w-1;-1,\dots,-1)$
is the Leech cusp from section~\ref{subsec-Leech-cusps-and-Leech-roots}, $A$ is a closed horoball centered at
$\rho$ and disjoint from $\H$, and $a$ is an element of $A$.  
Write $S$ for the set of $130$ Leech roots 
\begin{equation*}
  p_i,
  \qquad
  p_i-\rho,
  \qquad
  \wbar p_i-l_j,
  \qquad
  \wbar p_i-l_j-\rho
\end{equation*}
with $i,j=1,\dots,13$ and
$p_i$ and $l_j$  incident in the last two cases.
Then $G_a$ is generated by the Leech meridians
$M_{a,A,s}$, where $s$ varies over~$S$.
\end{theorem}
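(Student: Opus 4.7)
The starting point is Theorem~1.4 of \cite{AB-braidlike}, which asserts that the (infinite) collection of all Leech meridians $\{M_{a,A,s}\}$ generates $G_a$. Write $G' \subset G_a$ for the subgroup generated by the $130$ meridians $M_{a,A,s}$ with $s \in S$; the plan is to show that every Leech meridian lies in $G'$.

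The main mechanism will be conjugation. Suppose $h \in P\Gamma_\rho$ (the $P\Gamma$-stabilizer of $\rho$) admits a lift $\tilde h \in G'$. Because $h \in \Gamma$ preserves the lattice and the isotropic line $\gend{\rho}$, the vector $h\rho$ differs from $\rho$ only by a sixth root of unity, so $h$ preserves $\height_\rho$ and hence the horoball $A$. Representing $\tilde h$ by a path from $a$ to $h(a)$ inside the simply-connected set $A$, the construction of Leech meridians in Section~\ref{sec-paths-and-homotopies} is visibly $h$-equivariant, and one obtains
\begin{equation*}
\tilde h \cdot M_{a,A,s} \cdot \tilde h^{-1} = M_{a,A,\,h(s)}
\end{equation*}
for every Leech root $s$ (the verification being parallel to Remark~\ref{rem-change-a-without-loss}). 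Granted this formula, the task splits into: \textbf{(A)} lifting a generating set of $P\Gamma_\rho$ into $G'$; and \textbf{(B)} checking that the $P\Gamma_\rho$-orbit of $S$ exhausts the set of Leech roots.

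For \textbf{(B)}, one uses the semidirect decomposition $P\Gamma_\rho = N \rtimes \Aut(\Lambda)$, with $N$ the Heisenberg-type unipotent radical of parabolic translations and $\Aut(\Lambda) = 6 \cdot \Suz$ acting on $\Lambda = \rho^\perp/\gend{\rho}$. In the parametrization~\eqref{eq-Leech-roots-in-Leech-model}, $N$ can shift $\sigma$ by any element of $\Lambda$ and $\nu$ by the allowed imaginary Eisenstein integers, while $\Aut(\Lambda)$ permutes $\sigma$-vectors of equal norm; using the transitivity of $6 \cdot \Suz$ on norm-$6$ and norm-$9$ vectors of $\Lambda$ (\S\ref{subsec-Leech-model-of-L}) together with the $L_3(3)$-symmetry of the $P^2\F_3$ model, only finitely many $P\Gamma_\rho$-orbits on Leech roots remain, and one verifies that every one of them is represented by a root in $S$.

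The crux is \textbf{(A)}. The roots of $S$ are organized into pairs $(p_i,\,p_i - \rho)$ and $(\wbar p_i - l_j,\ \wbar p_i - l_j - \rho)$ differing by the null vector $\rho$; suitable products of the two triflections in each pair (perhaps combined with a few extra triflections in $S$) should produce parabolic elements fixing $\rho$, which are then lifted into $G'$ by the corresponding products of meridians. Combined with the transitive action of $L_3(3) \subset \Gamma$ permuting the points and lines of $P^2\F_3$, these should generate all of $N$; further products should yield enough of $\Aut(\Lambda) = 6 \cdot \Suz$, which is itself generated by triflections in its norm-$6$ vectors. The main obstacle is the combinatorial and lattice-theoretic bookkeeping at this step: one must identify the $\sigma$-coordinates of all $130$ roots of $S$ explicitly in the Leech model and verify that the resulting triflection-products surject onto a generating set of $P\Gamma_\rho$. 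This is precisely where the special properties of the Leech lattice and of $\Suz$ alluded to in the introduction should enter decisively.
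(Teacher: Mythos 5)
Your overall strategy (conjugating the given meridians by lifts of elements of the $\rho$-stabilizer) is the same as the paper's, but the crux that you label \textbf{(A)} and leave as ``bookkeeping'' is exactly where the real content lies, and as stated your plan has a genuine gap. An element $g$ of $P\Gamma_{\!A}$ has a \emph{canonical} lift to $G_a$, namely $(\gamma,g)$ with $\gamma$ a path inside the simply connected horoball $A$, and it is only for this canonical lift that the conjugation formula $\tilde h\, M_{a,A,s}\, \tilde h^{-1}=M_{a,A,h(s)}$ holds; an arbitrary product of meridians projecting to $g$ differs from the canonical lift by an element of $\pi_1(\BB^{13}-\H,a)$, and then conjugation by it does \emph{not} send meridians to meridians. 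So the key step is to prove that the specific product $M_{a,A,s-\rho}\cdot M_{a,A,s}$ \emph{equals} the canonical lift of $R_{s-\rho}R_s$. This is the paper's lemma~\ref{lem-homotopy-into-horoball}, and it is proved not by formal equivariance but by an explicit homotopy: one reduces to the $\BB^1$ spanned by $(0;1,*)$-vectors and checks that the concatenated path lies in the union of the two height-one horoballs centered at $\rho$ and $R_{s-\rho}(\rho)$, which is contractible and misses $\H$. Your proposal asserts that such products ``should produce parabolic elements fixing $\rho$, which are then lifted into $G'$ by the corresponding products of meridians,'' which assumes precisely what must be shown. A second, related gap: the isometry $R_{s-\rho}R_s$ is \emph{not} a translation $T_{\lambda,z}$; it is $-\w\, Q\, T_{0,-2\theta}$ conjugated by a translation, where $Q$ scales $\Lambda$ by $-\wbar$. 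Extracting honest translations requires forming $\bigl(R_s'R_s\bigr)\bigl(R_t'R_t\bigr)^{-1}=T_{-\w(\sigma-\tau),?}$ and then conjugating and using the Heisenberg commutator relation together with $\Lambda=\theta\Lambda^*$ to get the central translations; this is lemma~\ref{lem-Heisenberg-trick-foundation} and is not routine.

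Your steps \textbf{(B)} and the $6\cdot\Suz$ part of \textbf{(A)} are also off target. The translation group $N$ already acts \emph{simply transitively} on the Leech roots (section~\ref{subsec-Leech-cusps-and-Leech-roots}), so once $G'$ contains all translations there is a single orbit and any one meridian of $S$ generates the rest by conjugation; there is no finite list of $P\Gamma_{\!\rho}$-orbits to check, and there is no need to lift any part of $\Aut\Lambda=6\cdot\Suz$ into $G'$ --- which is fortunate, since generating $6\cdot\Suz$ from triflection-products built out of $S$ is not something your outline could plausibly deliver. What \emph{does} need to be verified, and what your proposal omits entirely, is the hypothesis that makes the Heisenberg argument produce \emph{all} translations: the differences of the roots $p_i$ and $\wbar p_i-l_j$ must span $\rho^\perp/\spanof{\rho}$ over $\E$. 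The paper checks this by showing their $\E$-span is all of $\rho^\perp$, using that $L$ is spanned by the point- and line-roots and taking inner products with $p_\infty$ and $l_\infty$. Without that spanning statement the theorem's specific list of $130$ roots is not justified.
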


In this section we will write $G$ for the subgroup of $G_a$ generated
by these particular Leech meridians, and our goal is to show that $G$
is all of $G_a$.  The strategy is to show that $G$ contains some
isometries of $\BB^{13}$ that fix $\rho$ and act transitively on the
Leech roots.  It follows from this that every Leech meridian is
$G$-conjugate into $G$, hence lies in $G$.  Since the set of all Leech
meridians generates $G_a$, it follows that $G$ is all of $G_a$, as
desired.

But $G_a$ is an orbifold fundamental group, not a group of isometries
of $\BB^{13}$, so we must say what we mean by $G_a$ ``containing''
some isometries of $\BB^{13}$.  We will explain this and then develop the details of the
strategy just outlined.  The idea is that the subgroup
$P\Gamma_{\!A}$ of $P\Gamma\sset\Isom(\BB^{13})$ preserving $A$ can be
thought of as a subgroup
of $G_a$.  To see this, consider the  obvious homomorphism
\begin{equation}
  \label{eq-P-Gamma-A-as-a-subgroup-of-G-a}
  P\Gamma_{\!A}
  =
  \piorb(A/P\Gamma_{\!A},a)
  \to
  \piorb\bigl((\BB^{13}-\H)/P\Gamma,a\bigr)
  =G_a.
\end{equation}
The first equality holds because $A$ is simply connected, and the map
exists because $A$ misses $\H$.  This map $P\Gamma_{\!A}\to G_a$ is an
embedding, because following it by the natural map $G_a\to P\Gamma$
(see section~\ref{subsec-orbifold-fundamental-group}) gives the
identity on $P\Gamma_{\!A}$.  In this manner, we may regard
$P\Gamma_{\!A}$ as a subgroup of $G_a$.  Concretely, if $g\in
P\Gamma_{\!A}$, then the corresponding element of $G_a$ is
$(\gamma,g)$ where $\gamma$ is any path in $A$ from $a$ to $g(a)$.
The choice of $\gamma$ is unimportant since $A$ is simply connected.

The elements of $P\Gamma_{\!A}$ we will need are the following
``translations'' $T_{\lambda,z}$. The language comes from their
analogy with the isometries of real hyperbolic space which look like
Euclidean translations in
the upper half space model.  Given $\lambda\in\Leech$ and $z\in\Im\C$ such that
$z-\lambda^2/2\in\theta\E$, the definition is
  \begin{align}
    &(\phantom{0}\llap{$l$};0,0)\mapsto\bigl(l;0,\thetabar^{-1}\ip{l}{\lambda}\bigr)
    \notag\\
    \label{eq-definition-of-translation}
    \llap{$T_{\lambda,z}:$ }&
    (0;1,0)\mapsto\bigl(\lambda;1,\theta^{-1}(z-\lambda^2/2)\bigr)\\
    &(0;0,1)\mapsto(0;0,1)
    \notag
  \end{align}
One can see that the
translations form a Heisenberg group by checking the identities:
  \begin{align}
    \label{eq-product-of-translations}
    T_{\lambda,z}T_{\lambda',z'}&{}=T_{\lambda+\lambda',z+z'+\Im\ip{\lambda}{\lambda'}}
    \\
    \label{eq-inversion-in-group-of-translations}
    \bigl(T_{\lambda,z})^{-1}&{}=T_{-\lambda,-z}
    \\
    \label{eq-commutator-of-translations}
    T_{\lambda,z}T_{\lambda',z'}T_{\lambda,z}^{-1}T_{\lambda',z'}^{-1}
    &{}=T_{0,2\Im\ip{\lambda}{\lambda'}}
  \end{align}
  This group preserves~$A$ and~$\rho$, and acts simply transitively on
  the Leech roots. 
  The center of this group consists of $T_{0, z}$ given
  by $T_{0,z} (v; 1 , \alpha) = (v; 1, \alpha + \theta^{-1} z)$ where 
  $z \in \theta \Z$.
  The next lemma, which is the crucial trick in this
  section, obtains some elements of $P\Gamma_{\!A}$ as products of
  meridians.  Then we combine these in
  lemma~\ref{lem-Heisenberg-trick-foundation} to obtain translations.

\begin{lemma}
\label{lem-homotopy-into-horoball}
Suppose $\rho\in L$ is a Leech cusp, $A$ is a closed horoball centered
at $\rho$ and disjoint from $\H$, and $a\in A$.  Suppose $s$ is a
Leech root, and write $s'$ for the Leech root $s-\rho$.  Write $H$ and
$H'$ for their mirrors and $R$ and $R'$ for their $\w$-reflections.
Then
\begin{equation}
  \label{eq-homotopy-into-horoball}
  M_{a,A,H'}
  \cdot
  M_{a,A,H}
  =
  R'\cdot R.
\end{equation}
Here the product of meridians on the left is evaluated in the orbifold
fundamental group $G_a$, and the product of reflections on
the right is evaluated in $P\Gamma$, with result in $P\Gamma_{\!A}$.   When stating equality we are regarding $P\Gamma_{\!A}$ as a
subgroup of $G_a$ via \eqref{eq-P-Gamma-A-as-a-subgroup-of-G-a}.
\end{lemma}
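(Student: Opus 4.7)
The plan is to show that the path underlying the left side of \eqref{eq-homotopy-into-horoball}, namely $\mu_{a,A,H'}$ followed by $R'\circ\mu_{a,A,H}$, is homotopic rel endpoints in $\BB^{13}-\H$ to a path lying entirely in $A$ from $a$ to $R'R(a)$. Both sides of \eqref{eq-homotopy-into-horoball} project to the same isometry $R'R\in P\Gamma$; since $A$ is simply connected, any two paths in $A$ with common endpoints are $A$-homotopic, so exhibiting such a homotopy suffices.

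First I would verify $R'R\in P\Gamma_{\!A}$. Because $s'=s-\rho$, the $\w$-reflections $R,R'$ both preserve the $2$-dimensional complex subspace $V=\spanof{s,\rho}$ and act trivially on $V^\perp$. A direct computation in $V$, using the reflection formula~\eqref{eq-formula-for-omega-reflection} together with the identities $R'(s)=-s+2\rho$ (which follows from $(1-\w)(1+\theta/3)=2$) and $(1-\w)\theta/3=-\wbar$, yields $R'R(\rho)=-\w\rho$. Since $-\w$ is a unit, $R'R$ fixes $\rho$ projectively and preserves the height function $\height_\rho$ exactly, hence preserves $A$.

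Next I would reduce to a two-dimensional picture. The perpendicular-feet $p,p'$ from $\rho$ to $H,H'$ lie in $V$, as do their nearest points $b,b'$ on $\partial A$. By Remark~\ref{rem-change-a-without-loss} we may assume $a\in V\cap A$; then both meridians $\mu_{a,A,H}$ and $\mu_{a,A,H'}$ can be chosen to lie in the complex geodesic $\BB(V)\subset\BB^{13}$, and since $R'$ preserves $V$, the whole composed path lies in $\BB(V)$. In the upper half-plane realization of $\BB(V)$ with $\rho$ at $\infty$, the Leech roots lying in $V$ are exactly the $s+n\rho$ for $n\in\Z$; their mirrors cross $\BB(V)$ in points $p_n$, all at the common hyperbolic height~$1$ from $\rho$, while the higher-shell roots of $L$ in $V$ give mirror-intersection points at heights $\geq 3$. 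Hence the whole composed path lies in a horizontal strip that meets $\H\cap\BB(V)$ only in $\{p_n\}_{n\in\Z}$, and on this strip the projective action of $R'R$ is the horocyclic translation $p_n\mapsto p_{n-2}$.

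The main step, which I expect to be the principal obstacle, is to write down an explicit homotopy in $\BB(V)-\{p_n\}_{n\in\Z}$ between the composed path and an $A$-path from $a$ to $R'R(a)$. Geometrically, the two $1/3$-arcs (around $p_{-1}$ and $R'(p_0)=p_{-2}$) together with their connecting geodesics through $A$ sweep out a $2$-real region of the strip; the shift $R'R$ precisely aligns the final arc's endpoint with $R'R(a)$, so that together with a horocyclic segment in $A$ from $R'R(a)$ back to $a$, the whole composite closes up into a loop in $\BB(V)-\{p_n\}$ bounding an embedded disk that avoids every $p_n$. The verification reduces to careful bookkeeping of the six top-level sub-paths and the disk they bound, invoking Lemma~\ref{lem-homotopy-for-moving-the-point-on-the-hyperplane-cusp-version} where convenient to reposition the points $q$ used in the meridian construction on $H$ and $H'$. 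The essential use of the Leech-root hypothesis $s'=s-\rho$ is that it forces $p$ and $p'$ to sit at equal heights and to be related by an exact horocyclic translation, which is what allows the combined arc system to close up as a disk without accumulating winding.
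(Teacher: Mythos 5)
Your setup is sound and matches the paper's: the reduction to the complex geodesic $\BB(V)$ with $V=\spanof{s,\rho}$, the normalization of $a$ via Remark~\ref{rem-change-a-without-loss}, and the computation $R'R(\rho)=-\w\rho$ (hence $R'R\in P\Gamma_{\!A}$) are all correct. But the proof has two genuine gaps, both located exactly where the real content is. First, your claim that the relevant strip meets $\H\cap\BB(V)$ only in $\{p_n\}$ is justified only by examining roots \emph{lying in} $V$. That is not enough: a mirror $r^\perp$ with $r\notin V$ still cuts $\BB(V)$ in a point whenever $r^\perp\cap V$ contains negative-norm vectors, and such points are part of $\H\cap\BB(V)$. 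The statement you need happens to be true --- for a Leech root $t=(\sigma;1,*)$ one computes from \eqref{eq-general-inner-product-formula} that $t^\perp\cap V$ is spanned by a vector of norm proportional to $\sigma^2-3$, which is positive unless $\sigma=0$ because $\Lambda$ has minimal norm~$6$; so first-shell mirrors of roots outside $V$ miss $\BB(V)$ entirely, and all other mirrors meet $\BB(V)$ (if at all) at height${}\geq3$ --- but none of this is in your argument, and it is precisely the kind of Leech-lattice input the lemma secretly depends on.

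Second, and more seriously, your "main step" is deferred as bookkeeping, yet it is the whole theorem. The punctured strip has free fundamental group of infinite rank, and the composed path executes a $1/3$-turn around $p_{-1}$ and a $1/3$-turn around $p_{-2}$; the assertion that the closed-up loop "bounds an embedded disk avoiding every $p_n$" is exactly what must be proved, and partial windings around distinct punctures do not cancel for any formal reason. The paper disposes of both issues at once with a cleaner device: the entire path $\mu_{a,A,H'}$ followed by $R'\circ\mu_{a,A,H}$ lies in the union of the two \emph{open height-$1$ horoballs} centered at $\rho$ and at $R'(\rho)$ (one checks that $s'$ and $R'(s)$ are first-shell with respect to $R'(\rho)$, so the two circular arcs stay inside this union). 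Each such horoball misses all of $\H$ in $\BB^{13}$ --- no analysis of $\H\cap\BB(V)$ is needed --- and the union of two overlapping convex sets is simply connected, so the path homotopes rel endpoints into $A$ with no winding or disk argument at all. I recommend you replace your strip-and-disk step with this two-horoball observation; as written, your proposal does not yet constitute a proof.
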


\def\applyR#1#2{
  \pgfmathparse{(1-(#1))*(1-(#1))+((#2)*(#2))}
  \let\denom\pgfmathresult
  \pgfmathparse{(1-(#1))/\denom}
  \let\resultx\pgfmathresult
  \pgfmathparse{(#2)/\denom}
  \let\resulty\pgfmathresult
}

\def\URx{1.1}
\def\URy{1.8}
\def\LLx{-2.6}
\def\LLy{-.15}
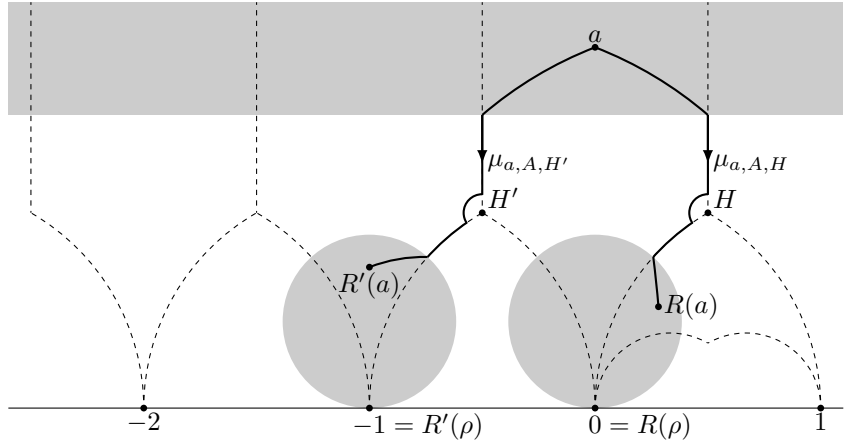
\begin{figure}
  \begin{center}
    \begin{tikzpicture}[x=3cm,y=3cm,every to/.style={hyperbolic plane}]
      \clip(\LLx,\LLy)rectangle(\URx,\URy);
      \def\dotradius{.015}
      \def\Hpx{-.5}
      \def\Hpy{.866}
      \def\Hx{.5}
      \def\Hy{\Hpy}
      \def\RpHx{-1.5}
      \def\RpHy{\Hpy}
      \coordinate(Hp)at(\Hpx,\Hpy);
      \coordinate(H)at(\Hx,\Hy);
      \draw[thick] (\Hpx,\URy)--(Hp);
      \draw[thick] (\Hx,\URy)--(H);
      \draw[thick] (-1,0) arc[radius=1,start angle=180,end angle=120];
      \draw[thick] (0,0) arc[radius=1,start angle=180,end angle=120];
      \def\Acolor{gray!40}
      \def\Ay{1.3}
      \fill[color=\Acolor] (\LLx,\Ay) rectangle (\URx,\URy);
      \def\ax{0}
      \def\ay{1.6}
      \coordinate(a) at (\ax,\ay);
      \coordinate(ashifted) at (\ax-2,\ay);
      \fill(a)circle(\dotradius);
      \draw[thick](a)to(\Hpx,\Ay);
      \draw[thick](a)to(\Hx,\Ay);
      \def\fillradius{.08}
      \def\detourradius{.0825}
      \fill[color=white] (Hp) ++ (-\fillradius,0) arc[radius=\fillradius,start
        angle=180,end angle=225] --(Hp);
      \fill[color=white] (Hp) ++ (-\fillradius,0) arc[radius=\fillradius,start
        angle=180,end angle=60] --(Hp);
      \draw[thick] (Hp)++(0,\detourradius) arc[radius=\detourradius, start
        angle=90, end angle=214];
      \fill(Hp)circle(\dotradius);
      \fill[color=white] (H) ++ (-\fillradius,0) arc[radius=\fillradius,start
        angle=180,end angle=225] --(H);
      \fill[color=white] (H) ++ (-\fillradius,0) arc[radius=\fillradius,start
        angle=180,end angle=60] --(H);
      \draw[thick] (H)++(0,\detourradius) arc[radius=\detourradius, start
        angle=90, end angle=214];
      \fill(H)circle(\dotradius);
      \pgfmathparse{.5/\Ay}
      \let\smallradius\pgfmathresult
      \fill[color=\Acolor] (0,\smallradius) circle[radius=\smallradius];
      \fill[color=\Acolor] (-1,\smallradius) circle[radius=\smallradius];
      \def\dashsize{2pt}
      \def\gapsize{2pt}
      \begin{scope}
        \pgfsetdash{{\dashsize}{\gapsize}}{1pt}
        \draw (\Hpx,\URy)--(\Hpx,\Hpy);
        \draw (\Hx,\URy)--(\Hx,\Hpy);
        \draw (-2.5,\URy)--(-2.5,\Hpy);
        \draw (\RpHx,\URy)--(\RpHx,\Hpy);
        \pgfsetdash{{\dashsize}{\gapsize}}{1pt}
        \draw (0,0) arc[radius=1,start angle=0,end angle=60];
        \draw (0,0) arc[radius=1,start angle=180,end angle=120];
        \draw (-1,0) arc[radius=1,start angle=0,end angle=60];
        \draw (-1,0) arc[radius=1,start angle=180,end angle=120];
        \draw (-2,0) arc[radius=1,start angle=0,end angle=60];
        \draw (-2,0) arc[radius=1,start angle=180,end angle=120];
        \draw (0,0) arc[radius=.333,start angle=180,end angle=60];
        \draw (1,0) arc[radius=.333,start angle=0,end angle=120];
        \draw (1,0) arc[radius=1,start angle=0,end angle=60];
      \end{scope}
      \applyR{\Hx}{\Ay}
      \let\Rbx\resultx
      \let\Rby\resulty
      \coordinate(Rb) at (\Rbx,\Rby);
      \applyR{\ax}{\ay}
      \coordinate(Ra) at (\resultx,\resulty);
      \draw[thick](Ra)to(Rb);
      \fill(Ra)circle(\dotradius);
      \applyR{\ax+1}{\ay}
      \coordinate(Rpa) at (\resultx-1,\resulty);
      \fill(Rpa)circle(\dotradius);
      \draw[thick](Rpa)to(\Rbx-1,\Rby);
      \draw (\LLx,0)--(\URx,0);
      \fill (1,0) circle (\dotradius);
      \fill (0,0) circle (\dotradius);
      \fill (-1,0) circle (\dotradius);
      \fill (-2,0) circle (\dotradius);
      \tikzstyle{lab}=[inner sep=0pt, outer sep=2pt]
      \draw(a)node[lab,anchor=south]{$a$};
      \draw(Hp)node[lab,anchor=south west]{$H'$};
      \draw(H)node[lab,anchor=south west]{$H$};
      \draw(Rpa)node[lab,anchor=north]{$R'(a)$};
      \draw(Ra)node[lab,anchor=west]{$R(a)$};
      \draw(0,0)node[lab,anchor=north]{$0\rlap{${}=R(\rho)$}$};
      \draw(-1,0)node[lab,anchor=north]{$-1\rlap{${}=R'(\rho)$}$};
      \draw(-2,0)node[lab,anchor=north]{$-2$};
      \draw(1,0)node[lab,anchor=north]{$1$};
      \pgfmathparse{(\Ay+\Hpy)/2}
      \let\arrowy\pgfmathresult
      \draw(\Hpx,\arrowy)node[lab,anchor=west]{$\mu_{a,A,H'}$};
      \draw(\Hx,\arrowy)node[lab,anchor=west]{$\mu_{a,A,H}$};
      \draw[thick,-latex](\Hpx,\Ay)--(\Hpx,\arrowy);
      \draw[thick,-latex](\Hx,\Ay)--(\Hx,\arrowy);
    \end{tikzpicture}
  \end{center}
  \caption{The Leech meridians appearing in \eqref{eq-homotopy-into-horoball}.  The cusp
    $\rho$ is at infinity and the hyperplanes $H$ and $H'$
    appear as the points where they meet the pictured $\BB^1$.  The top gray region is the
    horoball $A$, and the regions below it are its
    images under $R$ and~$R'$.}
  \label{fig-homotopy-into-horoball-preparation}
\end{figure}

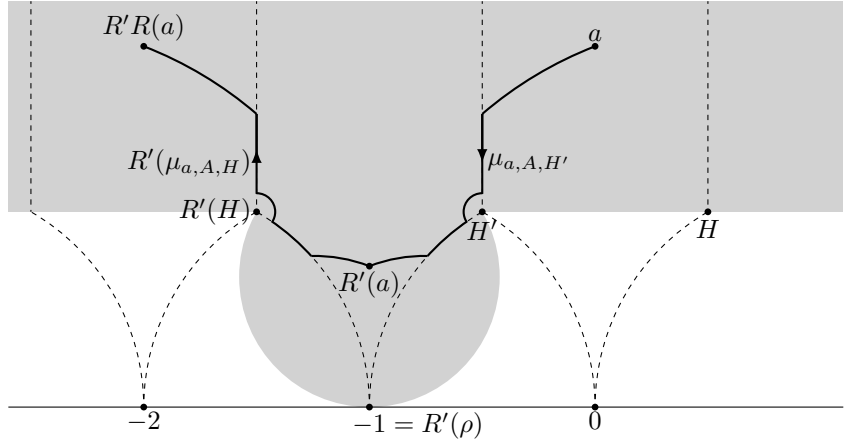
\begin{figure}
  \begin{center}
    \begin{tikzpicture}[x=3cm,y=3cm,every to/.style={hyperbolic plane}]
      \clip(\LLx,\LLy)rectangle(\URx,\URy);
      \def\dotradius{.015}
      \def\Hpx{-.5}
      \def\Hpy{.866}
      \def\Hx{.5}
      \def\Hy{\Hpy}
      \def\RpHx{-1.5}
      \def\RpHy{\Hpy}
      \coordinate(Hp)at(\Hpx,\Hpy);
      \coordinate(H)at(\Hx,\Hy);
      \coordinate(RpH)at(\RpHx,\RpHy);
      \fill[color=gray!35] (\LLx,\Hpy) rectangle (\URx,\URy);
      \pgfmathparse{\Hpy*2/3}
      \let\bigradius\pgfmathresult
      \fill[color=gray!35] (-1,\bigradius) circle[radius=\bigradius];
      \draw[thick] (\Hpx,\URy)--(Hp);
      \draw[thick] (\RpHx,\URy)--(RpH);
      \draw[thick] (-1,0) arc[radius=1,start angle=0,end angle=60];
      \draw[thick] (-1,0) arc[radius=1,start angle=180,end angle=120];
      \def\Ay{1.3}
      \fill[color=gray!35] (\LLx,\Ay) rectangle (\URx,\URy);
      \def\ax{0}
      \def\ay{1.6}
      \coordinate(a) at (\ax,\ay);
      \coordinate(ashifted) at (\ax-2,\ay);
      \fill(a)circle(\dotradius);
      \fill(ashifted)circle(\dotradius);
      \draw[thick](a)to(\Hpx,\Ay);
      \draw[thick](ashifted)to(\RpHx,\Ay);
      \def\fillradius{.08}
      \def\detourradius{.0825}
      \fill[color=gray!35] (Hp) ++ (-\fillradius,0) arc[radius=\fillradius,start
        angle=180,end angle=225] --(Hp);
      \fill[color=gray!35] (Hp) ++ (-\fillradius,0) arc[radius=\fillradius,start
        angle=180,end angle=60] --(Hp);
      \draw[thick] (Hp)++(0,\detourradius) arc[radius=\detourradius, start
        angle=90, end angle=214];
      \fill(Hp)circle(\dotradius);
      \fill(H)circle(\dotradius);
      \fill[color=gray!35] (RpH) ++ (\fillradius,0) arc[radius=\fillradius,start
        angle=0,end angle=-45] --(RpH);
      \fill[color=gray!35] (RpH) ++ (\fillradius,0) arc[radius=\fillradius,start
        angle=0,end angle=120] --(RpH);
      \draw[thick] (RpH)++(0,\detourradius) arc[radius=\detourradius, start
        angle=90, end angle=-34];
      \fill(RpH)circle(\dotradius);
      \pgfmathparse{.5/\Ay}
      \let\smallradius\pgfmathresult
      \fill[color=gray!35] (-1,\smallradius) circle[radius=\smallradius];
      \def\dashsize{2pt}
      \def\gapsize{2pt}
      \begin{scope}
        \pgfsetdash{{\dashsize}{\gapsize}}{1pt}
        \draw (\Hpx,\URy)--(\Hpx,\Hpy);
        \draw (\Hx,\URy)--(\Hx,\Hpy);
        \draw (-2.5,\URy)--(-2.5,\Hpy);
        \draw (\RpHx,\URy)--(\RpHx,\Hpy);
        \pgfsetdash{{\dashsize}{\gapsize}}{1pt}
        \draw (0,0) arc[radius=1,start angle=0,end angle=60];
        \draw (0,0) arc[radius=1,start angle=180,end angle=120];
        \draw (-1,0) arc[radius=1,start angle=0,end angle=60];
        \draw (-1,0) arc[radius=1,start angle=180,end angle=120];
        \draw (-2,0) arc[radius=1,start angle=0,end angle=60];
        \draw (-2,0) arc[radius=1,start angle=180,end angle=120];
      \end{scope}
      \applyR{\Hx}{\Ay}
      \let\Rbx\resultx
      \let\Rby\resulty
      \coordinate(Rb) at (\Rbx,\Rby);
      \applyR{\ax+1}{\ay}
      \coordinate(Rpa) at (\resultx-1,\resulty);
      \fill(Rpa)circle(\dotradius);
      \draw[thick](Rpa)to(\Rbx-1,\Rby);
      \draw[thick](Rpa)to(-\Rbx-1,\Rby);
      \draw (\LLx,0)--(\URx,0);
      \fill (0,0) circle (\dotradius);
      \fill (-1,0) circle (\dotradius);
      \fill (-2,0) circle (\dotradius);
      \tikzstyle{lab}=[inner sep=0pt, outer sep=2pt]
      \draw(a)node[lab,anchor=south]{$a$};
      \draw(\ax-2,\ay)node[lab,anchor=south]{$R'R(a)$};
      \draw(Hp)node[lab,anchor=north,outer sep=3pt]{$H'$};
      \draw(H)node[lab,anchor=north,outer sep=3pt]{$H$};
      \draw(RpH)node[lab,anchor=east]{$R'(H)$};
      \draw(Rpa)node[lab,anchor=north]{$R'(a)$};
      \draw(0,0)node[lab,anchor=north]{$0$};
      \draw(-1,0)node[lab,anchor=north]{$-1\rlap{${}=R'(\rho)$}$};
      \draw(-2,0)node[lab,anchor=north]{$-2$};
      \pgfmathparse{(\Ay+\Hpy)/2}
      \let\arrowy\pgfmathresult
      \draw(\Hpx,\arrowy)node[lab,anchor=west]{$\mu_{a,A,H'}$};
      \draw(\RpHx,\arrowy)node[lab,anchor=east]{$R'(\mu_{a,A,H})$};
      \draw[thick,-latex](\Hpx,\Ay)--(\Hpx,\arrowy);
      \draw[thick,-latex reversed](\RpHx,\Ay)--(\RpHx,\arrowy);
    \end{tikzpicture}
  \end{center}
  \caption{The proof of lemma~\ref{lem-homotopy-into-horoball}.  The gray region is the
    union of the open height~$1$ horoballs around $\rho$ and $R(\rho)$.
    Since these horoballs miss $\H$, the path from $a$ to $R'R(a)$ is
    homotopic into $A$.}
  \label{fig-homotopy-into-horoball-punchline}
\end{figure}

\begin{proof}
We use the Leech model of $L$, with $\rho=(0;0,1)$ as usual.  By the
transitivity of translations on Leech roots, we may suppose without
loss of generality that $s=(0;1,-\w)$, so $s'=(0;1,\wbar)$.  Consider
the subspace $V=\{(0;x,y)\}$ of $L\tensor\C$.  We plot the point in
$P(V)\iso P^1$ represented by $(0;x,y)$ as $y/x\in\C\cup\{\infty\}$.
Using this, we have drawn the situation in
figure~\ref{fig-homotopy-into-horoball-preparation}.  In particular,
$\rho$ corresponds to the point at infinity and $\BB^1=\BB(V)$ to the
upper half plane.  The orthogonal complements in $V$ of $s$ and $s'$
are spanned by $(0;1,-\wbar)$ and $(0;1,\w)$ respectively.  So
$H=s^\perp$ and $H'={s'}^\perp$ correspond to $-\wbar$ and $\w$ in the
upper half plane.  The triflections $R'$ and $R$ act by
counterclockwise hyperbolic rotations by $2\pi/3$ around these points.
It is easy to see that $R(\infty)=0$, $R'(0)=\infty$ and
$R'(\infty)=-1$.  In particular, $R'R\in P\Gamma_{\!A}$, which is part
of the lemma.  The closed horoball $A$ and its images under $R$ and
$R'$ also appear in
figure~\ref{fig-homotopy-into-horoball-preparation}.  To draw the
figure we have assumed that $a$, a priori any point of $A$, actually
lies in $\BB^1$ and has real part~$0$.  This is harmless, by
remark~\ref{rem-change-a-without-loss}.

By definition (section~\ref{subsec-orbifold-fundamental-group}),
the left side of \eqref{eq-homotopy-into-horoball} is equal to
$$
\bigl(\mu_{a,A,H'},R'\bigr)
\cdot
\bigl(\mu_{a,A,H},R\bigr)
=
\Bigl(\hbox{$\mu_{a,A,H'}$ followed by $R'\circ\mu_{a,A,H}$},\ R'
R\Bigr)
$$ The path part of the right side appears in
figure~\ref{fig-homotopy-into-horoball-punchline}. This figure is the
same as figure~\ref{fig-homotopy-into-horoball-preparation} except as
follows.  First, we have drawn $R'\circ\mu_{a,A,H}$ instead of
$\mu_{a,A,H}$.  Second, we have drawn the height~$1$ open horoballs
around $\rho=\infty$ and $R'(\rho)=-1$, instead of $A$ and its
translates.  It is easy to see that $\mu_{a,A,H'}$ followed by
$R'\circ\mu_{a,A,H}$ lies in the union of these two horoballs.  (One
must check that $s'$ and $R'(s)$ are first-shell roots with respect to
$R'(\rho)$, which is almost obvious.)  The union of these horoballs is
contractible and misses~$\H$.  So our path from $a$ to $R'R(a)$ may be
homotoped (rel endpoints and missing~$\H$) to a path in $A$.  We have
proven that the left side of \eqref{eq-homotopy-into-horoball} is
equal to
$$
\bigl(\hbox{some path in $A$ from $a$ to $R' R(a)$}, R' R\bigr)
$$
in the
orbifold fundamental group $G_a$.  This is exactly what the equality
\eqref{eq-homotopy-into-horoball}  asserts, so the proof is
complete.
\end{proof}

\begin{remark}
  Figures \ref{fig-homotopy-into-horoball-preparation}
  and~\ref{fig-homotopy-into-horoball-punchline} bring to mind the classical action of $\SL_2\Z$
  on the upper half plane.  This is because the projective isometry
  group of $\smallmat{0}{\thetabar}{\theta}{0}$ happens to be
  $\PSL_2\Z$.  The dashed lines indicate some fundamental domains for
  the index~$2$ subgroup generated by triflections.
\end{remark}

\begin{lemma}
  \label{lem-Heisenberg-trick-foundation}
  Suppose $\rho$, $A$ and $a$ are as in
  lemma~\ref{lem-homotopy-into-horoball}, and $S_0$ is a set of Leech
  roots whose differences span $\rho^\perp/\spanof{\rho}$.  Then the
  subgroup of $P\Gamma_{\!A}$ generated by the products $R' R$ of
  lemma~\ref{lem-homotopy-into-horoball}, with $s$ varying over $S_0$,
  contains all the translations \eqref{eq-definition-of-translation}.  In particular, it
  acts transitively on the Leech roots.
\end{lemma}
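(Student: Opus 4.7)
The plan is to show that the subgroup $H$ generated by the elements $\tilde T_s := R_{s-\rho}R_s$ (for $s \in S_0$) contains every translation $T_{\lambda, z}$, exploiting the Heisenberg structure of the translation subgroup. The first step is to show that for any two Leech roots $s, t$, the elements $\tilde T_s$ and $\tilde T_t$ differ by a translation. Since translations act simply transitively on Leech roots, one can pick a translation $T$ with $T(s) = t$, and because $T$ fixes $\rho$ we also get $T(s-\rho) = t-\rho$, whence $T \tilde T_s T^{-1} = R_{T(s-\rho)}R_{T(s)} = R_{t-\rho}R_t = \tilde T_t$. Since the translation subgroup is normal in $P\Gamma_A$, this means $U_{s,t} := \tilde T_s \tilde T_t^{-1}$ is a translation, and it lies in $H$ for every pair $s, t \in S_0$.

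Next I would compute $U_{s,t}$ explicitly in the Leech model. Two applications of \eqref{eq-formula-for-omega-reflection} show that $\tilde T_s$ sends each $(l;0,0)$ with $l \in \Leech$ to $(l; 0, -\tfrac{1-\omega}{3}\langle l, \sigma(s)\rangle)$, where $\sigma(s) \in \Leech$ is the Leech-lattice component of $s$. Combining with the analogous expression for $\tilde T_t^{-1}$ and matching against \eqref{eq-definition-of-translation} yields $U_{s,t} = T_{\lambda_{s,t}, z_{s,t}}$ with $\lambda_{s,t}$ a fixed unit multiple (namely $\bar\omega$) of $\sigma(s) - \sigma(t)$.

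By the hypothesis on $S_0$, as $s, t$ range over $S_0$ the differences $\sigma(s) - \sigma(t)$ span $\Leech$ as an $\E$-module, and therefore so do the $\lambda_{s,t}$. Conjugation by any single $\tilde T_{s_0} \in H$ scales the $\lambda$-part of a translation by the unit $-\bar\omega$, because the image of $\tilde T_{s_0}$ in the Levi quotient of $P\Gamma_A$ is a ``rotation'' fixing $\Leech$ pointwise and scaling the $\rho$-direction by $-\omega$; combined with products via \eqref{eq-product-of-translations}, this gives $T_{\lambda, \ast} \in H$ for every $\lambda \in \Leech$. Finally, the commutator identity \eqref{eq-commutator-of-translations} produces central translations $T_{0, 2\Im\langle\lambda,\lambda'\rangle}$, and these values fill $\theta\Z$ as $\lambda, \lambda'$ range over $\Leech$, exhausting the center of the translation subgroup; combining with the previous step via \eqref{eq-product-of-translations} then yields every $T_{\lambda, z}$.

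The main technical obstacle is the explicit computation in the second step: one must carefully track how the two $\omega$-reflections act on the three ``blocks'' of the Leech model to confirm that $U_{s,t}$ is a pure translation (not merely a translation up to a nontrivial Levi piece) and to pin down the unit relating $\lambda_{s,t}$ to $\sigma(s) - \sigma(t)$. The rest is a formal consequence of the Heisenberg structure encoded in \eqref{eq-product-of-translations}--\eqref{eq-commutator-of-translations}.
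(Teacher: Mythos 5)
Your proposal is correct and follows essentially the same route as the paper: form the quotients $\tilde T_s\tilde T_t^{-1}$, identify them as translations whose $\Leech$-component is a unit multiple of $\sigma(s)-\sigma(t)$, use conjugation by a single $\tilde T_{s_0}$ (which rotates the $\Leech$-component by the primitive sixth root of unity $-\wbar$) to pass from the $\Z$-span to the $\E$-span, and fill out the center with the commutator identity \eqref{eq-commutator-of-translations} together with $\Leech=\theta\Leech^*$. The only discrepancy is the unit you name ($\wbar$ rather than the paper's $-\w$), which is immaterial since either is absorbed by the conjugation step.
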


\begin{proof}
  At heart this is lemma~8 of \cite{Allcock-Y555}.  But the
  statement there, and conventions and notation, are different enough
  that we indicate the proof.  Write $G$ for the specified subgroup of
  $P\Gamma_{\!A}$.
  
  We begin by taking $r$ to be the Leech root $(0;1,-\w)$ and
  setting $r'=r-\rho=(0;1,\wbar)$.
  One can check that on the span of these vectors,
  $R_r' R_r$ acts by $-\w T_{0,-2\theta}$ (Here, as in lemma
  \ref{lem-homotopy-into-horoball}, using a  
  prime means replacing a Leech root by a translate of
  itself and replacing the complex reflection in the corresponding way). 
  Obviously $R_r' R_r$ acts
  trivially on $\Lambda$.  We re-express this by saying that $R_r'R_r$
  acts on $L$ by $-\w Q T_{0,-2\theta}$, where $Q$ scales
  $\Lambda\sset L$ by $-\wbar$ and fixes $r$ and $r'$.  One can check
  that $Q T_{\lambda,z} Q^{-1}=T_{-\wbar\lambda,z}$ for every
  translation $T_{\lambda,z}$.

  Now suppose $s\in S_0$.  As a Leech root, we have $s=T_{\sigma,y}(r)$ for
  some translation $T_{\sigma,y}$, and  we also
  have $s'=T_{\sigma,y}(r')$ because $s'=s-\rho$ and $r'=r-\rho$.  Therefore
  $$ R_s' R_s = T_{\sigma,y} \bigl(R_r' R_r\bigr) T_{\sigma,y}^{-1} = T_{\sigma,y}
  \cdot-\wbar Q T_{0,-2\theta}\cdot T_{\sigma,y}^{-1}
  $$
  Choosing another element $t$ of $S_0$ and writing it as
  $T_{\tau,z}(r)$, we get a similar description of $R_t' R_t$.  So 
  \begin{align*}
    \bigl(R_s' R_s\bigr)\bigl(R_t' R_t\bigr)^{-1}
    &{}=
    T_{\sigma,y}\cdot\bigl(-\w Q T_{0,-2\theta}\bigr)\cdot
    T_{\sigma,y}^{-1}
    \cdot
    T_{\tau,z}\cdot\bigl(-\wbar T_{0,2\theta} Q^{-1} \bigr)\cdot
    T_{\tau,z}^{-1}
    \\
    &{}=
    T_{\sigma,y} Q \bigl(T_{-\sigma,-y} T_{\tau,z}\bigr)
    Q^{-1}T_{\tau,z}^{-1}
    \\
    &{}=
    T_{\sigma,y}T_{\wbar\sigma,-y} T_{-\wbar\tau,z}T_{\tau,z}^{-1}
    \\
    &{}=
    T_{\sigma+\wbar\sigma-\wbar\tau-\tau,?}
    \\
    &{}=
    T_{-\w(\sigma-\tau),?}
  \end{align*}
  where the value of ``?'' could be worked out but is unimportant for us.

  We have shown that for each pair $s,t\in S_0$, $G$ contains a
  translation $T_{-\w(\sigma-\tau),?}$.  So it also contains a
  translation $ (R_s' R_s)T_{-\w(\sigma-\tau),?}(R_s'R_s)^{-1} =
  T_{\sigma-\tau,?}.  $ We have shown that for all $s,t\in S_0$, $G$
  contains a translation $T_{\sigma-\tau,?}$ and a translation
  $T_{-\w(\sigma-\tau),?}$.  It follows from \eqref{eq-product-of-translations} that for every
  $\lambda$ in the $\E$-span of the differences $\sigma-\tau$, $G$
  contains a translation $T_{\lambda,?}$.

  By hypothesis, the differences $s-t$, with $s,t\in S_0$, projected
  into $\rho^\perp/\spanof{\rho}$, span $\rho^\perp/\spanof{\rho}$
  over~$\E$.
  Another way to say this is the $\E$-span of the corresponding
  differences $\sigma-\tau$ is all of $\Leech$.  So we have shown that
  for every $\lambda\in\Leech$, $G$ contains a translation
  $T_{\lambda,?}$.  Using these in the commutator formula
  \eqref{eq-commutator-of-translations}, and using
  $\Lambda=\theta\Lambda^*$, shows that $G$ also contains $T_{0,z}$ for
  every $z\in\Im\E$.  It follows that $G$ contains all translations.
\end{proof}

\begin{lemma}
  \label{lem-Heisenberg-trick-application}
  Suppose $\rho$, $A$, $a$ and $S_0$ are as in lemma~\ref{lem-Heisenberg-trick-foundation}, and
  define
  $$S=\set{s,s-\rho}{s\in S_0}.$$ Then $G_a$ is generated by the
  Leech meridians $M_{a,A,s}$, where $s$ varies over $S$.
\end{lemma}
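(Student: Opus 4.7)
Let $G$ denote the subgroup of $G_a$ generated by the meridians $M_{a,A,s}$ for $s\in S$; we will show $G=G_a$. The plan is to combine three ingredients: lemma~\ref{lem-homotopy-into-horoball}, to convert pairs of meridians into concrete isometries sitting inside $G$; lemma~\ref{lem-Heisenberg-trick-foundation}, to promote these isometries to the full translation subgroup of $P\Gamma_{\!A}$; and the fact (from \cite{AB-braidlike}) that all Leech meridians generate $G_a$, together with $P\Gamma_{\!A}$-equivariance of the meridian construction, to conclude that every Leech meridian lies in $G$.

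First I would observe that for each $s\in S_0$, both $s$ and $s-\rho$ belong to $S$, so by lemma~\ref{lem-homotopy-into-horoball} the product $R_{s-\rho}R_s\in P\Gamma_{\!A}$ lies in $G$, viewed as a subgroup of $G_a$ via the embedding~\eqref{eq-P-Gamma-A-as-a-subgroup-of-G-a}. Applying lemma~\ref{lem-Heisenberg-trick-foundation}, the subgroup of $P\Gamma_{\!A}$ generated by these products contains every translation $T_{\lambda,z}$ of \eqref{eq-definition-of-translation}, and in particular acts transitively on the set of Leech roots. Thus $G$ contains a subgroup $T\subseteq P\Gamma_{\!A}$ acting transitively on Leech roots.

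Next I would use the $P\Gamma_{\!A}$-equivariance of the meridian construction: for any $g\in P\Gamma_{\!A}$ (viewed as an element of $G_a$ as in section~\ref{subsec-orbifold-fundamental-group}) and any Leech root $s$, conjugation yields $g\cdot M_{a,A,s}\cdot g^{-1}=M_{a,A,g(s)}$. This identity is immediate from the definition of the group operation in $G_a$ and the fact that $g$ preserves $A$ and $a$ up to a path in $A$; the underlying path of the conjugate is $g$ applied to $\mu_{a,A,s}$, which is exactly $\mu_{a,A,g(s)}$ since $g$ carries the mirror $s^\perp$ to $g(s)^\perp$ and $R_s$ to $R_{g(s)}=g R_s g^{-1}$. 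Given any Leech root $t$, transitivity of $T$ on Leech roots supplies some $g\in T\subseteq G$ with $g(s)=t$ for some fixed $s\in S$; conjugating $M_{a,A,s}\in G$ by $g\in G$ gives $M_{a,A,t}\in G$.

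Therefore $G$ contains every Leech meridian, and by theorem~1.4 of \cite{AB-braidlike} the set of all Leech meridians generates $G_a$, so $G=G_a$. The only step requiring genuine care is the equivariance identity $g\cdot M_{a,A,s}\cdot g^{-1}=M_{a,A,g(s)}$; the rest is assembly of the preceding lemmas. I expect this is harmless because $g$ preserves $A$, so the initial and terminal segments $\geodesic{ab}$ and $R_s\geodesic{ba}$ in the definition of $\mu_{a,A,s}$ transform correctly into the corresponding segments for $\mu_{a,A,g(s)}$ after absorbing the harmless path in $A$ that compares $a$ with $g(a)$.
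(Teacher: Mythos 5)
Your proposal is correct and follows essentially the same route as the paper's own proof: use lemma~\ref{lem-homotopy-into-horoball} to place the products $R_{s-\rho}R_s$ in $G$, lemma~\ref{lem-Heisenberg-trick-foundation} to get all translations and hence transitivity on Leech roots, the conjugation identity $(\gamma,g)\cdot M_{a,A,s}\cdot(\gamma,g)^{-1}=M_{a,A,g(s)}$ to sweep up every Leech meridian, and theorem~1.4 of \cite{AB-braidlike} to conclude. Your justification of the equivariance identity is in fact somewhat more explicit than the paper's (which defers to ``drawing a picture''), and correctly identifies the role of remark~\ref{rem-change-a-without-loss} in absorbing the comparison path inside $A$.
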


\begin{proof}
  Write $G$ for the subgroup of $G_a$ generated by these Leech meridians.
  Also, fix some $s\in S$.
  For any other Leech root $t$, there is a translation $g\in
  P\Gamma_{\!A}$ that sends $s$ to $t$.  We choose a path $\gamma$
  in $A$ from $a$ to $g(a)$, and identify $g$ with the element
  $(\gamma,g)$ of the orbifold fundamental group $G_a$, via \eqref{eq-P-Gamma-A-as-a-subgroup-of-G-a}.
  By the previous lemma we know that $(\gamma,g)$ lies in $G$.
  By drawing a picture, it is easy to see that
  $$
  (\gamma,g)\cdot(\mu_{a,A,s},R_s)\cdot(\gamma,g)^{-1}
  =
  (\mu_{a,A,t},R_t)
  $$
  in $G_a$, so $G$ also contains the Leech
  meridian associated to $t$.
  Since $t$ was an arbitrary Leech root,
  $G$ contains all the Leech meridians.  These generate $G_a$ by
  theorem~1.4 of \cite{AB-braidlike}, completing the proof that $G=G_a$.
\end{proof}

\begin{proof}[Proof of theorem~\ref{thm-130-meridians-generate}]
This follows from the previous lemma with $S_0$ equal to the set of
Leech roots $p_i$ and $\wbar p_i-l_j$, where $i,j=1,\dots,13$ and $p_i$
and $l_j$ are incident.  
To apply the lemma, we must show that the image of $K$ in
$\rho^\perp/\spanof{\rho}$ is equal to $\rho^\perp/\spanof{\rho}$, 
where $K$ is the $\E$-span of $\lbrace s - t \colon s, t \in S_0 \rbrace$.
We show that in fact $K = \rho^{\perp}$.

Note that $\rho$ has the same inner product with all elements of $S_0$.
So $K \subseteq \rho^{\perp}$. For the converse, 
let $K_1$ be the $\E$-span of $\lbrace (p_i - p_1), (l_i - l_1) \colon i = 1, \dotsb , 13 \rbrace$.
Observe that $K$ contains
$(p_1 - p_i)$.
Given a line root $l_i$, let $p_j$ be the point root
incident on $l_i$ and $l_1$. Writing
$(l_i - l_1) = (\bar{\omega} p_j - l_1) - (\bar{\omega} p_j - l_i) $ we find that
$K$ contains $(l_i - l_1)$ too.
We have now shown that $K_1 \subseteq K$.

Now suppose $v \in \rho^{\perp}$; we must show that it lies in $K$.
By adding a multiple of $(\bar{\omega} p_1 - l_1) - p_1 \in K$, which has inner product 
$\theta$ with $p_{\infty}$, we may suppose without loss of generality that $v \perp p_{\infty}$.
Being orthogonal to $\rho$ and $p_{\infty}$, the vector $v$ is also orthogonal to $l_{\infty}$.
Since $L$ is spanned by the point and line roots, we may write
$v = y + a p_1 + b l_1$ for some $y \in K_1$ and $a, b \in \E$.
Taking inner product with $p_{\infty}$ and $l_{\infty}$ and using the fact that $p_{\infty}, l_{\infty} \in K_1^{\bot}$
shows 
$a = b = 0$. So $v = y \in K_1 \subseteq K$.
\end{proof}

\section{Twenty-six generators based ``at'' a cusp}
\label{sec-26-generators-based-at-a-cusp}

\noindent
The goal of this section is to prove the theorem
\ref{thm-26-meridians-at-cusp-generate} below.
We recall
that in the $P^2\F_3$ model of $L$ from section~\ref{t-P2-F3-model-of-L}, the point-roots
are the $13$ vectors of the form $(0;\theta,0^{12})$ and the line-roots
are the $13$ vectors of the form $(1;1^4,0^9)$.  We also recall
from section~\ref{subsec-Leech-cusps-and-Leech-roots} that the null vector $\rho=(3\w-1;-1,\dots,-1)$
represents a Leech cusp, that $A$ indicates a closed horoball centered
at $\rho$, small enough to miss $\H$, and that $a\in A$ indicates the
basepoint.  Finally, recall that if $s$ is a point- or line-root, then we will call the
meridian $M_{a,A,s}\in G_a$ the corresponding point- or
line-meridian (based at~$a$).

\begin{theorem}[26 generators based ``at'' a cusp]
  \label{thm-26-meridians-at-cusp-generate}
  The $26$ point- and line-meridians $M_{a,A,s}$, based
  at $a$, generate the orbifold fundamental group~$G_a$.
\end{theorem}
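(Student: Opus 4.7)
Write $G\subseteq G_a$ for the subgroup generated by the $26$ point- and line-meridians. By Theorem~\ref{thm-130-meridians-generate}, it suffices to show that $G$ contains the $130$ Leech meridians $M_{a,A,s}$ with $s\in S=\{p_i,\ p_i-\rho,\ \bar\omega p_i-l_j,\ \bar\omega p_i-l_j-\rho\}$, where the last two are restricted to incident pairs $(p_i,l_j)$. The $13$ meridians $M_{a,A,p_i}$ are generators; I would dispatch the remaining $117$ in two stages, combining the algebraic identity of Lemma~\ref{lem-homotopy-into-horoball} with the homotopy lemmas of Section~\ref{sec-paths-and-homotopies}.

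\textbf{Stage 1 (line roots $\bar\omega p_i-l_j$).} Applying formula~\eqref{eq-formula-for-omega-reflection} to an incident pair $(p_i,l_j)$ with $\ip{p_i}{l_j}=\theta$ gives $R_{p_i}(l_j)=l_j-\bar\omega p_i$, and therefore
\[
R_{p_i}\,R_{l_j}\,R_{p_i}^{-1}=R_{\bar\omega p_i-l_j}.
\]
Consequently the conjugate $M_{a,A,p_i}\,M_{a,A,l_j}\,M_{a,A,p_i}^{-1}\in G$ is of the form $(\mu,R_{\bar\omega p_i-l_j})$ for an explicit path $\mu$ from $a$ to $R_{\bar\omega p_i-l_j}(a)$ in $\BB^{13}-\H$. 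To identify it with $M_{a,A,\bar\omega p_i-l_j}$, I would homotope $\mu$ rel endpoints to $\mu_{a,A,\bar\omega p_i-l_j}$ using Lemmas~\ref{lem-homotopy-for-moving-the-basepoint} and~\ref{lem-homotopy-for-moving-the-point-on-the-hyperplane-cusp-version}. This reduces to checking that a specific totally geodesic triangle in $\BB^{13}$ meets $\H$ only along the prescribed edge --- a property to be verified in the appendices.

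\textbf{Stage 2 (shifted roots $s-\rho$).} For any Leech root $s$, Lemma~\ref{lem-homotopy-into-horoball} gives the identity
\[
M_{a,A,s-\rho}\,M_{a,A,s}=R_{s-\rho}R_s \qquad\text{in }G_a,
\]
with the right-hand side read via the embedding~\eqref{eq-P-Gamma-A-as-a-subgroup-of-G-a}. Solving, $M_{a,A,s-\rho}=(R_{s-\rho}R_s)\cdot M_{a,A,s}^{-1}$, and since $M_{a,A,s}$ is already in $G$ (by Stage~1, or trivially when $s=p_i$), it suffices to exhibit the $65$ translations $R_{s-\rho}R_s\in P\Gamma_{\!A}$, for $s\in\{p_i\}\cup\{\bar\omega p_i-l_j\}$, as elements of $G$. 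I would produce each such translation by writing an explicit word in the $26$ generators and the Stage~1 meridians whose underlying isometry is $R_{s-\rho}R_s$, and then homotope the resulting path into the simply connected horoball $A$ by exactly the same horoball-collapsing argument that drove the proof of Lemma~\ref{lem-homotopy-into-horoball}. Once enough such translations lie in $G$, the Heisenberg arithmetic of Lemma~\ref{lem-Heisenberg-trick-foundation} and the conjugation trick of Lemma~\ref{lem-Heisenberg-trick-application} together propagate the conclusion to all $65$ shifted meridians.

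\textbf{Main obstacle.} The nontrivial content is geometric rather than algebraic: every homotopy above rests on showing that a specific totally geodesic triangle $\triangle\rho p q\subset\overline{\BB^{13}}$ meets $\H$ only along its expected edge $\overline{pq}\subset H$ (plus possibly additional mirrors at~$q$). These are the ``ten triangles'' alluded to in the introduction, and their verifications turn on arithmetic inequalities in~$L$ --- ultimately reflecting the minimal-norm and transitivity properties of the Leech lattice. Once those triangle computations are in hand, the rest of the argument is a bookkeeping composition of the homotopy lemmas of Section~\ref{sec-paths-and-homotopies} with the Stage~1 identity $R_{p_i}R_{l_j}R_{p_i}^{-1}=R_{\bar\omega p_i-l_j}$ and the Stage~2 identity of Lemma~\ref{lem-homotopy-into-horoball}.
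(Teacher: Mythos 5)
Your opening reduction coincides with the paper's: both arguments aim to show that the subgroup $G$ generated by the $26$ point- and line-meridians contains the $130$ Leech meridians of Theorem~\ref{thm-130-meridians-generate}. After that the proposal diverges from the paper, and each stage has a problem; the second is fatal as written.

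In Stage~1, the isometry-level identity $R_{p_i}R_{l_j}R_{p_i}^{-1}=R_{\bar\omega p_i-l_j}$ is correct, but it only tells you that $M_{a,A,p_i}M_{a,A,l_j}M_{a,A,p_i}^{-1}$ is some pair $(\mu,R_{\bar\omega p_i-l_j})$. Whether $\mu$ is homotopic rel endpoints to $\mu_{a,A,\bar\omega p_i-l_j}$ is exactly the kind of statement that Lemmas~\ref{lem-homotopy-for-moving-the-basepoint} and~\ref{lem-homotopy-for-moving-the-point-on-the-hyperplane-cusp-version} do \emph{not} address: those lemmas move the basepoint or the aiming point of a single meridian; they do not identify a conjugate of one meridian with a different meridian, and the required homotopy does not reduce to a single triangle check. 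The paper sidesteps this entirely: Lemma~\ref{lem-point-and-line-meridians-give-another-Leech-meridian} claims only that $M_{a,A,\bar\omega p_i-l_j}$ lies in the subgroup generated by $M_{a,A,p_i}$ and $M_{a,A,l_j}$, and proves it by localizing at the intersection point $q$ of the mirrors, rescaling to a Euclidean picture, and quoting the generation statement for the braid group of $D_4^\E$ (Lemma~\ref{lem-generators-for-braid-group-of-D-4}); no explicit word is ever produced. Your conjugation formula is an additional unproved assertion.

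Stage~2 is where the essential content is missing. Solving Lemma~\ref{lem-homotopy-into-horoball} for $M_{a,A,s-\rho}=(R_{s-\rho}R_s)\cdot M_{a,A,s}^{-1}$ is fine, but it merely relocates the problem: you must show that the translation $R_{s-\rho}R_s\in P\Gamma_{\!A}$ lies in $G$, and the only identity available expressing it as a product of meridians is $M_{a,A,s-\rho}M_{a,A,s}$, which already involves the element you are trying to capture. Saying you would ``produce each such translation by writing an explicit word in the $26$ generators'' assumes exactly what must be proved; there is no a priori reason such a word exists, nor that the horoball-collapsing argument of Lemma~\ref{lem-homotopy-into-horoball} applies to any word other than the specific product $M_{a,A,s-\rho}M_{a,A,s}$ for which it was proved. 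The paper does not obtain the shifted meridians from translations at all. Instead it reaches $p_1-\rho$ and $\bar\omega p_1-l_1-\rho$ by iterating Lemma~\ref{lem-configuration-3-Leech-roots-plus-1-second-shell}\eqref{item-two-Leech-meridians-give-a-third} and Lemma~\ref{lem-point-and-line-meridians-give-another-Leech-meridian} along explicit chains of Leech roots ($s_1,\dots,s_{17}$ and $s_1,\dots,s_{14}$), found by an intensive computer search. Producing those chains is the heart of the proof of this theorem, and it is absent from your proposal.
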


To prove it we will show that the subgroup $G$ of $G_a$ generated by
the point- and line-meridians contains the~$130$ Leech meridians from
theorem~\ref{thm-130-meridians-generate}.  That theorem says that the $130$ meridians generate
$G_a$, so theorem~\ref{thm-26-meridians-at-cusp-generate} follows.  We
remark that the point-roots are Leech roots with respect to $\rho$, so
their corresponding meridians are Leech meridians.  The line-roots are
2nd-shell roots rather than Leech roots.

Our recipe for getting additional Leech meridians from the point- and
line-meridians is the following.  Suppose $x$ and $y$ are Leech roots
with inner product $\pm\theta$, or a point-root and an incident
line-root, in which case they also have inner product $\pm\theta$.
The span of two roots with inner product $\pm\theta$ is a well-known
Eisenstein lattice, called $D_4^\E$ because its underlying
$\Z$-lattice is a copy of the usual $D_4$ root lattice, scaled to have
minimal norm~$3$.  Among the $24$ roots in $\gend{x,y}$, it turns out
that there is exactly one Leech root that is not a scalar multiple of
$x$ or $y$.  Call it $z$.  We will show that the subgroup of $G_a$
generated by $M_{a,A,x}$ and $M_{a,A,y}$ also contains $M_{a,A,z}$.
``New'' Leech meridians obtained this way can be combined with old
ones using the same method, to obtain even more Leech meridians.
Repeating the process many times enables us to prove that $G$ contains
the $130$ Leech meridians of theorem~\ref{thm-130-meridians-generate}.

This idea suggested itself because the braid group associated to the
complex reflection group $\Aut D_4^\E$ is known to be isomorphic to
the usual $3$-strand braid group $B_3$, with the conjugacy class of
our meridians corresponding to the conjugacy class of the standard
generators of~$B_3$.  It was natural to hope that the meridians
$M_{a,A,x}$, $M_{a,A,y}$ and $M_{a,A,z}$ all lie in a single copy of
$B_3$, with the first two of them serving as generators.  So our first
result is that the braid group of $D_4^\E$ is generated by two
specific meridians, for either of two specific basepoints. It seems
silly to have to prove this, since the braid groups of finite complex
reflections groups are well understood
\cite{Bannai}\cite{Bessis}\cite{Michel}.  But we do not know of any
quotable result stating generation by particular loops with
particular basepoints.

\begin{lemma}[Generators for the braid group of $D_4^\E$]
  \label{lem-generators-for-braid-group-of-D-4}
  Consider the lattice $D_4^\E$ in $\C^2$, namely the $\E$-span of
  $\alpha=(\theta,0)$ and $\beta=(1,\sqrt2)$ under the
  usual inner product.
  Let $\H$ be the
  union of the mirrors orthogonal to the $24$ roots of $D_4^\E$, and
  let $c$ be any scalar multiple of
  $(-1,-\wbar\sqrt2)$
  or 
  $\frac{\w}{2}\bigl(\theta+1,(\theta-2)\sqrt2\bigr)$.
  Then
  $J_c=\piorb\bigl((\C^2-\H)/\gend{R_\alpha,R_\beta},c\bigr)$
  is generated by $M_{c,\alpha}$ and
  $M_{c,\beta}$.
\end{lemma}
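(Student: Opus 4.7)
The plan is to recognize this as the braid group of the Shephard-Todd complex reflection group $G_4$, whose braid group is the classical $3$-strand braid group $B_3$, and to verify that for each allowed basepoint the two specified meridians form a standard generating pair. Since $\ip{\alpha}{\beta} = \theta$ has norm~$3$, the triflections $R_\alpha$ and $R_\beta$ satisfy $R_\alpha R_\beta R_\alpha = R_\beta R_\alpha R_\beta$, and $W = \gend{R_\alpha, R_\beta}$ is a group of order~$24$ acting transitively on the $24$ roots of $D_4^\E$; the four resulting mirror orbits constitute~$\H$. The orbifold fundamental group $J_c$ fits into the exact sequence
\begin{equation*}
1 \to \pi_1(\C^2 - \H, c) \to J_c \to W \to 1,
\end{equation*}
and the pure braid group of this central arrangement of four lines in $\C^2$ is generated by small loops around each of the four mirrors (a central $\C^*$-loop is a product of these).

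Let $G \subseteq J_c$ be the subgroup generated by $M_{c,\alpha}$ and $M_{c,\beta}$. These project to $R_\alpha$ and $R_\beta$ under $J_c \to W$, so $G$ surjects onto $W$, and it remains to show that $G$ contains the pure braid group. The elements $M_{c,\alpha}^3$ and $M_{c,\beta}^3$ of $G$ are loops around $\alpha^\perp$ and $\beta^\perp$. For the other two mirrors, I would show by direct homotopy arguments (modelled on section~\ref{sec-paths-and-homotopies}) that the conjugation identities
\begin{equation*}
M_{c,\alpha}\, M_{c,\beta}\, M_{c,\alpha}^{-1} = M_{c, R_\alpha(\beta)}
\quad \text{and} \quad
M_{c,\beta}\, M_{c,\alpha}\, M_{c,\beta}^{-1} = M_{c, R_\beta(\alpha)}
\end{equation*}
hold in $J_c$. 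The roots $R_\alpha(\beta)$ and $R_\beta(\alpha)$ lie on the two remaining mirrors, so $G$ contains meridians associated to all four mirrors; cubing then yields all generators of the pure braid group. Hence $G = J_c$.

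The main obstacle is establishing these conjugation identities for each of the two specified basepoints. Unpacking the definition of the orbifold fundamental group, the identity $M_{c,\alpha} M_{c,\beta} M_{c,\alpha}^{-1} = M_{c, R_\alpha(\beta)}$ reduces to a homotopy, rel endpoints, in $\C^2 - \H$ between an explicit path built by concatenating $\mu_{c,\alpha}$, $R_\alpha(\mu_{c,\beta})$ and $R_\alpha R_\beta R_\alpha^{-1}(\reverse(\mu_{c,\alpha}))$, and the canonical meridian path $\mu_{c, R_\alpha(\beta)}$. This in turn reduces to verifying that certain totally-real triangles in $\C^2$ meet $\H$ only as permitted by lemma~\ref{lem-homotopy-for-moving-the-point-on-the-hyperplane-ordinary-basepoint-version} (and that certain geodesic arcs miss $\H$, as in lemma~\ref{lem-homotopy-for-moving-the-basepoint}). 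I would carry out this verification by explicit coordinate computation using the given formulas for $c$: each of the two candidate basepoints is expected to sit in a suitably symmetric position with respect to the four mirrors and the origin, so that the relevant triangles avoid the arrangement. The two specific forms of $c$ in the statement are presumably forced by where this lemma is applied later in the paper.
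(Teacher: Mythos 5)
Your reduction has a genuine gap at its final step. You argue that once $G$ contains a meridian around each of the four mirrors, cubing gives generators of $\pi_1(\C^2-\H,c)$ and hence $G=J_c$. But $\pi_1$ of the complement of a central arrangement is generated by one meridional loop per hyperplane only for a \emph{compatible} (geometric, Zariski--van Kampen) choice of those loops; arbitrary representatives of the four meridian conjugacy classes can generate a proper subgroup (already in a free group, replacing one element of a basis of conjugacy representatives by a different conjugate can destroy generation). What you would actually have in hand are the four specific based loops $M_{c,H}^3$ built from geodesic tails out of $c$, and the claim that \emph{these} form a compatible system is essentially equivalent to the lemma being proved. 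The same circularity infects your conjugation identities: $M_{c,\alpha}M_{c,\beta}M_{c,\alpha}^{-1}$ is certainly \emph{a} meridian of $R_\alpha(\beta)^\perp$, but it equals the canonical $M_{c,R_\alpha(\beta)}$ only up to conjugation by an element of $\pi_1(\C^2-\H,c)$ --- precisely the group you do not yet control --- and verifying the identity on the nose is not easier than the lemma itself.

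The paper sidesteps both problems by quoting Theorem~1.2 of \cite{AB-braidlike}, which gives a checkable sufficient condition for the nearest-mirror meridians to generate: besides surjectivity onto $\gend{R_\alpha,R_\beta}$, one needs that some triflection in $\alpha$ or $\beta$ moves the basepoint closer to its projection onto $\gamma^\perp$, and likewise for $\delta^\perp$. This is verified by direct computation at the symmetric basepoint $c_0=(-2,\sqrt2+\sqrt6)$, and the result is then transported to the two basepoints in the statement by moving the basepoint along $\geodesic{c_0c_1}$ and checking that the relevant triangles miss $\H$ (via lemma~\ref{lem-homotopy-for-moving-the-basepoint} and lemma~\ref{lem-homotopy-for-moving-the-point-on-the-hyperplane-ordinary-basepoint-version}). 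Your closing paragraph, which reduces everything to triangle-avoidance checks in coordinates, is in the same spirit as that second half; what is missing is a substitute for the generation criterion at the first stage. To repair your argument you would either need to invoke that criterion, or prove directly that your four based meridians map to a standard generating quadruple of $\pi_1$ of the four-times-punctured sphere under the fibration $\C^2-\H\to\C P^1$, which is a nontrivial additional verification.
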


\begin{remark}
The $\sqrt2$'s can be avoided by using three coordinates with the last
two being equal: $\alpha=(\theta,0,0)$ and $\beta=(1,1,1)$.
\end{remark}

\begin{proof}
  We begin by remarking that $\alpha^2=\beta^2=3$ and
  $\ip{\alpha}{\beta}=\theta$.  We write $\gamma=(1,\wbar\sqrt2)$ and
  $\delta=(1,\w\sqrt2)$ for the remaining roots of $D_4^\E$, up to
  scale.  We will begin the proof by establishing the lemma for a
  different basepoint $c_0=(-2,\sqrt2+\sqrt6)$.  This basepoint is
  probably the most natural: when one plots the situation in $\C P^1$, the four
  mirrors form the vertices of a regular tetrahedron and $c_0$
  corresponds to the midpoint of the edge between $\alpha^\perp$ and
  $\beta^\perp$.  We will suppress the subscript $0$ until we need to
  refer to the other $c$'s.

  One can check that $\ip{c}{\alpha}$ and $\ip{c}{\beta}$ are equal in
  absolute value, and smaller in absolute value than $\ip{c}{\gamma}$
  and $\ip{c}{\delta}$.  Therefore $\alpha^\perp$ and $\beta^\perp$
  are the mirrors nearest~$c$.  Write $J$ for the subgroup of $J_c$
  generated by $M_{c,\alpha}=(\mu_{c,\alpha},R_\alpha)$ and
  $M_{c,\beta}=(\mu_{c,\beta},R_\beta)$.  We are claiming that $J$ is all of
  $J_c$.  Theorem~1.2 of \cite{AB-braidlike} gives sufficient conditions for
  this.  First,  $J$ should surject to $\gend{R_\alpha,R_\beta}$,
  which is obvious.  
  Second, some
  triflection in $\alpha$ or $\beta$ should move $c$ closer to
  $c$'s projection to $\gamma^\perp$, and some triflection
  should do the same with $\delta$ in place of $\gamma$.  
  Direct calculation reveals
  that $R_\alpha$ does this for $\gamma^\perp$ and
  $R_\alpha^{-1}$ does it for $\delta^\perp$.
  This finishes the proof of the lemma for
  the basepoint $c_0$.

  A different method is required for the other basepoints.  First we
  suppose the basepoint $c$ is a scalar multiple of
  $(-1,-\wbar\sqrt2)$. It does not matter which multiple, because
  scaling permutes the possibilities (and
  transforms their meridians accordingly).
  We choose $c$ to be
  the particular multiple $c_1=(\theta\wbar-\w\sqrt3)\bigl(-1,-\wbar\sqrt2\bigr)$.
  The advantage of this scaling is that $\ip{c_0}{c_1}$ is
  positive, so the angle between $c_0$ and $c_1$ is as small as
  possible.  We regard this as saying that $c_1$ is ``as close as
  possible'' to $c_0$, which is desirable because we will  study the identification of $J_{c_0}$ with $J_{c_1}$ got by
  moving $c_0$ to $c_1$ along $\geodesic{c_0 c_1}$.  Namely, will show
  that $M_{c_0,\alpha}\in J_{c_0}$ corresponds to
  $M_{c_1,\alpha}\in J_{c_1}$, and similarly with
  $\beta$ in place of $\alpha$.  From this correspondence and the
  previous paragraph it follows
  immediately that $M_{c_1,\alpha}$ and
  $M_{c_1,\beta}$ generate $J_{c_1}$.

  We will write $p_i$ for the projection of $c_i$ to $\alpha^\perp$.
  One can check that the convex hull $T$ of $c_0$, $p_0$ and $c_1$
  meets $\H$ only at $p_0$.  
  (Let $\ip{T}{\alpha}$ be the triangle in $\C$ with vertices 
  $\ip{c_0}{\alpha}$, $\ip{p_0}{\alpha}$, $\ip{p_1}{\alpha}$.
  One checks that the triangle $\ip{T}{\alpha}$ touches the origin 
  only at its vertex $\ip{p_0}{\alpha}$. Similarly one checks
  that $\beta^{\perp}$, $\gamma^{\perp}$, $\delta^{\perp}$
  do not intersect $T$ by checking that the triangles
  $\ip{T}{\beta}$, $\ip{T}{\gamma}$, $\ip{T}{\delta}$
  do not contain the origin.)
  The obvious variant of
  Lemma~\ref{lem-homotopy-for-moving-the-basepoint} 
  for Euclidean space implies that
  moving the basepoint from $c_0$ to $c_1$ along $\geodesic{c_0c_1}$
  identifies $M_{c_0,\alpha}\in J_{c_0}$ with $M_{c_1,p_0,\alpha}\in
  J_{c_1}$.  Similarly, one checks that the convex hull of $p_0$,
  $c_1$ and $p_1$ misses $\H$ except that
  $\geodesic{p_0p_1}\sset\alpha^\perp$.  From this,
  lemma~\ref{lem-homotopy-for-moving-the-point-on-the-hyperplane-ordinary-basepoint-version}
  concludes that $M_{c_1,p_0,\alpha}$ is the same element of $J_{c_1}$
  as $M_{c_1,\alpha}$.   We have proven the correspondence between
  $M_{c_0,\alpha}$ and $M_{c_1,\alpha}$.  The same method applies with
  $\beta$ in place of $\alpha$.  This completes the proof for the
  basepoint~$c_1$.

  Exactly  the same analysis applies to the second possibility for
  $c$.  The scalar multiple of
  $\frac{\w}{2}\bigl(\theta+1,(\theta-2)\sqrt2\bigr)$ that we used was
  $$c_2=-(3+2\sqrt3+\theta\sqrt3)\Bigl(\theta+1,(\theta-2)\sqrt2\Bigr),$$
  for the same reason as before.
\end{proof}

As explained above, our method starts with two Leech meridians,
corresponding to Leech roots with inner product $\pm\theta$, and shows
that the group they generate contains a third Leech meridian.  So
naturally we will need to understand such pairs of Leech roots.  At
the same time we will classify the pairs of Leech roots with inner
product $-\frac32\pm\frac\theta2$.  We do not care about these pairs
themselves.  But they turn out to be key for the other half of our
method: using a point-meridian and a line-meridian to generate another
Leech meridian.  The precise statements of our constructions of
``new'' Leech meridians are lemmas
\ref{lem-configuration-3-Leech-roots-plus-1-second-shell}\eqref{item-two-Leech-meridians-give-a-third}
and~\ref{lem-point-and-line-meridians-give-another-Leech-meridian}.

\begin{lemma}
  \label{lem-ordered-pairs-of-Leech-roots-with-absolute-inner-product-sqrt3}
  Under the $\Gamma$-stabilizer $\Gamma_{\!\rho}$ of $\rho$, there are
  four orbits of ordered pairs of Leech roots with inner product of
  absolute value $\sqrt3$.  In the Leech model of $L$, orbit
  representatives are the pairs $(s,s')$ where
  \begin{center}
    \begin{tabular}{lll}
      $s=(0;1,-\w)$ and $s'=(\lambda_6;1,\w),$&with $\ip{s}{s'}=\theta$
      \\
      $s=(0;1,-\w)$ and $s'=(\lambda_9;1,\theta),$&with $\ip{s}{s'}=-\frac{3}{2}+\frac{\theta}{2}=\wbar\thetabar$
    \end{tabular}
  \end{center}
  and the pairs got from these by exchanging $s,s'$.  Here $\lambda_6$
  and $\lambda_9$ are any fixed vectors of norms $6$ and~$9$ in
  $\Leech$. 
\end{lemma}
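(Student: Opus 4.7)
The plan is to exploit the simple transitivity of translations on Leech roots (used in the proof of lemma~\ref{lem-Heisenberg-trick-foundation}) and the explicit inner product formula \eqref{eq-general-inner-product-formula} to reduce to a classification in the first argument. Since the Heisenberg group \eqref{eq-definition-of-translation} lies in $\Gamma_\rho$, I may assume $s=(0;1,-\w)$. Writing $s'=(\sigma';1,\alpha')$ via \eqref{eq-Leech-roots-in-Leech-model} and substituting into \eqref{eq-general-inner-product-formula} with $m=m'=1$, $\sigma=0$, $N=N'=3$, and $\nu=-\theta/6$, the inner product simplifies to
\[
\ip{s}{s'} = 3 - \tfrac{1}{2}{\sigma'}^2 + 3(\nu' - \nu),
\]
whose real and purely imaginary parts are $3-{\sigma'}^2/2$ and $3(\nu'-\nu)$ respectively.

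Imposing $|\ip{s}{s'}|^2=3$ forces $(3-{\sigma'}^2/2)^2\le 3$, confining ${\sigma'}^2$ to the integer range $[6,9]$. A direct case check on ${\sigma'}^2\in\{6,7,8,9\}$, using the quantization of $\nu'$ from \eqref{eq-Leech-roots-in-Leech-model}, eliminates ${\sigma'}^2\in\{7,8\}$ and leaves exactly two admissible values of $\nu'$ in each of the remaining cases. The corresponding inner products are $\pm\theta$ when ${\sigma'}^2=6$ and $-\tfrac{3}{2}\pm\tfrac{\theta}{2}$ when ${\sigma'}^2=9$.

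To collapse each inner product value to a single orbit, I would invoke $\Aut(\Leech)=6\cdot\Suz\subseteq(\Gamma_\rho)_s$, acting on the $\Leech$-coordinate and trivially on the last two. Its transitivity on norm-$6$ and norm-$9$ vectors of $\Leech$, stated in section~\ref{subsec-Leech-model-of-L}, moves $\sigma'$ to any prescribed $\lambda_6$ or $\lambda_9$; the value of $\ip{s}{s'}$ then pins down $\nu'$ uniquely, so there is at most one orbit per inner product value. Since $\ip{s}{s'}$ is itself a $\Gamma_\rho$-invariant of the ordered pair, the four orbits are pairwise distinct. Finally, swapping $s\leftrightarrow s'$ conjugates $\ip{s}{s'}$, interchanging $\theta\leftrightarrow-\theta$ and $-\tfrac{3}{2}+\tfrac{\theta}{2}\leftrightarrow-\tfrac{3}{2}-\tfrac{\theta}{2}$, so the two listed representatives together with their swaps exhaust the four orbits.

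I expect no substantive obstacle; the work is mostly the bookkeeping of $|\ip{s}{s'}|^2=3$ under the $\nu'$-quantization, combined with the two transitivity facts (translations on Leech roots, and $6\cdot\Suz$ on norm-$6$ and norm-$9$ vectors of $\Leech$) already recorded in the paper.
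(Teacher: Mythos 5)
Your proposal is correct and follows essentially the same route as the paper: normalize $s=(0;1,-\w)$ by the transitivity of translations, expand $\ip{s}{s'}$ via the inner product formula into its real part $3-\sigma'^2/2$ and imaginary part $3(\nu'-\nu)$, use the Leech lattice's minimal norm and the quantization of $\nu'$ to force $\sigma'^2\in\{6,9\}$ with two values of $\nu'$ each, and finish with the transitivity of $6\cdot\Suz$ on norm-$6$ and norm-$9$ vectors. The only cosmetic difference is that the paper pins down the real part directly from the list of elements of $\E$ of absolute value $\sqrt3$ rather than via your inequality-plus-case-check, which amounts to the same bookkeeping.
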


\begin{proof}
  Suppose $(s,s')$ is such an ordered pair.  In the Leech model, the
  Leech roots are parameterized by \eqref{eq-Leech-roots-in-Leech-model}.  So $s$ has the form
  $\bigl(\sigma;1,\theta(\frac{\sigma^2-3}{6}+\nu)\bigr)$ where
  $\sigma$ lies in the Leech lattice and $\nu$ is purely imaginary and
  chosen so that the last coordinate lies in $\E$.  And similarly
  for~$s'$.  By the transitivity of $\Gamma_{\!\rho}$ on Leech roots,
  we may take $\sigma=0$ and $\nu=\frac{1}{2\theta}$, yielding
  $s=\bigl(0;1,\theta(-\frac12+\frac1{2\theta})\bigr)=(0;1,-\w)$.  The
  inner product $\ip{s}{s'}$ is best understood using \eqref{eq-general-inner-product-formula}, which
  in this case reduces to
  $$
  \ip{s}{s'}=\frac{6-\sigma'^2}{2}+3\bigl(\nu'-1/2\theta\bigr).
  $$
  The first term is real and the second is imaginary.
  Now we consider the elements of $\E$ of absolute value~$\sqrt3$.
  First, $\frac32\pm\frac\theta2$ cannot occur, because $\sigma'$
  would have to have norm~$3$ in order for $\ip{s}{s'}$ to have real
  part~$\frac32$.  This is impossible since the Leech lattice has
  minimal norm~$6$.   By the same reasoning, $\pm\theta$ can occur only
  when $\sigma'$ has norm~$6$, and $-\frac32\pm\frac\theta2$ can occur
  only when $\sigma'$ has norm~$9$.  Since $\Aut\Leech=6\cdot\Suz$ acts
  transitively on the norm~$6$ and norm~$9$ elements of $\Leech$, we
  may suppose without loss of generality that $\sigma'=\lambda_6$ or
  $\lambda_9$.  Then $\nu'$ is determined by the imaginary
  part of $\ip{s}{s'}$.  When $\ip{s}{s'}=\theta$ we get
  $3(\nu'-\frac{1}{2\theta})=\theta$, so $\nu'=-\frac{1}{2\theta}$ and $s'$ is
  as displayed.   Similarly, when
  $\ip{s}{s'}=-\frac{3}{2}+\frac{\theta}{2}$ one gets $\nu'=0$.  
\end{proof}

The proofs of the next two lemmas depend on how certain totally real
triangles in $\BB^{13}$ meet $\H$.  Because the calculations are long,
and similar computations will be needed later, we moved the verifications
to appendix~\ref{app-how-real-triangles-meet-mirrors}.
Lemma~\ref{lem-configuration-3-Leech-roots-plus-1-second-shell} relies
on
lemma~\ref{lem-triangle-analysis-for-2-Leech-meridians-give-a-third}
and
lemma~\ref{lem-configuration-2-Leech-roots-plus-1-second-shell-plus-1-third-shell}
relies on
lemma~\ref{lem-triangle-analysis-for-point-and-line-meridians-give-Leech-meridian}.

\begin{lemma}
  \label{lem-configuration-3-Leech-roots-plus-1-second-shell}
  Suppose $x,y$ are Leech roots with $\ip{x}{y}=\theta$, $p$ is the
  projection of $\rho$ to $x^\perp$, and $q$ is the projection of
  $\rho$ to the $x^\perp\cap y^\perp$.  Then
  \begin{enumerate}
  \item
    \label{item-three-mirrors-are-Leech-mirrors}
    Of the four mirrors of the reflection group $\gend{R_x,R_y}$,
    three are Leech mirrors (corresponding to the
    Leech roots $x$, $y$ and $z=-\wbar x-\w y$) and one is a second-shell
    mirror.
  \item
    \label{item-mirrors-through-q-are-just-3-Leech-and-1-second-shell}
    These four mirrors are the only ones containing $q$.
  \item
    \label{item-two-Leech-meridians-give-a-third}
    The subgroup of $G_a$ generated by the Leech
    meridians $M_{a,A,x}$ and $M_{a,A,y}$
        also contains the Leech meridian $M_{a,A,z}$.
  \end{enumerate}
\end{lemma}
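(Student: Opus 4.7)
My plan is to establish the three parts in sequence. For (1), I would fix the canonical pair from Lemma~\ref{lem-ordered-pairs-of-Leech-roots-with-absolute-inner-product-sqrt3} in the Leech model, namely $x=(0;1,-\w)$ and $y=(\lambda_6;1,\w)$. The $\E$-span $\gend{x,y}$ has Gram matrix $\bigl(\begin{smallmatrix}3&\theta\\\bar\theta&3\end{smallmatrix}\bigr)$, of determinant $6$, which is a copy of $D_4^\E$. Solving $3(|a|^2+|b|^2)+2\Re(a\bar b\,\theta)=3$ for $(a,b)\in\E^2$ reveals that its $24$ roots span exactly four mirror directions, with representatives $x$, $y$, $z=-\wbar x-\w y$, and $w=x-\w y$. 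Using $\ip{\rho}{x}=\ip{\rho}{y}=\theta$, direct evaluation gives $\ip{\rho}{z}=\theta$ and $\ip{\rho}{w}=\theta(1-\w)$, of squared absolute values $3$ and $9$ respectively; so $x$, $y$, $z$ are Leech roots and $w$ is a second-shell root.

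For (2), I would invoke the appendix's triangle analysis, Lemma~\ref{lem-triangle-analysis-for-2-Leech-meridians-give-a-third}, which shows that the totally real triangle $\triangle\rho p q$ meets $\H$ only along the segment $\geodesic{p q}\sset x^\perp$ and at its vertex $q$, where exactly the four mirrors of (1) are incident. The heart of that appendix argument is a lower bound on $|\ip{s}{q}|^2$ for every root $s$ outside $\gend{x,y}$, supplied by explicit inequalities in~$\Leech$.

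For (3), I would localize the fundamental-group computation near $q$ and quote Lemma~\ref{lem-generators-for-braid-group-of-D-4}. Writing $V$ for the complex $2$-plane $\gend{x,y}\otimes\C$, each of the four mirrors $x^\perp$, $y^\perp$, $z^\perp$, $w^\perp$ contains the complex codimension-$2$ subspace $\BB(V^\perp)$, and by (2) they are the only mirrors meeting a neighborhood of $q$. Such a neighborhood in $\BB^{13}-\H$ therefore has the homotopy type of (a complex $2$-disk minus four complex lines)$\,\times\,$(a complex $11$-disk), so its orbifold fundamental group is the braid group of $D_4^\E$. Lemma~\ref{lem-generators-for-braid-group-of-D-4} then says that, for a suitable basepoint $c$ in the slice, the subgroup generated by $M_{c,x}$ and $M_{c,y}$ contains $M_{c,z}$.

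To finish, I would transport the basepoint from $c$ to $a$. Remark~\ref{rem-change-a-without-loss} lets me place $a$ wherever in $A$ is convenient, and Lemma~\ref{lem-homotopy-for-moving-the-point-on-the-hyperplane-cusp-version} identifies $M_{a,A,s}$ with $M_{a,A,q,s}$ for each $s\in\{x,y,z\}$, its triangle hypotheses being precisely what Lemma~\ref{lem-triangle-analysis-for-2-Leech-meridians-give-a-third} verifies. The change-of-basepoint isomorphism along a path in $\BB^{13}-\H$ from $c$ to $a$ matches $M_{c,s}$ with $M_{a,A,q,s}$, putting $M_{a,A,z}$ in $\gend{M_{a,A,x},M_{a,A,y}}$. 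The main obstacle is item~(2): it is the only step requiring a substantive geometric computation---bounding how close other mirrors come to $q$---rather than formal manipulation of basepoints and meridians.
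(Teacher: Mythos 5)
Your overall route is the same as the paper's: normalize $(x,y)$ via Lemma~\ref{lem-ordered-pairs-of-Leech-roots-with-absolute-inner-product-sqrt3}, get (1) by direct computation, get (2) from the appendix triangle analysis, and get (3) by localizing at $q$ and quoting Lemma~\ref{lem-generators-for-braid-group-of-D-4}, then transporting basepoints. Parts (1) and (2) and the final basepoint transport are fine (modulo a harmless slip: $\ip{\rho}{x-\w y}=\theta(1-\wbar)$, not $\theta(1-\w)$, though the squared modulus is $9$ either way). The problems are concentrated in how you invoke Lemma~\ref{lem-generators-for-braid-group-of-D-4}.

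First, the basepoint in the local $D_4^\E$ picture is \emph{not} at your disposal. Since $c$ lies on $\geodesic{\rho q}$, under the identification of the normal slice with $W=\spanof{x,y}$ it lands on the complex line spanned by the projection $\pi_W(\rho)=\w x-\wbar y$, which corresponds to a scalar multiple of $(-1,-\wbar\sqrt2)$. Lemma~\ref{lem-generators-for-braid-group-of-D-4} is proved only for two specific basepoint directions, and the fact that $\pi_W(\rho)$ is one of them is a computation you must actually do; ``for a suitable basepoint $c$ in the slice'' assumes away exactly the condition that makes the lemma applicable. Second, observing that a neighborhood of $q$ has the homotopy type of (a $2$-disk minus four lines) times a disk does not by itself identify the \emph{elements} $M_{c,x},M_{c,y},M_{c,z}$ with the Euclidean meridians $M_{c,\alpha},M_{c,\beta},\dots$ of Lemma~\ref{lem-generators-for-braid-group-of-D-4}: meridians are built from nearest-point projections, geodesics and circular arcs, all metric-dependent. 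The paper bridges this with a family of rescaled metrics $g_t$ interpolating between the hyperbolic metric and the flat metric on $T_q\BB^{13}$, proving $\mu_{c,x}\homotopic\mu_{c,x}^{\Euc}$; some such argument is needed. Finally, a smaller point: Lemma~\ref{lem-triangle-analysis-for-2-Leech-meridians-give-a-third} analyzes only the triangle $\triangle\rho p q$ with $p$ the projection of $\rho$ to $x^\perp$, so it verifies the hypothesis of Lemma~\ref{lem-homotopy-for-moving-the-point-on-the-hyperplane-cusp-version} only for $s=x$; for $s=y,z$ you need the analogous triangles, which the paper obtains from a $\Gamma_{\!\rho}$-isometry cyclically permuting $x,y,z$ (itself a consequence of the transitivity in Lemma~\ref{lem-ordered-pairs-of-Leech-roots-with-absolute-inner-product-sqrt3}).
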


\begin{remark}
An example of the situation described
in this lemma is given by
$x = -R_{p_2}(l)$ and $y = p$, where $l$ is a line-root
and $p, p_2$ are the two point-roots incident on $l$.
  We formulated our notion of ``meridian'' to include cases
  where the basepoint is not in general position.  The situation in
  this lemma is such a case, because $q$ is the projection of $\rho$
  to the second-shell mirror in \eqref{item-three-mirrors-are-Leech-mirrors}, and is only one of $4$
  mirrors passing through~$q$.
  In fact, one can prove that this is the only sort of non-genericity
  that occurs for our lattice $L$.
\end{remark}

\begin{proof}
  By
  lemma~\ref{lem-ordered-pairs-of-Leech-roots-with-absolute-inner-product-sqrt3}
  we may suppose without loss of generality that $x=(0;1,-\w)$ and
  $y=(\lambda_6;1,\w)$.
  And we suppose without
  loss of generality 
  that our basepoint $a$ lies in
  $\geodesic{\rho q}\cap A$ (see remark~\ref{rem-change-a-without-loss}).
  
  \eqref{item-three-mirrors-are-Leech-mirrors} Using $x^2=y^2=3$
  and $\ip{x}{y}=\theta$, one checks that $z$ and $\w x-\wbar y$
  are roots.  Then, using $\ip{\rho}{x}=\ip{\rho}{y}=\theta$, one
  checks that $\ip{\rho}{z}=\theta$, so that $z$ is also a Leech
  root, and that $\ip{\rho}{\w x-\wbar y}=3$, so that $(\w
  x-\wbar y)^\perp$ is a second-shell mirror.

  \eqref{item-mirrors-through-q-are-just-3-Leech-and-1-second-shell}
    Lemma~\ref{lem-triangle-analysis-for-2-Leech-meridians-give-a-third}
  identifies all the mirrors that meet the totally real triangle
  $\triangle\rho p q$.  The only ones that do are the
  four from \eqref{item-three-mirrors-are-Leech-mirrors}.  So they are the only ones that can contain~$q$.
    
  Before proving \eqref{item-two-Leech-meridians-give-a-third} we
  establish two preparatory results.  First, there is an isometry of
  $L$ that fixes~$\rho$ and permutes $x$, $y$ and~$z$ cyclically.
  To see this, one checks that $\ip{y}{z}=\theta$ and applies the
  transitivity in
  lemma~\ref{lem-ordered-pairs-of-Leech-roots-with-absolute-inner-product-sqrt3}
  to conclude that some $g\in\Gamma_\rho$ sends $(x,y)$ to
  $(y,z)$.  Now, $g$ must permute the three Leech roots in
  $\spanof{x,y}=\spanof{y,z}$.  Since $g$ sends $x$ to $y$ and
  $y$ to $z$, it must also send $z$ to $x$.
    
  Second, we claim that $\mu_{a,A,x}$ is homotopic to
  $\mu_{a,A,q,x}$ in $\BB^{13}-\H$, rel endpoints, and similarly
  with $y$ or $z$ in place of~$x$.  By the symmetry just established,
  the $y$ and $z$ cases follow from the $x$ case.
  Lemma~\ref{lem-triangle-analysis-for-2-Leech-meridians-give-a-third}
  describes how the mirrors of $\H$ meet  $\triangle\rho q p$.  Namely,
  $x^\perp$ contains $\geodesic{pq}$, the other three mirrors from
  \eqref{item-three-mirrors-are-Leech-mirrors} meet the triangle at
  $q$ only, and all other mirrors miss it completely.  This verifies
  the hypothesis of 
  lemma~\ref{lem-homotopy-for-moving-the-point-on-the-hyperplane-cusp-version}, whose conclusion is that $\mu_{a,A,x}\homotopic\mu_{a,A,q,x}$, as claimed.
    
  \eqref{item-two-Leech-meridians-give-a-third} Let $U$ be an open
  ball around $q$, small enough that the only mirrors meeting it are
  the ones through $q$.   We also take it
  small enough to miss~$A$.  Let $c$ be any point of $\geodesic{\rho
    q}\cap U$ other than $q$.  Examining the definition of the
  meridian shows that we may suppose without loss of generality that
  $c$ is the ``turning point'' of $\mu_{a,A,q,x}$.  Formally:
  $\mu_{a,A,q,x}$ equals $\geodesic{a c}$ followed by $\mu_{c,x}$
  followed by $R_x(\geodesic{c a})$.  (Note that $\geodesic{a
    c}\sset\geodesic{\rho q}$ by our choice of $a$ at the beginning of the
  proof.  And we can use $\geodesic{ac}$ in place of $\dodge{ac}$
  because it misses $\H$ by lemma~\ref{lem-triangle-analysis-for-2-Leech-meridians-give-a-third}.)  Another way to state the  homotopy
  $\mu_{a,A,x}\homotopic\mu_{a,A,q,x}$  of the previous paragraph is that
  $M_{a,A,x}=(\mu_{a,A,x},R_x)\in G_a$ corresponds to
  $M_{c,x}=(\mu_{c,x},R_x)\in G_c$  under the
  identification of $G_a$ with $G_c$ induced by $\geodesic{a c}$.
  And similarly with $y$ or $z$
  in place of~$x$.  Therefore proving \eqref{item-two-Leech-meridians-give-a-third} is equivalent to
  proving that $M_{c,z}$ lies in the subgroup of
  $G_c$ generated by $M_{c,x}$ and
    $M_{c,y}$.
  
  We focus attention on $U$ by defining a ``local'' analogue of $G_c$,
  namely $J_c=\piorb\bigl((U-\H)/\gend{R_x,R_y},c\bigr)$.  By
  considering the natural map $J_c\to G_c$, it suffices to prove that
  $M_{c,z}$ lies in the subgroup of $J_c$ generated by $M_{c,x}$ and
  $M_{c,y}$.  We convert this into a problem in the Euclidean space
  $T_q\BB^{13}$ as follows.  We write $g_0$ for the Riemannian metric
  on $\BB^{13}$ got by identifying $\BB^{13}$ with $T_q\BB^{13}$ under
  the exponential map.  This metric is Euclidean, so we write
  $\mu_{c,x}^\Euc$ for the path defined like $\mu_{c,x}$, but using
  this Euclidean metric in place of the hyperbolic metric.  We claim:
  $\mu_{c,x}^\Euc$ and $\mu_{c,x}$ are homotopic rel endpoints in
  $U-\H$.  In particular, $M_{c,x}=(\mu_{c,x},R_x)$ and
  $M_{c,x}^\Euc=(\mu_{c,x}^\Euc,R_x)$ are the same element of $J_c$,
  and similarly with $y$ or $z$ in place of $x$.  After proving this
  claim, it will suffice to prove that $M_{c,x}^\Euc$ and
  $M_{c,y}^\Euc$ generate~$J_c$.
  
  To prove the claim, we define for each $t\in(0,1]$ a diffeomorphism
    $S_t$ of $\BB^{13}$, namely the one corresponding to the
    scaling-by-$t$ map on $T_q\BB^{13}$.  Let $g_t$ be the Riemannian
    metric got by pulling back the hyperbolic metric under $S_t$ and
    multiplying it by $1/t$.  It is easy to see that the $g_t$
    converge to $g_0$ as $t\to0$.  For any $t\in[0,1]$ we
    define $\mu_{c,x}^t$ just as we did for $\mu_{c,x}$,
    except that we use $g_t$ in place of the hyperbolic metric.  All
    the ingredients in this definition (nearest points, geodesics,
    etc.)  vary continuously with $t$, so the $\mu_{c,x}^t$ provide
    a homotopy between $\mu_{c,x}^1=\mu_{c,x}$ and
    $\lim_{t\to0}\mu_{c,x}^t=\mu_{c,x}^\Euc$, as desired.  (Note that
    $\BB^{13}$ is nonpositively curved for any $t$; for $t>0$ this is
    because $(\BB^{13},g_t)$ is isometric to $\BB^{13}$ with its
    metric scaled, and scaling doesn't affect the sign of sectional
    curvature.  And $g_0$ is Euclidean by
    definition.  $U$ is convex under any $g_t$, because it
    is always an open ball centered at~$q$.)

    It remains to show that $M_{c,z}^\Euc$ lies in the group generated
    by $M_{c,x}^\Euc$ and
    $M_{c,y}^\Euc$.
    Obviously we may restrict attention
    to the $\BB^2$ orthogonal to $\spanof{x,y}^\perp$ at~$q$.
    We will identify $T_q\BB^2$ with the $\C^2$ from
    lemma~\ref{lem-generators-for-braid-group-of-D-4}, in a way which lets us quote that result.  
    We first note that $\BB^2=\BB(V)$ where $V$
    is the complex span of $\rho$, $x$ and~$y$.  We write $W$ for the
    subspace of $V$ spanned by $x$ and $y$, and $Q$ for the linear projection
    of $\rho$ to $W^\perp$.  This is a vector
    representing~$q\in\BB^2$.  We identify $\BB^2$ with a neighborhood
    of $0$ in $W$ by
    $$
    (w\in W)\leftrightarrow\bigl(\hbox{the image in $P V$ of $Q+w$}\bigr).
    $$
    Up to a scaling factor, this correspondence is essentially the exponential
    map of $\BB^{13}$ at $q$.
    This yields an identification of $T_q\BB^2$ with $T_0W=W$,
    in which the mirrors through $q$ correspond to the mirrors of
    the $D_4^\E$ spanned by $x$ and~$y$, and this identification
    is equivariant under the action of the finite group generated
    by $R_x$ and $R_y$.  (We saw in
    \eqref{item-mirrors-through-q-are-just-3-Leech-and-1-second-shell} that  no more mirrors pass through~$q$.)
    Furthermore, $c$ lies in $P(\C Q+\C\rho)$, whose corresponding
    complex line in $W$ is spanned by the projection $\pi_W(\rho)$ of
    $\rho$ to~$W$.  This projection can be worked out from
    $\ip{\rho}{x}=\ip{\rho}{y}=\theta$, namely $\pi_W(\rho)=\w x-\wbar
    y$.

    Now we identify $x$ (resp.\ $y$) with $\alpha$ (resp.\ $\beta$) in
    lemma~\ref{lem-generators-for-braid-group-of-D-4}.  Under this identification, $T_q\BB^2-T_q\H$ is
    identified with the mirror complement of $D_4^\E$, with $c$
    corresponding to some scalar multiple of
    $\w\alpha-\wbar\beta=(-1,-\wbar\sqrt2)$.  That lemma tells us
    that $M_{c,\alpha}$ and
    $M_{c,\beta}$ generate
    $\piorb\bigl((\C^2-\H)/\gend{R_\alpha,R_\beta},c\bigr)$.
    Transferring this back to $U-\H$ finishes the proof.
\end{proof}

\begin{lemma}
  \label{lem-configuration-2-Leech-roots-plus-1-second-shell-plus-1-third-shell}
  Suppose $x,z$ are Leech roots with
  $\ip{x}{z}=-\frac32+\frac{\theta}{2}$, and let $q$ be the
  projection of $\rho$ to the intersection of their mirrors.  Then
  \begin{enumerate}
  \item
    \label{item-2-mirrors-are-Leech-one-is-2nd-shell-and-one-is-3rd}
    Of the four mirrors of the reflection group $\gend{R_x,R_z}$,
    two are Leech mirrors ($x^\perp$ and ${z}^\perp$), one is a
    second-shell mirror ($y^\perp$ for $y=\wbar x-z$), and one is a
    third-shell mirror.
  \item
    \label{item-mirrors-through-q-are-just-2-Leech-1-second-shell-1-third-shell}
    These mirrors are the only ones containing $q$.
  \item
    \label{item-Leech-meridian-and-2nd-shell-meridian-give-Leech-meridian}
    The subgroup of $G_a$ generated by the Leech meridian
    $M_{a,A,x}$ and the second-shell meridian
    $M_{a,A,y}$ also contains the Leech meridian $M_{a,A,z}$.
  \end{enumerate}
\end{lemma}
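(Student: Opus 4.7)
The plan is to follow the proof of Lemma~\ref{lem-configuration-3-Leech-roots-plus-1-second-shell} closely, adjusting for the new shell pattern (two Leech, one second-shell and one third-shell, rather than three Leech and one second-shell).

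For \ref{item-2-mirrors-are-Leech-one-is-2nd-shell-and-one-is-3rd}, I would set $y=\wbar x-z$ and $w=x+z$ and verify by direct computation, using $x^2=z^2=3$ and $\ip{x}{z}=-\tfrac{3}{2}+\tfrac{\theta}{2}$, that $y^2=w^2=3$, $|\ip{\rho}{y}|^2=9$, and $\ip{\rho}{w}=2\theta$ (so $|\ip{\rho}{w}|^2=12$).  This identifies the four mirrors of $\gend{R_x,R_z}=\gend{R_x,R_y}$ as $x^\perp,z^\perp$ (Leech), $y^\perp$ (second-shell) and $w^\perp$ (third-shell).  For \ref{item-mirrors-through-q-are-just-2-Leech-1-second-shell-1-third-shell}, I would invoke the appendix Lemma~\ref{lem-triangle-analysis-for-point-and-line-meridians-give-Leech-meridian}, which analyzes every mirror meeting the totally real triangle $\triangle\rho p q$ (with $p$ the projection of $\rho$ to $x^\perp$); by that lemma only the four mirrors above meet the triangle, so these are the only mirrors through~$q$.

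For \ref{item-Leech-meridian-and-2nd-shell-meridian-give-Leech-meridian}, I would assume without loss of generality (Remark~\ref{rem-change-a-without-loss}) that $a\in\geodesic{\rho q}\cap A$.  Applying the triangle analyses of the appendix lemma to the three triangles $\triangle\rho p q$ with $p$ the projection of $\rho$ to $x^\perp$, $z^\perp$, $y^\perp$ respectively, and using Lemma~\ref{lem-homotopy-for-moving-the-point-on-the-hyperplane-cusp-version}, I would conclude $\mu_{a,A,s}\homotopic\mu_{a,A,q,s}$ for each $s\in\{x,y,z\}$.  Choosing the turning point $c\in\geodesic{\rho q}$ very near $q$, the problem reduces, just as in the previous lemma, to proving that $M_{c,z}$ lies in the subgroup of $J_c=\piorb\bigl((U-\H)/\gend{R_x,R_y},c\bigr)$ generated by $M_{c,x}$ and $M_{c,y}$, where $U$ is a small open ball around $q$ meeting $\H$ only in the four mirrors through~$q$.

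The final step transfers the problem to the Euclidean model in $T_q\BB^2$ using the $g_t$-family of metrics exactly as before, identifying $T_q\BB^2$ with $W=\spanof{x,y}$ and $c$ with a scalar multiple of $\pi_W(\rho)$.  A direct computation gives $\pi_W(\rho)=\w(x+z)=-\wbar x-\w y$; under the identification $x\leftrightarrow\alpha$, $y\leftrightarrow\beta$ of Lemma~\ref{lem-generators-for-braid-group-of-D-4}, this becomes a scalar multiple of $(-1,-\w\sqrt{2})$.  This is not literally in either of the basepoint orbits handled by that lemma, but it equals $\w\alpha-\wbar\gamma$ for $\gamma=(1,\wbar\sqrt{2})$: the ``$c_1$-analog'' after interchanging $\beta$ with $\gamma$.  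Since $\alpha,\beta,\gamma$ all have pairwise inner products $\theta$, a Weyl-group symmetry of $D_4^\E$ fixing $\alpha^\perp$ permutes these three roots and reduces this to the $c_1$-case actually covered, yielding that $M_{c,x}$ and $M_{c,y}$ generate $J_c$ and hence $J_c$ contains $M_{c,z}$.  The main obstacle I anticipate is arranging this final symmetry identification cleanly, together with the triangle-analysis verifications in the appendix.
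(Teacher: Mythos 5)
Your treatment of parts \eqref{item-2-mirrors-are-Leech-one-is-2nd-shell-and-one-is-3rd} and \eqref{item-mirrors-through-q-are-just-2-Leech-1-second-shell-1-third-shell}, and the reduction of \eqref{item-Leech-meridian-and-2nd-shell-meridian-give-Leech-meridian} to a statement about the local group $J_c$ via the three triangle analyses and the $g_t$-rescaling, all match the paper's argument. The gap is in the very last step: your formula $\pi_W(\rho)=\w(x+z)$ is wrong. The projection must satisfy $\ip{\pi_W(\rho)}{x}=\ip{\rho}{x}=\theta$ and $\ip{\pi_W(\rho)}{z}=\ip{\rho}{z}=\theta$; one checks $\ip{\w(x+z)}{x}=\theta$ but $\ip{\w(x+z)}{z}=\w\bigl(\ip{x}{z}+3\bigr)=\w\bigl(\tfrac32+\tfrac\theta2\bigr)=-\tfrac32+\tfrac\theta2\neq\theta$, so $\w(x+z)$ is not the projection. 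Solving the correct linear system (using $\ip{\rho}{x}=\theta$, $\ip{\rho}{y}=3\wbar$, $\ip{x}{y}=\theta$) gives $\pi_W(\rho)=-\tfrac{\theta}{2}\w x+(\tfrac{\theta}{2}-1)\w y$, which under $x\leftrightarrow\alpha$, $y\leftrightarrow\beta$ corresponds to $\tfrac{\w}{2}\bigl(\theta+1,(\theta-2)\sqrt2\bigr)$ --- precisely the \emph{second} basepoint that lemma~\ref{lem-generators-for-braid-group-of-D-4} was set up to handle (this is the reason that lemma lists two basepoints). So no symmetry trick is needed at all; the quoted lemma applies directly.

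Beyond the arithmetic slip, your proposed repair would not deliver the needed conclusion even on its own terms. A symmetry of $D_4^\E$ fixing $\alpha$ and carrying $\gamma$ to $\beta$ transports the statement ``$M_{\alpha}$ and $M_{\beta}$ generate $J$ at the $c_1$-type basepoint'' to the statement that $M_{c,\alpha}$ and $M_{c,\gamma}$ generate $J_c$ at your basepoint --- not $M_{c,\alpha}$ and $M_{c,\beta}$. Since $\alpha$ and $\beta$ are the roots identified with $x$ and $y$, generation by $\{M_{c,\alpha},M_{c,\gamma}\}$ is not the assertion required for \eqref{item-Leech-meridian-and-2nd-shell-meridian-give-Leech-meridian}; which pair of meridians generates depends delicately on the position of the basepoint relative to the four mirrors, and cannot be permuted freely. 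Correcting the projection computation removes the need for this step entirely.
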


\begin{proof}
  This is very similar to the proof of
  lemma~\ref{lem-configuration-3-Leech-roots-plus-1-second-shell}.
  By lemma~\ref{lem-ordered-pairs-of-Leech-roots-with-absolute-inner-product-sqrt3} we may suppose without loss of generality that
  $x=(0;1,-\w)$ and $z=(\lambda_9;1,\theta)$, and as before we take
  $a\in\geodesic{\rho q}$. 

  \eqref{item-2-mirrors-are-Leech-one-is-2nd-shell-and-one-is-3rd}
  Using $x^2=z^2=3$ and $\ip{x}{z}=-\frac32+\frac\theta2$, one
  checks that $y$ and $x+z$ are roots.  So the $24$ roots in
  $\spanof{x,z}$ are their unit multiples together with those of $x$
  and~$z$.  Using $\ip{\rho}{x}=\ip{\rho}{z}=\theta$, one computes
  $\ip{\rho}{y}=3\wbar$ and $\ip{\rho}{x+z}=2\theta$.  So $y$ is a
  second shell root and $x+z$ is a third shell root.

  \eqref{item-mirrors-through-q-are-just-2-Leech-1-second-shell-1-third-shell}
  This is just like the corresponding part of the previous lemma,
  except that we refer to lemma~\ref{lem-triangle-analysis-for-point-and-line-meridians-give-Leech-meridian}
  in place of lemma~\ref{lem-triangle-analysis-for-2-Leech-meridians-give-a-third}.
  
  In preparation for \eqref{item-Leech-meridian-and-2nd-shell-meridian-give-Leech-meridian}, we claim that $\mu_{a,A,x}$ is
  homotopic to $\mu_{a,A,q,x}$ in $\BB^{13}-\H$, rel endpoints,
  and similarly with $y$ or $z$ in place of $x$.  This is just like
  the corresponding part of the previous proof, except that there is
  no cyclic symmetry.  So one has to analyze three triangles rather
  than just one.  This is done in
  lemma~\ref{lem-triangle-analysis-for-point-and-line-meridians-give-Leech-meridian}.

  \eqref{item-Leech-meridian-and-2nd-shell-meridian-give-Leech-meridian}
  This is just like the corresponding part of the previous lemma.  The
  only difference is in the very last step: now $\ip{\rho}{x}=\theta$
  and $\ip{\rho}{y}=3\wbar$, so $\pi_W(\rho)=-\frac{\theta}{2}\w
  x+(\frac{\theta}{2}-1)\w y$.  We still identify $T_q\BB^2$ with
  $\C^2$ by taking $x$ and $y$ to correspond to lemma~\ref{lem-generators-for-braid-group-of-D-4}'s $\alpha$ and $\beta$.
  This makes sense since one can check $\ip{x}{y}=\theta$.  Under this
  identification, $\pi_W(\rho)$ corresponds to
  $\frac{\w}{2}\bigl(\theta+1,(\theta-2)\sqrt2\bigr)\in\C^2$, and we can
  apply lemma~\ref{lem-generators-for-braid-group-of-D-4} just as before.
\end{proof}

\begin{lemma}
  \label{lem-point-and-line-meridians-give-another-Leech-meridian}
  Suppose $x$ is a point-root and $y$ is an incident line-root.
  Then $z=\wbar x-y$ is a Leech root, and the subgroup of $G_a$
  generated by $M_{a,A,x}$ and $M_{a,A,y}$
  contains the Leech meridian $M_{a,A,z}$.
\end{lemma}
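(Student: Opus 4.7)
The plan is to reduce to lemma~\ref{lem-configuration-2-Leech-roots-plus-1-second-shell-plus-1-third-shell} applied to the pair $(x,z)$. The key observation I would make first is that the line-root $y$ coincides with the distinguished second-shell root produced by that lemma: in the notation there the second-shell root is $\wbar x - z$, and substituting $z = \wbar x - y$ gives $\wbar x - z = y$. So once $(x,z)$ is shown to satisfy that lemma's hypotheses, its part~\eqref{item-Leech-meridian-and-2nd-shell-meridian-give-Leech-meridian} will directly yield the claim.

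To verify those hypotheses I would use three inner products that are immediate from sections~\ref{subsec-point-roots-etc} and~\ref{subsec-Leech-cusps-and-Leech-roots}: $\ip{x}{y}=\theta$ (point-root and incident line-root), $\ip{\rho}{x}=\theta$ (point-roots are Leech roots with respect to $\rho$), and $\ip{\rho}{y}=3\wbar$ (line-roots are second-shell roots with respect to $\rho$). Expanding $\ip{\wbar x-y}{\wbar x-y}=x^2+y^2-\bigl(\wbar\theta+\w\thetabar\bigr)=6-3=3$ shows $z^2=3$, so $z$ is a root. Next, $\ip{\rho}{z}=\w\theta-3\wbar=-1-2\wbar=\theta$, so in fact $z$ is a Leech root. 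Finally, $\ip{x}{z}=\w x^2-\ip{x}{y}=3\w-\theta=-\tfrac{3}{2}+\tfrac{\theta}{2}$, which is precisely the inner-product hypothesis of lemma~\ref{lem-configuration-2-Leech-roots-plus-1-second-shell-plus-1-third-shell}.

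Lemma~\ref{lem-configuration-2-Leech-roots-plus-1-second-shell-plus-1-third-shell}\eqref{item-Leech-meridian-and-2nd-shell-meridian-give-Leech-meridian} then asserts that the subgroup of $G_a$ generated by the Leech meridian $M_{a,A,x}$ and the second-shell meridian $M_{a,A,y}$ contains the Leech meridian $M_{a,A,z}$, which is the desired conclusion. There is no real obstacle: the geometric content --- the triangle analyses in appendix~\ref{app-how-real-triangles-meet-mirrors} and the local reduction to the $D_4^\E$ braid group carried out in the proof of the preceding lemma --- has already been done, and what remains here is only the arithmetic translation from the point/line-root language to the Leech-root language.
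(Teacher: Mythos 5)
Your proposal is correct and follows essentially the same route as the paper's own proof: verify $z^2=3$, $\ip{\rho}{z}=\theta$, and $\ip{x}{z}=-\tfrac32+\tfrac{\theta}{2}$, observe that the second-shell root $\wbar x-z$ of lemma~\ref{lem-configuration-2-Leech-roots-plus-1-second-shell-plus-1-third-shell} is exactly the given $y$, and then invoke part~\eqref{item-Leech-meridian-and-2nd-shell-meridian-give-Leech-meridian} of that lemma. The explicit inner-product computations you supply all check out.
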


\begin{proof}
  Using $x^2=y^2=3$ and $\ip{x}{y}=\theta$, one checks $z^2=3$, so $z$
  is a root.  Using $\ip{\rho}{x}=\theta$ and $\ip{\rho}{y}=3\wbar$,
  one checks $\ip{\rho}{z}=\theta$, so $z$ is a Leech root.
  Similarly, one checks that
  $\ip{x}{z}=-\frac{3}{2}+\frac{\theta}{2}$.  So we may apply
  lemma~\ref{lem-configuration-2-Leech-roots-plus-1-second-shell-plus-1-third-shell}
  to $x$ and~$z$.
  One checks that the root called $y$ there is the same as the one we
  have called $y$.  To finish the proof we appeal to
  lemma~\ref{lem-configuration-2-Leech-roots-plus-1-second-shell-plus-1-third-shell}\eqref{item-Leech-meridian-and-2nd-shell-meridian-give-Leech-meridian}.
\end{proof}

\begin{proof}[Proof of theorem~\ref{thm-26-meridians-at-cusp-generate}]
Let $G$ denote the subgroup of $G_a$ generated by the $26$ point and
line meridians $M_{a,A,s}$ based at $a$.
We must prove $G=G_a$.
By
theorem~\ref{thm-130-meridians-generate} it suffices to show that $G$ contains the Leech
meridians associated to the Leech roots $p_i$, $\wbar p_i-l_j$,
$p_i-\rho$ and $\wbar p_i-l_j-\rho$, where $i,j=1,\dots,13$ and $p_i$
and $l_j$ are incident.  $G$ contains the Leech meridians associated
to the $p_i$ by definition, and those associated to the $\wbar p_i-l_j$ by
lemma~\ref{lem-point-and-line-meridians-give-another-Leech-meridian}.  By $L_3(3)$ symmetry, it now suffices to show that $G$
contains the meridians corresponding to the Leech roots $p_1-\rho$ and $\wbar
p_1-l_1-\rho$.

For $p_1-\rho$, consider the following sequence of Leech roots:
\begin{align*}
  s_1&{}=\wbar p_1 -l_1
  &s_7&{}=-\wbar s_1-\w p_4
  &s_{13}&{}=-\wbar s_{12}-\w s_{11} 
  \\
  s_2&{}=\wbar p_1  -l_{11}
  &s_8&{}=-\wbar s_4-\w p_3    
  &s_{14}&{}=-\wbar s_{13}-\w p_1 
  \\
  s_3&{}=\wbar p_1  -l_{13}
  &s_9&{}=-\wbar s_4-\w p_5    
  &s_{15}&{}=-\wbar s_3-\w s_{14} 
  \\
  s_4&{}=\wbar p_2  -l_2
  &s_{10}&{}=-\wbar s_5-\w s_8  
  &s_{16}&{}=-\wbar s_{15}-\w s_7  
  \\
  s_5&{}=\wbar p_5  -l_5
  &s_{11}&{}=-\wbar s_6-\w s_9  
  &s_{17}&{}=-\wbar s_{16}-\w p_{10}  
  \\
  s_6&{}=\wbar p_{11}-l_{11}
  &s_{12}&{}=-\wbar s_2-\w s_{10} 
\end{align*}
In each equation in the left column, the point and line root on the
right hand side are incident.  By
lemma~\ref{lem-point-and-line-meridians-give-another-Leech-meridian},
$G$ contains the Leech meridian associated to the Leech root defined
by that equation.  In each equation in the other two columns, the
roots appearing on the right side are Leech roots and $\ip{\hbox{the
    first one}}{\hbox{the second}}=\theta$.  By
repeated use of lemma~\ref{lem-configuration-3-Leech-roots-plus-1-second-shell}\eqref{item-two-Leech-meridians-give-a-third},
$G$ contains the  meridians associated to the Leech roots defined
by these equations.  And one checks that $s_{17}=p_1-\rho$.

For $\wbar p_1-l_1-\rho$ the argument is the same, defining
\begin{align*}
  s_1  &{}=\wbar p_3-l_2
  &s_6  &{}=-\wbar s_1-\w p_5
  &s_{11}&{}=-\wbar s_8-\w s_{10}
  \\
  s_2  &{}=\wbar p_6-l_5
  &s_7  &{}=-\wbar s_5-\w p_{12}
  &s_{12}&{}=-\wbar s_2-\w s_{11}
  \\
  s_3  &{}=\wbar p_8-l_5
  &s_8  &{}=-\wbar s_4-\w s_6
  &s_{13}&{}=-\wbar s_{12}-\w s_1
  \\
  s_4  &{}=\wbar p_2-l_6
  &s_9  &{}=-\wbar s_3-\w s_7
  &s_{14}&{}=-\wbar s_{13}-\w p_{11}
  \\
  s_5  &{}=\wbar p_2-l_{12}
  &s_{10}&{}=-\wbar s_9-\w p_{13}
\end{align*}
and checking that $s_{14}=\wbar p_1-l_1-\rho$.  We found these
sequences of Leech roots by a rather intensive computer search.  But
their validity can be verified easily.
\end{proof}

\section{Change of basepoint}
\label{sec-change-of-basepoint}

\noindent
In this section we prove the main theorem of the paper,
theorem~\ref{t-26-meridians-based-at-tau-generate}: the orbifold
fundamental group
$G_\tau=\piorb\bigl((\BB^{13}-\H)/P\Gamma,\tau\bigr)$ is generated by
the $26$ point- and line-meridians $M_{\tau,H}$.  Here
$\tau=(4+\sqrt3;1,\dots,1)$ is the $26$-point specified in
section~\ref{subsec-point-roots-etc} and $H$ varies over the point- and
line-mirrors.  The starting point of the proof is
theorem~\ref{thm-26-meridians-at-cusp-generate}: for any basepoint $a$
in the horoball $A$ centered at the Leech cusp $\rho$, the $26$ point-
and line-meridians $M_{a,A,H}$ generate $G_a$.  Here
$\rho=(3\w-1;-1,\dots,-1)$ is the Leech cusp defined in
section~\ref{subsec-Leech-cusps-and-Leech-roots} and used throughout
section~\ref{sec-26-generators-based-at-a-cusp}, and $H$ varies over
the same $26$ mirrors.  By remark~\ref{rem-change-a-without-loss} we may choose $a$ to be
a point of $\geodesic{\tau\rho}$ very close to $\rho$; exactly how
close will be specified later.  So $\geodesic{\tau a}$ is a subsegment
of $\geodesic{\tau\rho}$.  In light of theorem~\ref{thm-26-meridians-at-cusp-generate},
theorem~\ref{t-26-meridians-based-at-tau-generate} follows immediately
from the next lemma.

\begin{lemma}
  \label{lem-change-of-basepoint}
  Suppose $H$ is a point- or line-mirror.  Then
  the meridian
$M_{\tau,H}\in G_\tau$
    corresponds to the
  meridian
  $M_{a,A,H}\in G_a$
  under the isomorphism
  $G_\tau\iso G_a$ induced by the path $\geodesic{\tau a}$.  
\end{lemma}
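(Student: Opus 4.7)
The plan is to introduce a waypoint $q\in H$ and chain the homotopy lemmas of section~\ref{sec-paths-and-homotopies}. I would take $q=p_\infty$ when $H$ is a point-mirror and $q=l_\infty$ when $H$ is a line-mirror. In either case $q$ lies in $H$, and the remaining mirrors through $q$ are the other $12$ point- (or line-) mirrors, which are mutually orthogonal, hence all orthogonal to $H$. This verifies the ``orthogonality through $q$'' hypothesis required by lemma~\ref{lem-homotopy-for-moving-the-basepoint}.

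Write $p,p'$ for the points of $H$ closest to $\tau,\rho$ respectively, and $b'\in\partial A$ for the point of the horoball boundary closest to $q$. The identification of $M_{\tau,H}$ with $M_{a,A,H}$ would proceed in three steps. First, apply lemma~\ref{lem-homotopy-for-moving-the-point-on-the-hyperplane-ordinary-basepoint-version} to the totally real triangle $\triangle\tau p q$ to obtain $M_{\tau,H}=M_{\tau,q,H}$ in $G_\tau$. Second, apply lemma~\ref{lem-homotopy-for-moving-the-basepoint} to the composite path from $\tau$ to $b'$ obtained by concatenating $\geodesic{\tau a}$ with $\geodesic{ab'}$ (the second segment lies in $A$ by convexity of horoballs). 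The induced isomorphism $G_\tau\iso G_{b'}$ then identifies $M_{\tau,q,H}$ with $M_{b',q,H}$; unwinding the definition of a fat-basepoint meridian, the inverse of the factor $G_{b'}\iso G_a$ produced by $\geodesic{ab'}$ carries $M_{b',q,H}$ back to $M_{a,A,q,H}\in G_a$, so the isomorphism $G_\tau\iso G_a$ induced by $\geodesic{\tau a}$ alone sends $M_{\tau,q,H}$ to $M_{a,A,q,H}$. Third, apply lemma~\ref{lem-homotopy-for-moving-the-point-on-the-hyperplane-cusp-version} to the totally real triangle $\triangle\rho p' q$ to obtain $M_{a,A,q,H}=M_{a,A,H}$. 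Combining the three identifications gives the lemma.

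The main obstacle is the verification of the ``triangle-misses-mirrors'' hypotheses in each step. Steps one and three each demand that a totally real triangle with one edge in $H$ and one vertex at $q$ ($\triangle\tau p q$ and $\triangle\rho p' q$ respectively) meets $\H$ only along that edge and at $q$, where the mirrors passing through $q$ are the ones known to be orthogonal to $H$. The second step demands an analogous statement for the ruled surface swept out by the geodesics $\geodesic{b_t q}$ as $b_t$ traverses the composite path; because $a$ is chosen on $\geodesic{\tau\rho}$ very close to $\rho$, this reduces essentially to verifying that the totally real triangle $\triangle\tau\rho q$ meets $\H$ only at $q$ (since $A$ itself misses $\H$). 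The full $L_3(3){:}2$-stabilizer of $\tau$ acts transitively on the $13$ point-mirrors and on the $13$ line-mirrors, so the verifications reduce to the two cases $H=p_1^\perp$ with $q=p_\infty$ and $H=l_1^\perp$ with $q=l_\infty$; each reduces to an inner-product computation in the $P^2\F_3$ coordinates of section~\ref{t-P2-F3-model-of-L}, of the same flavor as the appendix verifications supporting the lemmas of section~\ref{sec-26-generators-based-at-a-cusp}.
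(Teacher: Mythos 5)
Your architecture coincides with the paper's: the same waypoints $q=p_\infty$ or $l_\infty$, the same three homotopy lemmas from section~\ref{sec-paths-and-homotopies}, and the same list of triangle verifications, which the paper carries out in the appendices (lemmas \ref{lem-disjointness-needed-for-step-1-of-basepoint-change}, \ref{lem-complex-triangles-miss-mirrors-except-as-known}, \ref{lem-step-4-point-case} and \ref{lem-step-4-line-case}). Your middle step packages the paper's steps~2 and~3 into a single application of lemma~\ref{lem-homotopy-for-moving-the-basepoint} along the composite path $\tau\to a\to b'$, followed by unwinding the definition of the fat-basepoint meridian; this works, and is arguably cleaner than the paper's limiting argument, because $a\in\geodesic{\tau\rho}$ and $b'\in\geodesic{\rho l_\infty}$ force $\triangle a b' l_\infty\subseteq\triangle\tau\rho l_\infty$ by convexity, so every geodesic $\geodesic{b_t q}$ along the composite path lies in the one triangle you must control. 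The one substantive correction: $\triangle\tau\rho l_\infty$ is \emph{not} a totally real triangle. Since $\tau$ lies on $\geodesic{p_\infty l_\infty}$ and $\rho$, $p_\infty$, $l_\infty$ all lie in the $L_3(3)$-fixed sublattice $F$ of rank~$2$, all three vertices lie in the complex geodesic $\BB(F)\iso\BB^1$; so this is a complex triangle, and the convex-hull-of-inner-products test of lemma~\ref{lem-how-real-triangle-meets-hyperplane} does not apply to it. Its intersection with $\H$ has to be controlled by a different argument (appendix~\ref{app-how-complex-triangles-meet-mirrors}: classify the mirrors meeting $\BB(F)$ via lemma~\ref{lem-how-the-1-ball-meets-the-mirrors} and enumerate the relevant norm~$\pm3$ vectors in a convenient superlattice of $F$). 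The remaining verifications are indeed of the totally real type you describe, though the line-mirror case $\triangle\rho p' l_\infty$ is far from a routine check: it requires enumerating the mirrors in batches $1,\dots,5$ around $l_\infty$ as in lemma~\ref{lem-mirrors-near-a-13-point}.
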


For the lemma to make sense, one must verify that
$\geodesic{\tau a}$ misses $\H$.  Because
$\geodesic{\tau a}\sset\geodesic{\tau\rho}$, this follows from the
stronger result (lemma~\ref{lem-complex-triangles-miss-mirrors-except-as-known}) that the complex triangle
$\triangle \rho\tau l_\infty$ misses $\H$ except at $l_\infty$.

\begin{proof}
  We will give the proof when $H$ is a line-mirror, and then remark on
  the changes needed for the point-mirror case.  Recall from
  section~\ref{subsec-point-roots-etc} that $l_\infty$ is where all
  $13$ line-mirrors intersect.  We will use a $4$-step homotopy.  The
  first step corresponds to the bottom region (shaded darkly) in
  figure~\ref{fig-change-of-basepoint}, the second step to the region
  above it (lighter), the third to the next (dark again), and the
  fourth to the rightmost region (light again).  The key fact is that
  these regions miss $\H$ except at known points.  These verifications
  are lemmas
  \ref{lem-disjointness-needed-for-step-1-of-basepoint-change},
  \ref{lem-step-4-line-case} and~\ref{lem-complex-triangles-miss-mirrors-except-as-known} in the appendices.

  \begin{figure}
    \begin{center}
      \begin{tikzpicture}[x=1.45cm,y=1.45cm,every to/.style={hyperbolic plane}]
        \def\LLx{-.15}
        \def\LLy{-.2}
        \def\URx{5.3}
        \def\URy{5}
        \clip(\LLx,\LLy)rectangle(\URx,\URy);
        \def\Aheight{3}
        \def\aheight{\Aheight+1}
        \def\rhoheight{\aheight+.8}
        \def\rightmost{\pprimex+.5}
        \def\radius{2.5}
        \def\pprimex{4}
        \def\linftyangle{140}
        \coordinate (center) at (\pprimex,0);
        \coordinate (pprime) at (\pprimex,\radius);
        \coordinate (bprime) at (\pprimex,\Aheight);
        \coordinate (rhoABOVEpprime) at (\pprimex,\rhoheight);
        \path (center)++(\linftyangle:\radius) coordinate (linfty);
        \path (linfty);
        \pgfgetlastxy{\XCoord}{\YCoord};
        \coordinate (b) at (\XCoord,\Aheight);
        \coordinate (rhoABOVElinfty) at (\XCoord,\rhoheight);
        \def\smallradius{1}
        \pgfmathparse{veclen(\radius,\smallradius)}
        \let\centergap\pgfmathresult
        \def\tauangle{135}
        \coordinate(smallcenter) at(\pprimex-\centergap,0);
        \path(smallcenter)++(\tauangle:\smallradius) coordinate (tau);
        \path(tau);
        \pgfgetlastxy{\XCoord}{\YCoord};
        \coordinate (a) at (\XCoord,\Aheight+1);
        \coordinate (rhoABOVEa) at (\XCoord,\rhoheight);
        \pgfmathparse{atan(\radius/\smallradius)}
        \path(smallcenter)++(\pgfmathresult:\smallradius)coordinate(p);
        \def\lightpath{(a)to(bprime)--(pprime)to(linfty)to(tau)--cycle}
        \fill[gray!30]\lightpath;
        \def\darkpath{(b)to(a)to(linfty)to(tau)to(p)to(linfty)--cycle}
        \fill[gray!70]\darkpath;
        \draw\lightpath;
        \draw\darkpath;
        \def\dotradius{.03}
        \foreach\x in{tau,a,linfty,pprime,bprime,b,p} \fill (\x) circle (\dotradius);
        \draw[thick] (center)++(80:\radius) arc (80:180:\radius);
        \draw[thick](\LLx,0)--(\rightmost,0);
        \draw[thick](\LLx,\Aheight)--(\rightmost,\Aheight);
        \draw [dashed,->,>=latex] (a)--(rhoABOVEa);
        \draw [dashed,->,>=latex] (b)--(rhoABOVElinfty);
        \draw [dashed,->,>=latex] (bprime)--(rhoABOVEpprime);
        \tikzstyle{lab}=[inner sep=0pt, outer sep=2pt]
        \draw(rhoABOVEa)node[lab,anchor=south]{$\rho$};
        \draw(rhoABOVElinfty)node[lab,anchor=south]{$\rho$};
        \draw(rhoABOVEpprime)node[lab,anchor=south]{$\rho$};
        \draw(a)node[lab,anchor=north east]{$a$};
        \draw(b)node[lab,anchor=north west]{$b$};
        \draw(tau)node[lab,anchor=north,outer sep=3pt]{$\tau$};
        \draw(linfty)node[lab,anchor=north west]{$l_\infty$};
        \draw(pprime)node[lab,anchor=north,outer sep=3pt]{$p'$};
        \draw(\rightmost,0)node[lab,anchor=west]{$\partial\BB^{13}$};
        \draw(\rightmost,\Aheight)node[lab,anchor=west]{$\partial A$};
        \draw(center)++(76:\radius)node[lab,outer sep=0pt]{$H$};
        \draw(p)node[lab,anchor=north west]{$p$};
      \end{tikzpicture}
    \end{center}
    \caption{The regions correspond to the four steps in the proof of
      lemma~\ref{lem-change-of-basepoint}, starting from the bottom.
      We use the upper half space model with the Leech cusp $\rho$ at
      vertical infinity and $A$ being the horoball centered there,
      whose boundary is the horizontal line.  Each of the regions
      misses $\H$ except for obvious intersection points.}
    \label{fig-change-of-basepoint}
  \end{figure}
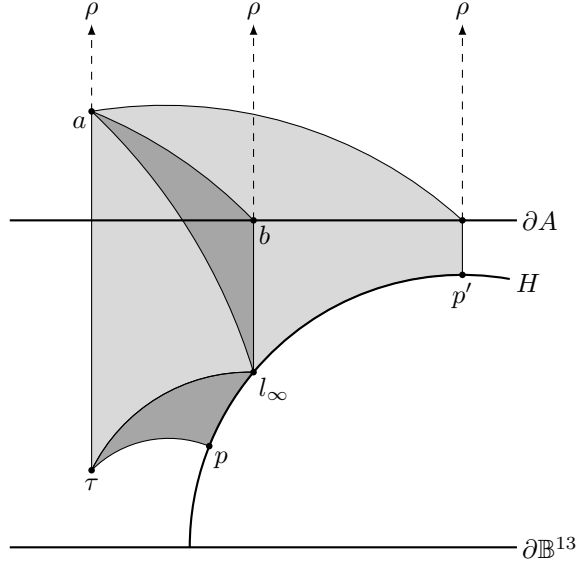

  The first step is that $M_{\tau,H}$ is equal to
  $M_{\tau,l_\infty,H}$ in $G_\tau$.  This follows from lemma~\ref{lem-homotopy-for-moving-the-point-on-the-hyperplane-ordinary-basepoint-version}
  once one checks that $\triangle\tau p l_\infty$ misses $\H$, except
  that $\geodesic{p l_\infty}$ lies in $H$ and $l_\infty$ may lie in
  additional hyperplanes.  Here $p$ means the point of $H$ closest to
  $\tau$.  This check is lemma~\ref{lem-disjointness-needed-for-step-1-of-basepoint-change}.

  The second step is that the isomorphism $G_\tau\iso G_a$ identifies
  $M_{\tau,l_\infty,H}$ with $M_{a,l_\infty,H}$.  This follows from
  lemma~\ref{lem-homotopy-for-moving-the-basepoint}, once one checks that the (complex) triangle $\triangle
  a\tau l_\infty$ misses $\H$ except at $l_\infty$.  This check is
  lemma~\ref{lem-complex-triangles-miss-mirrors-except-as-known}:  it proves the corresponding result for
  $\triangle\rho\tau l_\infty$, which contains $\triangle a\tau
  l_\infty$.

  The third step is that $M_{a,l_\infty,H}$ is equal to
  $M_{a,A,l_\infty,H}$ in $G_a$.  By taking $a$ very far up in
  figure~\ref{fig-change-of-basepoint} this becomes almost obvious.  Namely, the uniform
  distance between $\geodesic{a l_\infty}$ and the concatenation of
  $\geodesic{a b}$ and $\geodesic{b l_\infty}$ tends to $0$ as
  $a$ approaches $\rho$.  Here $b$ is the point of $A$ closest to $l_\infty$.
  Therefore, by taking $a$ close enough to $\rho$, we may take the
  uniform distance between $M_{a,l_\infty,H}$ and $M_{a,A,l_\infty,H}$
  to be arbitrarily small.  We take it small enough that the
  straight-line homotopy misses $\H$ (This uses the fact verified in lemma
  \ref{lem-step-4-line-case} that the geodesic segment joining $b$ and
  $l_{\infty}$ misses $\H$ except at $l_{\infty}$.)

  The final step is like the first: $M_{a,A,l_\infty,H}$ is equal in
  $G_a$ to $M_{a,A,H}$.  This follows from lemma~\ref{lem-homotopy-for-moving-the-point-on-the-hyperplane-cusp-version}, once one
  checks that $\triangle\rho l_\infty p'$ misses $\H$ except that
  $\geodesic{p'l_\infty}\sset H$ and that $l_\infty$ may lie in
  additional mirrors.  Here $p'$ is the point of $\H$ nearest $\rho$.
  This check is lemma~\ref{lem-step-4-line-case}.
  
  Putting these four steps together gives
  \begin{equation*}
    M_{\tau,H}
    =
    M_{\tau,l_\infty,H}
    \leftrightarrow
    M_{a,l_\infty,H}
    =
    M_{a,A,l_\infty,H}
    =
    M_{a,A,H}
  \end{equation*}
  Here the first two terms are equal in $G_\tau$ by the first 
  step of the homotopy.  The second and third terms correspond under
  the isomorphism $G_\tau\iso G_a$ induced by $\geodesic{\tau a}$, by
  the second step.  And the  last three terms are equal in $G_a$ by the
  third and fourth steps.   This finishes the proof for line-meridians.

  The point-meridian case is exactly the same.  Now $H$ is
  a point-mirror and $p$ and $p'$ are
  the projections of $\tau$ and $\rho$ to it.  And  
  we replace $l_\infty$ by $p_\infty$.  In step~1, $\triangle\tau p
  l_\infty$ gets replaced by 
  $\triangle\tau p p_\infty$, whose intersection with $\H$ 
  is also given by lemma~\ref{lem-disjointness-needed-for-step-1-of-basepoint-change}.  In step~2, $\triangle\tau\rho
  l_\infty$ gets replaced by $\triangle\tau\rho p_\infty$, whose
  intersection with $\H$ is also given by lemma~\ref{lem-complex-triangles-miss-mirrors-except-as-known}.  In step~4,
  $\triangle\rho p'l_\infty$ gets replaced by $\triangle\rho p'
  p_\infty$, whose intersection with $\H$ is given by lemma~\ref{lem-step-4-point-case}
  rather than lemma~\ref{lem-step-4-line-case}.
\end{proof}

\appendix
\section{How eight totally real triangles meet the mirrors}
\label{app-how-real-triangles-meet-mirrors}

\noindent
In this appendix we examine how certain totally real triangles in
$\overline{\BB^{13}}$ meet the mirror arrangement $\H$.  This was needed in
the proofs of lemmas
\ref{lem-configuration-3-Leech-roots-plus-1-second-shell},
\ref{lem-configuration-2-Leech-roots-plus-1-second-shell-plus-1-third-shell}
and \ref{lem-change-of-basepoint}.  Recall from
section~\ref{subsec-geodesics-and-geodesic-triangles} that if
$x,y,z\in L\tensor\C-\{0\}$ have norms and inner products in
$(-\infty,0]$, then the convex hull of the corresponding points in
  $\overline{\BB^{13}}$ is the projectivization of the convex hull of
  $x$, $y$ and~$z$ in $L\tensor\C$. In
  section~\ref{subsec-geodesics-and-geodesic-triangles} we called such
  a triangle in $\overline{\BB^{13}}$ totally real.  For totally real
  triangles we don't usually distinguish between the triangle in
  $L\tensor\C$ and the triangle in $\overline{\BB^{13}}$.  The
  following lemma is trivial but crucial. 
  
  \begin{lemma}[How to intersect a totally real triangle and a hyperplane]
    \label{lem-how-real-triangle-meets-hyperplane}
    Suppose $x,y,z\in L\tensor\C$ have norms and inner products in
    $(-\infty,0]$, $T$ is the totally real triangle $\triangle x y z$,
      and $s\in L\tensor\C$.  Then $T\cap s^\perp$ is the preimage of
      the origin under the map $T\to\C$ given by $v\mapsto\ip{v}{s}$.

      In particular, if the convex hull in $\C$ of $\ip{x}{s}$,
      $\ip{y}{s}$ and $\ip{z}{s}$ does not contain the origin, then
      $T\cap s^\perp=\emptyset$.
      \qed
  \end{lemma}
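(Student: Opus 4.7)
The plan is to unwrap the identifications set up in section~\ref{subsec-geodesics-and-geodesic-triangles}. By those conventions, the totally real triangle $T=\triangle xyz$ in $\overline{\BB^{13}}$ is identified with the convex hull of the vectors $x,y,z$ in $L\tensor\C$, which sits inside $L\tensor\C$ and descends injectively to projective space. Under this identification the projective hyperplane $s^\perp\sset P(L\tensor\C)$ pulls back to the kernel of the $\C$-linear functional $\varphi\colon v\mapsto\ip{v}{s}$ on $L\tensor\C$. So $T\cap s^\perp$, interpreted in $\overline{\BB^{13}}$, is literally $\varphi^{-1}(0)\cap T$ computed on vectors, which is the first assertion.

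For the ``in particular'' clause, I would invoke that $\varphi$, being $\C$-linear, is in particular affine, and hence carries the convex hull of $\{x,y,z\}$ to the convex hull in $\C$ of $\{\ip{x}{s},\ip{y}{s},\ip{z}{s}\}$. If $0$ lies outside this latter convex hull, then $\varphi$ does not vanish anywhere on $T$, and the first assertion yields $T\cap s^\perp=\emptyset$. There is no subtlety to verify, because the scalings of $x,y,z$ that make the three pairwise inner products real and nonpositive have been fixed in advance, so the convex hull taken in $L\tensor\C$ projectivizes faithfully to $T$.

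I expect no obstacle: the result really is a one-line observation about the image of a convex set under a linear functional. The ``crucial'' half of the authors' billing refers not to the argument itself but to how the lemma will be deployed in the appendices, where for each of the ten triangles in the paper and each candidate root $s$ one reduces the geometric disjointness $T\cap s^\perp=\emptyset$ to the elementary planar inequality $0\notin\op{conv}\bigl(\ip{x}{s},\ip{y}{s},\ip{z}{s}\bigr)$.
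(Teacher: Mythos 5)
Your argument is correct and is exactly the intended one: the paper marks this lemma with \qed and offers no proof, regarding it as immediate from the identification in section~2.3 of $T$ with the real convex hull of $x,y,z$ and the fact that the $\R$-linear map $v\mapsto\ip{v}{s}$ carries that convex hull onto the convex hull of the three inner products. Nothing is missing.
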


In the situation of the lemma we will write $\ip{T}{s}$ for the convex
hull in $\C$ of $\ip{x}{s}$, $\ip{y}{s}$ and $\ip{z}{s}$.  
Note that if the vertices of the triangle $\ip{T}{s}$ have co-ordinates in 
$\Q[\omega]$ or $\Q[\omega, \sqrt{3}]$, then the condition
$0 \in \ip{T}{s}$ can be checked by exact arithmetic in that field.
Most arguments in this appendix amount to showing that $0\notin\ip{T}{s}$
for various triangles~$T$ and roots~$s$.  Some of the results require
computer calculation, and others depend on properties of the Leech
lattice. For example, lemmas
\ref{lem-triangle-analysis-for-2-Leech-meridians-give-a-third}
and~\ref{lem-step-4-line-case} use the following result:

\begin{lemma}[Leech lattice points near $\lambda_6/\theta$ and
    $\lambda_9/\theta$]
  \label{lem-lattice-points-near-vectors-over-theta}
  Suppose $\lambda_6$ and $\lambda_9$ are vectors in
  $\Lambda$ with norms $6$ and~$9$.  Then the nearest points of
  $\Lambda$ to $\lambda_6/\theta$ are three in number, at
  distance~$\sqrt2$.  And the nearest points of $\Lambda$ to
  $\lambda_9/\theta$ are $36$ in number, at distance~$\sqrt3$.
\end{lemma}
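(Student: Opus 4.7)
The plan is to convert the problem into a finite enumeration via Cauchy--Schwarz, then handle the residual ``special'' cases by a norm-minimality trick for $\lambda_{6}$ and by explicit computation for $\lambda_{9}$. The key input is that $\Lambda=\theta\Lambda^{*}$, so $\ip{\mu}{\lambda_{k}}\in\theta\E$ for every $\mu\in\Lambda$; write $\alpha:=\ip{\mu}{\lambda_{k}}/\theta\in\E$. A direct expansion (using $\bar\theta=-\theta$ and $\theta\bar\theta=3$) gives
\[
  \lvert\mu-\lambda_{k}/\theta\rvert^{2}=\mu^{2}+2\Re(\alpha)+k/3.
\]
By transitivity of $\Aut\Lambda=6\cdot\Suz$ on norm-$6$ and norm-$9$ vectors I may fix a convenient $\lambda_{k}$. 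Demanding the distance be at most $\sqrt{k/3}$ forces $\Re(\alpha)\le-\mu^{2}/2$, which combined with Cauchy--Schwarz $|\alpha|^{2}\le k\mu^{2}/3$ gives $\mu^{2}\le 4k/3$. Since norms in $\Lambda$ lie in $3\Z_{\ge0}$ with minimum $6$, this leaves $\mu^{2}\in\{0,6\}$ when $k=6$ and $\mu^{2}\in\{0,6,9,12\}$ when $k=9$, and for each $\mu^{2}$ the admissible $\alpha\in\E$ form a finite list found by elementary enumeration.

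I would then separate admissible pairs into Cauchy--Schwarz-tight and strict cases. A tight pair forces $\mu\in\C\cdot\lambda_{k}$; a short check shows the only lattice vectors it produces are $\mu=0$, $\bar\omega\lambda_{k}$, and $-\omega\lambda_{k}$ (other potential proportionality constants lie in $\frac{1}{3}\E\setminus\E$, and $\lambda_{k}$ is not divisible by any non-unit of $\E$ in $\Lambda$). These supply three nearest points in both the $k=6$ and the $k=9$ case.

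For $k=6$ only one strict (``special'') pair survives, $(\mu^{2},\alpha)=(6,-3)$, and I would rule it out in one line. If such $\mu$ existed, then using $\ip{\mu}{\lambda_{6}}=-3\theta$ and the identity $\omega-\bar\omega=\theta$,
\[
  (\mu+\omega\lambda_{6})^{2}=\mu^{2}+3\theta(\omega-\bar\omega)+6=6+3\theta^{2}+6=3,
\]
which contradicts the minimum nonzero norm $6$ of $\Lambda$. Thus no such $\mu$ exists, and the three vectors above are all the nearest points. This also exhibits the ``special property of the Leech lattice'' advertised in the introduction -- its minimum norm -- as the mechanism behind the number three.

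The $k=9$ case is the main obstacle. The strict pairs are $(6,-3)$, $(6,-2\pm2\omega)$, $(9,-4+\omega)$, and $(9,-5-\omega)$, and none is killed by the analogous norm-$3$ trick: direct computation shows $(\mu+c\lambda_{9})^{2}$ always lands at legal $\Lambda$-norms. These configurations must therefore actually occur and supply the remaining $36-3=33$ nearest points. I expect to finish by fixing an explicit coordinate description of $\Lambda$ (for instance via the hexacode / complex-MOG model) together with a specific $\lambda_{9}$, and enumerating for each special pair the $\mu$ of the prescribed norm satisfying the inner-product condition. This enumeration is finite and well-suited to computer verification, in line with the paper's acknowledgement that parts of the argument rely on computer calculation; a more structural alternative would be to identify $\lambda_{9}/\theta$ as a deep hole of the complex Leech lattice whose $36$-point hole diagram is determined by the stabilizer of $\lambda_{9}$ in $6\cdot\Suz$.
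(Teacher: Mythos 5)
Your setup and the $k=6$ case are sound, but your enumeration of admissible pairs for $k=9$ is incomplete, and the omissions are exactly the dangerous ones. With $\Re(\alpha)\le-\mu^2/2$ and $|\alpha|^2\le 3\mu^2$, the full list of strict (non-Cauchy--Schwarz-tight) pairs also contains $(\mu^2,\alpha)=(6,-4)$, $(6,-3+\omega)$, $(6,-4-\omega)$ and $(9,-5)$, which correspond to putative lattice points at distance $1$, $\sqrt2$, $\sqrt2$, $\sqrt2$ from $\lambda_9/\theta$. These are precisely the configurations that would falsify the lemma, so they must be excluded, and your proposal never addresses them: the pairs you do list are all at distance exactly $\sqrt3$, which is why the norm-$3$ trick (correctly) fails to kill them. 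The missing cases \emph{can} be killed by the same minimal-norm argument once you notice them --- e.g.\ for $(6,-4)$ one gets $(\theta\mu-\lambda_9)^2=3$, for $(9,-5)$ one gets $(\mu+\omega\lambda_9)^2=3$, and for $(6,-3+\omega)$ one gets $(\mu-\bar\omega\lambda_9)^2=3$, each contradicting the minimal norm $6$ --- so the approach is salvageable, but as written the proof does not establish that no lattice point lies closer than $\sqrt3$. A secondary gap is that the count of $36$ is only sketched (deferred to a computer search or a deep-hole argument), whereas it is the heart of the statement.

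For comparison, the paper's proof is a two-line reduction: a lattice point $\mu$ at distance $r$ from $\lambda_k/\theta$ corresponds to $x=\theta\mu-\lambda_k\in\Lambda$ with $x\equiv-\lambda_k\pmod{\theta\Lambda}$ and $x^2=3r^2$, so the question becomes counting the minimal vectors in the congruence class mod $\theta$ of a norm-$6$ (resp.\ norm-$9$) vector. That count --- $3$ vectors of norm $6$, resp.\ $36$ of norm $9$, and nothing shorter --- is the lemma on p.~153 of Wilson's paper, which the proof simply cites. Your argument amounts to re-deriving Wilson's lemma from scratch; this works painlessly for $k=6$ (only the minimal norm of $\Lambda$ is needed) but for $k=9$ you are forced to reprove both the nonexistence of norm-$6$ vectors in the class (the missing exclusions above) and the count of $36$, i.e.\ the full content of the cited result.
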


\begin{proof}
The essential point is that the shortest elements of $\Lambda$ which
are congruent mod~$\theta$ to a lattice vector of norm~$6$ (resp.~$9$)
are $3$ (resp.~$36$) in number, all of norm~$6$ (resp.~$9$).  This is
the lemma on p.~153 of \cite{Wilson}.  

We abbreviate $\lambda_6/\theta$ to $C$ (for ``centroid'').  $\Lambda$
contains the vector $C-\lambda_6/\theta$, which lies at
distance~$\sqrt2$ from~$C$.  (It lies in $\Lambda$ because it is the
zero vector.)  Now consider any lattice point at
distance${}\leq\sqrt2$ from~$C$, and write it as $C+x/\theta$ with
$x\in\Lambda\tensor\C$ of norm${}\leq6$.  Since $\Lambda$ contains the
difference between this point and the one just
mentioned, it contains $(x+\lambda_6)/\theta$.  That is, $x$ lies
in $\Leech$ and is congruent to $-\lambda_6$ mod~$\theta$.  By
Wilson's result, the possibilities for $x$ are the three minimal
representatives of $-\lambda_6$'s congruence class mod~$\theta$.  This
finishes the $\lambda_6$ case.
The $\lambda_9$ case is similar.
\end{proof}

When we have a Leech cusp $\rho$ in mind, we will call a horosphere
centered there a critical horosphere if it is tangent to some mirror.
We use the same language for the (open or closed) horoball it bounds.
Recall the definition of a horosphere of height $h$ given in the discussion
following equation \eqref{eq-definition-of-height}.
If $s$ is a root then it is easy to see that the horosphere tangent to
its mirror is the one of height $\frac13|\ip{\rho}{s}|^2$.  (Compute the
inner product of $\rho$ with its projection onto $s^\perp$.) So the
heights of the first four critical horospheres are $1$, $3$, $4$
and~$7$.  The first-, second- and third-shell mirrors (defined in
section~\ref{subsec-Leech-cusps-and-Leech-roots}) are the mirrors
tangent to the first, second and third critical horoballs.

The following lemma is the first of several in this appendix having the general
form: prove that some particular triangle misses the mirrors except for some
obvious intersection points. The general strategy is to start by showing that
the triangle is covered by the union of a ball and a horoball. We enumerate
the finitely many mirrors that meet the ball, and check by direct computation
that they miss the triangle. Then we enumerate the mirrors that meet the 
horoball; there are infinitely many such mirrors, but they correspond to roots
of small height and therefore they may be parametrized.
Checking that these mirrors miss the triangle requires some
intricate analysis rather than just direct computation.

\begin{lemma}[The triangle needed in lemma~\ref{lem-configuration-3-Leech-roots-plus-1-second-shell}]
  \label{lem-triangle-analysis-for-2-Leech-meridians-give-a-third}
  In the Leech model, consider the Leech roots $x=(0;1,-\w)$ and
  $y=(\lambda_6;1,\w)$, where $\lambda_6$ is a norm~$6$ vector in
  $\Lambda$.  Define $p$, resp.\ $q$, as the projection of $\rho$ to
  $x^\perp$, resp.\ $\spanof{x,y}^\perp$.  Then the totally real
  triangle $T=\triangle\rho p q$ meets $\H$ as follows: $x^\perp$
  meets $T$ in $\geodesic{pq}$, the other three mirrors of
  $\gend{R_x,R_y}$ meet $T$ only at $q$, and all other mirrors
  miss~$T$.
\end{lemma}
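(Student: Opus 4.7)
\textit{Plan.} My plan combines three ingredients: containing $T$ in a horoball around $\rho$, applying Lemma~\ref{lem-how-real-triangle-meets-hyperplane} to turn each intersection check into a convex-hull question in $\C$, and invoking Lemma~\ref{lem-lattice-points-near-vectors-over-theta} to restrict which roots can contribute.

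First I would compute $\ip{\rho}{p}=-1$, $p^2=-1$, $\ip{\rho}{q}=-3$, $q^2=-3$, giving $\height_\rho(p)=1$ and $\height_\rho(q)=3$. The Busemann function from $\rho$ is convex along hyperbolic geodesics, and $\height_\rho$ is monotone in it, so $\height_\rho$ attains its maximum on $T$ at a vertex; since $\rho$ contributes the limit value $0$, the maximum is $3$, attained at $q$. Thus $T$ lies inside the closed horoball of height~$3$, and only first-shell (Leech) and second-shell roots can meet $T$. By Lemma~\ref{lem-how-real-triangle-meets-hyperplane} I then need only decide, for each such $s$, whether $0$ lies in the convex hull of $a=\ip{\rho}{s}$, $b=\ip{p}{s}$, $c=\ip{q}{s}$. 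A direct computation for the four mirrors of $\gend{R_x,R_y}$ gives: $(a,b,c)=(\theta,0,0)$ for $s=x$, realizing $T\cap x^\perp=\geodesic{pq}$; $c=0$ with $a,b$ in the open upper half plane for $s\in\{y,z\}$; and $c=0$ with $a,b$ positive real for $s=w$. In each of the last three cases the hull meets $0$ only at the vertex corresponding to $q$, yielding $T\cap s^\perp=\{q\}$.

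For the remaining Leech roots I parameterize $s=(\sigma;1,\theta((\sigma^2-3)/6+\nu))$ as in~\eqref{eq-Leech-roots-in-Leech-model}. A short calculation gives $\Im(b)=\sigma^2/(2\sqrt3)$, which is $\geq\sqrt3$ whenever $\sigma\neq0$; combined with $\Im(a)=\sqrt3$ this pushes $a$ and $b$ into the open upper half plane. I then compute the real and imaginary parts of $c$ in terms of $\sigma^2$, $\Re(\mu)$ and $\Im(\mu)$ where $\mu=\ip{\lambda_6}{\sigma}$, and argue that $0$ in the hull forces $|\sigma-\lambda_6/\theta|^2\leq 2$; Lemma~\ref{lem-lattice-points-near-vectors-over-theta} then restricts $\sigma$ to just three lattice points, all producing mirrors already accounted for. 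The residual case $\sigma=0,\ s\neq x$ reduces to $s=x+n\rho$ for some $n\in\Z\setminus\{0\}$, giving $(a,b,c)=(\theta,-n,-3n)$; the two real vertices lie on one side of the real axis and the third on the positive imaginary axis, so $0$ is outside the hull. Second-shell roots are handled by the same method, with $\lambda_9/\theta$ and its $36$ nearest Leech neighbors replacing the data of the first shell; only $w^\perp$ survives.

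The main obstacle is the Leech-lattice bookkeeping in the penultimate paragraph: deriving the precise inequality that ties $\Im(c)$ to $|\sigma-\lambda_6/\theta|^2$, together with the analogous computation centered at $\lambda_9/\theta$ for the second shell. I expect the structural input from Lemma~\ref{lem-lattice-points-near-vectors-over-theta}, which provides exactly the congruence-class count needed, to make these estimates fall out cleanly once the identities are set up.
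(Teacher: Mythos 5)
Your treatment of the first-shell (Leech) roots is correct and is essentially the paper's own argument: confine $T$ to the closed height-$3$ horoball by convexity of the height function, reduce each mirror to whether $0$ lies in the convex hull of $\bigl(\ip{\rho}{s},\ip{p}{s},\ip{q}{s}\bigr)$ via lemma~\ref{lem-how-real-triangle-meets-hyperplane}, note that the first two vertices lie strictly above the real axis once $\sigma\neq0$, and invoke lemma~\ref{lem-lattice-points-near-vectors-over-theta} to force $\sigma$ into a three-element set. Your explicit hull data for $x$, $y$, $z$, $w$ and for the family $x+n\rho$ all check out. One step you should not elide: for the two nonzero admissible values of $\sigma$ the vertex $\ip{q}{s}$ is merely \emph{real}, so you must still observe that $0$ can lie in the hull only if $\ip{q}{s}=0$ exactly, which pins down $\nu$ and identifies $s$ (up to units) as $y$ or $z$; the phrase ``all producing mirrors already accounted for'' hides this.

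Where you genuinely diverge is the second shell, and there your plan as stated would misfire: the data ``$\lambda_9/\theta$ and its $36$ nearest neighbours'' has nothing to do with this triangle (it is needed for the triangles of lemma~\ref{lem-triangle-analysis-for-point-and-line-meridians-give-Leech-meridian} and lemma~\ref{lem-step-4-line-case}). For a second-shell root scaled so that $m=\theta$, formula \eqref{eq-general-inner-product-formula} makes the relevant quantity $\frac13(\sigma+\wbar\lambda_6)^2$ with $\sigma+\wbar\lambda_6\in\Lambda$; positive-definiteness alone gives $\Re\ip{q}{s}\geq0$ with equality only for $\sigma=-\wbar\lambda_6$, while $\ip{\rho}{s}=3$ and $\Re\ip{p}{s}\geq1$, so only $w^\perp$ survives---easier than you anticipate, but for a different reason than the one you give. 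The paper skips this computation entirely: a mirror outside the first shell meets the closed height-$3$ horoball only if it is second-shell, and then only at its tangency point $\pi_{s^\perp}(\rho)$, which must therefore equal $q$; this forces $\frac{\ip{\rho}{s}}{s^2}s=\w x-\wbar y$ and hence $s\propto\w x-\wbar y$. Either route closes the argument once your $\lambda_9$ citation is replaced.
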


\begin{proof}
  First we introduce various important points.
  Recall that
  $\rho=(0;0,1)$.  By definition,
  $$
  p=
  \rho-\frac{\ip{\rho}{x}}{x^2}x
  =
  \bigl(0;{1}/{\theta},-{\wbar}/{\theta}\bigr),
  $$
  and one can compute $p^2=\ip{\rho}{p}=-1$.  For later
  calculations, we also write $p$ in the form~\eqref{eq-definition-of-vector-s}, writing
  $\sigma_p$, $m_p$, $N_p$ and $\nu_p$ in place of their unsubscripted forms
  that appear there.  Obviously we have $\sigma_p=0$ and
  $m_p=1/\theta$. And  $N_p$ is $p^2$, which we just computed to be~$-1$.  One can
  then solve for $\nu_p$, namely $\nu_p=-\frac{\theta}{18}$.  One can
  verify the following formula for
  $q$ by checking that it is orthogonal to $x$ and $y$:
  $$
  q
  =
  \rho-\bigl(\w x-\wbar y\bigr)
  =
  \bigl(\wbar\lambda_6;\thetabar,\theta\wbar\bigr).
  $$ One can check 
  $q^2=\ip{\rho}{q}=\ip{p}{q}=-3$.  Since the vectors we have chosen
  to represent the three vertices of $\triangle\rho p q$ have inner products
  in $(-\infty,0]$, we will be able to apply
    lemma~\ref{lem-how-real-triangle-meets-hyperplane}.  Also, writing
    $q$ in the form~\eqref{eq-definition-of-vector-s}, just as we did
    for $p$, gives $\sigma_q=\wbar\lambda_6$, $m_q=\thetabar$,
    $N_q=-3$ and $\nu_q=-\frac{\theta}{2}$.
    
    The mirrors of $\gend{R_x,R_y}$ obviously meet $T$ as claimed, so
    it suffices to show that no other mirrors meet~$T$.  By
    construction, $p$ lies on the boundary of the first critical
    horoball.  And $\height(q)=3$, so $q$ lies on the boundary of the
    second.  Therefore only one mirror not in the first shell could
    meet the triangle, and then only at $q$.  This mirror corresponds
    to the second-shell root $\w x-\wbar y$ in $\spanof{x,y}$ that we
    left unnamed in
    lemma~\ref{lem-configuration-3-Leech-roots-plus-1-second-shell}\eqref{item-three-mirrors-are-Leech-mirrors}.
    So it suffices to show that no Leech mirrors meet the triangle
    except the three coming from $\spanof{x,y}$.

    Our strategy is to write a general Leech root $s$ in the form
    \eqref{eq-Leech-roots-in-Leech-model}, namely
    $$
    s
=
\biggl(
\sigma
;
1
,
\theta\Bigl(\frac{\sigma^2-3}{6}+\nu\Bigr)
\biggr)
    $$
and compute its inner products with
    $\rho$, $p$ and~$q$, and then apply
    lemma~\ref{lem-how-real-triangle-meets-hyperplane} to gain control over
    $\sigma$ and~$\nu$.
    To find $\ip{p}{s}$ and $\ip{q}{s}$ we appeal to formula
    \eqref{eq-general-inner-product-formula}; being able to apply this
    is the reason we computed $\sigma_p,\dots,\nu_q$.  By
    \eqref{eq-general-inner-product-formula}, we have
    \begin{align}
      \label{eq-ip-of-p-and-s}
      \ip{p}{s}
      &{}=
      \textstyle
      \frac{\theta}{6}\sigma^2
      +\Bigl(\frac{1}{2}-\theta\nu\Bigr)
      \\
      \label{eq-ip-of-q-and-s}
      \ip{q}{s}
      &{}=
      \textstyle
      \frac{\theta}{2}\Bigl(
        \bigl(\sigma+\wbar\frac{\lambda_6}{\theta}\bigr)^2-2\Bigr)
      -\Bigl(\theta\Im\bigip{\sigma}{\wbar\frac{\lambda_6}{\theta}}
        -\frac{3}{2}+3\theta\nu\Bigr)
    \end{align}
    In each of these, the first term is imaginary and the second is real.
    Our goal is to show that only three possible pairs
    $\sigma$, $\nu$ allow $0$ to lie in the triangle
    $\ip{T}{s}\sset\C$ whose vertices are \eqref{eq-ip-of-p-and-s}, \eqref{eq-ip-of-q-and-s} and
    $\ip{\rho}{s}=\theta$.

    First we consider the case $\sigma=0$.  Then \eqref{eq-ip-of-p-and-s} and \eqref{eq-ip-of-q-and-s}
    simplify dramatically, and $\ip{T}{s}$ is the triangle in $\C$
    with vertices  
    $\frac{1}{2}-\theta\nu$,
    $3(\frac{1}{2}-\theta\nu)$ and $\theta$.  Since the last vertex is above the
    real axis, while
    the first two are real and differ by a factor of~$3$, $\ip{T}{s}$
    can only contain~$0$ if both of its first two vertices are~$0$.
    This forces $\nu=1/2\theta$, which leads to $s=x$, whose mirror we
    already know meets~$T$.

    Now we suppose $\sigma\neq0$.  So \eqref{eq-ip-of-p-and-s} lies above the real
    axis, just as $\ip{\rho}{s}$ does.  
    If $\sigma$ lies at distance${}>\sqrt2$ from
    $-\wbar\lambda_6/\theta$ then \eqref{eq-ip-of-q-and-s}  also lies above
    the real axis.  In this case it is obvious that
    $0\notin\ip{T}{s}$.  By
    lemma~\ref{lem-lattice-points-near-vectors-over-theta}, the only
    other possibility is that $\sigma$ is one of the three elements of
    $\Lambda$ that lie at distance~$\sqrt2$ from
    $-\wbar\lambda_6/\theta$, in which case $\ip{q}{s}$ is real.
    One of these nearest neighbors is $0\in\Lambda$, which we treated
    in the previous paragraph.  In each of the other two cases, 
    $\ip{p}{s}$ and $\ip{\rho}{s}$ are still above the $x$-axis and 
    $\ip{q}{s}$ is on the $x$-axis.
    So the only way the origin can lie in $\ip{T}{s}$ is for it to be the
    vertex $\ip{q}{s}$.  Then  $\nu$ is determined in terms of $\sigma$
using equation \eqref{eq-ip-of-q-and-s}, since $\ip{q}{s} = 0$.
    We have shown that there are at most three Leech mirrors that meet
    the triangle.  Since we know three Leech mirrors that do meet it,
    coming from roots in $\spanof{x,y}$, the proof is complete.    
\end{proof}

\begin{lemma}[Three triangles needed in lemma~\ref{lem-configuration-2-Leech-roots-plus-1-second-shell-plus-1-third-shell}]
  \label{lem-triangle-analysis-for-point-and-line-meridians-give-Leech-meridian}
  In the Leech model, consider the Leech roots $x=(0;1,-\w)$ and
  $z=(\lambda_9;1,\theta)$ and define the second shell root $y$ as
  $\wbar x-z$.  Write $X$, $Y$ and $Z$ for the projections of $\rho$
  to $x^\perp$, $y^\perp$ and $z^\perp$, and $q$ for the projection of
  $\rho$ to $\spanof{x,y}^\perp$.  Then the only mirrors of $\H$ which
  meet any of the totally real triangles $\triangle\rho q X$, $\triangle\rho q Y$ and
  $\triangle\rho q Z$ are the four mirrors of $\gend{R_x,R_y}$.
\end{lemma}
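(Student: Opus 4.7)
The proof follows the template of Lemma \ref{lem-triangle-analysis-for-2-Leech-meridians-give-a-third}, though it is more intricate because we now have three triangles to analyze (and they are not interchanged by any symmetry) and because candidate mirrors may come from three shells rather than just one. First, I would compute explicit representatives for each vertex in the Leech model: $X = \rho - (\theta/3)x$ with $X^2 = -1$; $Z = \rho - (\theta/3)z$ with $Z^2 = -1$; $Y = \rho - \wbar y$ with $Y^2 = -3$; and $q = \rho - ax - bz$ where $a, b \in \Q(\w)$ are determined by $\ip{q}{x} = \ip{q}{z} = 0$. Solving the linear system gives $a = (-1+3\theta)/4$, $b = (1+3\theta)/4$, and hence $q^2 = -9/2$. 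So the heights of $X, Z, Y, q$ are $1, 1, 3, 9/2$, and all three triangles lie inside the closed horoball of height $9/2$ centered at $\rho$. Since the first four critical horospheres lie at heights $1, 3, 4, 7$, only first-, second-, or third-shell roots can have mirrors meeting any of the three triangles.

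After rescaling the vertices so that all pairwise inner products within each triangle lie in $(-\infty, 0]$, Lemma \ref{lem-how-real-triangle-meets-hyperplane} reduces each question to whether the origin lies in the triangle $\ip{T}{s} \sset \C$. For any candidate root $s$ not in $\spanof{x,z}$, I would write it in the form \eqref{eq-definition-of-vector-s} with parameters $\sigma, m, N, \nu$, and compute $\ip{v}{s}$ at each vertex $v$ using \eqref{eq-general-inner-product-formula}. Just as in the previous lemma, the imaginary parts of these inner products are governed by the positions of $\sigma/m$ relative to the Leech-model centers of the vertices (namely $0$ from $X$, $\wbar\lambda_9/\theta$ from $Z$, and related points attached to $Y$ and $q$), so the condition $0 \in \ip{T}{s}$ forces $\sigma/m$ to lie at small controlled distance from one of these centers. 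The second half of Lemma \ref{lem-lattice-points-near-vectors-over-theta}, giving exactly $36$ points of $\Lambda$ at distance $\sqrt3$ from $\lambda_9/\theta$, then cuts the possibilities for $\sigma$ down to a finite explicit list. A third-shell root whose mirror meets a triangle must pass through the deepest vertex $q$, so it is orthogonal to $q$ and hence lies in $\spanof{x,z}$, forcing it to be a unit multiple of $x+z$.

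The hardest part will be the second-shell analysis: one must confirm that $y = \wbar x - z$ is the only second-shell root (up to units) whose mirror meets any of the three triangles, while excluding all other second-shell roots. The candidate set here is larger than in the Leech case because $\nu$ has more freedom and $|\ip{\rho}{s}|^2 = 9$ permits $\sigma$ to be a norm-$3$ translate of a norm-$6$ Leech vector. I anticipate that for each triangle, the conjunction of the sign constraints on $\Re\ip{X}{s}, \Re\ip{Y}{s}, \Re\ip{Z}{s}, \Re\ip{q}{s}$ and $\Re\ip{\rho}{s}$, together with the distance bound from Lemma \ref{lem-lattice-points-near-vectors-over-theta}, leaves only a finite and manageable collection of $(\sigma, \nu)$ pairs to examine. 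Much like the final enumeration at the end of Lemma \ref{lem-triangle-analysis-for-2-Leech-meridians-give-a-third}, this last step is likely to require computer verification to rule out each remaining candidate by checking $0 \notin \ip{T}{s}$ directly, but the validity of the resulting list can then be confirmed by hand.
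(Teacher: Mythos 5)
Your setup is correct as far as it goes: the vertex computations, the heights $1,1,3,9/2$, and the conclusion that only first-, second- and third-shell roots are relevant all match the paper. But the core of your argument has two genuine gaps. First, the third-shell step fails as stated: a third-shell mirror is tangent to the height-$4$ horosphere and otherwise lies at height${}>4$, while the triangles reach height $9/2$ at $q$, so such a mirror could meet the two-dimensional corner of a triangle cut off by the height-$4$ horosphere without passing through the vertex $q$ itself. And even if it did contain $q$, ``orthogonal to $q$, hence in $\spanof{x,z}$'' is a non sequitur: $q^\perp$ is $13$-dimensional, and the assertion that the only mirrors through $q$ are those of $\gend{R_x,R_y}$ is part (2) of lemma~\ref{lem-configuration-2-Leech-roots-plus-1-second-shell-plus-1-third-shell}, which is deduced \emph{from} the present lemma, so assuming it here is circular.

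Second, the first- and second-shell analysis is only a plan, and the plan does not obviously close up. In lemma~\ref{lem-triangle-analysis-for-2-Leech-meridians-give-a-third} the sign analysis worked because lemma~\ref{lem-lattice-points-near-vectors-over-theta} supplied exactly the distance bound needed at the single nontrivial vertex; here there are three triangles, the vertex $q$ has Leech-part $(2\wbar-\w)\lambda_9/(3\thetabar)$ rather than a point covered by that lemma, and for second-shell roots (where $m=\theta$) the relevant $\sigma$ range over lattice points near several centers for which you have proved no such bounds. The paper's actual proof replaces this scheme entirely: it shows that the parts of the three triangles outside the first critical horoball lie in the metric ball of radius $\cosh^{-1}\sqrt{9/2}$ about $q$, deduces $\bigl|\ip{q}{s}\bigr|^2\le 189$, bounds the norm of the projection of $s$ to $\spanof{x,y}\iso D_4^\E$ by $27/2$, and then runs a finite computer enumeration ($937$ projections, $2811$ lifts, $460$ survivors) finished off by Cauchy--Schwarz and the minimal norm of $\Lambda$. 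That reduction to a finite enumeration over $\spanof{q,x,y}$ is the essential idea of the proof, and it is absent from your proposal.
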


\begin{proof}
  We begin by finding basic
  data about various important points.  In
  addition to $x$ and $z$ given in the statement, we have
\begin{equation*}
  \rho=(0;0,1)
  \qquad\hbox{and}\qquad
  y=(-\lambda_9;\theta\w,2\wbar)
\end{equation*}  
where $\lambda_9$ is some norm~$9$ vector in the Leech lattice.
Recall that $x$ and $z$ are Leech roots, so
$\ip{\rho}{x}=\ip{\rho}{z}=\theta$.  Also,
$\ip{x}{z}=-\frac32+\frac\theta2$ and $y$ is a second-shell root with
$\ip{\rho}{y}=3\wbar$.  
The projections $X$, $Y$, $Z$ are
\begin{align*}
  X
  &=\rho-\frac{\ip{\rho}{x}}{x^2}x
  =\rho+x/\theta
  =(0;1/\theta,-\wbar/\theta)
  \\
  Y&{}=\rho-\frac{\ip{\rho}{y}}{y^2}y
  =\rho-\wbar y
  =(\wbar\lambda_9;\thetabar,1-2\w)
  \\
  Z&{}=\rho-\frac{\ip{\rho}{z}}{z^2}z
  =\rho+z/\theta
  =(\lambda_9/\theta;1/\theta,2)
\end{align*}
Using these one can check
\begin{align*}
  X^2=Z^2&{}=-1
  &
  \ip{\rho}{X}=\ip{\rho}{Z}&{}=-1
  &
  \ip{X}{y}&{}=1+3\wbar
  \\
  Y^2&{}=-3
  &
  \ip{\rho}{Y}&{}=-3
  &
  \ip{Y}{x}&{}=-\w\theta.
\end{align*}
Because
$X\perp x$ and $Y\perp y$, the following gives a formula for the
projection $q$ of $\rho$ to $\spanof{x,y}^\perp$:
\begin{equation*}
  q=(2\w-1)\Bigl(\rho
  -\frac{\ip{\rho}{x}}{\ip{Y}{x}}Y
  -\frac{\ip{\rho}{y}}{\ip{X}{y}}X\Bigr)
  =
  \Bigl((2\wbar-\w)\lambda_9;3\thetabar,3-3\w\Bigr)
\end{equation*}
(The initial factor $2\w-1$ makes $\ip{\rho}{q}$ negative,
and also makes $q$ a primitive lattice vector.)  Using this one can
check
\begin{equation*}
  q^2=-18
  \quad\hbox{and}\quad
  \ip{q}{\rho}
  =\ip{q}{X}
  =\ip{q}{Y}
  =\ip{q}{Z}
  =-9
\end{equation*}
We will be able to apply
lemma~\ref{lem-how-real-triangle-meets-hyperplane} to $\triangle\rho q
X$ because
$\rho$, $q$ and $X$ have negative inner products.  And similarly with
$Y$ or $Z$ in place of~$X$.

We first claim that $\triangle\rho q X\cup\triangle\rho q
Y\cup\triangle\rho q Z$ lies in the interior of the fourth critical
horoball.  It suffices to show that $X$, $Y$, $Z$ and $q$ do.  
By definition, $Y$ lies on the
boundary of the 2nd critical horoball and $X$ and $Z$ lie on the
boundary of the 1st.  For $q$, we use
$q^2=-18$ and $\ip{q}{\rho}=-9$ to get $\height(q)=9/2<7$, as desired.

Let $Q_X$ be  $\triangle\rho q X$ minus the first (open) critical
horoball around~$\rho$, and similarly for $Q_Y$ and $Q_Z$.  By working
in the hyperbolic plane containing $\triangle\rho q X$, it is obvious that
the point of $Q_X$ furthest from $q$ is~$X$.  The distance formula
\eqref{eq-complex-hyperbolic-metric} gives $d(q,X)=\cosh^{-1}\sqrt{9/2}$.  Similarly, $Z$ is the
point of $Q_Z$ furthest from $q$, and $d(q,Z)$ is also
$\cosh^{-1}\sqrt{9/2}$.  Finally, the furthest point of $Q_Y$ from $q$
is where $\geodesic{Y\!\rho}$ intersects the
 first critical horosphere.  We can find
this point by parameterizing $\geodesic{Y\!\rho}-\{\rho\}$ by
$Y+t\rho$ for $t\in[0,\infty)$.  The intersection point is defined by the condition
  $\height_\rho(Y+t\rho)=1$.  Writing this out explicitly using
  \eqref{eq-definition-of-height} and solving for $t$ gives $t=1$.  So the point is $Y+\rho$.
  The distance formula gives $d(q,Y+\rho)=\cosh^{-1}\sqrt2$.  This is
  smaller than $\cosh^{-1}\sqrt{9/2}$, so we conclude that $Q_X\cup Q_Y\cup
  Q_Z$ lies in the closed ball around $q$ of radius
  $\cosh^{-1}\sqrt{9/2}$.

  Now suppose $s$ is a root whose mirror meets $\triangle\rho q
  X\cup\triangle\rho q Y\cup\triangle\rho q Z$.  Since these triangles
  lie in the fourth (open) critical horoball, $s$ must be a 1st, 2nd or 3rd
  shell root.  By scaling $s$ we may
  suppose $\ip{\rho}{s}\in\{\theta,3,2\theta\}$.  We will work with
  many projections of $s$, so we name them in a uniform way:
  \begin{center}
    \setlength{\tabcolsep}{0pt}
    \begin{tabular}{rll}
      {$s_q$}&${}=\frac{\ip{s}{q}}{q^2}q$
      &\ is its projection to the span of $q$,
      \\
      {$s_{q\perp}$}&${}=s-s_q$
      &\ is its projection to the orthogonal complement of this,
      \\
      $s_{xy}$&
      &\ is its projection to the span of $x$ and $y$,
      \\
        {$s_{qxy}$}&${}=s_q+s_{xy}$
        &\ is its projection to the span of $q$, $x$ and~$y$,
        \\
          {$s_{qxy\perp}$}&${}=s-s_{qxy}$
          &\ is its projection to the orthogonal complement of this,
          \\
            {$s_{\rho x}$}&
            &\ is its projection to the span of $\rho$ and $x$,
  \\
 $s_9$&&\ is its
  projection to the span of $\lambda_9\in\Lambda$, and
            \\
            $s_\Lambda$&${}=s_9+s_{qxy\perp}$
            &\ is its projection to the summand $\Lambda=\spanof{\rho,x}^\perp$ of $L$.
    \end{tabular}
  \end{center}

  \noindent
  Because all mirrors miss the first (open) critical horoball, $s^\perp$
  must meet $Q_X\cup Q_Y\cup Q_Z$.  Because this set lies in the
  closed $\bigl(\cosh^{-1}\sqrt{9/2}\bigr)$-ball around $q$, we get
   $d(q,s^\perp)\leq\cosh^{-1}\sqrt{9/2}$.  Using the
  second distance formula \eqref{eq-distance-point-to-hyperplane} gives
  $$
  \sinh^{-1}\sqrt{-\frac{\bigl|\ip{q}{s}\bigr|^2}{q^2 s^2}}
  \leq\cosh^{-1}\sqrt{9/2}.
  $$
  Now $s^2=3$, $q^2=-18$ and $\sinh(\cosh^{-1}(?))=\sqrt{(?)^2-1}$
  give us the inequality $\bigl|\ip{q}{s}\bigr|^2\leq 189$.  
  Using $q^2=-18$ a second time shows that the most negative  $s_q^2$ can be is
  $-21/2$.  Next, $3=s^2=s_q^2+(s_{q\perp})^2$, so $(s_{q\perp})^2\leq
  27/2$.  Since $s_{xy}$ is a projection of $s_{q\perp}$ inside the
  positive definite space $q^\perp$, it follows that
  $s_{xy}^2\leq27/2$.

  We recall that $\spanof{x,y}$ is a copy of the Eisenstein lattice
  $D_4^\E$, for which one can introduce explicit coordinates, for
  example as in lemma~\ref{lem-generators-for-braid-group-of-D-4}.  Since all inner products in $L$ are
  divisible by $\theta$, we have $s_{xy}\in\theta(D_4^\E)^*$.  One can
  check that $\theta(D_4^\E)^*$ is a copy of $D_4^\E$ scaled to have
  minimal norm $3/2$.  An $\E$-basis consists of $(\theta x+y)/2$ and
  $(\theta y-x)/2$.
  Let $S_{xy}$ be the set of vectors in this
  lattice with norm${}\leq27/2$.  We have shown that the projection
  $s_{xy}$ of
  $s$ must lie in $S_{xy}$.  This set can be enumerated on a computer
  and turns out to have size~$937$.  (PARI has a built-in function to do this. It is
  most natural to rescale by multiplying all inner products by $4/3$, to work
  with a copy of the standard $D_4$ lattice over~$\Z$.  Then one enumerates all
  lattice vectors of norm~$\leq\frac{4}{3}\cdot\frac{27}{2}=18$. The
  number~$937$ matches what one expects from \cite[Table~4.8]{Conway-Sloane}.)

  Now we consider the possibilities for $s_{qxy}$.  We claim it
  lies in
  $$
  S_{qxy}=\Bigset{s_{xy}+tq}{s_{xy}\in S_{xy}\hbox{ and }
    t=\frac{\bigl(\hbox{one of $\thetabar$, $3$, and $2\thetabar$}\bigr)-\ip{s_{xy}}{\rho}}{-9}}
  $$
  This is easy: $s_{qxy}$ equals $s_{xy}$ plus some multiple of
  $q$, and the  multiple of $q$ is determined by the condition
  $\ip{\rho}{s_{qxy}}=\ip{\rho}{s}\in\{\theta,3,2\theta\}$.
  So $S_{qxy}$ has order $937\cdot3=2811$.

  We are working under the assumption that $s$ is orthogonal to some
  point of $Q_X\cup Q_Y\cup Q_Z$.  Now, $s_{qxy}$ has the same inner
  products as $s$ with all elements of the span of $q,x,y$, which
  contains $Q_X\cup Q_Y\cup Q_Z$.  It follows that $s_{qxy}^\perp$
  meets one of the triangles $\triangle\rho qX$, $\triangle\rho qY$
  and $\triangle\rho qZ$.  Of the 2811 possibilities for $s_{qxy}$,
  only~$460$ satisfy this condition.  And the following considerations
  further restrict the possibilities.

  Because the span of $q$, $x$ and $y$ is the same as the span of
  $\rho$, $x$ and $\lambda_9$ and $\lambda_9$ is orthogonal
  to the span of $\rho$ and $x$, we have
  $s_{q x y} = s_9 + s_{x \rho}$ and hence 
  $s = s_{q x y \perp}  + s_9 + s_{x \rho}$.
  It follows that $s_{\Lambda} = s_{q x y \perp} + s_9$.
 Note that the projection $s_9$ of $s$ to
  $\C\lambda_9$ coincides with the corresponding projection of
  $s_{qxy}$.  Furthermore, the equalities
  \begin{gather*}
    s_\Lambda^2=(s_{qxy\perp})^2+s_9^2=(3-s_{qxy}^2)+s_9^2
    \\
    \ip{s_\Lambda}{\lambda_9}=\ip{s_{qxy}}{\lambda_9}
  \end{gather*}
  show that $s_{qxy}$ determines $s_\Lambda^2$ and
  $\ip{s_\Lambda}{\lambda_9}$.  Therefore it determines all inner
  products in the span of $s_\Lambda$ and $\lambda_9$.  Also, note
  that $s_\Lambda$ lies in $\Lambda$, not just $\Lambda\tensor\C$,
  because $\Lambda$ is an orthogonal summand of~$L$.

  Of the 460 possibilities for $s_{qxy}$, 449 lead to
  $\bigl|\ip{s_\Lambda}{\lambda_9}\bigr|^2>9s_\Lambda^2$, which
  violates the Cauchy-Schwarz inequality.  Of the remaining 11
  possibilities, 4 lead to  $s_\Lambda^2=3$, which contradicts the
  fact that $\Lambda$ has minimal norm~$6$.  Of the remaining 7
  possibilities, 3 lead to the similar contradiction that $(\zeta
  s_\Lambda-\lambda_9)^2=3$ for some unit~$\zeta$ of~$\E$.  The
  remaining 4 possibilities for $s_{qxy}$ turn out to be $x$,
  $\wbar y$, $z$ and $x+z$, all of which are roots of
  $\spanof{x,y}$.  In particular, they all have norm~$3$.  It follows
  that
  $$
  s_{qxy\perp}^2=s^2-s_{qxy}^2=3-3=0.
  $$
  Since $s_{qxy\perp}$ lies in the positive-definite space $q^\perp$,
  we conclude $s_{qxy\perp}=0$.  Therefore $s=s_{qxy}$, which we have
  already observed is a root of $\spanof{x,y}$.
\end{proof}

For the rest of the results in this section we will need to understand
the mirrors near the intersection point $p_\infty$ of all $13$
point-mirrors.  Just as we spoke of critical horoballs around $\rho$,
we will speak of critical balls around $p_\infty$.  They are defined
the same way: the balls centered at $p_\infty$ and tangent to
some mirror.  The radii of the critical balls are called critical
radii. Even though some mirrors pass through $p_\infty$, we
don't regard $0$ as a critical radius.

For this analysis we use the $P^2\F_3$ model, in which
$p_\infty=(\thetabar;0,\dots,0)$.  For a given root $s$ of $L$, the
distance formula \eqref{eq-distance-point-to-hyperplane} gives
$d(p_\infty,s^\perp)$ in terms of $p_\infty^2=-3$, $s^2=3$ and
$|\ip{p_\infty}{s}|^2$.  Since the inner product lies in $\theta\E$ and
hence has norm in $3\cdot\{0,1,3,4,7,9,12,\dots\}$, one can work out
the critical radii
as
\begin{equation*}
r_1,r_2,r_3,r_4,r_5,r_6,\ldots=
\sinh^{-1}\sqrt{\hbox{($1,3,4,7,9,12,\dots$)}/3}
\end{equation*}
This assumes that there are indeed roots in $L$ having the appropriate
inner products with $p_\infty$, which is easy to check in the range we
care about (see lemma~\ref{lem-mirrors-near-a-13-point} below).  Numerically, the first six
critical radii are approximately $.549$, $.881$, $.987$, $1.210$,
$1.317$ and $1.444$.  We call the mirrors tangent to the $n$th
critical ball around $p_\infty$ the batch~$n$ mirrors.  For
completeness we also refer to the point-mirrors as the batch~$0$
mirrors.  Lemma~\ref{lem-mirrors-near-a-13-point} below describes
batches $0,\dots,5$ explicitly, but until
lemma~\ref{lem-step-4-line-case} we only need batches $0$, $1$
and~$2$.  These are easy to enumerate and appear in
table~\ref{tab-roots-near-w-P}.  As an example application we reprove
the following result from \cite[Prop.~1.2]{Basak-bimonster-1}.  Although this
result helps motivate the main theorem of the paper (theorem~\ref{t-26-meridians-based-at-tau-generate}),
it is not logically necessary for us.

\begin{table}
\begin{tabular}{lllrl}
\bf distance\rlap{($s^\perp$,$p_\infty$)}
&&\bf representative
&\bf $\bigl|3^{13}{:}$\rlap{$L_3(3)$-orbit$\bigr|$}
\\
$0$
&&$(0;\theta,0^{12})$
&$13$
\\
\noalign{\smallskip}
$\sinh^{-1}\sqrt{1/3}$
&$\!\!\!\approx.549$
&$(1;1^4,0^9)$
&$1053$
&$\!\!\!=13\cdot3^4$
\\
\noalign{\smallskip}
$\sinh^{-1}\sqrt{3/3}$
&$\!\!\!\approx.881$
&$(\theta;1^3,-1^3,0^7)$
&$113724$
&$\!\!\!=156\cdot3^6$
\\
&&$(\theta;\theta,\thetabar,0^{11})$
&$1404$
&$\!\!\!=13\cdot12\cdot3^2$
\\
&&$(\theta;\theta,\theta,0^{11})$
&$702$
&$\!\!\!=\binom{13}{2}\cdot3^2$
\\
&&$(\theta;\thetabar,\thetabar,0^{11})$
&$702$
\end{tabular}
\smallskip
\caption{The $3^{13}{:}L_3(3)$-orbits of batch $0$, $1$ and $2$ roots,
  up to units.
  Their mirrors pass through or near the $13$-point $p_\infty$.  The
  last~$13$ coordinates, read modulo~$\theta$, must give
  an element of the line code.%
}
\label{tab-roots-near-w-P}
\end{table}

\begin{lemma}[Mirrors near a $26$-point {\cite[prop.~1.2]{Basak-bimonster-1}}]
\label{lem-mirrors-near-the-26-point}
The mirrors of $L$ closest to the $26$-point $\tau$ are the point- and line-mirrors, at distance
$\sinh^{-1}\bigl(6+8\sqrt3\bigr)^{-1/2}\approx.223$. 
\end{lemma}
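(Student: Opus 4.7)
The plan is to show that $|\ip{\tau}{s}|^2\geq 3$ holds for every root $s\in L$, with equality if and only if $s$ is a unit multiple of a point- or line-root. The lemma then follows at once from the distance formula \eqref{eq-distance-point-to-hyperplane} applied with $\tau^2=-6-8\sqrt3$ and $s^2=3$. A direct check using $\tau=l_\infty+ip_\infty$ gives $\ip{\tau}{p_i}=\bar\theta$ and $\ip{\tau}{l_j}=-\sqrt3$, so in both cases $|\ip{\tau}{\cdot}|^2=3$, and the stated common distance follows.

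The key reformulation I would exploit is
\begin{equation*}
  |\ip{\tau}{s}|^2
  =
  |\ip{l_\infty}{s}+i\ip{p_\infty}{s}|^2
  =
  |A|^2+|B|^2+2\Im(A\bar B),
\end{equation*}
where $A=\ip{l_\infty}{s}$ and $B=\ip{p_\infty}{s}$. Since $|B|^2=3|s_0|^2$, classifying $s$ by its batch with respect to $p_\infty$ amounts to controlling $|s_0|^2\in\{0,1,3,4,7,\dots\}$. For batches $\geq 3$ (that is, $|s_0|^2\geq 7$) I would use the triangle inequality $d(\tau,s^\perp)\geq r_3-d(\tau,p_\infty)$ with $r_3=\sinh^{-1}\sqrt{4/3}$. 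A direct computation gives $\sinh^2 d(\tau,p_\infty)=(4\sqrt3-3)/6$, and the hyperbolic addition formula reduces the needed strict inequality $r_3>d(\tau,p_\infty)+d(\tau,p_i^\perp)$ to $\sqrt3<2$ after squaring twice.

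The remaining three batches are handled using the orbit description in table~\ref{tab-roots-near-w-P}. Batch $0$ consists only of unit multiples of point-roots, so equality already holds. For batch $1$, every root is, up to units and the action of $3^{13}{:}L_3(3)$, of the form $(1;\zeta_1,\zeta_2,\zeta_3,\zeta_4,0^9)$ with the $\zeta_k$ cube roots of unity sitting on a line of $P^2\F_3$; a short computation gives $\ip{\tau}{s}=\sum\bar\zeta_k-(4+\sqrt3)$, and since $\Re\sum\bar\zeta_k\leq 4$ with equality only when all $\bar\zeta_k=1$, the bound $|\ip{\tau}{s}|^2\geq 3$ follows, with equality exactly at the line-roots. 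For batch $2$ I normalize $s_0=\theta$, so $B=3$ and the identity above simplifies to $|\ip{\tau}{s}|^2=|A|^2+9+6\Im A$; for each of the four orbit types in the table the contribution $\Im(\sum\bar s_i)$ to $\Im A=4\sqrt3+\Im(\sum\bar s_i)$ has absolute value at most $3\sqrt3$, so $\Im A>0$ and $|\ip{\tau}{s}|^2>9>3$. The main technical point is the numerical tightness of the triangle inequality for batch $3$: the margin is only about $0.02$ in hyperbolic distance, but as noted it reduces after clearing radicals to $\sqrt3<2$, which is elementary.
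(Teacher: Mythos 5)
Your argument is correct in substance, and its skeleton is the same as the paper's: both proofs compute $d(\tau,p_\infty)$ and the common distance $\sinh^{-1}(6+8\sqrt3)^{-1/2}$ from $\tau$ to the $26$ special mirrors, and both use the triangle inequality $d(p_\infty,s^\perp)\leq d(p_\infty,\tau)+d(\tau,s^\perp)$ to conclude that any competing mirror must lie within $r_3=\sinh^{-1}\sqrt{4/3}$ of $p_\infty$, hence in batches $0$, $1$, $2$. Where you genuinely diverge is the finite check: the paper simply iterates by computer over the orbits in table~\ref{tab-roots-near-w-P}, whereas you exploit $\tau=l_\infty+ip_\infty$ to write $\ip{\tau}{s}=A+iB$ with $A=\ip{l_\infty}{s}$, $B=\ip{p_\infty}{s}$, and then bound $|A+iB|^2$ by hand orbit-by-orbit. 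Your batch~$1$ computation ($\Re\sum\bar\zeta_k\leq4$ with equality only at line-roots) and batch~$2$ computation ($\Im A\geq 4\sqrt3-3\sqrt3>0$, so $|\ip{\tau}{s}|^2>9$) both check out, so your version makes this particular lemma computer-free, at the cost of leaning on the orbit classification in the table. You also sharpen the paper's purely numerical comparison $.963<.987$ to an exact inequality, which is a genuine improvement in rigor.

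Two small corrections. First, batch~$3$ consists of the roots with $|s_0|^2=4$ (the critical radii are indexed by $|s_0|^2\in\{1,3,4,7,\dots\}$), not $|s_0|^2\geq7$; your argument is unaffected because the bound you invoke is $d(p_\infty,s^\perp)\geq r_3$, which holds for all $|s_0|^2\geq4$, but the parenthetical as written would leave the $|s_0|^2=4$ roots uncovered. Second, the exact reduction of $r_3>d(\tau,p_\infty)+\sinh^{-1}(6+8\sqrt3)^{-1/2}$ is not to $\sqrt3<2$: carrying out the computation with $\sinh^2 d(\tau,p_\infty)=(4\sqrt3-3)/6$ and $\sinh^2 d(\tau,p_i^\perp)=(4\sqrt3-3)/78$, and squaring twice, one arrives at $35\sqrt3<62$, i.e.\ $3675<3844$. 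This is true, but the margin is about $2\%$, consistent with the tiny gap $.987-.963$; a reduction to $\sqrt3<2$ would imply a far larger margin than actually exists, so that step of your write-up needs to be redone, though the conclusion stands.
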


\begin{proof}
First we use \eqref{eq-complex-hyperbolic-metric} to compute
$$
d(\tau,p_\infty)
=\cosh^{-1}\sqrt{\frac{\bigl|\ip{\tau}{p_\infty}\bigr|^2}{
    p_\infty^2\, \tau^2}}
=\cosh^{-1}\sqrt{\frac{19+8\sqrt3}{6+8\sqrt3}}
\approx.740.
$$
That the  point- and line-mirrors lie at distance
$\sinh^{-1}\bigl(6+8\sqrt3\bigr)^{-1/2}\approx.223$ from $\tau$ is a similar
calculation, using \eqref{eq-distance-point-to-hyperplane} in place of \eqref{eq-complex-hyperbolic-metric}. 
Any  mirror which passes as near or nearer to $\tau$ as these
do must lie at
distance at most
$$
d(\tau,p_\infty)+\sinh^{-1}\bigl(6+8\sqrt3\bigr)^{-1/2}\approx.740+.223
=.963
$$
from~$p_\infty$.   This is less than
$r_3=\sinh^{-1}\sqrt{4/3}\approx.987$, so all such mirrors occur
in batches $0$, $1$ and~$2$.   Our computer iterated over 
these batches and found that the mirrors closest to $\tau$ are just
the point and line mirrors.
\end{proof}

\begin{lemma}[Two triangles needed for step~$1$ of lemma~\ref{lem-change-of-basepoint}]
\label{lem-disjointness-needed-for-step-1-of-basepoint-change}
Let $s$ be a point-root and $x$ be the projection of the $26$-point $\tau$ to its
mirror.  
Then the only mirrors of $L$ that meet 
the totally real triangle $\triangle\tau x p_\infty$  are the point-mirrors.
The intersection of $\triangle\tau x p_\infty$ with each of them is $p_\infty$, except
that the intersection with $s^\perp$ is $\overline{x p_\infty}$.  The
same results hold if $s$ is a line-mirror, provided we replace $p_\infty$ by~$l_\infty$.
\end{lemma}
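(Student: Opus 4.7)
The plan is to invoke lemma~\ref{lem-how-real-triangle-meets-hyperplane}: for each root $r$ of $L$, the intersection of $r^\perp$ with $T=\triangle\tau x p_\infty$ is the preimage of $0$ under $v\mapsto\ip{v}{r}$, so it is empty once $0\notin\ip{T}{r}$. By the $L_3(3):2$ symmetry fixing $\tau$ and exchanging $p_\infty\leftrightarrow l_\infty$ with point-roots $\leftrightarrow$ line-roots, it suffices to treat the point-root case. I take $s=p_1$ without loss of generality and compute $x=\tau-(\bar\theta/3)p_1=(4+\sqrt3;0,1^{12})$.

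The first step is to bound which mirrors can meet $T$. Since $p_\infty\in s^\perp$, contractivity of nearest-point projection in the CAT(0) space $\BB^{13}$ yields $d(x,p_\infty)\leq d(\tau,p_\infty)$, and convexity of balls then forces $T$ into the closed ball centered at $p_\infty$ of radius $d(\tau,p_\infty)$. Using \eqref{eq-complex-hyperbolic-metric} with $\ip{\tau}{p_\infty}=-\theta(4+\sqrt3)$, $\tau^2=-(6+8\sqrt3)$, and $p_\infty^2=-3$ gives $d(\tau,p_\infty)=\cosh^{-1}\sqrt{(19+8\sqrt3)/(6+8\sqrt3)}\approx .740$, strictly below $r_2\approx .881$. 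Hence only batch-$0$ and batch-$1$ mirrors can meet $T$.

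For batch $0$, each point-root $p_j$ satisfies $\ip{\tau}{p_j}=\bar\theta$, $\ip{p_\infty}{p_j}=0$, and $\ip{x}{p_j}$ equals $0$ if $j=1$ and $\bar\theta$ otherwise. If $j\neq 1$ then $\ip{T}{p_j}$ is the segment $[0,\bar\theta]$, containing $0$ only at the vertex $p_\infty$; if $j=1$ then the vertices are $\bar\theta,0,0$, and the preimage of $0$ is precisely the edge $\geodesic{xp_\infty}$, as asserted. For batch $1$, scaling by a unit reduces to roots of the form $r=(1;r_1,\ldots,r_{13})$ with $\ip{p_\infty}{r}=\theta$. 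The equation $r^2=3$ gives $\sum|r_i|^2=4$, and the requirement that $(r_1,\ldots,r_{13})\bmod\theta$ lie in the line code (whose minimum weight is $4$) rules out the ``one coordinate of norm $4$'' and ``one of norm $3$ plus one unit'' possibilities, forcing exactly four of the $r_i$ to be Eisenstein units supported on a line of $P^2\F_3$, the rest zero. Then $\ip{p_\infty}{r}=\theta$ is purely imaginary, while
$$
\ip{\tau}{r}=-(4+\sqrt3)+\sum_{i=1}^{13}\bar r_i,\qquad \ip{x}{r}=-(4+\sqrt3)+\sum_{i=2}^{13}\bar r_i.
$$

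I expect the crux of the argument to be this final inequality. Since $\Re\bar u\leq 1$ for every Eisenstein unit $u$, both sums above are bounded in real part by $4$, giving $\Re\ip{\tau}{r}\leq-\sqrt3<0$ and $\Re\ip{x}{r}\leq-\sqrt3<0$. Thus $\ip{\tau}{r}$ and $\ip{x}{r}$ lie strictly in the open left half-plane while $\ip{p_\infty}{r}=\theta$ is on the imaginary axis, so $\ip{T}{r}$ meets the imaginary axis only at the vertex $\theta\neq 0$ and $0\notin\ip{T}{r}$. This is exactly the kind of ``special property'' advertised in the introduction: the basepoint $\tau=l_\infty+ip_\infty$ is placed just far enough from $p_\infty$ that no sum of four Eisenstein units can offset its real part. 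The line-root case then follows by applying the correlation-based isometry from section~\ref{subsec-point-roots-etc}.
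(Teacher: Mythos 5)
Your proof is correct in substance and follows the same skeleton as the paper's: reduce to the point-root case via the $L_3(3){:}2$ symmetry, trap $\triangle\tau x p_\infty$ inside the ball of radius $d(\tau,p_\infty)\approx.740<r_2$ about $p_\infty$ so that only batch-$0$ and batch-$1$ mirrors can meet it, and then analyze those two batches with lemma~\ref{lem-how-real-triangle-meets-hyperplane}. Where you genuinely diverge is the batch-$1$ step: the paper disposes of the $1053$ batch-$1$ mirrors by computer enumeration, whereas you classify the batch-$1$ roots by hand (four unit coordinates supported on a line, forced by $\sum|r_i|^2=4$ and the minimum weight $4$ of the line code) and then kill them all at once with the inequality $\Re\sum\bar r_i\le 4<4+\sqrt3$. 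This is a real improvement in transparency: it replaces a black-box computation with a one-line estimate, and it makes visible exactly why the placement of $\tau$ works. (In fact you do not even need the line-code classification here: in each of the three norm distributions $(4)$, $(3,1)$, $(1^4)$ one has $\Re\sum\bar r_i\le\sum|r_i|\le 4$, so the inequality already closes the case.)

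One technical slip to repair: lemma~\ref{lem-how-real-triangle-meets-hyperplane} requires the three representing vectors to have pairwise inner products in $(-\infty,0]$, and with $p_\infty=(\thetabar;0^{13})$ the product $\ip{\tau}{p_\infty}=-(4+\sqrt3)\theta$ is purely imaginary, so the convex hull of your three vectors does not project onto the geodesic triangle and the lemma does not literally apply. The paper handles this by replacing $p_\infty$ with $\theta p_\infty=(3;0^{13})$; you should do the same. The fix costs nothing and actually strengthens your endgame: the third vertex of $\ip{T}{r}$ becomes $\theta\cdot\theta=-3$, so for a batch-$1$ root all three vertices lie in the open left half-plane and $0\notin\ip{T}{r}$ is immediate, while for the batch-$0$ roots the relevant inner products with $\theta p_\infty$ are still $0$ and your identification of the preimage of the origin (the vertex $p_\infty$, resp.\ the edge $\geodesic{x p_\infty}$ for $s^\perp$ itself) goes through unchanged.
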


\begin{proof}
  We prove the point-mirror case; the line-mirror case follows by
  applying an element of $L_3(3):2$ that swaps the point- and
  line-mirrors.  We gave $d(p_\infty,\tau)\approx.740$ in the previous
  lemma, and $x$ is closer to $p_\infty$ than $\tau$ is.  So the
  triangle lies within $d(p_\infty,\tau)$ of $p_\infty$.  This is less
  than $r_2=\sinh^{-1}1\approx.881$, so only the batch $0$ and~$1$
  mirrors might meet the triangle.  The batch~$0$ mirrors are the
  point-mirrors, which obviously meet the triangle as stated.  For the
  batch~$1$ roots we used our computer to check (using
  lemma~\ref{lem-how-real-triangle-meets-hyperplane}) that none of
  their mirrors meet the triangle.  When doing this we
  replaced the vector $(\thetabar;0^{13})$ representing $p_\infty$ by
  $\theta p_\infty=(3;0^{13})$.  Then the vectors representing the
  $\tau$, $x$ and $p_\infty$ have negative inner products, so
  lemma~\ref{lem-how-real-triangle-meets-hyperplane} applies.
\end{proof}

So far we have not needed a concrete description of the Leech lattice.
But we need one to check that $\H$ is disjoint from the homotopies in
step~4 of the proof of lemma~\ref{lem-change-of-basepoint}.  We will
use Wilson's $L_3(3)$-invariant model from \cite[p.~188]{Wilson}.  Its
description involves $\psi=1-3\wbar\in\E$ from \cite[p.~154]{Wilson},
and uses $13$ coordinates in $\E$, indexed by the points of $P^2\F_3$.
Namely, $(x_1,\dots,x_{13})\in\E^{13}$ lies in $\Lambda$ just if
$x_1+\cdots+x_{13}=0$, all coordinates are congruent modulo~$\psibar$,
and the element of $\F_3^{13}$ got by reducing the components
mod~$\theta$ is an element of the line difference code described in
section~\ref{t-P2-F3-model-of-L}.  The inner product is the usual one
divided by~$13$.  For completeness we record:

\begin{lemma}
  \label{lem-it-is-the-Leech-lattice}
  The lattice just described is isometric to the complex Leech lattice.
\end{lemma}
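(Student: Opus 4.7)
The strategy is to identify the lattice $\Lambda'$ defined in the lemma's statement as an Eisenstein Niemeier lattice admitting $L_3(3)=\PGL_3(\F_3)$ as an isometry group, and then invoke \cite[Thm.~4]{Allcock-Y555}, which asserts that $\Lambda$ is the unique such lattice. Thus it suffices to verify three things about $\Lambda'$: it is a positive-definite $\E$-lattice of rank $12$; it satisfies $\Lambda'=\theta(\Lambda')^*$; and its minimal norm is~$6$.

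First I would set up the basic numerical framework. The crucial arithmetic identity is $\psi\bar\psi=13$: indeed $(1-3\wbar)(1-3\w)=1-3(\w+\wbar)+9=1+3+9=13$ since $\w+\wbar=-1$. This identity is exactly what permits the $1/13$-scaling of the inner product to return values in $\E$. The condition $\sum x_i=0$ makes $\Lambda'$ a rank-$12$ $\E$-module inside $\E^{13}$, while the mod-$\bar\psi$ congruence condition and the line-code condition carve out a finite-index sublattice. For integrality: given $v,w\in\Lambda'$ with common residues $c_v,c_w\in\E/\bar\psi\E$, decompose $v=c_v(1,\dots,1)+\bar\psi u$ (and similarly for $w$), expand $\frac{1}{13}\sum v_i\bar w_i$, and use $\sum u_i=-\psi c_v$ together with $\psi\bar\psi=13$ to see that every term lies in $\E$ (in fact in $\theta\E$, which will be needed for the duality statement).

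Next I would verify $\Lambda'=\theta(\Lambda')^*$, equivalently that $\Lambda'$ is an integral $\E$-lattice of determinant $3^6$. The discriminant calculation is the most delicate point: the three defining conditions each contribute to the index of $\Lambda'$ inside a suitable ambient lattice, and these indices must combine with the $1/13$-rescaling to produce $3^6$. The line code, being a $7$-dimensional subspace of $\F_3^{13}$, contributes a factor of $3^{13-7}=3^6$, and the remaining factors from the sum-zero and mod-$\bar\psi$ conditions must cancel against the $1/13$-scaling using $\psi\bar\psi=13$. In parallel I would verify the minimum-norm bound~$6$: a hypothetical nonzero vector of norm $\leq 3$ would satisfy $\sum|v_i|^2\leq 39$ and $\sum v_i=0$, with all coordinates forced by the mod-$\bar\psi$ condition to lie in a narrow range of small Eisenstein integers, and with reduction lying in the line code; a short case analysis modulo $L_3(3)$-symmetry rules out all candidates.

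Finally, $L_3(3)$ acts on $\Lambda'$ through its natural action on the $13$ coordinates indexed by the points of $P^2\F_3$. Each of the three defining conditions is preserved: the sum-zero and mod-$\bar\psi$ conditions manifestly, and the line-code condition because $\PGL_3(\F_3)$ permutes the lines of $P^2\F_3$. Hence $\Lambda'$ is an Eisenstein Niemeier lattice with $L_3(3)$-symmetry, and by \cite[Thm.~4]{Allcock-Y555} it must be isometric to $\Lambda$. The principal obstacle is the discriminant bookkeeping: the interactions among the three conditions and the $1/13$-scaling must combine to give exactly $\det=3^6$, and it is easy to be off by a factor of $13$ or a power of~$3$ if one is not careful about the indexing.
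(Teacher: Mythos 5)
Your proposal is correct in its essential strategy, and that strategy coincides with the paper's at the decisive final step: verify that the lattice is an Eisenstein Niemeier lattice (integral with all inner products in $\theta\E$ and determinant $3^6$, equivalently $\Lambda'=\theta\Lambda'^*$), observe that $L_3(3)$ acts by permuting coordinates, and invoke \cite[Thm.~4]{Allcock-Y555}. Where you differ is in the middle. The paper computes the determinant by exhibiting an explicit generating set --- the vectors $\varepsilon_{ij}=(P_i-P_j)/\psi$, $\delta_{ij}=L_i-L_j$ and $P_1$ borrowed from the following lemma --- and running a three-step filtration ($13^{12}\cdot13 \to 3^6\cdot 13\to 3^6$), whereas you propose to read the index off the three defining congruence conditions directly. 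Your route works: the sum-zero condition gives a $1/13$-scaled copy of $\E\tensor A_{12}$ of determinant $13^{-11}$, the mod-$\psibar$ congruence has index $13^{11}$ (this is exactly the ``$13$-part'' that the paper's own Correction warns was mishandled in \cite[Lemma~6]{Allcock-Y555}, so your explicit use of $\psi\psibar=13$ is the right instinct), and the code condition contributes $3^6$; but you leave this as a sketch, and it is the whole content of the proof, so it would need to be written out. The one genuine inefficiency is your insistence on verifying minimal norm~$6$: the paper does not do this and does not need to, since the uniqueness theorem it cites distinguishes the Leech lattice among Eisenstein Niemeier lattices by its $L_3(3)$-symmetry alone. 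Moreover that verification is not the ``short case analysis'' you suggest --- it requires knowing the minimum weight of the line code and shortest-representative facts for residue classes mod $\psibar$ in the style of \cite{Wilson} --- so you would be adding the hardest step of your plan only to discard it. Drop it and your argument is essentially the paper's, with a different but valid bookkeeping of the discriminant.
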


\begin{proof}
  We will use the vectors $P_i$ and $L_j$ from the next lemma, and
  their inner product information from the remark after it.  But we
  won't use that lemma itself.   Write $\Lambda$ for the $\E$-lattice
  just defined. We  will show that it is isometric to the complex
  Leech lattice, which we have denoted $\Lambda$ elsewhere in the
  paper.  First, define $\delta_{ij}=L_i-L_j$ and
  $\varepsilon_{ij}=(P_i-P_j)/\psi$.  The $\varepsilon_{ij}$ have the
  form $(0,\dots,0,\psibar\theta,0,\dots,0,-\psibar\theta,0,\dots,0)$,
  so they lie in $\Lambda$.  We claim that $\Lambda$ is spanned by the
  $\delta_{ij}$, $\varepsilon_{ij}$ and any one $P_i$, say $P_1$.  To
  see this, given $x\in\Lambda$, the reduction of its components
  modulo~$\theta$ lies in the line difference code.  By adding suitable
  multiples of $\delta_{ij}=L_i-L_j$, we may suppose without loss of
  generality that this codeword is the zero codeword.  That is, all
  coordinates are divisible by~$\theta$.  By adding a multiple of
  $P_1$, we may suppose that the last component is~$0$.  It follows
  that all components are zero mod~$\psibar$.  Since all coordinates
  are divisible by $\psibar\theta$, and the coordinate sum is zero,
  $x$ may be expressed as a linear combination of the
  $\varepsilon_{ij}$.

  Second, all inner products in $\Lambda$ lie in $\theta\E$---in
  particular, $\Lambda$ is integral.  This is just a computation using
  the data in remark~\ref{rem-data-about-P-and-L}.  Third, the determinant of $\Lambda$ is
  $3^6$.  
 To see this, note that  
  the $\E$-span of
  the $\varepsilon_{ij}$ is a scaled copy of $\E\tensor A_{12}$, with
  determinant~$3^{12}\cdot13$.  (Recall the factor $\frac{1}{13}$ in
  $\Lambda$'s inner product.)  Adjoining the
  $\delta_{ij}$ gives a larger lattice, whose quotient by this copy of
  $\E\tensor A_{12}$ is isomorphic to the line difference code $\F_3^6$, and whose determinant
  is $3^6\cdot13$.  Finally, adjoining $P_1$ reduces the determinant
  to~$3^6$.  Since $\Lambda$ has determinant~$3^6$, and all inner
  products are divisible by~$\theta$, it is an Eisenstein Niemeier
  lattice (cf.\ \cite[Sec.~2]{Allcock-Y555}).  And the only Eisenstein Niemeier lattice
  whose isometry group contains $L_3(3)$ is the Leech lattice \cite[Thm.~4]{Allcock-Y555}.
\end{proof}

\begin{correction}
  This proof is  similar to that of Lemma~6 in
	\cite{Allcock-Y555}, which asserts that there is a unique $L_3(3)$-invariant
	integral lattice properly containing $\theta\E\tensor A_{12}$.  This is wrong
	because it neglects the $13$-part of the discriminant group of $\theta\E\tensor A_{12}$.  A counterexample
	is $\Lambda$ in the form above.  That lemma was used only once in \cite{Allcock-Y555},
	in the proof of Lemma~$9$, where it is used to recognize a certain lattice as
	the complex Leech lattice.  That result can be saved by addressing the $13$-part 
	in a manner similar to the proof above.
\end{correction}

We will need to understand the lattice points near a particular point
$C$ of $\Lambda\tensor\C$ that is not in $\Lambda$ itself.  This will allow
us to write down Leech model versions of the point-roots, line-roots
and $13$-points, and thereby identify the $P^2\F_3$ and Leech models
of~$L$.  The name ``centroid'' in the next lemma is explained after
lemma~\ref{lem-point-and-line-roots-in-Leech-model}.

\begin{lemma}[The ``centroid'' $C$]
  \label{lem-the-center}
  For $i=1,\dots,13$ define $P_i$ as the Leech lattice vector
  $(12\theta,\thetabar^{12})$ with
  the $12\theta$ in the $i$th position, and define $C=-P_1/\psi$.  Then
  the only points of $\Lambda$ at
  distance${}<\sqrt{42/13}$ from~$C$ are the
\begin{equation*}
  C+\frac{1}{\psi}P_i
  =
  \frac{P_i-P_1}{\psi}
  =
  \begin{cases}
    (0,\dots,0)
    &
    \hbox{if $i=1$}
    \\
    (-\psibar\theta,0,\dots,0,\psibar\theta,0,\dots,0)
    &
    \hbox{if $i=2,\dots,13$}
  \end{cases}
\end{equation*}
all of which lie at distance~$\sqrt{36/13}$.

Similarly, for $j=1,\dots,13$ define $L_j$ as the Leech lattice vector
$\bigl((-9)^4,4^9\bigr)$ with the
$(-9)$'s in the positions corresponding to the points of $P^2\F_3$
that lie on the $j$th line.   Then the 
only points of
$\frac{1}{\theta}\Lambda$ at distance${}<\sqrt{14/13}$ from $C$ are 
the
\begin{equation*}
  C-\frac{\wbar}{\theta\psi}L_j
  =
  \begin{cases}
    \frac{1}{\theta}\bigl(
    \phantom{4-8\w}\llap{$-9\w$},
    -3\hbox{\rm\ (three times)},
    -\wbar\hbox{\rm\ (\rlap{nine}\phantom{eight} times)}\bigr)
    &
    \hbox{if $P_1\in L_j$}
    \\
    \frac{1}{\theta}\bigl(
    4-8\w,
    -3\hbox{\rm\ (\rlap{four}\phantom{three} times)},
    -\wbar\hbox{\rm\ (eight times)}\bigr)
    &
    \hbox{otherwise}
  \end{cases}
\end{equation*}
all of which
lie at distance~$\sqrt{12/13}$.
\end{lemma}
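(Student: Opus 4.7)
The plan is to reduce both parts to a short-vector enumeration in a coset of $\Lambda$, exploiting the identity $\psi C = -P_1 \in \Lambda$.  Since $|\psi|^2 = 13$, for any $\lambda \in \Lambda$ we have $\ip{\lambda-C}{\lambda-C} = |\psi|^{-2}\ip{\psi\lambda+P_1}{\psi\lambda+P_1}$, with $\psi\lambda + P_1$ lying in the coset $P_1 + \psi\Lambda \subseteq \Lambda$.  So $\Lambda$-points at distance $<\sqrt{42/13}$ from $C$ correspond bijectively to vectors in $P_1 + \psi\Lambda$ of unscaled (non-$\frac{1}{13}$-rescaled) norm $<42$.  For Part~2, the same trick with $\theta\psi$ in place of $\psi$ uses $|\theta\psi|^2 = 39$, so the cutoff $\sqrt{14/13}$ again translates to an unscaled norm $<42$, this time within the coset $\theta P_1 + \psi\Lambda$.

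\textbf{The listed vectors.} Using $13/\psi = \psibar$, one has $C + P_i/\psi = (P_i-P_1)/\psi = \psibar\theta(e_i - e_1)$, which sits in $\Lambda$: the coordinate sum is zero, all entries are congruent to $0$ modulo $\psibar$, and the mod-$\theta$ reduction is the zero codeword.  Computing $P_i^2 = (|12\theta|^2 + 12|\thetabar|^2)/13 = 36$ gives distance $\sqrt{36/13}$ from $C$ for each.  For Part~2, multiplying through by $\theta\psi$ turns the claim into $-\wbar L_j \in \theta P_1 + \psi\Lambda$, which I would verify by splitting into the two cases (whether $P_1$ lies on the line $L_j$ or not); these produce the two displayed coordinate expressions.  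The distance $\sqrt{12/13}$ then follows from $L_j^2 = 36$ and $|\theta\psi|^2 = 39$.

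\textbf{Uniqueness: the main obstacle.} The heart of the argument is showing that no other short coset representatives exist: namely, that the thirteen $P_i$'s (resp.\ the thirteen $-\wbar L_j$'s) are the only elements of $P_1 + \psi\Lambda$ (resp.\ $\theta P_1 + \psi\Lambda$) of norm $<42$.  This is a finite verification but not a trivial one, since $\Lambda$ contains many vectors of norm in the range $30$ to $42$ and the coset condition modulo $\psi\Lambda$ is only a single constraint in $\Lambda/\psi\Lambda \iso \F_{13}^{12}$.  I expect to carry out this enumeration by computer, in the spirit of the computer checks used in lemmas~\ref{lem-mirrors-near-the-26-point} and~\ref{lem-disjointness-needed-for-step-1-of-basepoint-change}; a more conceptual path would invoke the order-$432$ stabilizer of $P_1$ in $L_3(3)$, which acts transitively on $\{P_2,\ldots,P_{13}\}$ (and the analogous subgroup fixing a line), to argue that the listed orbits exhaust the minimum-norm vectors in each coset, and then rule out any strictly shorter additional vectors by a norm-gap analysis in $\Lambda$.
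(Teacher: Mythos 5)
Your reduction is exactly the paper's: rescale by $\psi$ (resp.\ $\theta\psi$) so that the claim becomes ``the $P_i$ (resp.\ the $L_j$) are the only norm-${}<42$ representatives of their common congruence class mod $\psi$,'' and your verification that the listed vectors lie in the right lattices at the right distances is fine. But the uniqueness step, which you yourself identify as the heart of the matter, is left as a gap: neither of your two suggested routes is a proof. A brute-force enumeration of the norm-${}<42$ vectors in a coset of $\psi\Lambda$ inside $\Lambda$ is a close-vector problem in a rank-$24$ real lattice whose ball of radius $\sqrt{42}$ contains an enormous number of lattice points; it is nothing like the small finite checks elsewhere in the paper, and you give no indication of how to make it feasible. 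The ``conceptual path'' via the stabilizer of $P_1$ in $L_3(3)$ only shows that the thirteen listed vectors form one orbit; it cannot rule out further orbits of vectors of norm between $36$ and $42$ in the same coset, and the promised ``norm-gap analysis'' is precisely the missing content.

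The paper closes this gap with a short argument that needs no enumeration. Suppose $x\in P_1+\psi\Lambda$ has $x^2<42$ and $x\neq P_i$ for every $i$. Since all the $P_i$ are congruent mod $\psi$, each difference $x-P_i$ is a nonzero element of $\psi\Lambda$, hence has norm at least $6\cdot|\psi|^2=78$. Expanding $(x-P_i)^2=x^2+P_i^2-2\Re\ip{x}{P_i}<42+36-2\Re\ip{x}{P_i}$ then forces $\Re\ip{x}{P_i}<0$ for every $i$; summing over $i$ and using $P_1+\cdots+P_{13}=0$ gives $0<0$, a contradiction. The same argument works verbatim for the $L_j$, since they too have norm $36$, are congruent mod $\psi$, and sum to zero (each point of $P^2\F_3$ lies on four lines, so each coordinate of $\sum_j L_j$ is $4\cdot(-9)+9\cdot4=0$). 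This is the one idea your proposal is missing; everything else in it matches the paper's proof.
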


\begin{numberedremark}
  \label{rem-data-about-P-and-L}
  Even though the $P_i$ and $L_j$ are vectors, we write ``$P_i\in
  L_j$'' as shorthand for ``the $i$th point and $j$th line of
  $P^2\F_3$ are incident''.  The following data is useful in later
  calculations.  The $P_i$ and $L_j$ have norm~$36$, and for $i\neq j$
  we have $\ip{P_i}{P_j}=\ip{L_i}{L_j}=-3$.  Also,
  $\ip{P_i}{L_j}=-9\theta$ or~$4\theta$ and
  $\bigl(C+\frac{1}{\psi}P_i\bigr)-\bigl(C-\frac{\wbar}{\theta\psi}L_j\bigr)$
  has norm $3$ or~$4$, both according to whether or not $P_i\in L_j$.
  In particular, since $P_1\in L_1$ and $C+P_1/\psi=0$, we have
  $(C-\frac{\wbar}{\theta\psi}L_1)^2=3$. 
\end{numberedremark}

\begin{proof}
  To show that these vectors lie in $\Lambda$
  (resp.\ $\frac{1}{\theta}\Lambda$), one just checks the conditions
  in Wilson's definition.  Their distances from~$C$ are as stated
  because $P_i^2=L_j^2=36$ for all $i$ and~$j$.  It is easy to check
  that all the $P_i$ (resp.\ all the $L_j$) are congruent mod~$\psi$.
  The rest of the proof is like the proof of lemma~\ref{lem-lattice-points-near-vectors-over-theta}.  That is,
  the lemma follows from the claim: the $P_i$ (resp.\ $L_j$) are the
  only norm${}<42$ representatives of their mod~$\psi$ congruence
  class.  We treat the $P_i$ case;  the $L_j$ case is the
  same.
  
  Let $v$ be a Leech vector such that $(v - C)^2 < 42/13$.
  Let $x = \psi( v - C)$. 
  Then $x\in\Lambda$ has norm${}<42$ and is distinct from all the
  $P_i$, but congruent to them mod $\psi$.  As a nonzero member of
  $\psi\Lambda$, $x-P_i$ has norm at least~$6\cdot13=78$.  Since
  $P_i^2=36$ and $x^2<42$, the angle between $x$ and $P_i$ is obtuse.
  That is, in the real Euclidean space underlying $\Lambda\tensor\C$,
  $x$ and $P_i$ have negative inner product.  This holds for all $i$.
  So $x$ has negative inner product (in this real Euclidean space)
  with $P_1+\cdots+P_{13}=0$, which is absurd.
\end{proof}

From \ref{subsec-Leech-model-of-L} recall that the
Leech model for our lattice is
$L \simeq \Lambda \oplus H$
where $H$ has gram matrix $\bigl(\begin{smallmatrix}0&\thetabar\\\theta&0\end{smallmatrix}\bigr)$.
Wilson's definition of $\Lambda$ gives an $L_3(3)$ action on $L$.
The $P^2 \F_3$ model of $L$ gives another $L_3(3)$ action on $L$.
We should note that these two $L_3(3)$'s are not conjugate in 
$\op{Aut}(L)$. 
The lattice pointwise fixed by the two $L_3(3)$'s are
$H$ and $(\E p_{\infty}  +  \E l_{\infty})$ respectively
and these two lattices are not isometric.

Now we can write down the point- and line-roots in the Leech model.
We define $p_i$ to be the Leech root obtained by using
$\sigma=C+\frac{1}{\psi}P_i$ and $\nu=1/2\theta$ in \eqref{eq-Leech-roots-in-Leech-model}.
Explicitly, 
\begin{align*}
  \label{eq-point-roots-in-Leech-model}
  p_1
  &{}=
  \bigl(0,\dots,0;1,-\w\bigr)
  \\
  p_i
  &{}=
  \bigl(-\psibar\theta,0,\dots,0,\psibar\theta,0,\dots,0;1,-\wbar\bigr)      
  \hbox{\quad if $i=2,\dots,13$}
\end{align*}
where the $\psibar\theta$ appears in the $i$th
position.  Similarly, we define $l_j$ as the root obtained from
\eqref{eq-definition-of-vector-s} by taking
$\sigma=\bigl(C-\frac{\wbar}{\theta\psi}L_j\bigr)\theta\w$,
$m=\theta\w$, $N=3$, and $\nu=\thetabar$
or~$\thetabar/2$ according to whether $P_1\in L_j$ or not.  Explicitly,
$l_j$ has the form
\begin{equation*}
  l_j=
    \begin{cases}
      \bigl(
      \phantom{4\w-8\wbar}\llap{$-9\wbar$},
      -3\w\hbox{ (three times)},
      -1\hbox{ (\rlap{nine}\phantom{eight} times)};
      \theta\w,2\wbar\bigr)
    &
    \hbox{if $P_1\in L_j$}
    \\
      \bigl(
      4\w-8\wbar,
      -3\w\hbox{ (\rlap{four}\phantom{three} times)},
      -1\hbox{ (eight times)};
      \theta\w,\thetabar\,\bigr)
    &
    \hbox{otherwise.}
    \end{cases}
  \end{equation*}
Here the $-9\wbar$ or $4\w-8\wbar$ appears in the first position, and
the $(-3\w)$'s resp.\ $(-1)$'s appear in positions indexed by points
of $P^2\F_3$ lying resp.\ not lying in $L_j$.  One verifies the next
lemma by direct computation.  

\begin{lemma}[The point- and line-roots in the Leech model]
  \label{lem-point-and-line-roots-in-Leech-model}
  These vectors $p_i$, $l_j$  and $\rho$
  in the Leech model have the same inner products with each other
  as do the vectors with the same names in the $P^2\F_3$ model.
  This defines an isometry of the two models, that identifies their
  point-roots, line-roots and Leech cusp~$\rho$.  Under this
  identification, in the Leech model we have
  \begin{align*}
    p_\infty&{}=(\psi C;\psi,-\wbar\psi)=
    (12\thetabar,\theta,\dots,\theta;\psi,-\wbar\psi)
    \\
    l_\infty&{}=(-\wbar\theta\psi C;-\wbar\theta\psi,6\wbar-\w)
    =
    (-36\wbar,3\wbar,\dots,3\wbar;-\wbar\theta\psi,6\wbar-\w)
  \end{align*}
  Writing these vectors in the form \eqref{eq-definition-of-vector-s}, their values of $\sigma$
  and $m$ are their entries before and immediately after the semicolon, their
  values of $N$ are both~$-3$, and their values of $\nu$ are
  $-\frac{13}{6}\theta$ and $-\frac{17}{2}\theta$, respectively.  \qed
\end{lemma}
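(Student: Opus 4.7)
The plan is to verify every assertion by direct computation, using the Leech-model inner product formula \eqref{eq-inner-product-in-Leech-model}, the relative-position formula \eqref{eq-general-inner-product-formula}, and the explicit data about $P_i$, $L_j$ and $C$ collected in Lemma \ref{lem-the-center} and Remark \ref{rem-data-about-P-and-L}. First I would confirm that the listed Leech-model vectors $\rho, p_i, l_j$ satisfy all of the $P^2\F_3$-model relations from Section \ref{subsec-point-roots-etc}. The norms $p_i^2 = l_j^2 = 3$ are automatic because these vectors are already written in the form \eqref{eq-Leech-roots-in-Leech-model} or \eqref{eq-definition-of-vector-s} with $N=3$, and $\rho^2=0$ is immediate. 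The pairings $\ip{\rho}{p_i}=\theta$ and $\ip{\rho}{l_j}=3\wbar$ are one-term computations from \eqref{eq-inner-product-in-Leech-model} since $\rho=(0;0,1)$. For distinct point-roots, formula \eqref{eq-general-inner-product-formula} reduces $\ip{p_i}{p_j}$ to $\tfrac12(6-(\sigma_{p_i}-\sigma_{p_j})^2)+\Im\ip{\sigma_{p_i}}{\sigma_{p_j}}$; using $\sigma_{p_i}-\sigma_{p_j}=(P_i-P_j)/\psi$ together with $(P_i-P_j)^2=78$ and $\psi\bar\psi=13$ makes the real part vanish, and the imaginary part vanishes because $\ip{P_i}{P_j}\in\R$. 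The pairings $\ip{l_i}{l_j}=0$ and $\ip{p_i}{l_j}\in\{0,\theta\}$ are handled the same way, each reducing via Remark \ref{rem-data-about-P-and-L} to a short computation that distinguishes the two incidence cases.

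Since $L$ is spanned over $\E$ by the point- and line-roots, matching Gram matrices on this generating set forces the $\E$-linear map sending the Leech-model $p_i,l_j,\rho$ to their $P^2\F_3$-model namesakes to be well-defined (any $\E$-relation on one side produces a vector in the radical on the other side, and both lattices are non-degenerate), and this map is then automatically an isometry. To verify the claimed formula for $p_\infty$, I would compute directly that $(\psi C;\psi,-\wbar\psi)$ has norm $-3$: indeed $(\psi C)^2=|\psi|^2 C^2=13\cdot 36/13=36$, and the cross terms from \eqref{eq-inner-product-in-Leech-model} contribute $-13(\w\thetabar+\wbar\theta)=-13\cdot3=-39$. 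Orthogonality to each $p_i$ is a similar short calculation, treating $i=1$ (where $\sigma_{p_1}=0$) separately from $i>1$ (where $\ip{C}{(P_i-P_1)/\psi}=3$ follows from $\ip{P_1}{P_i}=-3$, $P_1^2=36$, $\psi\bar\psi=13$). Finally $\ip{p_\infty}{l_\infty}=4\theta$ pins down the scale and the sign, and is a direct check using the claimed formula for $l_\infty$. The formula for $l_\infty$ is verified analogously, now using $\ip{L_j}{L_k}=-3$ and $\ip{P_i}{L_j}\in\{-9\theta,4\theta\}$ from Remark \ref{rem-data-about-P-and-L}.

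The values of $\sigma, m, N, \nu$ are then read off by matching the claimed vectors against the canonical form \eqref{eq-definition-of-vector-s}: the components before and after the semicolon give $\sigma$ and $m$, $N=-3$ is the norm just verified, and $\nu$ is forced by the third coordinate. For $p_\infty$, $(\psi C)^2=36$ gives $(\sigma^2-N)/6=13/2$, so $\nu$ is determined by $\tfrac{\theta}{\bar\psi}(\tfrac{13}{2}+\nu)=-\wbar\psi$, which after multiplying by $\bar\psi/\theta$ and using $|\psi|^2/\theta=-13\theta/3$ collapses to $\nu=-\tfrac{13}{6}\theta$. For $l_\infty$, $\sigma^2=|\wbar\theta\psi|^2 C^2=108$ gives $(\sigma^2-N)/6=37/2$, and $\tfrac{\theta}{\w\theta\bar\psi}(\tfrac{37}{2}+\nu)=6\wbar-\w$ rearranges (using $\bar\psi=1-3\w$ and $1+\w+\wbar=0$) to $\nu=-\tfrac{17}{2}\theta$. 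The main obstacle throughout is purely bookkeeping --- juggling the factors $\theta$ and $\psi=1-3\wbar$, and tracking the two cases for $p_i$ and $l_j$ depending on incidence with the first point or first line of $P^2\F_3$ --- but no conceptual difficulty arises beyond the combination of Remark \ref{rem-data-about-P-and-L} with the two inner product formulas.
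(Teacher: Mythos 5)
Your proposal is correct and matches the paper's approach: the paper's entire proof is the sentence ``One verifies the next lemma by direct computation,'' and your plan simply spells out that computation, correctly reducing each inner product to the data of Lemma~\ref{lem-the-center} and Remark~\ref{rem-data-about-P-and-L} via formulas \eqref{eq-inner-product-in-Leech-model} and \eqref{eq-general-inner-product-formula}. The individual numerical checks you quote (e.g.\ $(P_i-P_j)^2=78$, the cross terms $-13(\w\thetabar+\wbar\theta)=-39$, and both $\nu$ values) are all accurate.
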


\begin{remarks}
  Here is how we found this identification.  For each $i$ we
  have $\ip{\rho}{p_i}=\theta$ in the $P^2\F_3$ model, so their
  analogues in the Leech model should have the form $(\sigma_i;1,\ldots)$
  for suitable $\sigma_i\in\Lambda$.  The inner product formula
  \eqref{eq-general-inner-product-formula} shows that the differences
  between the $\sigma_i$ must be minimal vectors of $\Lambda$.  This
  suggested looking for the centroid of the $\sigma_i$ in
  $\Lambda\tensor\C$, expecting it to have stabilizer $L_3(3)$ in the
  affine isometry group $\Lambda:6\Suz$ of the Leech lattice.   We
  were already familiar with the principle used in
  lemma~\ref{lem-lattice-points-near-vectors-over-theta}:    the
  lattice vectors near an element of the rational span of a
  lattice are related to the short lattice vectors in a suitable congruence
  class.  This suggested looking for a congruence class in $\Lambda$
  with stabilizer $L_3(3)$ and exactly~$13$ minimal representatives.
  In Wilson's model of $\Lambda$ the $P_i$ form an $L_3(3)$-orbit of
  size~$13$.  So we investigated and found that they are congruent
  mod~$\psi$.  This suggested that the centroid should be $C$ from
  lemma~\ref{lem-the-center} and that the $\sigma_i$'s should be its
  nearest neighbors.  This determined the $p_i$ up to their values of
  $\nu$. For $p_1$ we chose $\nu=\frac{1}{2\theta}$ arbitrarily, and
  then used the orthogonality of the $p_i$ and
  \eqref{eq-general-inner-product-formula} to compute the $\nu$ values
  of the other $p_i$.  Since $\rho$ and the point-roots span $L$ up to
  finite index, the expressions for all the other vectors follow.

  As we mentioned after \eqref{eq-general-inner-product-formula},
  inner products with a vector $(\sigma;m,...)$ in the Leech model can
  be expressed in terms of $\sigma/m\in\Lambda\tensor\C$.  For
  $p_\infty$ and $l_\infty$ this point is $C$.  And
  lemma~\ref{lem-the-center} shows that for the point-roots
  (resp.\ line-roots), the corresponding points of $\Lambda\tensor\C$
  are $C$'s nearest neighbors in $\Lambda$
  (resp.\ $\frac{1}{\theta}\Lambda$).  This will be crucial in the
  proofs of lemma \ref{lem-step-4-point-case}
  and~\ref{lem-step-4-line-case}.  Finally, with additional work it is
  possible to introduce a version of the Leech model ``centered at
  $C$'', to avoid hiding some of the $L_3(3)$ symmetry.
\end{remarks}

\begin{lemma}[The triangle needed for step~$4$ in lemma~\ref{lem-change-of-basepoint}, point-mirror case]
\label{lem-step-4-point-case}
Let $x$ be the projection of $\rho$ onto a point-mirror, and $T$ be
the totally real triangle $\triangle\rho x p_\infty$.
Then $T$ meets that point-mirror in $\geodesic{x p_\infty}$, meets the
other point-mirrors in $p_\infty$ only, and misses all other mirrors.
\end{lemma}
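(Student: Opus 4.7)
The plan is to follow the template of lemma \ref{lem-triangle-analysis-for-2-Leech-meridians-give-a-third}, adapted to a triangle whose third vertex $p_\infty$ lies far from the cusp $\rho$. By the transitive $L_3(3)$-action on point-roots that fixes both $\rho$ and $p_\infty$, I would first reduce to the case where the distinguished point-mirror is $p_1^\perp$. In the $P^2\F_3$ model I would compute $x=\rho-(\theta/3)p_1=(3\w-1;0,-1,\ldots,-1)$, verify $x^2=\ip{\rho}{x}=-1$ (so $\height_\rho(x)=1$, i.e.\ $x$ sits on the first critical horosphere), and note the orthogonalities $p_1\perp x$ and $p_1\perp p_\infty$, which already give $\geodesic{xp_\infty}\subseteq p_1^\perp$. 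Rescaling the three vertex vectors so that their pairwise inner products lie in $(-\infty,0]$, as in section \ref{subsec-geodesics-and-geodesic-triangles}, makes $T=\triangle\rho xp_\infty$ a totally real triangle to which lemma \ref{lem-how-real-triangle-meets-hyperplane} applies.

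Next, using the convexity of horoballs centered at $\rho$, and the fact that $\rho$, $x$, and $p_\infty$ lie at heights $0$, $1$, and $13$ respectively (the last because $|\ip{\rho}{p_\infty}|^2=39$ and $p_\infty^2=-3$), the triangle $T$ is contained in the closed horoball of height${}\leq13$. Any mirror meeting $T$ must therefore come from a root $s$ with $|\ip{\rho}{s}|^2\leq 39$, placing $s$ in one of the first seven shells. In the Leech model of section \ref{subsec-Leech-model-of-L} I would parameterize a general such $s$ as in \eqref{eq-definition-of-vector-s} by $(\sigma,m,N,\nu)$, with $|m|^2$ fixing the shell, and expand the three inner products $\ip{\rho}{s}$, $\ip{x}{s}$, $\ip{p_\infty}{s}$ via formula \eqref{eq-general-inner-product-formula} as expressions in $\sigma/m$ and $\nu$. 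The crucial geometric input here is lemma \ref{lem-point-and-line-roots-in-Leech-model}, which identifies $p_\infty$'s Leech coordinate $\sigma/m$ as the centroid $C\in\Lambda\tensor\C$ of lemma \ref{lem-the-center}, whose only nearest $\Lambda$-neighbors are the $13$ coordinates of $p_1,\ldots,p_{13}$ and whose only nearest $\tfrac{1}{\theta}\Lambda$-neighbors are the $13$ coordinates of $l_1,\ldots,l_{13}$.

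Then I would apply lemma \ref{lem-how-real-triangle-meets-hyperplane} shell-by-shell: the mirror $s^\perp$ meets $T$ precisely when the complex triangle $\ip{T}{s}\subseteq\C$ contains the origin. Formula \eqref{eq-general-inner-product-formula} shows that the real part of $\ip{p_\infty}{s}$ is driven by $|\sigma/m-C|^2$, so $\sigma/m$ must lie very close to $C$ for $0$ to stand a chance of lying in $\ip{T}{s}$. The main obstacle is the infinite Leech shell ($m=1$): the resolution is the quantitative near-neighbor bound of lemma \ref{lem-the-center}, which restricts $\sigma$ to the $13$ coordinates of $p_1,\ldots,p_{13}$; the imaginary part of the bracketed expression then pins $\nu$ by the requirement that one vertex of $\ip{T}{s}$ lie on the real axis, and direct inspection of the resulting candidates yields the claimed intersections $p_i^\perp\cap T=\{p_\infty\}$ for $i\neq1$ and $p_1^\perp\cap T=\geodesic{xp_\infty}$. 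For the higher shells (including the $m=\theta$ shell, where the line-root half of lemma \ref{lem-the-center} controls $\sigma$) the same logic leaves only finitely many candidate pairs $(\sigma,\nu)$ per shell, and direct (computer-aided) verification along the lines of lemma \ref{lem-mirrors-near-the-26-point} and lemma \ref{lem-disjointness-needed-for-step-1-of-basepoint-change} rules out each of them.
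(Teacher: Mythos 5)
Your setup (the reduction to $p_1$ by $L_3(3)$-symmetry, the formula for $x$ and the verification $x^2=\ip{\rho}{x}=-1$, the vertex heights $0$, $1$, $13$, and the use of lemmas \ref{lem-how-real-triangle-meets-hyperplane} and \ref{lem-the-center} for the Leech shell) matches the paper, and your treatment of the first shell is essentially the paper's: the bound $(C-\sigma)^2\geq 36/13$ from lemma~\ref{lem-the-center} forces $\ip{T}{s}$ into a closed half-plane, and the boundary cases yield exactly the point-roots. The divergence, and the gap, lies in how you reduce to finitely many mirrors. Covering $T$ by the single closed horoball of height $13$ about $\rho$ is correct (quasi-convexity of $\height_\rho$), but it leaves you with \emph{seven} shells of roots to analyze, each an infinite family. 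The paper instead covers $T$ by the union of the \emph{second} critical horoball about $\rho$ and the second critical ball about $p_\infty$, by checking that $\geodesic{p_\infty x}$ pierces the height-$3$ horosphere at a point within distance $\cosh^{-1}\sqrt{63-12\sqrt{26}}\approx .810<\sinh^{-1}(1)\approx .881$ of $p_\infty$. This reduces the problem to the first shell about $\rho$ (handled by the centroid argument) plus the $1053$ batch-$1$ mirrors about $p_\infty$ (a finite computer check); shells $2$ through $7$ never enter.

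Your assertion that ``the same logic leaves only finitely many candidate pairs $(\sigma,\nu)$ per shell'' is where the argument breaks down as written. For a shell with $|m|^2\in\{4,7,9,12,13\}$, confining $\sigma/m$ requires enumerating the points of $\frac{1}{m}\Lambda$ within some explicit radius of $C$ (or of $0$, or of the segment joining them), and neither lemma~\ref{lem-the-center} nor lemma~\ref{lem-lattice-points-near-vectors-over-theta} says anything about these lattices; fresh near-neighbor computations in the Leech lattice would be required, with no guarantee that the radii work out as cleanly as the ``miraculous'' $36/13$ does for the first shell (for $|m|^2=13$ the lattice $\frac{1}{m}\Lambda$ is far denser than $\Lambda$, so the candidate list could be enormous). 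Even for the second shell, lemma~\ref{lem-the-center} enumerates only the points of $\frac{1}{\theta}\Lambda$ within $\sqrt{14/13}$ of $C$, and you have not checked that the threshold forced by $0\in\ip{T}{s}$ is actually that small. So you must either supply all of these enumerations and thresholds, or---much better---replace the single-horoball covering with the paper's two-piece covering, which eliminates the higher shells entirely.
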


\begin{proof}
  By $L_3(3)$ symmetry it suffices to treat the point-root $p_1$.
  Using the Leech model, and the formula for $p_1$ given just before
  lemma~\ref{lem-point-and-line-roots-in-Leech-model}, we find
  $x=(0;1/\theta,\discretionary{}{}{}-\wbar/\theta)
  = ( \theta^{-1} p_1 + \rho)$, which has $x^2=\ip{\rho}{x}=-1$.
  The other two vertices of $T$ are represented by $p_\infty$ from
  lemma~\ref{lem-point-and-line-roots-in-Leech-model} and
  $\rho=(0;0,1)$.  In calculations we will use $\thetabar\psibar
  p_\infty$ rather than~$p_\infty$, because it has negative inner
  product (namely $-39$) with both $\rho$ and $x$.  So we will be able
  to apply lemma~\ref{lem-how-real-triangle-meets-hyperplane}.

  First we claim that $T$ lies in the union of the second critical
  horoball around $\rho$ and the second critical ball around
  $p_\infty$.  To see this, note that $\height_\rho(x)=1$, so
  $x$ lies on the boundary of the
  first critical horoball.  (In fact this holds by construction.)  Also, one checks
  $\height_\rho(p_\infty)=13>3$, so $p_\infty$ lies outside the second
  critical horoball.  Therefore it suffices to find where the boundary
  of this horoball intersects $\geodesic{p_\infty\rho}$ and
  $\geodesic{p_\infty x}$, and check that both points lie in the
  second critical ball around~$p_\infty$.  Since
  $\geodesic{p_\infty\rho}$ travels directly toward the horoball, its
  intersection point is closer to $p_\infty$ than is the intersection
  point of $\geodesic{p_\infty x}$.  So it suffices to show that this
  second intersection point lies in the second critical horoball.  We
  found this intersection point by the method from the proof of
  lemma~\ref{lem-triangle-analysis-for-point-and-line-meridians-give-Leech-meridian},
  obtaining $\thetabar\psibar p_\infty+\bigl(9\sqrt{26}-39\bigr)x$.
  This can be verified by checking that it has height~$3$.  It has
  norm $-702$ and its inner product with $\thetabar\psibar p_\infty$
  is $1404-351\sqrt{26}$.  So the point of $\BB^{13}$ it represents
  lies at distance $\cosh^{-1}\sqrt{63-12\sqrt{26}}\approx.810$ from
  $p_\infty$.  This is less than $r_2=\sinh^{-1}(1)\approx.881$, so this
  point lies in the second critical ball, as desired.

  Therefore the only mirrors that might meet $T$ are the Leech mirrors
  (which include the batch~$0$ roots, namely the point-roots) and the
  batch~$1$ mirrors around $p_\infty$.  We used computer calculations
  in the $P^2\F_3$ model, and
  lemma~\ref{lem-how-real-triangle-meets-hyperplane}, to check that the
  batch~$1$ mirrors miss~$T$.  So it remains to examine how the mirror
  of a Leech root $s$ can meet $T$.  This part of the proof is similar
  to the proof of
  lemma~\ref{lem-triangle-analysis-for-2-Leech-meridians-give-a-third}:
  we will compute $\ip{T}{s}\sset\C$ and examine whether it contains the
  origin.  For use in the calculation we tabulate the parameters of
  the important vectors when they are written as in
  \eqref{eq-definition-of-vector-s}:
  $$
    \begin{matrix}
      &&\sigma&m&N&\nu
      \\
      x
      &
      \hbox{(a vertex of $T$)}
      &0&1/\theta&-1&-\theta/18
      \\
      \thetabar\psibar p_\infty
      &
      \hbox{(a vertex of $T$)}
      &-13\theta C&-13\theta
      &-117&-169\theta/2
      \\
      s
      &
      \hbox{(a Leech root)}
      &\sigma&1&3&\nu
    \end{matrix}
    $$
    The inner product formula \eqref{eq-general-inner-product-formula}
    and some simplification gives
    \begin{align*}
      \ip{x}{s}
      &{}=
      \textstyle
      \frac{\theta}{6}\sigma^2-      
      \frac{\theta}{3}\bigl(3\nu+\theta/2\bigr)
      \\
      \bigip{\thetabar\psibar p_\infty}{s}
      &{}=
      \textstyle
      \frac{13\theta}{2}\Bigl((C-\sigma)^2-\frac{36}{13}\Bigr)
      -13\theta\Bigl(\Im\ip{C}{\sigma}+3\bigl(\nu+\theta/6\bigr)\Bigr)
    \end{align*}
    The first terms are their imaginary parts.
    So $\ip{x}{s}$ lies on the real axis (if $\sigma=0$)
    or above it (otherwise).  Also, 
		lemma~\ref{lem-the-center} gives exactly the inequality needed to
		make
		$\ip{\thetabar\psibar
      p_\infty}{s}$ lie on or above the real axis, namely that $C-\sigma$
		has norm${}\geq\frac{36}{13}$. It is something of  a miracle that the geometry of the
    Leech lattice gives us exactly the bound that we need 
    for our argument to work.
		Finally,
    $\ip{\rho}{s}=\theta$ since $s$ is a Leech root.  So
    $\ip{T}{s}$ lies in the closed upper half plane.  If
    $\sigma\neq0$ then two of its vertices lie strictly above the real
    axis, so $\ip{T}{s}$ misses the origin unless
    the third vertex coincides with the origin, i.e., unless
    $\ip{\thetabar\psibar p_\infty}{s}=0$.  But then $s$ is a
    point-root, since these are the only roots orthogonal to
    $p_\infty$.  So suppose $\sigma=0$.  Then the above formulas
    simplify to the real numbers $\ip{x}{s}=-\theta(\nu+\theta/6)$ and
    $\ip{\thetabar\psibar
      p_\infty}{s}=-39\theta(\nu+\theta/6)$.
    (The latter uses $C^2=36/13$.)
    Since these differ by a
    positive factor, the only way $\ip{T}{s}$ can contain the origin
    is for both of them to vanish.  In particular
    $\ip{p_\infty}{s}=0$, so again $s$ is a point-root, indeed~$p_1$.
\end{proof}

The analogue of lemma~\ref{lem-step-4-point-case} for a line-mirror requires a much larger
tabulation of the mirrors near $p_\infty$ than we have needed so far.
The following lemma describes batches $0,\dots,5$ in a manner suitable
for machine computation.  Batches $0$, $1$ and~$2$ appear in
table~\ref{tab-roots-near-w-P}.  In batches $3$, $4$ and $5$ there are
$743\,418$, $107\,953\,560$ and $480\,961\,338$ mirrors.  To actually
construct the batches one should refer to the tabulation of the line
code in tables 2 and~3 of \cite{Allcock-Y555}.

\begin{lemma}[Mirrors near a $13$-point]
\label{lem-mirrors-near-a-13-point}
%
Write vectors of $L$ in the $P^2 \F_3$ model.
Then the mirrors
in batches $b=0,\dots,5$, i.e.,
those at distance${}<\sinh^{-1}2\approx1.444$ from $p_\infty$ 
, are 
the orthogonal complements of the roots $s=(s_0;s_1,\dots,s_{13})$ 
described in the next paragraph.  For $b=0$ this gives
all roots, while for the other cases it gives one from each
scalar class.

Define the ``desired norm'' $N=3$, $4$, $6$, $7$, $10$ or $12$ and the
``required coordinate sum'' $S=0$, $1$, $0$, $1$, $1$, or $0\in\F_3$
according to the value of $b=0,\dots,5$.  Choose any codeword
$w=(w_1,\dots,w_{13})$ in the line code whose weight is at most $N$
and whose coordinate sum is $S$.  Choose $e_1,\dots,e_{13}$ with
$e_i\in\{0,3,6\}$ if $w_i\neq0$ and $e_i\in\{0,3,9,12\}$ if $w_i=0$,
such that the sum of the $e_i$ equals $N-\hbox{\rm weight}(w)$.
Choose any Eisenstein integers $s_1,\dots,s_{13}$ such that $s_i$ mod
$\theta$ is $w_i$, and $|s_i|^2$ is either $e_i$ or $e_i+1$ according to
whether $w_i=0$ or $w_i\neq0$.  Prefix the coordinates
$(s_1,\dots,s_{13})$ by a coordinate $s_0=0$, $1$, $\theta$, $-2$,
$2-(\w\hbox{ \rm or }\wbar)$, or $3$ according to value of
$b=0,\dots,5$.
\end{lemma}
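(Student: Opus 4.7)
The plan is to translate the definition of $L$ in the $P^2\F_3$ model into the stated parametrization, after first identifying each batch with a specific value of $|\ip{p_\infty}{s}|^2$.

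First I would use the distance formula \eqref{eq-distance-point-to-hyperplane} with $p_\infty^2=-3$, $s^2=3$, and $\ip{p_\infty}{s}=-\thetabar\bar{s_0}$ to get
\[
d(p_\infty,s^\perp)=\sinh^{-1}\sqrt{|s_0|^2/3}.
\]
Since $|s_0|^2$ ranges over the norms of Eisenstein integers, $d(p_\infty,s^\perp)<\sinh^{-1}2$ forces $|s_0|^2\in\{0,1,3,4,7,9\}$; these correspond to batches $b=0,\dots,5$ respectively, and give the critical radii quoted just before the lemma.

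Next I would rescale $s$ by a unit of $\E$ to choose a canonical $s_0$. The unit group $\{\pm1,\pm\omega,\pm\wbar\}$ acts on each shell of fixed norm; direct inspection shows that the orbit representatives are $0,\,1,\,\theta,\,-2,\,(2-\omega\text{ or }2-\wbar),\,3$ for $|s_0|^2=0,1,3,4,7,9$. (Only $|s_0|^2=7$ splits into two unit-orbits since $7=(2-\omega)(2-\wbar)$.) For batch $0$ no such scaling is required, giving every root rather than one per scalar class. Reducing each chosen $s_0$ modulo $\theta$ using $\omega\equiv\wbar\equiv1\pmod\theta$ yields precisely the claimed $S\in\F_3$.

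Third, the membership $s\in L$ means that $(s_1,\dots,s_{13})\bmod\theta$ is a codeword $w$ in the line code and that $s_0\equiv s_1+\cdots+s_{13}\pmod\theta$; the latter condition is exactly $\sum w_i=S$. Writing $s_i=w_i+\theta z_i$ (with $w_i\in\{0,\pm1\}$ lifted to $\E$), the norm condition $s^2=3$ becomes $\sum_{i=1}^{13}|s_i|^2=3+|s_0|^2=N$. When $w_i=0$, $s_i\in\theta\E$ forces $|s_i|^2\in\{0,3,9,12,21,\dots\}$, and the bound $|s_i|^2\le N\le12$ cuts this down to $\{0,3,9,12\}$; when $w_i\neq0$, $|s_i|^2\equiv1\pmod3$, and the same bound leaves $\{1,4,7\}$, i.e.\ $e_i+1$ with $e_i\in\{0,3,6\}$. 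The weight bound $\hbox{weight}(w)\le N$ is immediate since each nonzero coordinate contributes at least~$1$ to the norm sum, and $\sum e_i=N-\hbox{weight}(w)$ follows by summing $|s_i|^2-e_i$ over all coordinates (this difference is $0$ when $w_i=0$ and $1$ when $w_i\neq0$).

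There is no real obstacle here beyond careful bookkeeping; the only mildly delicate point is the two-orbit phenomenon in batch~$4$ and checking the residues mod~$\theta$ that produce $S$. Conversely, every choice allowed by the lemma gives a vector with $s_0\in\E$, coordinates consistent with the line-code condition, $s_0\equiv\sum s_i\pmod\theta$, and $s^2=3$, hence a root of $L$ in the specified batch, so the parametrization is exhaustive.
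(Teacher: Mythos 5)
Your proposal is correct and follows essentially the same route as the paper's proof: bound $|s_0|^2<12$ via the distance to $p_\infty$, normalize $s_0$ by a unit, and translate $s^2=3$ together with the definition of $L$ into the constraints on $w$, the $e_i$ and the $s_i$. The only (harmless) divergence is in bounding $|s_i|^2$ for $w_i\neq0$: the paper uses the minimum weight $4$ of the line code to get $|s_i|^2\leq 9$, while you use the cruder bound $|s_i|^2\leq 12$ and observe that no Eisenstein norm $\equiv1\pmod 3$ lies in $(7,12]$; both yield $\{1,4,7\}$.
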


\begin{proof}
Suppose $s=(s_0;s_1,\dots,s_{13})$ is a root in one of the batches
$0,\dots,5$.  Then $\bigl|\ip{s}{p_\infty}\bigr|^2<3\cdot12$, which
boils down to the condition $|s_0|^2<12$, i.e.,
$|s_0|^2\in\{0,1,3,4,7,9\}$. If $|s_0|^2=0$ then of course $s_0=0$.
In the other cases there is a unique way to scale $s$ by a unit, such
that $s_0$ is as described at the end of the lemma.  In all cases,
$s^2=3$ says that the vector $(s_1,\dots,s_{13})$ has norm~$N$, and the
definition of $L$ requires that its reduction $w$ modulo~$\theta$ is
in the line code and has coordinate sum~$S$.  For $i=1,\dots,13$ we take $e_i$ to be
defined as $|s_i|^2$ if $w_i=0$, or $|s_i|^2-1$ if $w_i\neq0$.  If
$w_i=0$ then $s_i$ must be divisible by $\theta$, and 
$|s_i|^2\leq12$, so $e_i\in\{0,3,9,12\}$.  If $w_i\neq0$ then $N\leq12$
gives $|s_i|^2\leq9$ since $w_i$ has weight at least~$4$ (so there are
at least~$3$ other nonzero coordinates).  An element of $\E$ not
divisible by $\theta$, and having norm${}\leq9$, must have norm~$1$,
$4$ or $7$.  That is, when $w_i\neq0$ we have proven $e_i\in\{0,3,6\}$.
We have just
established the lemma's constraints on the $e_i$. The
constraints on the $s_i$ in terms of the $e_i$ are satisfied by the
construction of the $e_i$.
Conversely, if one follows the
instructions in choosing $w,e_1,\dots,e_{13},s_1,\dots,s_{13},s_0$
then one obtains a norm~$3$ vector of $L$ in the specified batch.
\end{proof}

\begin{lemma}[The triangle needed for step~$4$ in lemma~\ref{lem-change-of-basepoint}, line-mirror case]
\label{lem-step-4-line-case}
Let $x$ be the projection of $\rho$ onto a line-mirror, and $T$ be the
totally real triangle $\triangle\rho x l_\infty$.  Then
$T$ meets that line-mirror in $\geodesic{x l_\infty}$,
meets the other line-mirrors in $l_\infty$ only, and misses all other
mirrors.
\end{lemma}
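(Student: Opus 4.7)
The proof will run closely parallel to that of Lemma~\ref{lem-step-4-point-case}, with $l_\infty$ in place of $p_\infty$ throughout; this is why Lemma~\ref{lem-mirrors-near-a-13-point} was formulated for batches $0,\dots,5$ rather than just $0,1$. First I would use the $L_3(3)$-symmetry in the $P^2\F_3$-model, which fixes $\rho$, $p_\infty$ and $l_\infty$ and permutes the line-roots, to reduce to the case of~$l_1$. Then I would compute the vertex $x$ explicitly in the Leech model (using Lemma~\ref{lem-point-and-line-roots-in-Leech-model} to write~$l_1$ there) and record its basic data. Since $|\ip{\rho}{l_1}|^2=9$, the point $x$ sits on the boundary of the \emph{second} critical horoball around~$\rho$, rather than on the first as in the point case, which is the source of the extra quantitative difficulty.

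Next I would show that $T$ is contained in the union of a suitable critical horoball $\mathcal{H}$ around $\rho$ and a suitable critical ball $\mathcal{B}$ around $l_\infty$. Since $\height_\rho(l_\infty)=39$, the portion of $T$ lying outside $\mathcal{H}$ is confined to a hyperbolic neighborhood of~$l_\infty$ whose radius can be read off by finding where the horosphere $\partial\mathcal{H}$ meets $\geodesic{\rho l_\infty}$ and $\geodesic{x l_\infty}$ and comparing the resulting distance to the critical radii $r_1,r_2,\dots$ introduced before Lemma~\ref{lem-mirrors-near-a-13-point}. Matching this radius against the list in Lemma~\ref{lem-mirrors-near-a-13-point} identifies which finite collection of mirrors near~$l_\infty$ must be inspected. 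Those mirrors are then checked to miss $T$, except as claimed, by direct computer calculation using Lemma~\ref{lem-how-real-triangle-meets-hyperplane}, exactly as in the point case.

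The infinitely many shells of mirrors meeting~$\mathcal{H}$ are handled analytically, by the same template as the Leech-root part of Lemma~\ref{lem-step-4-point-case}. For such a root~$s$, write it in the form~\eqref{eq-definition-of-vector-s}, use~\eqref{eq-general-inner-product-formula} to compute the three complex numbers $\ip{\rho}{s}$, $\ip{x}{s}$ and $\ip{l_\infty}{s}$, and argue that the triangle $\ip{T}{s}\sset\C$ they span does not enclose the origin unless $s$ is a unit-multiple of some~$l_j$. Because $l_\infty$ also has $\sigma/m=C$ in the Leech model, the decisive input is Lemma~\ref{lem-the-center}: its bounds on the nearest-neighbor distances of $C$ in $\Lambda$ and in $\frac{1}{\theta}\Lambda$ should pin the sign of the real or imaginary part of $\ip{l_\infty}{s}$ in each case, and these are precisely the inequalities needed for the geometry of the Leech lattice to force the origin outside $\ip{T}{s}$.

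The main obstacle I anticipate is that $\mathcal{H}$ must now reach above height~$3$, so second- and third-shell roots (not just first-shell Leech roots) must be handled analytically; each comes with a normalization $\ip{\rho}{s}\in\{\theta,3,2\theta\}$ that must be tracked, together with the handful of $\sigma\in\Lambda$ at each admissible squared-distance from~$C$ enumerated through Lemma~\ref{lem-the-center} and the $6\cdot\Suz$-transitivity on short Leech vectors. The case analysis is noticeably more elaborate than in the point case, but conceptually remains a direct generalization, and I expect the sign information to fall out of Lemma~\ref{lem-the-center} in each case just as it did for Leech roots in Lemma~\ref{lem-step-4-point-case}.
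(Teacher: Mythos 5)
Your proposal follows essentially the same route as the paper: reduce to $l_1$ by symmetry, cover $T$ by a critical horoball around $\rho$ together with a critical ball around $l_\infty$, dispose of the finitely many mirrors near $l_\infty$ by computer via lemma~\ref{lem-mirrors-near-a-13-point}, and treat the shells around $\rho$ analytically using \eqref{eq-general-inner-product-formula} and the nearest-neighbor bounds of lemma~\ref{lem-the-center} (plus lemma~\ref{lem-lattice-points-near-vectors-over-theta} for the vertex $x$). The only notable difference is that the paper's choice of the third critical horoball means only first- and second-shell roots need the analytic argument (not third-shell as you anticipate), and in the exceptional cases where $\sigma$ is a nearest neighbor of $C$ the sign information alone does not suffice --- one must write $s=p_i+n\rho$ or $s=\wbar l_j+n\theta\rho$ and check the resulting one-parameter family explicitly.
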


\begin{proof}
  This is similar to
  lemma~\ref{lem-step-4-point-case} but there are some
  new issues.  By symmetry we may take the line root to be~$l_1$.
  Computation gives $x=\rho-\wbar l_1$, with $x^2=\ip{\rho}{x}=-3$.
  In computations we use $\w\psibar l_\infty$ in place of $l_\infty$
  because it has negative inner product with $\rho$ and $x$,
  namely~$-39$.  This is also its norm.

  We claim that $T$ lies in the union of the sixth critical ball
  around $l_\infty$ and the third critical horoball around~$\rho$.
  Following the proof of lemma~\ref{lem-step-4-point-case}, it is enough to check
  that the point where $\geodesic{l_\infty x}$ pierces the boundary of
  the $3$rd critical horoball lies in the $6$th critical ball.   One can check
  that this point is represented by the vector $\w\psibar
  l_\infty+\bigl(4\sqrt{39}-13\bigr)x$, by computing that its height
  is~$4$.  Its norm is $-1404$ and its inner product with $\w\psibar
  l_\infty$ is $468-156\sqrt{39}$.  This lets us compute the distance
  from this point to $l_\infty$, namely
  $\cosh^{-1}\sqrt{\frac{64}{3}-\frac{8}{3}\sqrt{39}}\approx1.407$.
  This is less than $r_6=\sinh^{-1}(2)\approx1.444$, as desired.

  So the only mirrors that could meet $T$ are the first and second
  shell mirrors around $\rho$ (this includes the line mirrors, which
  are the batch~$0$ roots around~$l_\infty$), and
  the mirrors in batches~$1,\dots,5$ around $l_\infty$.  The latter
  checks are done in the $P^2\F_3$ model.  We applied an isometry $F$
  of $L$ that exchanges the point- and line-roots up to signs, namely
  $p_i\mapsto l_{14-i}$, $l_j\mapsto-p_{14-j}$.  (It follows that
  $F(p_\infty)=l_\infty$ and $F(l_\infty)=-p_\infty$, which is useful
  for writing down a matrix for $F$.)  So it suffices to check that
  the triangle with vertices $F(\rho)$, $F(x)$ and $F(\w\psibar
  l_\infty)$ misses all batch~$1,\dots,5$ mirrors around~$p_\infty$.
  For this we enumerated these mirrors in the $P^2\F_3$ model by using
  lemma~\ref{lem-mirrors-near-a-13-point}, and checked that they all
  miss this triangle by using
  lemma~\ref{lem-how-real-triangle-meets-hyperplane}.  This was the
  only computer calculation in the paper that took more than a
  moment---sixteen hours on a laptop.

  To examine how the Leech mirrors and second-shell mirrors meet $T$,
  we return to the Leech model.
  We begin by writing down the parameters $\sigma$, $m$, $N$ and
  $\nu$ when we write the following important vectors in the form
  \eqref{eq-definition-of-vector-s}:
  $$
    \begin{matrix}
      &&\sigma&m&N&\nu
      \\
      x&
      \hbox{(a vertex of $T$)}&
      -\bigl(C-\frac{\wbar}{\theta\psi}L_1)\theta&-\theta&-3&-\theta
      \\
      \w\psibar l_\infty&
      \hbox{(a vertex of $T$)}&
      -13\theta C&-13\theta
      &-39&-\frac{221}{2}\theta
      \\
      s&
      \hbox{(a Leech root)}&
      \sigma&1&3&\nu
      \\
      \llap{or }
      s&
      \hbox{(a second-shell root)}&\sigma&\theta&3&\nu
    \end{matrix}
    $$ Again we will use lemma~\ref{lem-how-real-triangle-meets-hyperplane} to determine whether $s^\perp$
    meets $T$.  We begin with the case of a second-shell root because
    it is simpler.   The real part of the inner product
    formula \eqref{eq-general-inner-product-formula} is
    \begin{align}
      \label{eq-Re-ip-x-with-s}
      \Re\ip{x}{s}
      &{}=
      \textstyle
      \frac{3}{2}\Bigl(
      \frac{\sigma}{\theta}
      -\bigl(C-\frac{\wbar}{\theta \psi}L_1)
      \Bigr)^2
      \\
      \label{eq-Re-ip-w-psibar-l-infty-with-s}
      \Re\bigip{\w\psibar l_\infty}{s}
      &{}=
      \textstyle
      \frac{39}{2}\Bigl(
      \bigl(\frac{\sigma}{\theta}-C\bigr)^2
      -\frac{12}{13}
      \Bigr)
    \end{align}
    We see that \eqref{eq-Re-ip-x-with-s} is at least~$0$, with
    equality if and only if
    $\frac{\sigma}{\theta}=C-\frac{\wbar}{\theta\psi}L_1$.  And
    \eqref{eq-Re-ip-w-psibar-l-infty-with-s} is at least~$0$ by
    lemma~\ref{lem-the-center}.  Together with $\Re\ip{\rho}{s}=3$, we
    see that $\ip{T}{s}$ lies in the closed
    right half plane.  If $\frac{\sigma}{\theta}\neq
    C-\frac{\wbar}{\theta\psi}L_1$ then 
    \eqref{eq-Re-ip-x-with-s} is strictly positive, so the only way
    $\ip{T}{s}$ could contain the origin is for its vertex
    $\ip{\w\psibar l_\infty}{s}$ to be the origin.  That is, $s\perp
    l_\infty$, so $s$ is a line root.
    
      So now suppose
    $\frac{\sigma}{\theta}=C-\frac{\wbar}{\theta\psi}L_1$.
    This implies that 
   $(s - \bar{\omega} l_1)$ is a multiple of $\rho$ 
   (just write $s$ and $l_1$ in the Leech coordinate system).
   Since $s$ and $l_1$ both have norm $3$, it quickly follows that
   $s = \bar{\omega} l_1 + n \theta \rho$  for some integer n.
   Using the known inner products between points, lines,
   $l_{\infty}$ and $\rho$, we find that
    \begin{equation*}
      \bigip{\w\psibar l_\infty}{s}=39 n \theta
      \quad\hbox{and}\quad
      \ip{x}{s}= 3 n \theta. 
    \end{equation*}
    Since these differ by a
    positive factor, the only way $0$ could lie in $\ip{T}{s}$ is for
    both of them  to be~$0$.  Then $s$ is a line root,
    indeed $l_1$.

    Now suppose $s$ is a Leech root.  The imaginary part of
    \eqref{eq-general-inner-product-formula} is
    \begin{align}
      \label{eq-Im-ip-x-with-s}
      \Im\ip{x}{s}
      &{}=\textstyle
      \frac{\theta}{2}\Bigl[
        \Bigl(\sigma-
        \bigl( C-\frac{\wbar}{\theta\psi}L_1\bigr
        )\Bigr)^2-2\Bigr]
      \\
      \label{eq-Im-ip-w-psibar-l-infty-with-s}
      \Im\bigip{\w\psibar l_\infty}{s}
      &{}=\textstyle
        \frac{13\theta}{2}\Bigl(\bigl(C-\sigma\bigr)^2-\frac{38}{13}\Bigr)
    \end{align}
    We have $\ip{\rho}{s}=\theta$, which lies above the real axis.  We
    claim that \eqref{eq-Im-ip-x-with-s} does also.
    To see this, recall from remark~\ref{rem-data-about-P-and-L} that
    $C-\frac{\wbar}{\theta\psi}L_1\in\frac{1}{\theta}\Lambda$ has
    norm~$3$.  So it equals a norm~$9$ vector of $\Lambda$, divided by
    $\theta$.  By lemma~\ref{lem-lattice-points-near-vectors-over-theta}, the distance from $C-\frac{\wbar}{\theta\psi}L_1$ to
    $\Lambda$ is $\sqrt3$.   So the first term in the brackets in
    \eqref{eq-Im-ip-x-with-s} is at least~$3$.  So
    \eqref{eq-Im-ip-x-with-s} lies above the real axis.  Next,
    lemma~\ref{lem-the-center} says that either
    $(C-\sigma)^2\geq\frac{42}{13}$ or else
    $\sigma=C+\frac{1}{\psi}P_i$ for some $i=1,\dots,13$.  In the
    first case we see that \eqref{eq-Im-ip-w-psibar-l-infty-with-s}
    lies above the real axis, so $T$ does too, so it
    cannot contain the origin.

    So suppose $\sigma=C+\frac{1}{\psi}P_i$.  
    This implies that 
    $(s - p_i)$ is a multiple of $\rho$.  
   Since $s$ and $p_1$ both have norm $3$, it follows
    that  $s = p_i + n \rho$ for some integer $n$.
  Using the known inner products between points, lines,
   $l_{\infty}$ and $\rho$, we find that
    \begin{align}
      \label{eq-ip-x-with-s}
      \ip{x}{s}
      &{}=
      \begin{cases}
        \theta - 3 n + \bar{\omega} \theta
        &\hbox{if $P_i\in L_1$}
        \\
        \theta-3n
        &\hbox{otherwise}.
      \end{cases}
      \\
      \label{eq-ip-w-psibar-l-infty-with-s}
      \bigip{\w\psibar l_\infty}{s}
      &{}=
      \thetabar+6-39n
    \end{align}
    (Note that \eqref{eq-ip-x-with-s} is above the real axis and
    \eqref{eq-ip-w-psibar-l-infty-with-s} is below it.)  
    We want to determine whether the origin lies in the
    triangle $\ip{T}{s}$ that has \eqref{eq-ip-w-psibar-l-infty-with-s}, \eqref{eq-ip-x-with-s} and $\ip{\rho}{s}=\theta$ for
    vertices.  We can find the intersection of $\ip{T}{s}$ with $\R$ by writing
    $A$, resp.\ $B$, for the convex combination of $\bigip{\w\psibar
      l_\infty}{s}$ and $\ip{\rho}{s}$, resp.\ $\ip{x}{s}$, that has
    no imaginary part.  Then the origin lies in $\ip{T}{s}$ just if it
    lies in the interval with endpoints $A$ and~$B$.  One works out
    $A$ and $B$, with the result
    $$
    \textstyle
    A=3-\frac{39}{2}n
    \qquad\qquad
    B=
    \begin{cases}
      3-15n&\hbox{if $P_i\in L_1$}\\
      3-21n&\hbox{otherwise.}
    \end{cases}
    $$
    If $n\leq0$ then $A$ and $B$ are  both positive. 
    And if $n\geq1$ then $A$ and $B$ are both negative.  So the origin
    does not lie in $\ip{T}{s}$.
\end{proof}

\section{How two complex triangles meet the mirrors}
\label{app-how-complex-triangles-meet-mirrors}

\noindent
In this appendix we prove lemma~\ref{lem-complex-triangles-miss-mirrors-except-as-known}: the complex triangles $\triangle\rho\tau
p_\infty$ and $\triangle\rho\tau l_\infty$ miss the mirror arrangement except
at $p_\infty$ and $l_\infty$ respectively.  This is the key fact in step~2 in
the proof of lemma~\ref{lem-change-of-basepoint}.  Because these
triangles are complex instead of totally
real, we cannot use the machinery in appendix~\ref{app-how-real-triangles-meet-mirrors}.  Instead we
exploit the fact that they lie in
$\BB(F)\iso\BB^1$, where $F$ is
the $L_3(3)$-invariant
sublattice of $L$.  Our first step is
to understand how this $\BB^1$ meets the mirrors:

\begin{lemma}
  \label{lem-how-the-1-ball-meets-the-mirrors}
  Suppose $x\in\BB^1\cap\H$.  Then either $x$ is represented by a
  norm~$-3$ vector of $F$, or is orthogonal to a norm~$3$ vector of~$F$.
\end{lemma}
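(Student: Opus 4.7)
The plan is to analyze the root $s \in L$ witnessing $x \in \H$ by decomposing it along the orthogonal splitting $L \otimes \Q(\w) = (F \otimes \Q(\w)) \perp (F^\perp \otimes \Q(\w))$. Writing $s = s_F + s_{F^\perp}$, the condition $\langle x, s \rangle = 0$ becomes $\langle x, s_F \rangle = 0$ since $x \in F \otimes \C$. The first step is to show that $F^\perp$ contains no roots: vectors in $F^\perp$ have the form $(0;v_1,\dots,v_{13})$ with $\sum v_i = 0$, and norm~$3$ forces either a single $|v_i|^2 = 3$ (but then $\sum v_j = v_i \in \theta\E$ cannot vanish) or three $v_i$ each of norm~$1$ summing to zero (whose mod-$\theta$ reduction has weight~$3$, contradicting the line code's minimum weight~$4$). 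Hence $s_F \ne 0$.

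Since $F \otimes \C$ is Lorentzian of signature $(1,1)$ and $x^2 < 0$, the orthogonality $\langle x, s_F \rangle = 0$ forces $s_F^2 > 0$ (two orthogonal negative-norm directions cannot coexist in a $(1,1)$-space). So $x$ lies on the unique negative line of $F \otimes \C$ perpendicular to $s_F$; being defined over $\Q(\w)$, this line contains a primitive $r \in F$ with $r^2 < 0$, and $x$ is a scalar multiple of $r$. Symmetrically, $s_F = \mu r^*$ for some primitive $r^* \in F$ with $(r^*)^2 > 0$ and some $\mu \in \Q(\w)$. The two conclusions of the lemma translate exactly to $r^2 = -3$ (so $x$ is represented by the norm-$-3$ vector $r$) and $(r^*)^2 = 3$ (so $x$ is orthogonal to the norm-$3$ vector $r^*$).

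To force one of these, I would combine two bounds. First, $s_F^2 + s_{F^\perp}^2 = s^2 = 3$ with $s_{F^\perp}^2 \ge 0$ gives $s_F^2 \le 3$. Second, the explicit projection formula $s_F = (s_0; ((s_1+\cdots+s_{13})/13)^{13})$, derived directly from the shapes of $p_\infty$ and $l_\infty$, shows that $s_F \in \tfrac{1}{13}F$, so $13\mu \in \E$ and therefore $|\mu|^2 \ge 1/169$ for $\mu \ne 0$. Together these give $(r^*)^2 \le 507$, and the product identity $r^2 \cdot (r^*)^2 = -39 \cdot [F : \E r \oplus \E r^*]^2$ leaves only finitely many candidate norm-pairs. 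The main obstacle will be the concluding case analysis: for each candidate pair with $r^2 \ne -3$ and $(r^*)^2 \ne 3$, I must rule out the existence of a root $s \in L$ whose $F$-projection lies in $\Q(\w) r^*$. I expect this to follow by combining the integrality constraint on the lift $s = s_F + s_{F^\perp}$ in $L$ with the sparsity of short vectors in $F^\perp$ imposed by the line code, via the glue structure of $L/(F \oplus F^\perp)$.
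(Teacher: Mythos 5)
Your setup is sound as far as it goes: $F^\perp$ is positive definite and contains no roots (your weight argument using the line code is correct), so any root $s$ orthogonal to $x$ has nonzero projection $s_F$, which must span the positive line of $F\tensor\Q(\w)$ orthogonal to $x=\C r$; and the bounds $s_F^2\le 3$, $13 s_F\in F$ do confine $(r^*)^2$ to a finite set of values. But the proof stops exactly where the mathematical content begins. The lemma is equivalent to ruling out every norm pair with $r^2\le -6$ \emph{and} $(r^*)^2\ge 6$ (for instance $r^2(r^*)^2=-351$ admits the pairs $(-9,39)$ and $(-39,9)$, which your constraints do not exclude), and for each such pair you must show that \emph{no} root of $L$ projects into $\Q(\w)r^*$. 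As stated this is not even visibly a finite verification: for a fixed bad norm there are infinitely many primitive vectors of $F$ of that norm, they need not form finitely many orbits under any group that acts on the set of roots of $L$, and the ``glue structure'' argument you invoke is only named, not carried out. Deferring this step with ``I expect this to follow'' leaves the proof without its decisive ingredient.

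For comparison, the paper avoids all of this by exploiting the $L_3(3)$-symmetry rather than analyzing one root at a time. It takes $M$ to be the sublattice spanned by \emph{all} roots orthogonal to $x$; since $x\in\BB(F)$ is fixed by $L_3(3)$, this group permutes the direct summands of $M$ (each a copy of $A_2^\E$, $D_4^\E$, $E_6^\E$ or $E_8^\E$). Because $L_3(3)$ is simple with smallest nontrivial permutation degree $13$, either every summand is invariant --- and then irreducibility of $F^\perp\tensor\C$ forces $M\iso A_2^\E\sset F$, giving a norm-$3$ vector of $F$ orthogonal to $x$ --- or $M\iso 13A_2^\E$, in which case the product of the $13$ commuting triflections acts as a complex ``reflection'' fixing only $x$, and preservation of $L=\theta L^*$ forces $x^2=-3$. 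If you want to salvage your approach you would need to actually execute the elimination of the surviving norm pairs; otherwise the representation-theoretic route is both shorter and complete.
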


\begin{proof}

Write $M$ for the sublattice of $L$ spanned by the roots orthogonal
  to $x$. The lattice $M$ is non-empty because $x \in \H$. So $1\leq\dim M\leq13$.
  As an $\E$-lattice with all  inner products divisible by~$\theta$,
  and spanned by roots, $M$ is a direct sum of copies of the
  Eisenstein root lattices $A_2^\E$, $D_4^\E$, $E_6^\E$ and $E_8^\E$. 
  (See \cite[Thm.~3]{Allcock-Y555}.) 
 There are at most $13$ direct summands of $M$ and $L_3(3)$ acts
 on the set of these.
  Since $L_3(3)$ is simple and contains an element of order~$13$, the smallest
  nontrivial permutation representations of $L_3(3)$ are on~$13$ objects.
Therefore either
  (a) $L_3(3)$ preserves each direct summand of $M$
  or
  (b) $M$ is the sum of $13$ copies of
  $A_2^\E$.


First we treat case (a). 
The action of $L_3(3)$ on $F^{\bot}$ is
known and irreducible: it is the deleted permutation
representation coming from $L_3(3)$'s action on the points of
$P^2\F_3$.
So as an $L_3(3)$ representation, $x^\perp$ decomposes into two irreducible factors:
$F^\perp$ and a one dimensional representation.
   We are assuming that the underlying vector space of each direct summand of $M$
 is preserved by $L_3(3)$ and so must contain one of these irreducible factors.
 Since each summand of $M$ have dimension at most $4$, it follows that $M$ must
 isomorphic to $A_2^\E$ and lie in the fixed space of $L_3(3)$.
  So $F$ contains a root orthogonal to $x$.
  
  Now we treat case (b).  As the orthogonal complement of $13$
  mutually orthogonal roots, the line in $L\tensor\C$ corresponding to
  $x$ is represented by a lattice vector; we choose a primitive one
  and use the same name $x$ for it.  The product of the
  $\wbar$-reflections in the $13$ roots, times the scalar $\w$, acts
  on $L$ by scaling $x$ by $\w$ and fixing $x^\perp$ pointwise.  So it
  is given by the same formula \eqref{eq-formula-for-omega-reflection} as a reflection, namely $v\mapsto
  v-(1-\w)\ip{v}{x}x/x^2$.  (This differs from a reflection in that
  its fixed set in $\BB^{13}$ is a point not a hyperplane.)  Since
  this preserves $L$, we have $(1-\w)\ip{v}{x}/x^2\in\E$ for
  every $v\in L$.  Since $L=\theta\cdot L^*$, there exists $v\in L$
  with $\ip{v}{x}=1-\wbar$.  So $3/x^2\in\E$, which forces $x^2=-3$.
\end{proof}

It is easy to see that $F$ is spanned by $\rho=(-\psibar;-1,\dots,-1)$ and
$p_\infty=(\thetabar;0,\dots,0)$.  These have norms $0$ and~$-3$, and
$\ip{\rho}{p_\infty}=\theta\psibar$.  We will also use a second null
vector $\rho'$, which is defined as $-\w$ times the image of $\rho$
under the $\w$-``reflection'' in $p_\infty$.  (As in the proof above,
this fixes a single point of $\BB^{13}$ rather than a hyperplane.)
One can check $\rho'=\psibar
p_\infty-\w\rho=(\wbar\psibar;\w,\dots,\w)$ and
$\ip{\rho}{\rho'}=13\theta$.

Everything becomes easier if we work in a certain superlattice $E$ of
$F$, namely the one spanned by $\rho/\psibar$ and $\rho'/\psibar$.  We
use the notation $[u,v]$ to mean $(u\rho'+v\rho)/\psibar$.
Obviously $E$ contains $\rho$, and it contains $p_\infty$ by our
formula for $\rho'$ in terms of $p_\infty$ and $\rho$.  So $E$ does
indeed contain~$F$.
The main advantage of working in $E$ is that the inner product has the
simple form
$$
\Bigip{[u,v]}{[u',v']}
=
\begin{pmatrix}
  u&v
\end{pmatrix}
\begin{pmatrix}
  0&\thetabar\\
  \theta&0
\end{pmatrix}
\begin{pmatrix}
  \ubar'\\
  \vbar'
\end{pmatrix}
$$
which is the same as in \eqref{eq-inner-product-in-Leech-model}.  One can
check that $p_\infty=[1,\w]$ and
$l_\infty=[\thetabar\wbar,\w-2]$.
Now we can prove the main result of this appendix:

\begin{lemma}[Two triangles needed in step~2 of lemma~\ref{lem-change-of-basepoint}]
  \label{lem-complex-triangles-miss-mirrors-except-as-known}
  The only point of $\H$ in  $\triangle\rho \tau p_\infty$
  (resp.\ $\triangle\rho\tau l_\infty$) is 
  $p_\infty$ (resp.\ $l_\infty$). 
\end{lemma}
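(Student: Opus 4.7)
The plan is to transfer the problem to the upper-half-plane model of $\BB(F)\iso\BB^1$. Since $F$ is a sublattice of $E$, and $E$ has the Gram matrix $\bigl(\begin{smallmatrix}0&\thetabar\\\theta&0\end{smallmatrix}\bigr)$, I use the coordinate $z=v/u$ on $\overline{\BB(E)}=\overline{\BB(F)}$, which identifies the interior with the upper half-plane. Direct computation gives vertex coordinates $\rho=[0,\psibar]\mapsto\infty$, $p_\infty=[1,\w]\mapsto\w$, $l_\infty=[\thetabar\wbar,\w-2]\mapsto\tfrac32+\tfrac{\sqrt{3}}{6}i$, and $\tau=l_\infty+ip_\infty\mapsto 1+i$. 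The three points $p_\infty,\tau,l_\infty$ lie on a single geodesic $\mathcal{S}$, the semicircle of center $\tfrac13$ and radius $\tfrac{\sqrt{13}}{3}$. So $\triangle\rho\tau p_\infty$ is bounded by the vertical rays $x=-\tfrac12$ and $x=1$ together with the arc of $\mathcal{S}$ from $\w$ to $1+i$; and $\triangle\rho\tau l_\infty$ is bounded by $x=1$, $x=\tfrac32$, and the arc from $1+i$ to $l_\infty$.

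Next, apply Lemma~\ref{lem-how-the-1-ball-meets-the-mirrors}: every point of $\H\cap\BB(F)$ is either represented by a primitive norm-$(-3)$ vector $[u,v]\in F$ (``Type 1''), or is the unique point of $\BB(F)$ perpendicular to some norm-$3$ root $s=[u_s,v_s]\in F$ (``Type 2''). The norm formula $[u,v]^2=-2\sqrt{3}\,|u|^2\Im(v/u)$ gives $\Im(z)=\sqrt{3}/(2|u|^2)$ for Type 1, and a short calculation shows the Type 2 point is $z=\overline{v_s}/\overline{u_s}$ with the same expression (replacing $|u|^2$ by $|u_s|^2$). So every $\H$-point of $\BB(F)$ has $y$-coordinate of the form $\sqrt{3}/(2n)$ for some $n\in\{1,3,4,7,9,12,13,\dotsc\}$ (the norms of nonzero Eisenstein integers); in particular $y\leq\sqrt{3}/2$.

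The geometric punchline is that $\mathcal{S}$ meets $y=\sqrt{3}/2$ only at $x=-\tfrac12$ and $x=\tfrac76$, and meets $y=\sqrt{3}/6$ only at $x=\tfrac32$ (within the relevant $x$-range). Thus $\triangle\rho\tau p_\infty$ lies in $y\geq\sqrt{3}/2$ with equality only at $p_\infty$, and meets no other line $y=\sqrt{3}/(2n)$. Similarly $\triangle\rho\tau l_\infty$ lies in $y\geq\sqrt{3}/6$ with equality only at $l_\infty$, and additionally meets $y=\sqrt{3}/2$ only in the subsegment $\tfrac76\leq x\leq\tfrac32$. It remains to enumerate the $\H$-points on these horizontal lines and check they avoid the forbidden subsegments. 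Using $F=\{[u,v]\in\E^2 : v\equiv\w u\pmod{\psibar}\}$, normalizing $u$ up to units (so $u=1$ for $n=1$ and $u=\theta$ for $n=3$), and using $|\psibar|^2=13$, a Diophantine calculation yields: on $y=\sqrt{3}/2$, the $\H$-points have $x\in-\tfrac12+13\Z$ (Type 1) or $x\in\tfrac{11}{2}+13\Z$ (Type 2); on $y=\sqrt{3}/6$, they have $x\in\tfrac32+13\Z$ (Type 1) or $x\in\tfrac72+13\Z$ (Type 2). The only such values in the $x$-ranges $[-\tfrac12,1]$ and $[1,\tfrac32]$ are those producing $p_\infty$ and $l_\infty$ respectively, completing the proof. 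The main obstacle is the careful bookkeeping of the $\pmod{\psibar}$ congruence defining $F\subset E$; once that is in hand, the underlying ``miracle''---that the spacing $13$ from $|\psibar|^2$ comfortably exceeds the widths of both triangles---makes the enumeration immediate.
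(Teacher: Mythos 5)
Your argument is correct, and it follows the paper's overall strategy: identify $\BB(F)$ with the upper half-plane via $z=v/u$, invoke lemma~\ref{lem-how-the-1-ball-meets-the-mirrors} to reduce to norm~$-3$ vectors of $F$ and points orthogonal to norm~$3$ vectors of $F$, and use $\Im z=\sqrt3/(2\abs{u}^2)$ to restrict to $\abs{u}^2\in\{1,3\}$. Only the endgame differs. The paper works with the union $\triangle\rho p_\infty l_\infty$ of the two triangles and enumerates candidates in the superlattice $E$, finding exactly three ($p_\infty$, $l_\infty$ and $[1,1-\wbar]$) and discarding the third because neither it nor its orthogonal norm-$3$ partner lies in $F$. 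You keep the two triangles separate and do the arithmetic inside $F$ from the outset, using the congruence $v\equiv\w u\pmod{\psibar}$ to show that the $x$-coordinates of the relevant points of $\H$ fall into arithmetic progressions of common difference $\abs{\psibar}^2=13$ that avoid the forbidden intervals. The two computations are consistent: the paper's extra candidate $[1,1-\wbar]$ sits at $z=\tfrac32+\tfrac{\sqrt3}{2}i$, which does lie in $\triangle\rho\tau l_\infty$ on the line $y=\sqrt3/2$, and your classes $-\tfrac12+13\Z$ and $\tfrac{11}{2}+13\Z$ correctly exclude it. Your version makes the role of $13=\abs{\psibar}^2$ explicit and avoids the detour through $E$, at the cost of four short congruence computations; either route is complete.
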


\begin{proof}
  Because $\tau$ is the midpoint of $\geodesic{p_\infty l_\infty}$,
  the union of these two triangles is the larger triangle $T=\triangle\rho p_\infty
  l_\infty$.  So it suffices to show that this triangle misses $\H$ except at
  $p_\infty$ and $l_\infty$.  We identify the projective space of
  $F\tensor\C$ with $\C\cup\{\infty\}$ by plotting a vector $[u,v]$ as
  $v/u$.  Then $\BB^1$ corresponds to the upper half plane, and
  $\rho$, $p_\infty$ and $l_\infty$ correspond to $\infty$ , $\w$ and
  $\frac{3}{2}+\frac{\theta}{6}$ respectively.  So  $T$ is the
  hyperbolic triangle they span.
  The edges of $T$ are
the vertical lines through $\omega$, $(\tfrac{3}{2} + \tfrac{\theta}{6})$ and an arc $C$ of the circle 
$\lbrace z \colon \abs{z - \tfrac{1}{3}}^2 < \tfrac{13}{9} \rbrace$.
The image of $\tau$ in the upper half plane is $(1 + i)$, which is the midpoint of the circular arc $C$.

  Our strategy is to find all the norm~$-3$ vectors of $F$
  representing points of~$T$, and all norm~$3$ vectors orthogonal to
  points of~$T$, and then apply lemma~\ref{lem-how-the-1-ball-meets-the-mirrors}.  It is convenient to
  find all such vectors in $E$ first, and then discard the ones that
  lie outside~$F$.  We begin by writing an arbitrary norm~$-3$ vector
  of $E$ as $x=\bigl[m,\frac{\theta}{\bar
      m}\bigl(\frac{0-(-3)}{6}+\nu\bigr)\bigr]$, where $m\in\E-\{0\}$,
  and $\nu\in\Im\C$ is chosen so that the second component lies
  in~$\E$.  (This is just like the analysis leading to \eqref{eq-definition-of-vector-s}.)  The
  corresponding point of $\BB^1$ has imaginary part $\theta/2|m|^2$.
  Now suppose $x\in T$.  The only point of $T$ with imaginary part
  $\theta/6$ is $l_\infty$, and all its other points have larger
  imaginary part.  Therefore either $x$ is a multiple of $l_\infty$,
  or else $|m|=1$.  In the latter case we scale $x$ so that $m=1$, so
  $x = [1,\theta(\frac12+\nu)]$.  Since the second component lies
  in~$\E$, we have $\nu=\frac{1}{2\theta}+\frac{n}{\theta}$ for some
  $n\in\Z$.  Then the point of $\BB^1$ represented by $x$ is
  $n-\wbar$.  Since every point of $T$ has real part at least
  $-\frac12$ and at most $\frac32$, the only possibilities for $n$ are
  $-1$, $0$ and~$1$.  The case $n=-1$ yields $x=p_\infty$.  The case
  $n=0$ does not actually arise, because $-\wbar$ lies below the
  geodesic $\geodesic{p_\infty l_\infty}$.  The case $n=1$ yields a
  point of $T$.  We have shown that the only points of $T$ represented
  by norm~$-3$ vectors of $E$ are $p_\infty$, $l_\infty$ and
  $[1,1-\wbar]$.

  Now consider a point of $T$ orthogonal to a norm~$3$ vector of $E$, say
  $[u,v]$.  The orthogonal complement of $[u,v]$ is spanned by
  $[\ubar,\vbar]$, which is a norm~$-3$ vector of $E$.
  So we have
  shown that the only points of $T$ that are represented by a
  norm~$-3$ vectors of $E$, or are orthogonal to a norm~$3$ vector
  of~$E$, are $p_\infty$, $l_\infty$ and $[1,1-\wbar]$.
  
  Finally, consider a point of $T\cap\H$ other than $p_\infty$ and
  $l_\infty$.  By lemma~\ref{lem-how-the-1-ball-meets-the-mirrors},
  either it is represented by a norm~$-3$ vector of $F$, or is
  orthogonal to a norm~$3$ vector of $F$.  This norm~$\pm3$ vector
  lies in $E$, so the previous two paragraphs show that the point is
  $[1,1-\wbar]$.  So it suffices to show that $[1,1-\wbar]\notin\H$.  For
  this we observe that $F$ contains neither the norm~$-3$ vector
  $[1,1-\wbar]$, nor the norm~$3$ vector $[1,1-\w]$ orthogonal to it.
\end{proof}

\end{document}